\newtheorem{thm}{Theorem}[section]
\newtheorem{prop}[thm]{Proposition}
\newtheorem{cor}[thm]{Corollary}
\newtheorem{lem}[thm]{Lemma}
\theoremstyle{definition}
\newtheorem{dfn}[thm]{Definition}
\theoremstyle{remark}
\newtheorem{remark}[thm]{Remark}
\newtheorem{cons}[thm]{Construction}
\newtheorem{exm}[thm]{Example}
\numberwithin{equation}{section}
\tikzset{
	0c/.style={circle, draw, fill, inner sep=1.5pt},
	1c/.style={->, thick, shorten <=2pt, shorten >=2pt},
	edge/.style={thick, shorten <=2pt, shorten >=2pt},
	1cdot/.style={->, densely dashed, thick, shorten <=2pt, shorten >=2pt},
	2c/.style={double, thick, shorten <=6pt, shorten >=8pt, decoration={markings,mark=at position -6pt with {\arrow[scale=1.75]{>}}}, preaction={decorate}},
	follow/.style={->, >=stealth, very thick, shorten <=3pt, shorten >=3pt},
	every node/.style={scale=.8},
}
\newcommand\bord[2]{\partial_{#1}^{#2}}
\newcommand\idd[1]{\mathrm{id}_{#1}}
\newcommand\opp[1]{{#1}^\mathrm{op}}
\newcommand\coo[1]{{#1}^\mathrm{co}}
\newcommand\pback[2]{\;{}_{#1}\!\!\times_{#2}}
\newcommand\invrs[1]{#1^{-1}}
\newcommand\cutt[1]{\mathrm{cut}_{#1}}
\newcommand\mrg[2]{\mathrm{mrg}_{#1}^{#2}}
\newcommand\thin[1]{\epsilon_{#1}}
\newcommand\sequ[1]{\langle{ #1 }\rangle}
\newcommand\tensor{\,{\scaleobj{0.75}{\boxtimes}}\,}
\newcommand\imonad[1]{\mathcal{I}#1}
\newcommand\mmonad[1]{\mathcal{M}#1}
\newcommand\tmonad[1]{\mathcal{T}#1}
\newcommand\cat[1]{\mathbf{#1}}
\newcommand\rpol{\cat{CPol}}
\newcommand\rtwopol{\cat{r2Pol}}
\newcommand\pbcat{\cat{rPolBiCat}}
\newcommand\mbcat{\cat{rMulBiCat}}
\newcommand\pcat{\cat{rPolCat}}
\newcommand\mcat{\cat{rMulCat}}
\newcommand\bicat{\cat{BiCat}}
\newcommand\moncat{\cat{MonCat}}
\newcommand\linbicat{\cat{LinBiCat}}
\newcommand\lindist{\cat{LinDistCat}}
\newcommand\staraut{\cat{*AutCat}}
\newcommand\twoglob{\cat{2Gph}}
\newcommand\chu[1]{\textit{Chu}(#1)}
\newcommand\mrgpol{\cat{MrgBiCat}}
\newcommand\rimp[2]{#1\!\multimap\!#2}
\newcommand\limp[2]{#2\!\multimapinv\!#1}
\newcommand\rcimp[2]{#1 \diagdown #2}
\newcommand\lcimp[2]{#2 \diagup #1}
\title{\Large\bfseries Weak units, universal cells, and \\ coherence via universality for bicategories}
\author{Amar Hadzihasanovic\\[3pt] RIMS, Kyoto University}
\date{v1: March 2018 --- v3: September 2019}
\begin{document}
\maketitle

\vspace{-7pt}
\begin{minipage}{0.9\linewidth}
\small Poly-bicategories generalise planar polycategories in the same way as bicategories generalise monoidal categories. In a poly-bicategory, the existence of enough 2-cells satisfying certain universal properties (representability) induces coherent algebraic structure on the 2-graph of single-input, single-output 2-cells. A special case of this theory was used by Hermida to produce a proof of strictification for bicategories. No full strictification is possible for higher-dimensional categories, seemingly due to problems with 2-cells that have degenerate boundaries; it was conjectured by C.\ Simpson that semi-strictification excluding units may be possible.

We study poly-bicategories where 2-cells with degenerate boundaries are barred, and show that we can recover the structure of a bicategory through a different construction of weak units. We prove that the existence of these units is equivalent to the existence of 1-cells satisfying lower-dimensional universal properties, and study the relation between preservation of units and universal cells.

Then, we introduce merge-bicategories, a variant of poly-bicategories with more composition operations, which admits a natural monoidal closed structure, giving access to higher morphisms. We derive equivalences between morphisms, transformations, and modifications of representable merge-bicategories and the corresponding notions for bicategories. Finally, we prove a semi-strictification theorem for representable merge-bicategories with a choice of composites and units.
\end{minipage}

\tableofcontents

\section{Introduction}

This article is a study in the theory of bicategories with an eye towards weak higher categories, in particular the open problems regarding their strictifiability.

Its most immediate influence is Hermida's work \cite{hermida2000representable}, which established a kind of ``Grothendieck construction'' for monoidal categories (and, by extension, bicategories), whereby the coherent structure that makes a category monoidal is subsumed by a property of a multicategory, that of being \emph{representable}: namely, the existence of enough \emph{universal} 2-cells, through which other 2-cells factor uniquely whenever possible. 

An interesting outcome of this construction was a new proof that each monoidal category is equivalent to a strict monoidal category, coming from the multicategorical side: representable multicategories with a choice of universal 2-cells are seen as pseudoalgebras for a 2-monad on the 2-category of multicategories, and such pseudoalgebras are equivalent to strict ones. Hermida dubbed this general strategy \emph{coherence via universality} \cite{hermida2001coherent}.

It is clear from the author's words that this was meant as a first step towards higher-dimensional generalisations: indeed, the construction of bicategorical structure from universal 2-cells is, essentially, a low-dimensional instance of the multitopic approach to weak higher categories, proposed by Hermida, Makkai, and Power \cite{hermida2000weak}, equivalent to the opetopic approach of Baez and Dolan \cite{baez1998higher}. However, years later, coherence theorems for dimensions higher than 3 remain elusive. Our work is an attempt to advance the programme.

A first, necessary observation is that the obvious generalisation of the strictification theorem for bicategories is false: a result of C.\ Simpson \cite[Theorem 4.4.2]{simpson2009homotopy} settled definitively what was previously folklore, that general tricategories are not equivalent to strict ones. The very case of tricategories as a pseudoalgebra for a 2-monad was picked by Shulman \cite{shulman2012notevery} as a counterexample to the idea that ``most pseudoalgebras are equivalent to strict ones''.

By varying the algebraic theory, we may hope to use the same general approach to prove weaker, semi-strictification results. As Shulman's work suggests, strictifiability is theory-dependent, so we do not expect the general algebra of 2-monads to indicate the right direction: we believe, rather, that a specific understanding of the combinatorics of higher categories is necessary. 

In fact, while we follow Hermida in studying strictification from the side of objects-with-properties, rather than objects-with-structure, we diverge in most other aspects: in particular, we do not use the same 2-categorical algebraic framework, and opt instead for an elementary, combinatorial approach.

\subsubsection*{Many inputs and many outputs}
String diagrams and their generalisations, such as surface diagrams, are one of the main sources of intuition about higher categories. Recently, they have been at the centre of an approach to semistrict algebraic higher categories \cite{bar2016data} built around the proof assistant Globular \cite{vicary2016globular}; less recently, they have been the basis of many coherence theorems, either implicitly \cite{kelly1980coherence} or explicitly \cite{joyal1991geometry}. 

In most useful applications, string diagrams represent cells with many inputs and many outputs, while multicategories and opetopic/multitopic sets only have many-to-one cells. This makes the description of higher morphisms --- transformations, modifications, etc --- quite unwieldy. The simplest description of such higher structure is based on a monoidal closed structure on the category of higher categories, such as the (lax) Gray product of strict $\omega$-categories, and corresponding internal homs \cite{steiner2004omega}. However, opetopes/multitopes are not closed under such products; this is why, for example, the only transformations of functors that appear naturally in a multicategorical approach to bicategories are icons \cite{lack2010icons}, that is, transformations whose 1-cell components are identities.

It is now due saying that an instance of ``coherence via universality'' earlier than Hermida's appeared in the work of Cockett and Seely on linearly distributive categories and representable polycategories \cite{cockett1997weakly}, and later, with Koslowski, on linear bicategories and representable poly-bicategories \cite{cockett2000introduction,cockett2003morphisms}. Poly-bicategories, a ``coloured'' generalisation of polycategories, have 2-cells with many inputs and many outputs, but a restricted form of composition, meant to interpret the cut rule of sequent calculus. 

As such, representability of poly-bicategories induces not one, but two compatible structures of bicategory --- what was called a linear bicategory structure in \cite{cockett2000introduction} --- which is probably too expressive for our purposes. Moreover, the cut-rule composition does not interact well with Gray products. This is why, in Section \ref{sec:probicat}, we will introduce \emph{merge-bicategories}, a modified version of poly-bicategories whose two potential bicategory structures collapse to one, and which admits a monoidal closed structure.

So we favour spaces of cells with many inputs and many outputs, but apart from the added complexity in the combinatorics of cell shapes, this choice presents several technical subtleties. If we simply allow unrestricted $\omega$-categorical shapes, we obtain the category of polygraphs or computads \cite{burroni1993higher,street1976limits}, which, unlike one would expect, is not a presheaf category, as shown by Makkai and Zawadowski \cite{makkai2008category}. 

In \cite{cheng2012direct}, Cheng pointed at 2-cells with degenerate, 0-dimensional boundaries as the source of this failure. These show up, directly or indirectly, in a number of other ``no-go'' theorems: the aforementioned proof by C.\ Simpson that strict 3-groupoids are not equivalent to 3-dimensional homotopy types, and the impossibility of defining proof nets for multiplicative linear logic with units \cite{heijltjes2014proof}, which would yield a simpler proof of coherence for linearly distributive categories compared to \cite{blute1996natural}.

Incidentally, in most examples of poly-bicategories that we know of, there is no natural definition of 2-cells with no inputs and no outputs; this is true even of the main example of \cite{cockett2003morphisms}, the poly-bicategorical Chu construction, as discussed in our Example \ref{exm:chu}. This leads us to our second technical choice.

\subsubsection*{The regularity constraint}
Our response is to restrict $n$-dimensional cell shapes (or ``pasting diagrams'') to those that have a geometric realisation that is homeomorphic to an $n$-dimensional ball; we call this \emph{regularity}, in analogy with a similar constraint on CW complexes. In general, this leads to a notion of \emph{regular polygraph}, that we introduced in \cite[Section 3.2]{hadzihasanovic2017algebra}, and developed in \cite{hadzihasanovic2018combinatorial, hadzihasanovic2019rds}. In the 2-dimensional case that we consider here, regularity simply bars cells with no inputs or no outputs. We introduce corresponding notions of regular poly-bicategory and regular multi-bicategory, which have an underlying regular 2-polygraph.

The construction of a weakly associative composition from Cockett, Seely, and Hermida carries through to regular contexts with no alteration. On the other hand, these authors relied on 2-cells with degenerate boundaries in order to construct weak units: for this, we develop a new approach. 

We will see in Section \ref{sec:strictify} that strictification for associators also carries through from Hermida's work, leading to a proof of semi-strictification for bicategories. This is, of course, weaker than the well-known full strictification result, to the point that it may look like a meagre payoff at the end of the article. Nevertheless, after proving his no-go theorem, C.\ Simpson offered evidence that, while homotopy types are not modelled by strict higher groupoids, they may still be equivalent to higher groupoids that satisfy strict associativity and interchange, but have weak units; this is now known as \emph{C.\ Simpson's conjecture}. In this sense, this is a strictification method with the potential to generalise: it is the proofs and the constructions, rather than the result, that we consider to be of interest.

A variant of C.\ Simpson's conjecture, restricted to the groupoidal case (all cells are invertible), was recently proved by S.\ Henry \cite{henry2018regular}, who came independently to the regularity constraint on polygraphs \cite{henry2017non}. In its original and most general form, the conjecture is considered to have been settled only in the 3-dimensional case by Joyal and J.\ Kock \cite{joyal2007weak}: for this purpose, they considered a notion of weak unit based on a suggestion of Saavedra-Rivano \cite{saavedra1972tannakiennes}, whose defining characteristics are \emph{cancellability} and \emph{idempotence}.

The theory of \emph{Saavedra units} was developed by J.\ Kock in \cite{kock2008elementary}, the second largest influence of our article. We will show that a notion of Saavedra unit can be formulated in the context of regular poly-bicategories or merge-bicategories, and is subsumed by the existence of 2-cells that satisfy two universal properties at once: the one typical of ``representing'' cells as in Cockett, Seely, and Hermida, and the one typical of ``internal homs'' or ``Kan extensions''.

Most interestingly, we will show that the existence of these weak units is equivalent to the existence of certain \emph{universal 1-cells}, where universality is formulated with respect to an ``internal'' notion of composition, witnessed by universal 2-cells. These form an elementary notion of equivalence, independent of the pre-existence of units, yet formulated in such a way that dividing a universal 1-cell by itself, we obtain a weak unit. Universal 1-cells are similar to the universal 1-cells in the opetopic approach, but have some subtle technical advantages, given by our consideration of different universal properties at once. These advantages are not visible in the 2-dimensional case, and we will postpone their discussion to future work.

\subsubsection*{All concepts are universal cells}

The construction of units from universal cells allows us to formulate representability for merge-bicategories --- which induces the structure of a bicategory --- in a particularly uniform way, stated purely in terms of the existence of universal 1-cells and 2-cells, and to define morphisms of representable merge-bicategories, inducing functors of bicategories, as those that preserve universal cells. ``Mapping universal cells to universal cells'' makes sense even for higher morphisms, when dimensions do not match and ``mapping units to units'' may produce a typing error; see the definition of fair transformations in Section \ref{sec:probicat}. 

This reformulation of basic bicategory theory, centred on \emph{universality} as the one fundamental notion, is the other main contribution of this article next to the semi-strictification strategy. While the opetopic approach also constructs both composites and units from the single concept of universal cells, the correspondence is slightly different: in both approaches, $n$-dimensional composites come from ``binary'' universal $(n+1)$-cells; however, in the opetopic approach, $n$-dimensional units come from ``nullary'' universal $(n+1)$-cells, while in ours they come from ``unary'' universal $n$-cells. As far as we know, the monoidal closed structure, transformations, and modifications had not been treated in any earlier work on the same line.

The reason why we think this is relevant, and not ``yet another equivalent formalism'' for an established theory, is that, as observed among others by Gurski \cite{gurski2009coherence}, we may have an idea of what a weakly associative composition looks like in arbitrary dimensions, be it via Stasheff polytopes \cite{stasheff1970hspaces} or orientals \cite{street1987algebra}; but we do not know at all what the equations for weak units should be. Gurski's coherence equations are not generalisations of Mac Lane's triangle in any obvious way, and the triangle equation itself has been criticised for its reliance on associators in the formulation of a unitality constraint \cite{kock2008elementary}.

Universal cells, on the other hand, have analogues in any dimension \cite[Appendix B]{hadzihasanovic2018combinatorial}, and if we can argue that they subsume both composition and units, then they have both a technical and a conceptual advantage. Taking the effort to show that the main fundamental notions in bicategory theory can be recovered (with the exception, admittedly, of lax functors, which do not translate well) establishes a stable ground for the theory in higher dimensions, where clear reference points may not be at hand.

\subsubsection*{Logical aspects}

We have given ample indication of how we expect to follow up on this article, for what concerns the theory of higher categories. Here, we would like to briefly discuss some logical aspects of our work, that we also hope to develop in the future.

As evidenced by Cockett and Seely in \cite{cockett1997weakly}, there is a deep connection between universal 2-cells in polycategories and left and right introduction rules in sequent calculus: roughly, division corresponds to one introduction rule for a connective, and composition with the universal cell to the other; whether the first or the second is a left rule defines the \emph{polarity} of the connective \cite{andreoli1992logic}. When units are produced as ``nullary composites'', witnessed by a universal 2-cell with a degenerate boundary, division corresponds to an introduction rule, and the unit has the same polarity as the connective for which it is a unit. 

However, our notion of units as universal 1-cells does not fit properly into this scheme. Even if we considered coherent witnesses of unitality, as in Theorem \ref{thm:polycoherent}, to be the universal 2-cells associated to the unit as a logical connective, we would find that division produces an \emph{elimination} rule, and the unit has opposite polarity compared to its connective. It would seem that the two notions of unit, even though they induce the same coherent structure, are really logically inequivalent, a fact which needs further exploration on the proof-theoretic side.

On a related note, the regularity constraint on polycategories, as a semantics for sequent calculus, bars the emptying of either side of a sequent, and forces one to introduce units before moving formulas from one side to the other. For example, the following are attempts at proving double negation elimination in a ``non-regular'' and ``regular'' sequent calculus, respectively, for noncommutative linear logic (thanks to P.\ Taylor's proof tree macros):
\begin{equation*}
\begin{minipage}{0.45\linewidth}
	\begin{prooftree}
		 	\[ { \quad } \justifies \bot \vdash \quad \using{\bot_L} \] \quad
			\[ \[ \[  { \quad } \justifies A \vdash A \using{\textsc{ax}} \] 
			\justifies A \vdash \bot, A \using{\bot_R} \] 
				\justifies \quad \vdash \rimp{A}{\bot}, A \using{\multimap_R}\]
		 \justifies \limp{(\rimp{A}{\bot})}{\bot} \vdash A \using{\multimapinv_L}
	\end{prooftree}
\end{minipage}
\begin{minipage}{0.45\linewidth}
	\begin{prooftree}
		 	\[ { \quad } \justifies \bot \vdash \bot \using{\textsc{ax}} \] \quad
			\[ \[ \[  { \quad } \justifies A \vdash A \using{\textsc{ax}} \] 
			\justifies A, 1 \vdash \bot, A \using{1_L, \bot_R} \] 
				\justifies 1 \vdash \rimp{A}{\bot}, A \using{\multimap_R}\]
		 \justifies \limp{(\rimp{A}{\bot})}{\bot}, 1 \vdash \bot, A \using{\multimapinv_L}
	\end{prooftree}
\end{minipage}
\end{equation*}
In the ``regular'' proof, there is a residual unit on both sides. This makes us wonder: is the minimal number of residual units in a regular proof an interesting classifier for non-regular proofs? And does the regularity constraint affect the complexity of proof search or proof equivalence? 

\subsubsection*{Structure of the article}
We start in Section \ref{sec:polybicat} by defining regular poly-bicategories and their sub-classes --- regular multi-bicategories, polycategories, and multicategories --- and by fixing a notation for universal 2-cells. Unlike those of \cite{cockett2003morphisms}, our poly-bicategories do not come with unit 2-cells as structure; we take the opportunity to show their construction from universal 2-cells (Proposition \ref{prop:2units}), as a warm-up to the construction of unit 1-cells from universal 1-cells.

In Section \ref{sec:weakunits}, we develop the elementary theory of weak units and universal 1-cells in regular poly-bicategories. After introducing tensor and par universal 1-cells, we show that they satisfy a ``two-out-of-three'' property, that is, they are closed under composition and division (Theorem \ref{thm:2outof3}). Moreover, dividing a universal 1-cell by itself produces a unit, so the existence of universal 1-cells is equivalent to the existence of units (Theorem \ref{thm:0representable}). In the rest of the section, we study conditions under which universality is equivalent to invertibility (Proposition \ref{prop:isomorphism} and \ref{prop:isoclosed}), and preservation of universal cells by a morphism is equivalent to preservation of units.

Section \ref{sec:coherence} begins with the important technical result that the 2-cells witnessing the unitality of unit 1-cells can be chosen in a ``coherent'' way (Theorem \ref{thm:polycoherent}). We use this to derive results analogous to those of \cite{hermida2000representable} (Corollary \ref{cor:equiv_bicat_rep}) and of \cite{cockett2003morphisms} (Corollary \ref{cor:equiv_linbicat_rep}) under the regularity constraint: representable regular multi-bicategories and tensor strong morphisms are equivalent to bicategories and functors, and representable regular poly-bicategories and strong morphisms are equivalent to linear bicategories and strong linear functors. We conclude the section with an analysis of the Chu construction for poly-bicategories (Example \ref{exm:chu}), showing that unit 1-cells cannot be constructed as ``nullary composites'', which contradicts \cite[Example 2.5(4)]{cockett2003morphisms}, but can be constructed with our methods.

In Section \ref{sec:probicat}, we introduce merge-bicategories as a setting for our strictification strategy, and a first step for higher-dimensional generalisations. Their theory of universal cells is simpler (Proposition \ref{prop:mergeunital} and Corollary \ref{cor:tensorparunitcollapse}), yet representable merge-bicategories and their strong morphisms are equivalent to bicategories and functors (Theorem \ref{mrgpol_equivalence}). We show that merge-bicategories have a natural monoidal closed structure (Proposition \ref{prop:graymonoidal}), which gives us access to higher morphisms; we then relate them to higher morphisms of bicategories (Theorem \ref{thm:fair_coherence}).

Finally, in Section \ref{sec:strictify}, we prove a semi-strictification theorem for representable merge-bicategories. We encode a strictly associative choice of units and composites in the structure of an algebra for a monad $\tmonad{}$; the key point is that $\tmonad{}$ splits into two components, $\imonad{}$ and $\mmonad{}$. Then, any representable merge-bicategory $X$ admits an $\imonad{}$-algebra structure (Proposition \ref{prop:ialg_0rep}), and its inclusion into $\mmonad{X}$ is an equivalence (Lemma \ref{lem:merge_inclusion}), realising the semi-strictification of $X$ (Theorem \ref{thm:semistrict}).

\subsubsection*{Acknowledgements}
This work was supported by a JSPS Postdoctoral Research Fellowship and by JSPS KAKENHI Grant Number 17F17810. The author would like to thank Joachim Kock and Jamie Vicary for their feedback on the parts which overlap with the author's thesis, Alex Kavvos and the participants of the higher-categories meetings in Oxford for useful comments in the early stages of this work, and Paul-Andr\'e Melli\`es for pointing him to the work of Hermida back in 2016. The revised version has benefitted from helpful comments by the anonymous referee and by Pierre-Louis Curien.

\section{Regular poly-bicategories and universal 2-cells} \label{sec:polybicat}

In this section, we present our variants of the definitions of \cite{cockett2003morphisms}, and some basic results. The following terminology is borrowed from \cite{burroni1993higher}, for what could equally be called a regular 2-computad, following \cite{street1976limits}.

\begin{dfn}
A \emph{regular 2-polygraph} $X$ consists of a diagram of sets
\begin{equation*}
\begin{tikzpicture}
	\node[scale=1.25] (0) at (0,0) {$X_0$};
	\node[scale=1.25] (1) at (2.5,0) {$X_1$};
	\draw[1c] (1.west |- 0,.15) to node[auto,swap] {$\bord{}{+}$} (0.east |- 0,.15);
	\draw[1c] (1.west |- 0,-.15) to node[auto] {$\bord{}{-}$} (0.east |- 0,-.15);
\end{tikzpicture}
\end{equation*}
together with diagrams of sets
\begin{equation*}
\begin{tikzpicture}
	\node[scale=1.25] (0) at (0,0) {$X_1$};
	\node[scale=1.25] (1) at (2.5,0) {$X_2^{(n,m)},$};
	\draw[1c] (1.west |- 0,.15) to node[auto,swap] {$\bord{j}{+}$} (0.east |- 0,.15);
	\draw[1c] (1.west |- 0,-.15) to node[auto] {$\bord{i}{-}$} (0.east |- 0,-.15);
	\node[scale=1.25] at (6,.25) {$i = 1,\ldots,n,$};
	\node[scale=1.25] at (6,-.25) {$j = 1,\ldots,m,$};
\end{tikzpicture}
\end{equation*}
for all $n, m \geq 1$, satisfying
\begin{align*}
	& \bord{}{-}\bord{1}{-} = \bord{}{-}\bord{1}{+}\;, \\ 
	& \bord{}{+}\bord{n}{-} = \bord{}{+}\bord{m}{+}\;, \\
	& \bord{}{+}\bord{i}{-} = \bord{}{-}\bord{i+1}{-}\;, \quad \quad \;i=1,\ldots,n-1, \\
	& \bord{}{+}\bord{j}{+} = \bord{}{-}\bord{j+1}{+}\;, \quad \quad \;j=1,\ldots,m-1.
\end{align*}
The elements of $X_0$ are called \emph{0-cells}, those of $X_1$ \emph{1-cells}, and those of $X_2^{(n,m)}$ \emph{2-cells}, for any $n, m$. The functions $\bord{}{-}$, $\bord{i}{-}$ are called ($i$-th) \emph{input}, and the $\bord{}{+}$, $\bord{j}{+}$ ($j$-th) \emph{output}; the inputs form the \emph{input boundary}, and the outputs the \emph{output boundary} of a cell.

Given regular 2-polygraphs $X$ and $Y$, a \emph{morphism} $f: X \to Y$ is a morphism of diagrams, in the obvious sense. Regular 2-polygraphs and their morphisms form a large category $\rtwopol$.
\end{dfn}

We picture a 2-cell $p \in X_2^{(n,m)}$ as
\begin{equation*}
\input{img/s2_2cell.tex}
\end{equation*}
where $\bord{i}{-}p = a_i$, and $\bord{j}{+}p = b_j$, for $i = 1, \ldots, n$, and $j = 1,\ldots, m$; then, $\bord{}{-}b_1 = x^-$, $\bord{}{+}b_1 = x_2^+$, and so on. Dashed arrows stand for a (possibly empty) sequence of 1-cells, while solid arrows stand for a single 1-cell. We will also write
\begin{equation*}
	p: (a_1, \ldots, a_n) \to (b_1, \ldots, b_m)
\end{equation*}
to indicate the same cell together with the 1-cells in its boundary, which are implied to have compatible boundaries (``be composable''). Similarly, we will write $a: x \to y$ for a 1-cell with $\bord{}{-}a = x$, and $\bord{}{+}a = y$. We will often use capital Greek letters $\Gamma, \Delta, \ldots$ for composable sequences of 1-cells.

From any regular 2-polygraph, we obtain three others where the direction of 1-cells, or 2-cells, or both, is reversed.
\begin{dfn}
Given a regular 2-polygraph $X$, the regular 2-polygraph $\opp{X}$ has the same 0-cells as $X$, and
\begin{enumerate}
	\item a 1-cell $\opp{a}: y \to x$ for each 1-cell $a: x \to y$ of $X$, and
	\item a 2-cell $\opp{p}: (\opp{a}_n, \ldots, \opp{a}_1) \to (\opp{b}_m, \ldots, \opp{b}_1)$ for each $p: (a_1,\ldots,a_n) \to (b_1,\ldots,b_m)$ of $X$.
\end{enumerate}
The regular 2-polygraph $\coo{X}$ has the same 0-cells as $X$, the same 1-cells as $X$, and a 2-cell $\coo{p}: (b_1, \ldots, b_m) \to (a_1, \ldots, a_n)$ for each 2-cell $p: (a_1,\ldots,a_n) \to (b_1,\ldots,b_m)$ of $X$.

Extending the definition to morphisms in the obvious way, for example letting $\opp{f}(\opp{a}) := \opp{f(a)}$, defines involutive endofunctors $\opp{(-)}$ and $\coo{(-)}$ on $\rtwopol$.
\end{dfn}

A regular 2-polygraph $X$ induces a 2-graph, or 2-globular set \cite[Section 1.4]{leinster2004higher}, by restriction to 2-cells that have a single input and a single output 1-cell.
\begin{dfn}
Given a regular 2-polygraph $X$, the 2-graph $GX$ is the sub-diagram 
\begin{equation*}
\begin{tikzpicture}
	\node[scale=1.25] (0) at (0,0) {$X_0$};
	\node[scale=1.25] (1) at (2.5,0) {$X_1$};
	\node[scale=1.25] (2) at (5,0) {$X_2^{(1,1)}.$};
	\draw[1c] (1.west |- 0,.15) to node[auto,swap] {$\bord{}{+}$} (0.east |- 0,.15);
	\draw[1c] (1.west |- 0,-.15) to node[auto] {$\bord{}{-}$} (0.east |- 0,-.15);
	\draw[1c] (2.west |- 0,.15) to node[auto,swap] {$\bord{1}{+}$} (1.east |- 0,.15);
	\draw[1c] (2.west |- 0,-.15) to node[auto] {$\bord{1}{-}$} (1.east |- 0,-.15);
\end{tikzpicture}
\end{equation*}
of $X$. 

Given a morphism $f: X \to Y$ of regular 2-polygraphs, its restriction to $GX$ induces a morphism $Gf: GX \to GY$ of 2-graphs. This defines a functor $G: \rtwopol \to \twoglob$, where $\twoglob$ is the category of 2-graphs.
\end{dfn}

\begin{remark}
The functor $G$ has a left adjoint $J: \twoglob \to \rtwopol$, identifying a 2-graph with a regular 2-polygraph which has only single-input, single-output 2-cells.
\end{remark}

We now want to introduce an algebraic composition of 2-cells, where composable pairs are those in any of the following geometric setups:
\begin{equation} \label{eq:composable}
\input{img/s2_composable.tex}
\end{equation}
and only regularity constraints apply outside of the shared boundary. The composition corresponds to ``merging'' the two 2-cells, while fixing the overall boundary, as in the following picture:
\begin{equation} \label{eq:merging}
\input{img/s2_merging.tex}
\end{equation}

\begin{dfn}
A \emph{(non-unital) regular poly-bicategory} is a regular 2-polygraph $X$ together with functions
\begin{equation*}
\begin{tikzpicture}
	\node[scale=1.25] (0) at (0,0) {$X_2^{(n,m)} \pback{\bord{j}{+}}{\bord{i}{-}} X_2^{(p,q)}$};
	\node[scale=1.25] (1) at (4.5,0) {$X_2^{(n+p-1,m+q-1)},$};
	\draw[1c] (0) to node[auto] {$\cutt{j,i}$} (1);
\end{tikzpicture}
\end{equation*}
whenever $i, j$ satisfy the two conditions on any side of the following square:
\begin{equation} \label{eq:poly-boundaries}
\begin{tikzpicture}[baseline={([yshift=-.5ex]current bounding box.center)}]
	\node[scale=1.25] (0) at (-1.5,.75) {$i=1$};
	\node[scale=1.25] (1) at (1.5,.75) {$j=m$};
	\node[scale=1.25] (2) at (-1.5,-.75) {$i=p$};
	\node[scale=1.25] (3) at (1.5,-.75) {$j=1.$};
	\draw[edge] (0.east) to node[auto] {$(b)$} (1.west);
	\draw[edge] (2.east) to node[auto,swap] {$(d)$} (3.west);
	\draw[edge] (0.south) to node[auto,swap] {$(a)$} (2.north);
	\draw[edge] (1.south) to node[auto] {$(c)$} (3.north);
\end{tikzpicture}
\end{equation}
These interact with the boundaries as follows, depending on which pair of conditions is satisfied:
\begin{enumerate}[label=($\alph*$)]
	\item $\bord{k}{-}\cutt{j,1}(t,s) = \bord{k}{-}t, \\
	\bord{k}{+}\cutt{j,1}(t,s) = \begin{cases}
		\bord{k}{+}t, & 1 \leq k \leq j-1, \\
		\bord{k-j+1}{+}s, & j \leq k \leq j+q-1, \\
		\bord{k-q+1}{+}t, & j+q \leq k \leq m+q-1;
	\end{cases}$
		
	\item $\bord{k}{-}\cutt{m,1}(t,s) = \begin{cases}
		\bord{k}{-}t, & 1 \leq k \leq n, \\
		\bord{k-n+1}{-}s, & n+1 \leq k \leq n+p-1,
	\end{cases} \\
	\bord{k}{+}\cutt{m,1}(t,s) = \begin{cases}
		\bord{k}{+}t, & 1 \leq k \leq m-1, \\
		\bord{k-m+1}{+}s, & m \leq k \leq m+q-1;
	\end{cases}$
	
	\item $\bord{k}{-}\cutt{1,i}(t,s) = \begin{cases}
		\bord{k}{-}s, & 1 \leq k \leq i-1, \\
		\bord{k-i+1}{-}t, & i \leq k \leq i+n-1, \\
		\bord{k-n+1}{-}s, & i+n \leq k \leq n+p-1,
	\end{cases} \\ \bord{k}{+}\cutt{1,i}(t,s) = \bord{k}{+}s;$
	
	\item $\bord{k}{-}\cutt{1,p}(t,s) = \begin{cases}
		\bord{k}{-}s, & 1 \leq k \leq p-1, \\
		\bord{k-p+1}{-}t, & p \leq k \leq n+p-1;
	\end{cases}\\ 
	\bord{k}{+}\cutt{1,p}(t,s) = \begin{cases}
		\bord{k}{+}s, & 1 \leq k \leq q, \\
		\bord{k-q+1}{+}t, & q+1 \leq k \leq m+q-1.
	\end{cases}$
\end{enumerate}
All of these are evident from the geometric picture (\ref{eq:composable}).

Moreover, the $\cutt{j,i}$ satisfy associativity and interchange equations, expressing the fact that when three 2-cells can be composed in two different orders, the result is independent of the order. Schemes of associativity equations are classified by the following 9 pictures, where the direction of 2-cells is from bottom to top:
\begin{equation} \label{asso_scheme}
\input{img/s2_associativity.tex}
\end{equation}
for example, the leftmost picture in the top row should be read as the equation scheme $\cutt{j,1}(t, \cutt{i,1}(s,r)) = \cutt{i+j-1,1}(\cutt{j,1}(t,s),r)$.

Schemes of interchange equations are classified by the following 8 pictures:
\begin{equation} \label{inter_scheme}
\input{img/s2_interchange.tex}
\end{equation}
for example, the leftmost picture in the top row should be read as the equation scheme $\cutt{i,1}(\cutt{i+k,1}(t,s),s') = \cutt{i+k+q-1,1}(\cutt{i,1}(t,s'),s)$, where $s'$ has $q$ outputs.

Given two regular poly-bicategories $X, Y$, a \emph{morphism} $f: X \to Y$ is a morphism of the underlying regular 2-polygraphs that commutes with the $\cutt{i,j}$ functions. Regular poly-bicategories and their morphisms form a large category $\pbcat$.
\end{dfn}

\begin{remark}
When picturing 2-cells in diagrams, we will casually identify a pasting diagram of 2-cells with its composite. Uniqueness of the composite, in the case where there are different orders of composition, can be deduced from the soundness of the circuit diagram language for poly-bicategories, stated in \cite[Appendix A]{cockett2000introduction}, since circuit diagrams are dual to pasting diagrams.
\end{remark}

There is an evident forgetful functor $U: \pbcat \to \rtwopol$, whose composition with $G: \rtwopol \to \twoglob$ we still denote with $G$. The functor $U$ is monadic: the left adjoint $F: \rtwopol \to \pbcat$ freely adds all cut-composable pasting diagrams of 2-cells to a regular 2-polygraph.

\begin{dfn}
A \emph{regular multi-bicategory} is a regular poly-bicategory $X$ such that $X_2^{(n,m)}$ is empty whenever $m > 1$. A \emph{regular polycategory} is a regular poly-bicategory with a single 0-cell. A \emph{regular multicategory} is a regular multi-bicategory with a single 0-cell. 

We write $\mbcat$, $\pcat$, and $\mcat$, respectively, for the corresponding full subcategories of $\pbcat$.
\end{dfn}

\begin{remark}
Any ordinary poly-bicategory or multi-bicategory yields a regular one by restricting to 2-cells with at least one input and one output. While in some cases this restriction may seem unnatural, in many others it is the other way around, and cells with no inputs or outputs are added in an \emph{ad hoc} manner, for instance by implicitly relying on a unit 1-cell. 

That is, in particular, the case for the poly-bicategorical version of the Chu construction, as discussed in Example \ref{exm:chu}.
\end{remark}

\begin{exm} \label{exm:lattice}
Every lattice $(L, \land, \lor)$ can be identified with a regular polycategory $L$ which has a unique 2-cell $(a_1,\ldots,a_n) \to (b_1,\ldots,b_m)$ whenever $a_1 \land \ldots \land a_n \leq b_1 \lor \ldots \lor b_m$ in the induced poset. Notice that ``having a greatest or least element'' is a property of a lattice, yet to define a non-regular polycategory one needs them as structure: 2-cells with nullary input are identified \emph{by construction} with 2-cells with input $(\top)$, and 2-cells with nullary output with 2-cells with output $(\bot)$.
\end{exm}

We now give an elementary definition of universality of a 2-cell at a 1-cell in its boundary, which encompasses the strongly universal arrows of \cite{hermida2000representable}, and the homs, cohoms and representing cells of \cite{cockett2003morphisms}.

\begin{dfn}
Let $t \in X_2^{(n,m)}$, $s \in X_2^{(p,q)}$ be two 2-cells in a regular poly-bicategory $X$, and let $\pi^1, \pi^2$ be the two projections of $X_2^{(n,m)} {}_{\bord{j}{+}}\!\!\times_{\bord{i}{-}} X_2^{(p,q)}$, for some $i,j$ compatible with (\ref{eq:poly-boundaries}). We say that
\begin{equation*}
	\cutt{j,i}(t,x) = s
\end{equation*}
is a \emph{well-formed equation} in the indeterminate $x$ if, for any $k, k'$, and $\alpha \in \{+,-\}$ such that $\bord{k}{\alpha}\cutt{j,i} = \bord{k'}{\alpha}\pi^1$, it holds that $\bord{k}{\alpha}s = \bord{k'}{\alpha}t$. Similarly, 
\begin{equation*}
	\cutt{j,i}(y,t) = s
\end{equation*}
is a well-formed equation in the indeterminate $y$ if $\bord{k}{\alpha}s = \bord{k'}{\alpha}t$ whenever $\bord{k}{\alpha}\cutt{j,i} = \bord{k'}{\alpha}\pi^2$.
\end{dfn}

In other words, $\cutt{j,i}(t,x) = s$ is well-formed if there can exist some $r$ such that $\cutt{j,i}(t,r) = s$, compatibly with the structure of a regular poly-bicategory; similarly for the other case.

\begin{dfn}
Let $t \in X_2^{(n,m)}$ be a 2-cell in a regular poly-bicategory $X$. We say that $t$ is \emph{universal at $\bord{j}{+}$} if, for all 2-cells $s$ and well-formed equations $\cutt{j,i}(t,x) = s$, there exists a unique 2-cell $r$ such that $\cutt{j,i}(t,r) = s$. We say that $t$ is \emph{universal at $\bord{i}{-}$} if, for all 2-cells $s$ and well-formed equations $\cutt{j,i}(x,t) = s$, there exists a unique 2-cell $r$ such that $\cutt{j,i}(r,t) = s$.

We say that $t$ is \emph{everywhere universal} if it is universal at $\bord{i}{-}$ and at $\bord{j}{+}$ for all $i= 1,\ldots,n$, and $j = 1,\ldots,m$. 

If $n = m = 1$, we simply say that $t$ is \emph{universal} if it is universal at $\bord{1}{+}$ and at $\bord{1}{-}$.
\end{dfn}

So universality of a 2-cell $t$ at a location in its boundary means that any 2-cell that can, in principle, be factorised as the composition of $t$ with another 2-cell at that same location, does factor through $t$, and does so uniquely.

\begin{remark}
Any universal property of a 2-cell implies a \emph{cancellability} property. For example, if $t$ is universal at $\bord{j}{+}$, uniqueness of factorisations through $t$ implies that for any pair of 2-cells $r_1$ and $r_2$ such that $\cutt{j,i}(t,r_1)$ and $\cutt{j,i}(t,r_2)$ are defined,
\begin{equation*}
	\cutt{j,i}(t,r_1) = \cutt{j,i}(t,r_2) \quad \text{implies} \quad r_1 = r_2.
\end{equation*}
\end{remark}

\begin{dfn}
A 2-cell $\idd{a}: (a) \to (a)$ in a regular poly-bicategory $X$ is a \emph{unit} on the 1-cell $a$ if, for all $t \in X_2^{(n,m)}$, if $\bord{j}{+}t = a$ then $\cutt{j,1}(t,\idd{a}) = t$, and if $\bord{i}{-}t = a$ then $\cutt{1,i}(\idd{a},t) = t$.
\end{dfn}
It is clear that any unit is a universal 2-cell. The reason why we did not require regular poly-bicategories to have units as structure is because we want to show the following constructions, which, while quite trivial by themselves, are a simpler analogue of the constructions of Section \ref{sec:weakunits}, and can serve as a warm-up for our later results.

\begin{lem} \label{lem:2divunit}
Let $p: (a) \to (a')$ be a universal 2-cell in a regular poly-bicategory $X$, and let $\idd{a}: (a) \to (a)$, $\idd{a'}: (a') \to (a')$ be the unique 2-cells such that 
\begin{equation*}
	\cutt{1,1}(\idd{a}, p) = p, \quad\quad \cutt{1,1}(p, \idd{a'}) = p.
\end{equation*}
Then $\idd{a}$ is a unit on $a$, and $\idd{a'}$ is a unit on $a'$.
\end{lem}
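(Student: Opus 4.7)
The lemma asserts two absorption laws for $\idd{a}$ (and symmetrically for $\idd{a'}$): namely, $\cutt{j,1}(t, \idd{a}) = t$ whenever $\bord{j}{+}t = a$, and $\cutt{1,i}(\idd{a}, t) = t$ whenever $\bord{i}{-}t = a$. The plan is to verify both laws for $\idd{a}$ and to leave $\idd{a'}$ to a symmetric argument, in which the roles of the input- and output-divisibility of $p$ are interchanged.

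For the right-absorption law, I would employ the standard divisibility-and-associativity move: compose the putative identity $\cutt{j,1}(t, \idd{a})$ with $p$ at the output where $\idd{a}$ sits, so that the defining equation of $\idd{a}$ can be invoked. An associativity scheme gives
\[
\cutt{j,1}(\cutt{j,1}(t, \idd{a}), p) \;=\; \cutt{j,1}(t, \cutt{1,1}(\idd{a}, p)) \;=\; \cutt{j,1}(t, p),
\]
exhibiting $\cutt{j,1}(t, \idd{a})$ and $t$ as two solutions of the well-formed equation $\cutt{j,1}(x, p) = \cutt{j,1}(t, p)$ in the indeterminate $x$; uniqueness of the solution, from divisibility of $p$ at $\bord{1}{-}$, forces the two solutions to coincide.

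For the left-absorption law, the same trick is unavailable: to compose $p$ with $\cutt{1,i}(\idd{a}, t)$ on the left at the input where $\idd{a}$ sits, one would need a cell with output $a$, but $p$ has output $a'$. The fix is to invoke the other half of the divisibility of $p$ to rewrite $t$ itself. Since $\bord{i}{-}t = a = \bord{1}{-}p$, the equation $\cutt{1,i}(p, x) = t$ is well-formed, and divisibility of $p$ at $\bord{1}{+}$ yields a unique $y$ with $t = \cutt{1,i}(p, y)$. An associativity scheme then telescopes the composite, using the defining equation of $\idd{a}$ in the middle step:
\[
\cutt{1,i}(\idd{a}, t) \;=\; \cutt{1,i}(\idd{a}, \cutt{1,i}(p, y)) \;=\; \cutt{1,i}(\cutt{1,1}(\idd{a}, p), y) \;=\; \cutt{1,i}(p, y) \;=\; t.
\]

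The main obstacle I expect to grapple with is exactly this asymmetry between the two laws: one is naturally tempted to mirror the right-absorption argument, but the fact that $p$ only appears on one side of $\idd{a}$'s defining equation blocks the direct analogue. Recognising that the opposite divisibility of $p$ still lets every relevant $t$ be written as a composite $\cutt{1,i}(p, y)$ is the key observation; once this is in place, the rest is the same associativity-plus-defining-equation manipulation. The argument for $\idd{a'}$ then proceeds dually, with the two divisibility properties of $p$ swapping roles: right-absorption for $\idd{a'}$ uses the factorisation $t = \cutt{j,1}(y, p)$ afforded by divisibility of $p$ at $\bord{1}{-}$, while left-absorption for $\idd{a'}$ mirrors the first argument above using divisibility of $p$ at $\bord{1}{+}$.
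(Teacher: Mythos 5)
Your proof is correct and follows essentially the same route as the paper's: the left-absorption law is obtained by factoring the given cell through $p$ via its divisibility at $\bord{1}{+}$ and telescoping with associativity, and the right-absorption law by post-composing with $p$ and invoking uniqueness of factorisations at $\bord{1}{-}$. The paper handles $\idd{a'}$ by passing to $\coo{X}$, which is the same duality you describe as swapping the roles of the two divisibility properties.
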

\begin{proof}
Consider a 2-cell $q: (\Gamma_1, a, \Gamma_2) \to (\Delta)$ in $X$; by universality of $p$ at $\bord{1}{+}$, there exists a unique $q': (\Gamma_1, a', \Gamma_2) \to (\Delta)$ such that $\cutt{1,i}(p,q') = q$. Then,
\begin{align*}
	\cutt{1,i}(\idd{a}, q) = \cutt{1,i}(\idd{a}, \cutt{1,i}(p,q')) = \cutt{1,i}(\cutt{1,1}(\idd{a},p),q') = \cutt{1,i}(p,q') = q.
\end{align*}
Next, consider a 2-cell $r: (\Gamma) \to (\Delta_1, a, \Delta_2)$. Post-composing it with $p$, we find
\begin{equation*}
	\cutt{j,1}(r, p) = \cutt{j,1}(r, \cutt{1,1}(\idd{a},p)) = \cutt{j,1}(\cutt{j,1}(r,\idd{a}),p);
\end{equation*}
therefore, by uniqueness of factorisations through $p$, we obtain $r = \cutt{j,1}(r,\idd{a})$. This proves that $\idd{a}$ is a unit on $a$; the same proof applied to $\coo{X}$ proves that $\idd{a'}$ is a unit on $a'$.
\end{proof}

\begin{prop} \label{prop:2units}
Let $X$ be a regular poly-bicategory. The following are equivalent:
\begin{enumerate}
	\item for all 1-cells $a$ of $X$, there exist a 1-cell $\overline{a}$ and a universal 2-cell $p: (a) \to (\overline{a})$;
	\item for all 1-cells $a$ of $X$, there exist a 1-cell $\overline{a}$ and a universal 2-cell $p': (\overline{a}) \to (a)$;
	\item for all 1-cells $a$ of $X$, there exists a (necessarily unique) unit $\idd{a}$ on $a$.
\end{enumerate}
\end{prop}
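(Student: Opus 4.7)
The plan is to prove the three-way equivalence by a short cyclic argument that routes the real work through Lemma \ref{lem:2divunit}. The implications $(3) \Rightarrow (1)$ and $(3) \Rightarrow (2)$ are essentially immediate: given a unit $\idd{a}$ on $a$, I would take $\overline{a} := a$ and $p := p' := \idd{a}$; since every unit is a divisible 2-cell, as noted right after the definition of unit, both conclusions follow. The parenthetical uniqueness claim is equally short: if $\idd{a}$ and $\idd{a}'$ are both units on $a$, then $\cutt{1,1}(\idd{a}, \idd{a}')$ evaluates to $\idd{a}$ using the unitality of $\idd{a}'$ on the right, and to $\idd{a}'$ using the unitality of $\idd{a}$ on the left, forcing $\idd{a} = \idd{a}'$.

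For $(1) \Rightarrow (3)$, I would fix an arbitrary 1-cell $a$, invoke the hypothesis to obtain a divisible $p: (a) \to (\overline{a})$, and then observe that the equation $\cutt{1,1}(y, p) = p$ in the indeterminate $y$ is trivially well-formed: the compatibility condition only constrains boundary positions of $\cutt{1,1}$ inherited from $\pi^2$, and both sides of the equation are $p$ itself. Divisibility of $p$ at $\bord{1}{-}$ then yields a unique 2-cell $\idd{a}: (a) \to (a)$ with $\cutt{1,1}(\idd{a}, p) = p$; the dual argument, using divisibility of $p$ at $\bord{1}{+}$, produces $\idd{\overline{a}}$ with $\cutt{1,1}(p, \idd{\overline{a}}) = p$. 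Lemma \ref{lem:2divunit} then identifies $\idd{a}$ as a unit on $a$ (and $\idd{\overline{a}}$ as a unit on $\overline{a}$). Since the hypothesis applies pointwise at every 1-cell, this furnishes a unit everywhere.

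The implication $(2) \Rightarrow (3)$ is obtained by the symmetric argument after passing to the co-opposite: the involutive endofunctor $\coo{(-)}$ exchanges inputs and outputs of 2-cells, turning a divisible $p': (\overline{a}) \to (a)$ in $X$ into a divisible 2-cell of the form required by $(1)$ inside $\coo{X}$, while preserving and reflecting the property of being a unit. I do not anticipate any substantive obstacle: the delicate content, namely verifying the unit laws against arbitrary 2-cells slotted at arbitrary boundary positions, is already packaged inside Lemma \ref{lem:2divunit}. What remains is only a well-formedness check for the defining equation, a reminder that the hypothesis must be applied to each 1-cell separately, and an appeal to co-duality for the second nontrivial implication.
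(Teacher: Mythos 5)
Your proposal is correct and follows essentially the same route as the paper: both directions reduce to Lemma \ref{lem:2divunit}, with $(3)\Rightarrow(1),(2)$ handled by taking $\overline{a}=a$ and $p=p'=\idd{a}$. The only cosmetic difference is that for $(2)\Rightarrow(3)$ you pass to $\coo{X}$, whereas the paper simply reads off the unit on the \emph{output} 1-cell directly from Lemma \ref{lem:2divunit} (which already produces units on both ends of a divisible 2-cell, its own proof having absorbed the co-duality step); both amount to the same argument.
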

\begin{proof}
Let $a$ be a 1-cell of $X$. If there exists a unit $\idd{a}: (a) \to (a)$, it is clearly universal, and fulfils the other two conditions for $a$.

Conversely, by Lemma \ref{lem:2divunit}, from any universal 2-cell $e: (a) \to (\overline{a})$, and from any universal 2-cell $e': (\overline{a}) \to (a)$, we can construct a unit on $a$.
\end{proof}

\begin{dfn}
A regular poly-bicategory $X$ is \emph{unital} if it satisfies any of the equivalent conditions of Proposition \ref{prop:2units}.
\end{dfn} 

In the presence of units, we retrieve the usual notion of universal 2-cells as isomorphisms, that is, 2-cells with an inverse, and one-sided universality suffices for a 2-cell to be an isomorphism.
\begin{prop} \label{cor:oneuniversal}
Let $X$ be a unital regular poly-bicategory, and $p: (a) \to (a')$ a 2-cell of $X$ with a single input and output. The following are equivalent:
\begin{enumerate}
	\item $p$ is universal at $\bord{1}{+}$;
	\item $p$ is universal at $\bord{1}{-}$;
	\item $p$ is an isomorphism, that is, it has a unique inverse $\invrs{p}: (a') \to (a)$ such that $\cutt{1,1}(p,\invrs{p}) = \idd{a}$, and $\cutt{1,1}(\invrs{p},p) = \idd{a'}$.
\end{enumerate}
\end{prop}
\begin{proof}
Suppose $p$ is universal at $\bord{1}{+}$. Dividing $\idd{a}$ by $p$, we obtain a unique 2-cell $\invrs{p}: (a') \to (a)$ such that $\cutt{1,1}(p,\invrs{p}) = \idd{a}$. Since 
\begin{equation*}
	\cutt{1,1}(p, \cutt{1,1}(\invrs{p},p)) = \cutt{1,1}(\cutt{1,1}(p,\invrs{p}),p) = \cutt{1,1}(\idd{a},p) = p = \cutt{1,1}(p, \idd{a'}),
\end{equation*}
it follows by uniqueness that $\cutt{1,1}(\invrs{p},p) = \idd{a'}$. Similarly when $p$ is universal at $\bord{1}{-}$. The converse implication is obvious.
\end{proof}

Given two 1-cells $a, b: x \to y$ in a unital regular poly-bicategory, we will write $a \simeq b$ to mean ``there exists an isomorphism $p: (a) \to (b)$''.

\begin{prop} \label{prop:unitmorph}
Let $X, Y$ be unital regular poly-bicategories, and $f: X \to Y$ a morphism. Then $f$ preserves units if and only if it preserves universal 2-cells.
\end{prop}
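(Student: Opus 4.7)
My plan is to reduce both directions to the characterisation of divisible $(1,1)$-cells as isomorphisms given by Proposition~\ref{cor:onedivisible}, which is available because $X$ and $Y$ are unital. Since ``divisible'' (unqualified) refers only to $(1,1)$-cells, the whole argument can be phrased in terms of units and inverses: two pieces of purely algebraic data that transfer cleanly across morphisms of regular poly-bicategories, once we know the morphism respects either one.

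For the direction ``$f$ preserves units $\Rightarrow$ $f$ preserves divisible 2-cells'', I would take a divisible $p: (a) \to (a')$ in $X$, apply Proposition~\ref{cor:onedivisible} to produce an inverse $\invrs{p}: (a') \to (a)$ with $\cutt{1,1}(p, \invrs{p}) = \idd{a}$ and $\cutt{1,1}(\invrs{p}, p) = \idd{a'}$, and then push the whole configuration along $f$. Because $f$ commutes with $\cutt{1,1}$ and, by hypothesis, sends $\idd{a}, \idd{a'}$ to $\idd{f(a)}, \idd{f(a')}$, the cell $f(p)$ becomes an isomorphism in $Y$; invoking Proposition~\ref{cor:onedivisible} in the converse direction then yields the divisibility of $f(p)$.

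For the reverse implication, I would start with a unit $\idd{a}$ on a 1-cell $a$ in $X$. Units are trivially divisible, so by hypothesis $f(\idd{a})$ is divisible in $Y$, and Lemma~\ref{lem:2divunit} applied to the divisible cell $f(\idd{a}): (f(a)) \to (f(a))$ tells me that the unique 2-cell $x$ satisfying $\cutt{1,1}(x, f(\idd{a})) = f(\idd{a})$ is necessarily a unit on $f(a)$. Now $\idd{f(a)}$ verifies this equation because it is a unit in $Y$, while $f(\idd{a})$ verifies it because $f$ preserves $\cutt{1,1}$ and $\cutt{1,1}(\idd{a}, \idd{a}) = \idd{a}$ in $X$; uniqueness then forces $f(\idd{a}) = \idd{f(a)}$.

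I do not anticipate any serious obstacle: all the real content is packaged in Lemma~\ref{lem:2divunit} and Proposition~\ref{cor:onedivisible}, and what remains is bookkeeping. The single point worth flagging is that Proposition~\ref{cor:onedivisible} is stated only for $(1,1)$-cells, so the argument covers ``divisible 2-cell'' exactly in the sense defined in the paper (divisible at $\bord{1}{-}$ and $\bord{1}{+}$) and no further dimensional analysis is required.
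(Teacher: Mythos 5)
Your proposal is correct and follows essentially the same route as the paper: the forward direction reduces divisibility to invertibility via Proposition~\ref{cor:onedivisible} and transports inverses along $f$, and the converse direction exhibits $\idd{f(a)}$ and $f(\idd{a})$ as two solutions of the same well-formed division problem against the divisible cell $f(\idd{a})$ and concludes by uniqueness (the paper divides on the output side where you divide on the input side, and your appeal to Lemma~\ref{lem:2divunit} is not actually needed for that step, but neither difference is substantive).
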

\begin{remark}
In the statement, ``universal'' is without restriction, hence applies only to single-input, single-output 2-cells; other universal properties are not necessarily preserved.
\end{remark}
\begin{proof}
If $f$ preserves units, then it also preserves isomorphisms, hence universal 2-cells. Conversely, if $f$ preserves universal 2-cells, because units are universal, $f(\idd{a})$ is universal in $Y$ for each unit $\idd{a}$ in $X$. Then, 
\begin{equation*}
	\cutt{1,1}(f(\idd{a}),\idd{f(a)}) = f(\idd{a}) = f(\cutt{1,1}(\idd{a},\idd{a})) = \cutt{1,1}(f(\idd{a}),f(\idd{a})), 
\end{equation*}
and by uniqueness of factorisations through $f(\idd{a})$, it follows that $f(\idd{a}) = \idd{f(a)}$.
\end{proof}

\begin{dfn}
A morphism $f: X \to Y$ of unital regular poly-bicategories is \emph{unital} if it satisfies either of the equivalent conditions of Proposition \ref{prop:unitmorph}. 
\end{dfn}

The homs and input-representing 2-cells of \cite{cockett2003morphisms} correspond to the three possible universal properties of 2-cells $p: (a,b) \to (c)$ with two inputs and one output.
\begin{dfn}
Let $a$, $b$ be two 1-cells in a regular poly-bicategory $X$, with compatible boundaries as required separately by each definition. 

A \emph{tensor} of $a$ and $b$ is a 1-cell $a \otimes b$, together with a 2-cell $t_{a,b}: (a,b) \to (a\otimes b)$ that is universal at $\bord{1}{+}$. A \emph{right hom} from $a$ to $b$ is a 1-cell $\rimp{a}{b}$, together with a 2-cell $e^R_{a,b}: (a, \rimp{a}{b}) \to (b)$ that is universal at $\bord{2}{-}$. A \emph{left hom} from $a$ to $b$ is a 1-cell $\limp{a}{b}$, together with a 2-cell $e^L_{a,b}: (\limp{a}{b}, a) \to (b)$ that is universal at $\bord{1}{-}$.

We say that $X$ is \emph{tensor 1-representable}, \emph{right 1-closed}, and \emph{left 1-closed}, respectively, if it is unital and has tensors, right homs, and left homs, respectively, for all pairs of 1-cells in the appropriate configuration.
\end{dfn}

Dually, the notions of cohoms and output-representing 2-cells correspond to the different universal properties of 2-cells $p': (c) \to (a,b)$.
\begin{dfn}
Let $a$, $b$ be two 1-cells in a regular poly-bicategory $X$, with compatible boundaries as required separately by each definition. 

A \emph{par} of $a$ and $b$ is a 1-cell $a \parr b$, together with a 2-cell $c_{a,b}: (a\parr b) \to (a,b)$ that is universal at $\bord{1}{-}$. A \emph{right cohom} from $a$ to $b$ is a 1-cell $\rcimp{a}{b}$, together with a 2-cell $c^R_{a,b}: (b) \to (a, \rcimp{a}{b})$ that is universal at $\bord{2}{+}$. A \emph{left cohom} from $a$ to $b$ is a 1-cell $\lcimp{a}{b}$, together with a 2-cell $c^L_{a,b}: (b) \to (\lcimp{a}{b}, a)$ that is universal at $\bord{1}{+}$.

We say that $X$ is \emph{par 1-representable}, \emph{right 1-coclosed}, and \emph{left 1-coclosed}, respectively, if it is unital and has pars, right cohoms, and left cohoms, respectively, for all pairs of 1-cells in the appropriate configuration.
\end{dfn}

A tensor (left hom, right hom) in $X$ is the same as a par (left cohom, right cohom) in $\coo{X}$, and a left hom (left cohom) in $X$ is the same as a right hom (right cohom) in $\opp{X}$.

The following is easily proved in the same way as \cite[Corollary 8.6]{hermida2000representable}.
\begin{lem}
Let $p$ be a 2-cell in a unital regular poly-bicategory, and suppose $p$ is universal at $\bord{j}{+}$. Then:
\begin{enumerate}[label=(\alph*)]
	\item if $p'$ is another 2-cell universal at $\bord{j}{+}$, which has the same boundaries as $p$ except at $\bord{j}{+}$, then factorising $p'$ through $p$ produces an isomorphism $q: (\bord{j}{+}p) \to (\bord{j}{+}p')$;
	\item if $q: (\bord{k}{+}p) \to (a)$ and $q': (b) \to (\bord{k'}{-}p)$ are isomorphisms, the 2-cells $\cutt{k,1}(p,q)$ and $\cutt{1,k'}(q',p)$ are also universal at $\bord{j}{+}$.
\end{enumerate}
Dual results hold when $p$ is universal at $\bord{i}{-}$.
\end{lem}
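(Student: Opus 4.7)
The plan is to prove both parts by combining associativity of cut composition with the uniqueness of factorisations through divisible cells.

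For part (a), write $b := \bord{j}{+}p$ and $a := \bord{j}{+}p'$. Since $p$ and $p'$ share all other boundaries, the equations $\cutt{j,1}(p, x) = p'$ and $\cutt{j,1}(p', y) = p$ are well-formed in indeterminates $x: (b) \to (a)$ and $y: (a) \to (b)$, and divisibility of $p$ and $p'$ at $\bord{j}{+}$ supplies unique solutions $q$ and $q'$ respectively. To see that $q'$ is a two-sided inverse of $q$, I would compute
\begin{equation*}
	\cutt{j,1}(p, \cutt{1,1}(q, q')) = \cutt{j,1}(\cutt{j,1}(p, q), q') = \cutt{j,1}(p', q') = p = \cutt{j,1}(p, \idd{b}),
\end{equation*}
using associativity and the defining equations of $q$ and $q'$; uniqueness of factorisations through $p$ then forces $\cutt{1,1}(q, q') = \idd{b}$, and the symmetric computation starting from $p'$ gives $\cutt{1,1}(q', q) = \idd{a}$.

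For part (b), write $b := \bord{j}{+}p$ and let $q: (b) \to (a)$ be an isomorphism, so $p'' := \cutt{j,1}(p, q)$ has $j$-th output $a$. Given a well-formed equation $\cutt{j,i}(p'', r) = s$ in an indeterminate $r$ whose $i$-th input is $a$, the relevant instance of associativity rewrites the left-hand side as
\begin{equation*}
	\cutt{j,i}(\cutt{j,1}(p, q), r) = \cutt{j,i}(p, \cutt{1,i}(q, r)).
\end{equation*}
By divisibility of $p$ at $\bord{j}{+}$ there is then a unique $y$ with $\cutt{j,i}(p, y) = s$; and because $q$ is invertible, hence divisible at $\bord{1}{+}$ by Proposition \ref{cor:onedivisible}, the equation $\cutt{1,i}(q, r) = y$ admits a unique solution $r$, which is concretely $\cutt{1,i}(\invrs{q}, y)$. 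This exhibits existence and uniqueness of factorisations of $s$ through $p''$ at $\bord{j}{+}$.

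The main effort is administrative: picking the correct instance from the associativity schemes in (\ref{asso_scheme}) to justify each associative rewrite, and checking that each intermediate equation is well-formed. The dual statements for $p$ divisible at $\bord{i}{-}$ will follow by applying the same argument to $\coo{X}$, which exchanges inputs with outputs while preserving the ambient regular poly-bicategory structure.
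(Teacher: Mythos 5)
Your proof is correct and is essentially the standard argument the paper has in mind (it cites Hermida's Corollary 8.6 rather than writing it out): uniqueness of factorisations through a divisible cell plus associativity gives the isomorphism in (a), and pre-composition with an invertible cell preserves the universal property in (b). The dualisation via $\coo{X}$ for the $\bord{i}{-}$ case is also the intended route.
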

In particular, in a unital regular poly-bicategory, tensors, pars, homs, and cohoms are unique up to isomorphism. In the next section, we will need the following technical lemma about 2-cells satisfying two different universal properties at once. 

\begin{lem} \label{lem:twouniversal}
Let $t: (a, b) \to (c)$ be a 2-cell in a unital regular poly-bicategory $X$, and suppose $t$ satisfies two different universal properties. Then:
\begin{itemize}
	\item if $t$ is universal at $\bord{1}{+}$ and $\bord{2}{-}$, then any 2-cell $u: (a, b) \to (c')$ that is universal at $\bord{1}{+}$ is also universal at $\bord{2}{-}$, and any 2-cell $u: (a, b') \to (c)$ that is universal at $\bord{2}{-}$ is also universal at $\bord{1}{+}$;
	\item if $t$ is universal at $\bord{1}{-}$ and $\bord{2}{-}$, then any 2-cell $u: (a', b) \to (c)$ that is universal at $\bord{1}{-}$ is also universal at $\bord{2}{-}$, and any 2-cell $u: (a, b') \to (c)$ that is universal at $\bord{2}{-}$ is also universal at $\bord{1}{-}$.
\end{itemize}
All duals of these statements through $\opp{(-)}$ and $\coo{(-)}$ also hold.
\end{lem}
\begin{proof}
We only consider the case where $t$ is universal at $\bord{1}{+}$ and $\bord{2}{-}$, and $u: (a, b') \to (c)$ is universal at $\bord{2}{-}$; the others are completely analogous. By factorising $u$ through $t$, we obtain
\begin{equation*}
\input{img/s2_twouniversal.tex}
\end{equation*} 
for a unique $p: (b') \to (b)$, which, since $X$ is unital, must be an isomorphism. As composition with isomorphisms does not affect universal properties, and $t$ is universal at $\bord{1}{+}$, it follows that $u$ is also universal at $\bord{1}{+}$.
\end{proof}

Appropriate compositions of binary tensors, homs, pars, and cohoms can be used to produce $n$-ary ones. Moreover, certain divisions preserve universal properties. The following lemma collects some useful closure properties of this sort.

\begin{lem} \label{lem:comp_divis_closure}
Let $t \in X^{(n,1)}$, $r \in X^{(m,1)}$ be 2-cells in a regular poly-bicategory $X$, such that $\bord{1}{+}t = \bord{i}{-}r$, and let $s := \cutt{1,i}(t,r)$. 
\begin{enumerate}[label=(\alph*)]
	\item Suppose $t$ is universal at $\bord{1}{+}$. Then:
	\begin{itemize}
		\item $r$ is universal at $\bord{1}{+}$ if and only if $s$ is universal at $\bord{1}{+}$;
		\item if $i \neq 1$, then $r$ is universal at $\bord{1}{-}$ if and only if $s$ is universal at $\bord{1}{-}$.
	\end{itemize}
	\item Suppose $i = 1$ and $r$ is universal at $\bord{1}{-}$. Then $t$ is universal at $\bord{1}{-}$ if and only if $s$ is universal at $\bord{1}{-}$.
\end{enumerate}
All duals of these statements through $\opp{(-)}$ and $\coo{(-)}$ also hold.
\end{lem}
\begin{proof}
Suppose $t$ and $r$ are both universal at $\bord{1}{+}$. If $\cutt{1,j}(s,x) = p$ is a well-formed equation, then so is $\cutt{1,i+j-1}(t,x) = p$, which has a unique solution $p'$. Then $\cutt{1,j}(r,x) = p'$ is also well-formed and has a unique solution $q$, so
\begin{equation*}
	\cutt{1,j}(s,q) = \cutt{1,j}(\cutt{1,i}(t,r),q) = \cutt{1,i+j-1}(t,\cutt{1,j}(r,q)) = \cutt{1,i+j-1}(t,p') = p.
\end{equation*}
Cancellability of both $t$ and $r$ implies that this solution is unique. This proves that $s$ is universal at $\bord{1}{+}$. 

Now, suppose that $t$ and $s$ are both universal at $\bord{1}{+}$, and let $\cutt{1,j}(r,x) = p'$ be a well-formed equation. Then $\cutt{1,j}(s,x) = \cutt{1,i+j-1}(t,p')$ is also well-formed, and has a unique solution $q$. It follows that
\begin{equation*}
	\cutt{1,i+j-1}(t,p') = \cutt{1,j}(s,q) = \cutt{1,j}(\cutt{1,i}(t,r),q) = \cutt{1,i+j-1}(t,\cutt{1,j}(r,q)),
\end{equation*}
and by cancellability of $t$, we have $p' = \cutt{1,j}(r,q)$; uniqueness of the solution follows from the cancellability of $s$.

The proof of point \emph{(b)} is entirely analogous.

Suppose $i \neq 1$, $t$ is universal at $\bord{1}{+}$, and $r$ is universal at $\bord{1}{-}$. Let $p \in X^{(k,\ell)}$ be such that $\cutt{\ell,1}(x,s) = p$ is well-formed. Then $\cutt{1,k'+i-1}(t,x) = p$ is also well-formed for $k' = k-n-m+2$, and has a unique solution $p'$, with the property that $\cutt{\ell,1}(x,r) = p'$ is well-formed. Solving it yields a unique $q$ such that
\begin{equation*}
	\cutt{\ell,1}(q,s) = \cutt{\ell,1}(q,\cutt{1,i}(t,r)) = \cutt{1,k'+i-1}(t,\cutt{\ell,1}(q,r)) = \cutt{1,k'+i-1}(t,p') = p.
\end{equation*}
Uniqueness is a consequence of the cancellability properties of $t$ and $r$. This proves that $s$ is universal at $\bord{1}{-}$.

Finally, suppose $i \neq 1$, $t$ is universal at $\bord{1}{+}$, and $s$ is universal at $\bord{1}{-}$. If $p' \in X^{(\tilde{k},\ell)}$ is such that $\cutt{\ell,1}(x,r) = p'$ is well-formed, then $\cutt{1,\ell}(x,s) = \cutt{1,k'+i-1}(t,p')$ is also well-formed for $k' = \tilde{k} - m + 1$. Its unique solution $q$ satisfies
\begin{equation*}
	\cutt{1,k'+i-1}(t,p') = \cutt{\ell,1}(q,s) = \cutt{\ell,1}(q,\cutt{1,i}(t,r)) = \cutt{1,k'+i-1}(t,\cutt{\ell,1}(q,r)),
\end{equation*}
so $p' = \cutt{\ell,1}(q,r)$. Again, uniqueness follows from cancellability of $t$ and $s$. 
\end{proof}

The theory of universal 2-cells in a unital regular poly-bicategory is just a specialisation of the one developed by Hermida, Cockett, Seely, and Koslowski; we refer to \cite{cockett2003morphisms} for more details, and move on to the original treatment of weak units.

\section{Weak units and universal 1-cells} \label{sec:weakunits}

In the proof of Proposition \ref{prop:2units}, we assumed a representability condition, which provided us with 2-cells that are ``equivalences'' for an elementary, unit-independent notion of equivalence. From that, we constructed units for a strictly associative, algebraic composition. That proof is going to be our blueprint for the construction of weak unit 1-cells, relative to the two internal, weakly associative notions of composition given by tensors and pars. We will proceed as follows:
\begin{enumerate}
	\item first, we will define a notion of weak unit 1-cell appropriate for regular poly-bicategories;
	\item then, we will introduce an elementary notion of universal 1-cell, and prove that weak units can be constructed from universal 1-cells, so their existence is equivalent to a lower-dimensional representability condition;
	\item finally, in Section \ref{sec:coherence} we will show that our weak units induce the intended coherent structure on $GX$.
\end{enumerate}
Our notion of weak units is based on \emph{Saavedra units} in monoidal categories, as defined by J.\ Kock in \cite{kock2008elementary}, based on Saavedra-Rivano's work \cite{saavedra1972tannakiennes}. Kock summarises the defining properties of a Saavedra unit $i$ in a monoidal category as \emph{cancellability} (morphisms $i \otimes a \to i \otimes b$ and $a \otimes i \to b \otimes i$ correspond uniquely to morphisms $a \to b$) and \emph{idempotence} ($i \otimes i$ is isomorphic to $i$). 

From the first condition applied to $i \otimes (i \otimes a) \simeq (i \otimes i) \otimes a \simeq i \otimes a$, one obtains left actions $i \otimes a \to a$ of the unit on any object, themselves isomorphisms; similarly one obtains right actions. This means that Saavedra units can always be introduced to the left or right of any object; cancellability means that after their introduction, they can be eliminated, as long as there is an object on which they are acting. 

This should be contrasted with the way units are obtained in \cite{hermida2000representable, cockett2003morphisms} and related works: being exhibited by universal 2-cells with a degenerate, 0-dimensional boundary, units can \emph{always} be eliminated by composition, even when they are ``on their own''; that is, from a 2-cell $(i) \to (a)$, one can obtain a 2-cell $() \to (a)$ by composition with a universal 2-cell.

We claim that the notion of Saavedra unit is captured at the poly-bicategorical level by the following definition, where we restrict our attention to tensor units first. Because compositions of 1-cells are subsumed by universal properties, we do not need to formulate any property with respect to a specified composition, and can use universality as the one fundamental concept.

\begin{dfn}
Let $x$ be a 0-cell in a regular poly-bicategory $X$. A 1-cell $1_x: x \to x$ is a \emph{tensor unit} on $x$ if, for each $a: x \to y$ and each $b: z \to x$, there exist 2-cells
\begin{equation*}
\input{img/s3_tensorunit.tex}
\end{equation*}
that are, respectively, universal at $\bord{1}{+}$ and $\bord{2}{-}$, and universal at $\bord{1}{+}$ and $\bord{1}{-}$: that is, $l_a$ exhibits $a$ as both $1_x \otimes a$ and $\rimp{1_x}{a}$, and $r_b$ exhibits $b$ as both $b \otimes 1_x$ and $\limp{1_x}{b}$.
\end{dfn}

\begin{remark} \label{remark:units}
By Lemma \ref{lem:twouniversal}, if $X$ is unital, and $1_x: x \to x$ is a tensor unit on $x$, a seemingly stronger claim can be made: if a 2-cell of the form $t: (1_x, a) \to (a')$ is universal at $\bord{1}{+}$, then it is also universal at $\bord{2}{-}$; if a 2-cell of the form $u: (1_x, a') \to (a)$ is universal at $\bord{2}{-}$, then it is also universal at $\bord{1}{+}$ and so on. 

In particular, any tensor $t_{1_x,a}: (1_x, a) \to (1_x \otimes a)$ satisfies both universal properties. If we fix such a tensor for each 1-cell $x \to y$, we obtain a bijection between 2-cells $p': (1_x \otimes a) \to (1_x \otimes b)$ and 2-cells $p: (a) \to (b)$ by considering 
\begin{equation*}
\input{img/s3_cancellability.tex}
\end{equation*}
either as an equation to be solved for $p$ given $p'$, using the universality of $t_{1_x,b}$ at $\bord{2}{-}$, or as an equation to be solved for $p'$ given $p$, using the universality of $t_{1_x,a}$ at $\bord{1}{+}$. 

Dually, any tensor $t_{a,1_y}: (a,1_y) \to (a \otimes 1_y)$ induces a bijection between 2-cells $(a \otimes 1_y) \to (b \otimes 1_y)$ and 2-cells $(a) \to (b)$. In this sense, tensor units subsume the cancellability of Saavedra units. Idempotence, on the other hand, is witnessed by the universality at $\bord{1}{+}$ of either $l_{1_x}$ or $r_{1_x}$, which produces an isomorphism between $1_x \otimes 1_x$ and $1_x$. 
\end{remark}

Next, we define an elementary notion of universality of 1-cells, relative to the composition exhibited by universal 2-cells.
\begin{dfn}
Let $e: x \to x'$ be a 1-cell in a regular poly-bicategory $X$. We say that $e$ is \emph{tensor left universal} if, for each $a: x \to y$ and each $a': x' \to y$, a right hom and tensor
\begin{equation*}
\input{img/s3_tensordivleft.tex}
\end{equation*}
exist and are universal both at $\bord{1}{+}$ and at $\bord{2}{-}$: that is, $e^R_{e,a}$ also exhibits $a$ as $e \otimes (\rimp{e}{a})$, and $t_{e,a'}$ also exhibits $a'$ as $\rimp{e}{(e \otimes a')}$.

Dually, we say that $e$ is \emph{tensor right universal} if, for each $b: z \to x$ and each $b': z \to x'$, a left hom and tensor
\begin{equation*}
\input{img/s3_tensordivright.tex}
\end{equation*}
exist and are universal both at $\bord{1}{+}$ and at $\bord{1}{-}$: that is, $e^L_{e,b'}$ also exhibits $b'$ as $(\limp{e}{b'}) \otimes e$, and $t_{b,e}$ also exhibits $b$ as $\limp{e}{(b \otimes e)}$. 

A 1-cell $e$ is \emph{tensor universal} if it is both tensor left and tensor right universal.
\end{dfn}

\begin{remark}
If $X$ is unital and $e$ is tensor left universal, since $e^R_{e,a}$ and $t_{e,\rimp{e}{a}}$ both exist and are universal at $\bord{1}{+}$, factorising one through the other produces a unique isomorphism $e \otimes (\rimp{e}{a}) \simeq a$; similarly, factorising $t_{e,a'}$ through $e^R_{e,e \otimes a'}$ produces a unique isomorphism $\rimp{e}{(e\otimes a')} \simeq a'$.

Dually, if $e$ is tensor right universal, we obtain unique isomorphisms $(\limp{e}{b'}) \otimes e \simeq b'$ and $\limp{e}{(b \otimes e)} \simeq b$.
\end{remark}

\begin{remark}
A \emph{quasigroup}, in the equational formulation \cite[Section 1.2]{smith2006introduction}, is a set $Q$ together with three binary operations $\;\cdot\;, \diagup, \diagdown$ satisfying the axioms
\begin{align*}
	& x \cdot (\rcimp{x}{y}) = y, & \rcimp{x}{(x \cdot y)} = y, \\
	& (\lcimp{x}{y}) \cdot x = y, & \lcimp{x}{(y \cdot x)} = y.
\end{align*}
The isomorphisms enforced by tensor universality, in a unital regular poly-bicategory, can be seen as a categorified version of these equations, with $\otimes$ corresponding to $\cdot$, $\multimap$ to $\diagdown$, and $\multimapinv$ to $\diagup$.
\end{remark}

An important property of tensor universal 1-cells in a unital regular poly-bicategory is that they satisfy a ``two-out-of-three'' property --- a common requirement for classes of weak equivalences --- in the following sense.

\begin{thm} \label{thm:2outof3}
Let $e: x \to x'$, $e': x' \to x''$, $e'': x \to x''$ be 1-cells in a unital regular poly-bicategory $X$, and suppose that
\begin{equation*}
\input{img/s3_2outof3.tex}
\end{equation*}
satisfies two different universal properties. If two of the three 1-cells $e$, $e'$, and $e''$ are tensor universal, then the third is also tensor universal, and $p$ is everywhere universal.
\end{thm}
\begin{proof}
Suppose first that $e$ and $e'$ are tensor universal. Because $p$ satisfies two different universal properties, it is universal at $\bord{1}{-}$ or at $\bord{2}{-}$. Factorising it through $e^L_{e',e''}$, in the first case, or through $e^R_{e,e''}$, in the second case, we find that it is also universal at $\bord{1}{+}$, that is, it exhibits $e''$ as $e \otimes e'$. Finally, factorising through $t_{e,e'}$, which, by tensor universality of both $e$ and $e'$ and an application of Lemma \ref{lem:twouniversal}, is everywhere universal, we obtain that $p$ has the same property. 

To check that $e''$ is tensor left universal, consider an arbitrary 1-cell $a : x \to y$. By universality of $p$ at $\bord{1}{+}$, the equation
\begin{equation} \label{eq:2outof3_1}
\input{img/s3_2outof3_tensor.tex}
\end{equation} 
holds for a unique 2-cell $q$. By Lemma \ref{lem:comp_divis_closure}, the left-hand side is universal both at $\bord{1}{+}$ and at $\bord{3}{-}$ (for the latter, use the $\opp{(-)}$-dual of point \emph{(b)}); it follows from point \emph{(a)} and its duals that $q$ is universal both at $\bord{1}{+}$ and at $\bord{2}{-}$.

Similarly, given an arbitrary 1-cell $a': x'' \to y$, the unique 2-cell $q'$ obtained in the factorisation
\begin{equation*} 
\input{img/s3_2outof3_tensor2.tex}
\end{equation*} 
using the universality of $p$ at $\bord{1}{+}$ is universal both at $\bord{1}{+}$ and at $\bord{2}{-}$. This proves that $e''$ is tensor left universal; a dual argument shows that it is tensor right universal.

Next, we consider the case in which $e$ and $e''$ are tensor universal. Because $p$ satisfies two different universal properties, it is universal at $\bord{1}{+}$ or at $\bord{2}{-}$; factorising it through $t_{e,e'}$, in the first case, or through $e^R_{e,e''}$, in the second case, we find that it must actually satisfy both of them. 

To check that $e'$ is tensor left universal, consider a 1-cell $b : x' \to y$. The unique 2-cell $r$ obtained in the factorisation
\begin{equation} \label{eq:2outof3_2}
\input{img/s3_2outof3_righthom.tex}
\end{equation} 
using the universality of $t_{e,b}$ at $\bord{2}{-}$ is universal at $\bord{2}{-}$ by Lemma \ref{lem:comp_divis_closure}. To prove that it is also universal at $\bord{1}{+}$, consider a 2-cell
\begin{equation*}
	p': (\Gamma_1, e', \rimp{e''}{(e \otimes b)}, \Gamma_2) \to (\Delta).
\end{equation*}
Suppose that $(\Gamma_1) = (\Gamma_1', c)$ for some $c: z \to x'$; then, we can perform the following sequence of factorisations (labels of 0-cells are omitted):
\begin{equation*} 
\input{img/s3_2outof3_factor.tex}
\end{equation*} 
\begin{equation*} 
\input{img/s3_2outof3_factor2.tex}
\end{equation*} 
for unique 2-cells $\tilde{p}$ and $p''$, where we used first the universality at $\bord{1}{+}$ of the two sides of equation (\ref{eq:2outof3_2}), then the universality at $\bord{1}{+}$ of $e^L_{e,c}$. Cancelling the latter, we obtain a necessarily unique factorisation of $p'$ through $r$. In case $\Gamma_1$ is empty, let $(\Delta) = (c', \Delta')$ for some $c': x' \to z$, and apply the same reasoning to the postcomposition of $p'$ with $t_{e,c'}: (e, c') \to (e \otimes c')$.

Next, consider a 1-cell $b': x'' \to y$. The unique 2-cell $r'$ obtained in the factorisation
\begin{equation*} 
\input{img/s3_2outof3_righthom2.tex}
\end{equation*} 
using the universality of $e^R_{e,e'' \otimes b'}$ at $\bord{2}{-}$ is universal at $\bord{2}{-}$ by Lemma \ref{lem:comp_divis_closure}, and at $\bord{1}{+}$ by a similar argument. This proves that $e'$ is tensor left universal.

The proof that $e'$ is tensor right universal is similar, and involves the 2-cells $s, s'$ obtained from factorisations
\begin{equation*} 
\input{img/s3_2outof3_lefthom.tex}
\end{equation*} 
\begin{equation*} 
\input{img/s3_2outof3_lefthom2.tex}
\end{equation*} 
Once we have established that $e'$ is tensor universal, the fact that $p$ is also universal at $\bord{1}{+}$ is a consequence of Lemma \ref{lem:twouniversal}.

Finally, the statement in the case where $e', e''$ are tensor universal follows from the previous case applied to $\opp{X}$.
\end{proof}

\begin{cor} \label{cor:div_closure}
The class of tensor universal 1-cells in a unital regular poly-bicategory is closed under tensors, left homs, and right homs.
\end{cor}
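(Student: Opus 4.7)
The plan is to deduce all three closure properties directly from Theorem \ref{thm:2outof3}, once I verify in each case that the defining witness satisfies the required hypothesis of having two different divisibility properties. In each case, the third 1-cell (which we want to show is tensor divisible) plays the role of $e''$ in that theorem, while the two tensor divisible 1-cells are the ones already assumed.

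For closure under tensors, suppose $e: x \to x'$ and $e': x' \to x''$ are tensor divisible and $e \otimes e'$ exists. The witness $t_{e,e'}: (e, e') \to (e \otimes e')$ is divisible at $\bord{1}{+}$ by definition of a tensor, and also divisible at $\bord{2}{-}$ by tensor left divisibility of $e$ (applied to the 1-cell $a' = e'$). So $t_{e,e'}$ satisfies two different divisibility properties, and Theorem \ref{thm:2outof3} applied with $p = t_{e,e'}$ yields that $e \otimes e'$ is tensor divisible.

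For closure under right homs, suppose $e$ and $a$ are tensor divisible and $\rimp{e}{a}$ exists. The witness $e^R_{e,a}: (e, \rimp{e}{a}) \to (a)$ is divisible at $\bord{2}{-}$ by definition of a right hom, and, by tensor left divisibility of $e$, is also divisible at $\bord{1}{+}$. Applying Theorem \ref{thm:2outof3} with the composite role played by $a$, we conclude that $\rimp{e}{a}$ is tensor divisible. Closure under left homs is entirely analogous, using the witness $e^L_{e,b'}: (\limp{e}{b'}, e) \to (b')$, which by definition is divisible at $\bord{1}{-}$ and, by tensor right divisibility of $e$, also at $\bord{1}{+}$; Theorem \ref{thm:2outof3} then gives that $\limp{e}{b'}$ is tensor divisible.

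Since the whole argument is a direct invocation of the two-out-of-three theorem, there is no real obstacle, provided one correctly reads off the additional divisibility property that tensor divisibility of $e$ grants to each of the three defining witnesses. The only subtlety is matching the asymmetric definitions of tensor left and tensor right divisibility to the correct side (input vs. output) of the corresponding hom or tensor witness.
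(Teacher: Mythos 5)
Your proof is correct and takes essentially the same route as the paper, which simply observes that in each case the exhibiting 2-cell falls under the hypotheses of Theorem \ref{thm:2outof3}; your verification that each witness acquires a second divisibility property from the tensor divisibility of $e$ (via Lemma \ref{lem:twodivisible}) is exactly the implicit content of that observation.
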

\begin{proof}
If $e''$ is obtained as a tensor, right hom, or left hom of tensor universal 1-cells, then the 2-cell that exhibits it falls under the hypotheses of Theorem \ref{thm:2outof3}. It follows that $e''$ is also tensor universal.
\end{proof}

Armed with this result, we can prove a lower-dimensional version of Lemma \ref{lem:2divunit} and of Proposition \ref{prop:2units}.

\begin{lem} \label{lem:1sideunit}
Let $e: x \to x'$ be a tensor universal 1-cell in a unital regular poly-bicategory. Then $\limp{e}{e}: x \to x$ is a tensor unit on $x$, and $\rimp{e}{e}: x' \to x'$ is a tensor unit on $x'$.
\end{lem}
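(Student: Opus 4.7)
To prove that $\limp{e}{e}$ is a tensor unit on $x$, I must exhibit, for every $a : x \to y$, a 2-cell $l_a : (\limp{e}{e}, a) \to (a)$ divisible at $\bord{1}{+}$ and $\bord{2}{-}$, and for every $b : z \to x$, a 2-cell $r_b : (b, \limp{e}{e}) \to (b)$ divisible at $\bord{1}{+}$ and $\bord{1}{-}$; the statement for $\rimp{e}{e}$ will then follow by a symmetric construction using $e^R_{e,e}$ in place of $e^L_{e,e}$. As a preliminary step, I apply Theorem \ref{thm:2outof3} to $e^L_{e,e} : (\limp{e}{e}, e) \to (e)$, which by tensor right divisibility of $e$ is divisible at $\bord{1}{+}$ and $\bord{1}{-}$: since the two 1-cells $e$ and $e$ are tensor divisible, the theorem simultaneously gives that $\limp{e}{e}$ is tensor divisible and that $e^L_{e,e}$ is everywhere divisible. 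The upgrade of $e^L_{e,e}$ to divisibility at $\bord{2}{-}$ will be essential below.

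The 2-cell $l_a$ is defined via $e^R_{e,a} : (e, \rimp{e}{a}) \to (a)$, which is divisible at $\bord{1}{+}$ and $\bord{2}{-}$. Using divisibility of $e^R_{e,a}$ at $\bord{1}{+}$ to split off a 2-cell with $a$ in place of $(e, \rimp{e}{a})$, I obtain the unique $l_a$ satisfying
\begin{equation*}
\cutt{1,2}(e^R_{e,a}, l_a) \,=\, \cutt{1,1}(e^L_{e,e}, e^R_{e,a}).
\end{equation*}
Geometrically, $l_a$ witnesses the chain $\limp{e}{e} \otimes a \simeq \limp{e}{e} \otimes (e \otimes \rimp{e}{a}) \simeq (\limp{e}{e} \otimes e) \otimes \rimp{e}{a} \simeq e \otimes \rimp{e}{a} \simeq a$. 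To verify divisibility of $l_a$ at $\bord{1}{+}$, given a well-formed equation $\cutt{1,i}(l_a, x) = s$, I chain three factorisations of $s$: first, use divisibility of $e^R_{e,a}$ at $\bord{1}{+}$ to replace $a$ at position $i$ with $(e, \rimp{e}{a})$; then use divisibility of $e^L_{e,e}$ at $\bord{1}{+}$ to insert $\limp{e}{e}$ before $e$; finally, absorb $(e, \rimp{e}{a})$ back to $a$ through $e^R_{e,a}$. Associativity and interchange, together with the defining equation of $l_a$, identify the resulting 2-cell as the required factor, and cancellability of $e^R_{e,a}$ in the defining equation ensures uniqueness. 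Divisibility of $l_a$ at $\bord{2}{-}$ is established by the dual chain, where the crucial ingredient is the divisibility of $e^L_{e,e}$ at $\bord{2}{-}$ obtained in the preliminary step.

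Finally, $r_b$ is constructed dually: using divisibility of $t_{b,e}$ at $\bord{1}{-}$ (which it has by tensor right divisibility of $e$), define $r_b$ as the unique 2-cell satisfying $\cutt{1,1}(r_b, t_{b,e}) = \cutt{1,2}(e^L_{e,e}, t_{b,e})$, and verify divisibility at $\bord{1}{+}$ and $\bord{1}{-}$ by the analogous chain of factorisations, this time passing through $t_{b,e}$ and $e^L_{e,e}$. The main obstacle across the proof is the careful bookkeeping in these nested factorisations, where one must repeatedly appeal to associativity and interchange to ensure that the chained factorisations compose to the desired 2-cell; the structure of the argument, however, closely mirrors the factorisation chains already used in the proof of Theorem \ref{thm:2outof3}, so I expect no new technical issues.
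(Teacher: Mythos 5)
Your overall architecture matches the paper's proof: you establish tensor divisibility of $\limp{e}{e}$ (your preliminary application of Theorem \ref{thm:2outof3} is exactly the paper's appeal to Corollary \ref{cor:div_closure}), you define $l_a$ by the same factorisation of $\cutt{1,1}(e^L_{e,e},e^R_{e,a})$ through $e^R_{e,a}$ at $\bord{1}{+}$, and your chain argument for divisibility of $l_a$ at $\bord{1}{+}$ is sound (precompose with $e^R_{e,a}$, factor through $e^L_{e,e}$ and $e^R_{e,a}$, then cancel). The construction of $r_b$ and the symmetric treatment of $\rimp{e}{e}$ are also fine.

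The one step that does not go through as written is your claim that divisibility of $l_a$ at $\bord{2}{-}$ follows from ``the dual chain'' driven by the divisibility of $e^L_{e,e}$ at $\bord{2}{-}$. Unwind what a well-formed equation $\cutt{j,2}(y,l_a)=s$ looks like: the conditions (\ref{eq:poly-boundaries}) force $j=1$, so $s$ has the shape $(\limp{e}{e},\Gamma)\to(a,\Delta)$, with $\limp{e}{e}$ as its first input and $a$ as its first output. Dividing such an $s$ by $e^L_{e,e}$ at $\bord{2}{-}$ would require $s$ to have $e$ as its first output (matching the output of $e^L_{e,e}$), and dividing by $e^R_{e,a}$ at $\bord{2}{-}$ would require $e$ as its first input; neither holds, and there is no named cell with $\limp{e}{e}$ as an \emph{output} that could be precomposed to convert the boundary. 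So, unlike the $\bord{1}{+}$ case (where $s$ genuinely contains the input string $(\limp{e}{e},a)$ to latch onto), the dual chain cannot even be started: no composition of $s$ with $e^L_{e,e}$ or $e^R_{e,a}$ is defined. The correct repair is the one the paper uses, and you already hold the needed ingredient: since $\limp{e}{e}$ is tensor (left) divisible, the tensor $t_{\limp{e}{e},a}:(\limp{e}{e},a)\to(\limp{e}{e}\otimes a)$ exists and is divisible at both $\bord{1}{+}$ and $\bord{2}{-}$; as $l_a$ has the same input boundary and is divisible at $\bord{1}{+}$, Lemma \ref{lem:twodivisible} (factor $l_a$ through $t_{\limp{e}{e},a}$ to get an isomorphism on the outputs) gives divisibility of $l_a$ at $\bord{2}{-}$. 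The same substitution is needed at the corresponding step for $p'$, $r_b$, and their duals.
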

\begin{proof}
Let $a: x \to y$ be a 1-cell. The unique 2-cell $p$ obtained in the factorisation
\begin{equation*}
\input{img/s3_leftunit.tex}
\end{equation*} 
using the universality of $e^R_{e,a}$ at $\bord{1}{+}$ is universal at $\bord{1}{+}$ by Lemma \ref{lem:comp_divis_closure}. By Corollary \ref{cor:div_closure}, $\limp{e}{e}$ is tensor universal, so by Lemma \ref{lem:twouniversal} $p$ is also universal at $\bord{2}{-}$.

Similarly, take any $a': x' \to y$. The unique 2-cell $p'$ obtained in the factorisation
\begin{equation*}
\input{img/s3_leftunitimpl.tex}
\end{equation*} 
using the universality of $t_{e,a'}$ at $\bord{2}{-}$ is itself universal at $\bord{2}{-}$. Since $\rimp{e}{e}$ is tensor universal, $p'$ is also universal at $\bord{1}{+}$. 

This proves the left tensor unit condition for both $\limp{e}{e}$ and $\rimp{e}{e}$; a dual argument in $\opp{X}$ leads to the right tensor unit condition.
\end{proof}

\begin{remark} \label{rmk:booleanalgebra}
Note that the converse does not hold: even if $\limp{e}{e}$ and $\rimp{e}{e}$ are tensor units, $e$ may not be tensor universal. 

For example, a Boolean algebra $(B, \top, \bot, \land, \lor, \neg)$ can be identified with a regular polycategory, as a special case of Example \ref{exm:lattice}. Then $B$ is unital, and has $\top$ as a tensor unit. 

For all $a,b \in B$, the element $\neg a \lor b$ is a right and left hom from $a$ to $b$, witnessed by $a \land (\neg a \lor b) = a \land b \leq b$, so in particular $\rimp{a}{a} = \limp{a}{a} = \neg a \lor a = \top$ for all $a$; but $\top$ is the only tensor universal 1-cell.
\end{remark}

\begin{thm} \label{thm:0representable}
Let $X$ be a unital regular poly-bicategory. The following conditions are equivalent:
\begin{enumerate}
	\item for all 0-cells $x$ of $X$, there exist a 0-cell $\overline{x}$ and a tensor universal 1-cell $e: x \to \overline{x}$;
	\item for all 0-cells $x$ of $X$, there exist a 0-cell $\overline{x}$ and a tensor universal 1-cell $e': \overline{x} \to x$;
	\item for all 0-cells $x$ of $X$, there exists a tensor unit $1_x$ on $x$.
\end{enumerate}
\end{thm}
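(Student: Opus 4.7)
The proof is essentially a direct application of Lemma \ref{lem:1sideunit}, together with the observation that a tensor unit is itself tensor divisible. The plan is to establish (3)$\Rightarrow$(1) and (3)$\Rightarrow$(2) by exhibiting the tensor unit itself as a witnessing tensor divisible 1-cell (with $\overline{x}:=x$), and the reverse implications (1)$\Rightarrow$(3) and (2)$\Rightarrow$(3) by invoking Lemma \ref{lem:1sideunit}.

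For (1)$\Rightarrow$(3), fix a 0-cell $x$ and a tensor divisible 1-cell $e: x \to \overline{x}$. By Lemma \ref{lem:1sideunit}, the 1-cell $\limp{e}{e}: x \to x$ is a tensor unit on $x$. The implication (2)$\Rightarrow$(3) is entirely analogous: given a tensor divisible $e': \overline{x} \to x$, the 1-cell $\rimp{e'}{e'}: x \to x$ is a tensor unit on $x$ by (the dual part of) Lemma \ref{lem:1sideunit}.

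For (3)$\Rightarrow$(1) and (3)$\Rightarrow$(2), given a tensor unit $1_x: x \to x$ on each 0-cell $x$, take $\overline{x} := x$ and $e := e' := 1_x$. It suffices to verify that $1_x$ is itself tensor divisible. For tensor left divisibility, we must show that for any $a: x \to y$ and $a': x \to y$ a right hom $\rimp{1_x}{a}$ and a tensor $1_x \otimes a'$ exist with divisibility both at $\bord{1}{+}$ and $\bord{2}{-}$; but the defining 2-cell $l_a: (1_x, a) \to (a)$ of the tensor unit has exactly these two divisibility properties by definition, and it simultaneously exhibits $a$ as $\rimp{1_x}{a}$ and $a$ as $1_x \otimes a$, so taking $\rimp{1_x}{a} := a$ and $1_x \otimes a' := a'$ works (using $l_{a'}$ for the tensor). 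Tensor right divisibility of $1_x$ is witnessed dually by the 2-cells $r_b$, which have divisibility at $\bord{1}{+}$ and $\bord{1}{-}$ and exhibit $b$ both as $b \otimes 1_x$ and as $\limp{1_x}{b}$.

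Since Lemma \ref{lem:1sideunit} already carries the technical weight, there is no real obstacle here, only a bookkeeping check: one must verify that the two divisibility properties packaged into the definition of a tensor unit match precisely the two divisibility properties required by the definitions of tensor left and tensor right divisibility. This is immediate from inspection of the respective definitions.
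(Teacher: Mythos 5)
Your proof is correct and follows essentially the same route as the paper: the paper also notes that a tensor unit is itself tensor divisible (which you helpfully spell out via the divisibility properties of $l_a$ and $r_b$) and derives the converse directions by applying Lemma \ref{lem:1sideunit} to obtain $\limp{e}{e}$ and $\rimp{e'}{e'}$ as tensor units. The only cosmetic remark is that the case of $\rimp{e'}{e'}$ needs no separate dualisation, since Lemma \ref{lem:1sideunit} already asserts directly that $\rimp{e}{e}$ is a tensor unit on the codomain of $e$.
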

\begin{proof}
Let $x$ be a 0-cell of $X$. If there exists a tensor unit $1_x: x \to x$, then it is clearly tensor universal, and fulfils the condition for $x$ on both sides.

Conversely, suppose $e: x \to \overline{x}$ is a tensor universal 1-cell, and define $1_x := \limp{e}{e}$. By Lemma \ref{lem:1sideunit}, $1_x$ is a tensor unit on $x$. Similarly, if $e': \overline{x} \to x$ is tensor universal, then ${1_x}':= \rimp{e'}{e'}$ is a tensor unit on $x$. This completes the proof.
\end{proof}

\begin{dfn} 
A unital regular poly-bicategory $X$ is \emph{tensor 0-representable} if it satisfies any of the equivalent conditions of Theorem \ref{thm:0representable}.
\end{dfn}

Instantiating the definitions in $\coo{X}$, we obtain dual notions of \emph{par units} $\bot_x: x \to x$, \emph{par universal 1-cells}, and \emph{par 0-representability}; the dual of Theorem \ref{thm:0representable}, that the existence of par units is equivalent to the existence of enough par universal cells, holds. 

\begin{dfn} 
A regular poly-bicategory $X$ is \emph{tensor representable} if it is tensor 0-representable and tensor 1-representable; it is \emph{par representable} if it is par 0-representable and par 1-representable. 

We say that $X$ is \emph{right closed (left closed)} if it is tensor 0-representable and right 1-closed (left 1-closed); it is \emph{right coclosed (left coclosed)} if it is par 0-representable and right 1-coclosed (left 1-coclosed).

We say that $X$ is \emph{representable} if it is tensor representable and par representable. We say that $X$ is \emph{$*$-autonomous} if it is representable, left and right closed, and left and right coclosed.
\end{dfn}

In the next section, we will see how representability conditions on $X$ induce coherent algebraic structure on $GX$. Consequently, we want to restrict our attention to morphisms that preserve this structure in a suitably weak sense: for tensors, pars, homs, and cohoms, this is achieved by requiring morphisms to preserve the corresponding universal properties of 2-cells. 

However, while Theorem \ref{thm:0representable} shows that the existence of enough universal 1-cells is equivalent to the existence of tensor units, preservation of the two classes is not equivalent for arbitrary morphisms of regular poly-bicategories: for example, if $f: X \to Y$ does not preserve any universal property of 2-cells, even if $f(1_x)$ is universal in $Y$ for a tensor unit $1_x$, it may not be ``tensor-idempotent'' in $Y$.

\begin{exm}
The regular multicategory $S$ with a single 1-cell $a$ and a single 2-cell $(\underbrace{a,\ldots,a}_n) \to (a)$ for each $n \geq 1$ is tensor representable: $a$ is the tensor unit, and all 2-cells are everywhere universal. If $X$ is a tensor representable regular multicategory whose every 1-cell is tensor universal (for example, the regular multicategory corresponding to a Picard groupoid \cite[Section 7]{benabou1967introduction}), then any morphism $f: S \to X$ preserves tensor universality of 1-cells, even though it may not send $a$ to a tensor unit.
\end{exm}

We will now show that if a morphism also preserves certain universal properties of 2-cells, it will map tensor units to tensor units.

\begin{dfn}
A morphism $f: X \to Y$ of unital regular poly-bicategories is \emph{tensor strong} if it is unital, preserves tensor universality of 1-cells, and preserves tensors of 1-cells. It is \emph{par strong} if $\coo{f}: \coo{X} \to \coo{Y}$ is tensor strong. It is \emph{strong} if it is tensor strong and par strong.
\end{dfn}

\begin{prop} \label{prop:unitpreserve1}
Let $X, Y$ be unital regular poly-bicategories, and $f: X \to Y$ a tensor strong morphism. Then $f$ maps tensor units in $X$ to tensor units in $Y$.
\end{prop}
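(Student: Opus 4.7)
The plan is to produce a canonical tensor unit on $f(x)$ from Lemma \ref{lem:1sideunit}, then identify it with $f(1_x)$ via a transported isomorphism.

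First, I observe that every tensor unit $1_x$ is itself tensor divisible: the unit cells $l_a: (1_x, a) \to (a)$ are divisible at $\bord{1}{+}$ and $\bord{2}{-}$, so they simultaneously exhibit $a$ as $1_x \otimes a$ and as $\rimp{1_x}{a}$, establishing tensor left divisibility; symmetrically the $r_b$ cells establish tensor right divisibility. Since $f$ is tensor strong and therefore preserves tensor divisibility of 1-cells, $f(1_x)$ is tensor divisible in $Y$. By Lemma \ref{lem:1sideunit}, the 1-cell $u := \limp{f(1_x)}{f(1_x)}$ is then a tensor unit on $f(x)$.

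Next, I construct an isomorphism $q: (f(1_x)) \to (u)$. The 2-cell $l_{1_x}: (1_x, 1_x) \to (1_x)$ is divisible at $\bord{1}{+}$, so it exhibits a tensor; since $f$ preserves tensors of 1-cells, $f(l_{1_x}): (f(1_x), f(1_x)) \to (f(1_x))$ is divisible at $\bord{1}{+}$ in $Y$. On the other hand, by tensor right divisibility of $f(1_x)$, the tensor $t_{f(1_x), f(1_x)}$ is divisible both at $\bord{1}{+}$ and at $\bord{1}{-}$; applying Lemma \ref{lem:twodivisible} with this cell as reference, I conclude that $f(l_{1_x})$ is also divisible at $\bord{1}{-}$. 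Now the left hom $e^L_{f(1_x), f(1_x)}: (u, f(1_x)) \to (f(1_x))$ has the same boundary as $f(l_{1_x})$ except at $\bord{1}{-}$, and both are divisible at that position; the dual of the lemma immediately following Lemma \ref{lem:twodivisible} therefore yields the required isomorphism $q: (f(1_x)) \to (u)$.

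Finally, I transport the unit structure from $u$ to $f(1_x)$: for each $a: f(x) \to y$ set $l_a := \cutt{1,1}(q, l_a^{(u)})$, and for each $b: z \to f(x)$ set $r_b := \cutt{1,2}(q, r_b^{(u)})$, where $l_a^{(u)}, r_b^{(u)}$ are the unit cells for $u$. Since composition with an isomorphism does not affect divisibility properties, $l_a$ retains divisibility at $\bord{1}{+}$ and $\bord{2}{-}$, and $r_b$ retains divisibility at $\bord{1}{+}$ and $\bord{1}{-}$, so $f(1_x)$ meets the definition of a tensor unit on $f(x)$. I expect the main obstacle to be the middle step: tensor strength only directly guarantees that $f(l_{1_x})$ is a tensor in $Y$, and it takes the combination with tensor divisibility of $f(1_x)$, routed through Lemma \ref{lem:twodivisible}, to upgrade it to a left hom and thus pin down $f(1_x) \simeq \limp{f(1_x)}{f(1_x)}$.
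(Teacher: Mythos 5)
Your proof is correct and follows essentially the same route as the paper's: both deduce tensor divisibility of $f(1_x)$ from tensor strength, apply Lemma \ref{lem:1sideunit} to divide $f(1_x)$ by itself, use preservation of tensors together with Lemma \ref{lem:twodivisible} to upgrade $f(l_{1_x})$ to a hom, and then transport the unit structure along the resulting isomorphism (the paper works with $\rimp{f(1_x)}{f(1_x)}$ where you use the left hom, a purely cosmetic difference). One small slip: the uniqueness-up-to-isomorphism lemma you invoke immediately \emph{precedes} Lemma \ref{lem:twodivisible} in the paper rather than following it.
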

\begin{proof}
Let $1_x: x \to x$ be a tensor unit in $X$. Then $f(1_x): f(x) \to f(x)$ is tensor universal, so we can divide it by itself, and obtain a tensor universal $\rimp{f(1_x)}{f(1_x)}$, and an everywhere universal 2-cell $p: (f(1_x), \rimp{f(1_x)}{f(1_x)}) \to (f(1_x))$, as in Theorem \ref{thm:2outof3}. 

Because $f$ preserves tensors, a 2-cell $t: (1_x, 1_x) \to (1_x)$ exhibiting the idempotence of $1_x$ is mapped to a 2-cell $f(t): (f(1_x), f(1_x)) \to (f(1_x))$ that is universal at $\bord{1}{+}$. By tensor universality of $f(1_x)$, we can also construct $q: (f(1_x), f(1_x)) \to (f(1_x) \otimes f(1_x))$ that is everywhere universal; by Lemma \ref{lem:twouniversal}, $f(t)$ is also everywhere universal, and factorising $p$ through $f(t)$, we obtain an isomorphism $f(1_x) \simeq \rimp{f(1_x)}{f(1_x)}$. Lemma \ref{lem:1sideunit} allows us to conclude that $f(1_x)$ is a tensor unit.
\end{proof}

\begin{prop} \label{prop:unitpreserve2}
Let $X, Y$ be unital regular poly-bicategories, and $f: X \to Y$ a unital morphism. Suppose that $f$ preserves right homs (or left homs) and tensor universality of 1-cells. Then $f$ maps tensor units in $X$ to tensor units in $Y$.
\end{prop}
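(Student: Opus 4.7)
The plan is to follow the template of Proposition \ref{prop:unitpreserve1}, replacing the role of tensor preservation with right hom preservation to secure the crucial divisibility at $\bord{1}{+}$. Since $f$ preserves tensor divisibility of 1-cells, $f(1_x)$ is tensor divisible in $Y$, so dividing $f(1_x)$ by itself via Theorem \ref{thm:2outof3} furnishes an everywhere divisible 2-cell
\begin{equation*}
p: (f(1_x), \rimp{f(1_x)}{f(1_x)}) \to (f(1_x)),
\end{equation*}
and Lemma \ref{lem:1sideunit} gives that $\rimp{f(1_x)}{f(1_x)}$ is a tensor unit on $f(x)$. As in Proposition \ref{prop:unitpreserve1}, it then suffices to produce an isomorphism $f(1_x) \simeq \rimp{f(1_x)}{f(1_x)}$ and transfer tensor-unitality along it.

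For that, I would exploit the 2-cell $l_{1_x}: (1_x, 1_x) \to (1_x)$ supplied by the tensor unit structure of $1_x$, which is divisible at both $\bord{1}{+}$ and $\bord{2}{-}$, and hence in particular exhibits $1_x$ as $\rimp{1_x}{1_x}$. Since $f$ preserves right homs, $f(l_{1_x}): (f(1_x), f(1_x)) \to (f(1_x))$ is divisible at $\bord{2}{-}$ in $Y$. Invoking Lemma \ref{lem:twodivisible} with pivot $p$ (divisible at both $\bord{1}{+}$ and $\bord{2}{-}$), I deduce that $f(l_{1_x})$ is also divisible at $\bord{1}{+}$. The 2-cells $p$ and $f(l_{1_x})$ are then both divisible at $\bord{2}{-}$ with the same boundaries outside of that location, so factorising one through the other produces the desired isomorphism $f(1_x) \simeq \rimp{f(1_x)}{f(1_x)}$ by the analogue of \cite[Corollary~8.6]{hermida2000representable} stated just before Lemma \ref{lem:twodivisible}.

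The case in which $f$ preserves left homs is dual, obtained by instantiating the same argument in $\opp{X} \to \opp{Y}$ and using $r_{1_x}: (1_x, 1_x) \to (1_x)$ in place of $l_{1_x}$. The hard part is ensuring that a single one-sided preservation hypothesis yields two-sided divisibility for the relevant witness; the key observation is that two-sidedness is already built into the definition of a tensor unit, so $l_{1_x}$ simultaneously serves as a tensor exhibitor and a right hom exhibitor, and right hom preservation combined with Lemma \ref{lem:twodivisible} (anchored by the tensor divisibility of $f(1_x)$) is enough to transfer both divisibility properties of $l_{1_x}$ into $Y$.
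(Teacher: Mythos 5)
Your argument is correct and follows essentially the same route as the paper: both identify $\rimp{f(1_x)}{f(1_x)}$ with $f(1_x)$ by applying right-hom preservation to $l_{1_x}$ (which exhibits $1_x$ as $\rimp{1_x}{1_x}$) and then conclude via Lemma \ref{lem:1sideunit}. The detour through Lemma \ref{lem:twodivisible} to upgrade $f(l_{1_x})$ to divisibility at $\bord{1}{+}$ is harmless but unnecessary, since the isomorphism $f(1_x) \simeq \rimp{f(1_x)}{f(1_x)}$ already follows from uniqueness of right homs, i.e.\ from the shared divisibility at $\bord{2}{-}$.
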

\begin{proof}
Let $1_x: x \to x$ be a tensor unit in $X$; by assumption, $f(1_x): f(x) \to f(x)$ is tensor universal in $X$. 

Suppose that $f$ preserves right homs; then $\rimp{f(1_x)}{f(1_x)} \simeq f(\rimp{1_x}{1_x}) \simeq f(1_x)$. But by Lemma \ref{lem:1sideunit}, we know that $\rimp{e}{e}$ is a tensor unit for all tensor universal 1-cells $e$. The proof when $f$ preserves left homs is similar.
\end{proof}

There are, as usual, dual results concerning par strong morphisms, or ones that preserve right or left cohoms. 

Notice that both Proposition \ref{prop:unitpreserve1} and Proposition \ref{prop:unitpreserve2} have a ``local'' character: if a tensor unit exists on a 0-cell in $X$, it will be mapped to a tensor unit in $Y$, irrespective of any property of $Y$ (except being unital, which is the minimal assumption for most of our results). 

In order to have a converse result, that a morphism preserving tensor units also preserves tensor universality, we are forced to renounce this local character: if $e: x \to x'$ is tensor universal in $X$, even if $f: X \to Y$ preserves tensor units and all universal properties of 2-cells, there is no guarantee, for instance, that tensors of $f(e)$ with arbitrary 1-cells $a: f(x') \to y$ and $b: z \to f(x)$ of $Y$ should exist. Therefore a ``global'' representability condition must be imposed.

\begin{prop} \label{prop:isomorphism}
Let $X$ be a tensor 1-representable regular poly-bicategory. Then, for a 1-cell $e: x \to y$, the following are equivalent:
\begin{enumerate}
	\item $e$ is tensor universal;
	\item there exist tensor units $1_x$ on $x$ and $1_y$ on $y$, a 1-cell $e^*: y \to x$, and 2-cells exhibiting $e \otimes e^* \simeq 1_x$ and $e^* \otimes e \simeq 1_y$.
\end{enumerate}
\end{prop}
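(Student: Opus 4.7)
The plan is to prove the two implications separately, using the closure properties of tensor divisible 1-cells and the characterisation of tensor units from Lemma \ref{lem:1sideunit}.

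For the direction $(1) \Rightarrow (2)$, suppose $e$ is tensor divisible. Lemma \ref{lem:1sideunit} gives tensor units $1_x := \limp{e}{e}$ on $x$ and $1_y := \rimp{e}{e}$ on $y$. I would set $e^* := \rimp{e}{1_x}$, which exists by tensor left divisibility of $e$ and is itself tensor divisible by Corollary \ref{cor:div_closure}. For $e \otimes e^* \simeq 1_x$: both $t_{e, e^*}: (e, e^*) \to (e \otimes e^*)$ and $e^R_{e, 1_x}: (e, e^*) \to (1_x)$ are divisible at $\bord{1}{+}$ and share their input list, so the unlabelled lemma that immediately precedes Lemma \ref{lem:twodivisible} yields an isomorphism $(e \otimes e^*) \to (1_x)$.

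The harder step is obtaining $e^* \otimes e \simeq 1_y$. I would chain the tensor associator (available in any tensor 1-representable regular poly-bicategory), the iso $e \otimes e^* \simeq 1_x$ just constructed, and the unit iso $1_x \otimes e \simeq e$ to build an isomorphism $\alpha : (e \otimes (e^* \otimes e)) \to (e)$. Then $\cutt{1,1}(t_{e, e^* \otimes e}, \alpha) : (e, e^* \otimes e) \to (e)$ inherits divisibility at $\bord{2}{-}$ from $t_{e, e^* \otimes e}$ (which is divisible there because $e$ is tensor left divisible), while $r_e: (e, 1_y) \to (e)$ is divisible at $\bord{2}{-}$ because $1_y$ is a tensor unit; the two 2-cells agree on all boundaries except at $\bord{2}{-}$, so another application of the preceding lemma produces an isomorphism $(e^* \otimes e) \to (1_y)$.

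For the direction $(2) \Rightarrow (1)$, I would define $\rimp{e}{a} := e^* \otimes a$ for each $a: x \to z$ and $\limp{e}{b} := b \otimes e^*$ for each $b: z \to y$, taking the tensors $e \otimes a'$ and $b' \otimes e$ as supplied by 1-representability. The 2-cell $e^R_{e, a}$ would be obtained by factoring, through the $\bord{1}{+}$-divisible $t_{e^*, a}$, the composite $(e, e^*, a) \to (a)$ built from $t_{e, e^*}$, the iso $e \otimes e^* \simeq 1_x$, and the left unit $l_a$; dually for $e^L_{e, b}$. Divisibility at $\bord{1}{+}$ is immediate from the construction since each constituent is $\bord{1}{+}$-divisible. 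For divisibility at $\bord{2}{-}$ (and dually at $\bord{1}{-}$), any factorisation problem for $e^R_{e, a}$ at $\bord{2}{-}$ can be transported to one involving $1_y$ by tensoring the unknown with $e^*$ on the left and invoking $e^* \otimes e \simeq 1_y$, where the problem becomes trivially solvable; the same mechanism applied to the tensor 2-cells $t_{e, a'}$ and $t_{b', e}$ yields their respective input-side divisibility, completing the proof that $e$ is tensor divisible.

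The main obstacle is the $e^* \otimes e \simeq 1_y$ step in $(1) \Rightarrow (2)$: marshalling the associator and unit isos into a workable $\alpha$ requires the coherent structure developed in Section \ref{sec:coherence}, and the argument then hinges on the cancellability at $\bord{2}{-}$ afforded by tensor divisibility of $e$. The symmetric challenge in $(2) \Rightarrow (1)$ is the verification of input-side divisibility, which is essentially the bicategorical fact that a 1-cell with a quasi-inverse extends to an adjoint equivalence, repackaged in poly-bicategorical terms.
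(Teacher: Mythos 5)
Your direction $(1)\Rightarrow(2)$ is essentially correct and close to the paper's, with a mild variation: the paper divides $1_x$ by $e$ on one side and $1_y$ by $e$ on the other, obtaining two candidate inverses $\rimp{e}{1_x}$ and $\limp{e}{1_y}$ with one isomorphism each, and then identifies the two candidates by the standard ``two one-sided inverses agree'' argument, whereas you keep the single candidate $e^*=\rimp{e}{1_x}$ and prove $e^*\otimes e\simeq 1_y$ directly by cancelling $e$ on the left. Both routes work, and neither needs Theorem \ref{thm:polycoherent}: you only use the \emph{existence} of the associativity and unit isomorphisms, which follows from uniqueness of factorisations through divisible cells, so your stated reliance on ``the coherent structure developed in Section \ref{sec:coherence}'' is unnecessary (and would be a forward reference, since this proposition precedes that section). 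One small repair: $r_e:(e,1_y)\to(e)$ is not divisible at $\bord{2}{-}$ ``because $1_y$ is a tensor unit'' --- the definition of tensor unit gives $r_e$ divisibility at $\bord{1}{+}$ and $\bord{1}{-}$ only. The claim is still true, but it comes from the tensor left divisibility of $e$: compare $r_e$ with $e^R_{e,e}:(e,\rimp{e}{e})\to(e)$ via Lemma \ref{lem:twodivisible}, or simply use $e^R_{e,e}$ in place of $r_e$.

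The genuine gap is in $(2)\Rightarrow(1)$, precisely at the step you declare ``trivially solvable''. The hypothesis supplies only $h:(e,e^*)\to(1_x)$ and $h':(e^*,e)\to(1_y)$ divisible at $\bord{1}{+}$, and these two witnesses are independent of one another. Your transport takes a factorisation problem $s=\cutt{1,2}(x,e^R_{e,a})$, glues $t_{e^*,a}$ onto the output of $s$, and factors the result through $h'$ and a left unitor to produce a candidate $r$. But $e^R_{e,a}$ was itself defined by factoring the composite of $h$ and $l_a$ through $t_{e^*,a}$; so when you compose the candidate $r$ back with $e^R_{e,a}$, what you recover is $s$ if and only if inserting the string $(e^*,e)$ into the boundary of $s$ via $h$ (acting on the left-hand pair, then absorbed by $l_a$ at the output) agrees with inserting it via $h'$ (acting on the right-hand pair, then absorbed by a unitor at the input). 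That is a zig-zag identity which an arbitrary pair $(h,h')$ exhibiting the two isomorphisms need not satisfy --- this is exactly the failure of an equivalence to be an adjoint equivalence, which you yourself identify in your closing sentence but do not resolve. The problem is solvable, just not by the obvious candidate: the paper's proof spends equations (\ref{eq:isomorphism_proof1})--(\ref{eq:isomorphism_proof3}) constructing auxiliary cells $s$, $t'$, $\tilde{t}$ by factorisations engineered so that the subsequent chain of rewrites closes up on a composite of the form $\tilde{p}$ after $\tilde{t}$ after $l_e$ after $h$, at which point $l_e$ and $h$ can be cancelled. (Alternatively, one can rescue your transport by observing that both ``glue $h$ and $l_a$ at the boundary of $s$'' and ``precompose with $h'$ and a unitor'' are bijections on the relevant sets of 2-cells, so the two round trips are bijections and hence so is $r\mapsto\cutt{1,2}(r,e^R_{e,a})$ --- but that argument, or the paper's, has to be made explicitly; it is the entire content of the hard direction.) The same objection applies verbatim to your treatment of the input-side divisibility of $t_{e,a'}$ and $t_{b',e}$.
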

\begin{proof}
Suppose $e: x \to y$ is tensor universal; by Lemma \ref{lem:1sideunit}, we know that there are tensor units $1_x$ on $x$ and $1_y$ on $y$. Dividing $1_x$ and $1_y$ by $e$, we obtain 1-cells $\rimp{e}{1_x}$ and $\limp{e}{1_y}$, and 2-cells exhibiting $e \otimes (\rimp{e}{1_x}) \simeq 1_x$, and $(\limp{e}{1_y}) \otimes e \simeq 1_y$. A standard argument then shows that $\rimp{e}{1_x} \simeq \limp{e}{1_y}$, so we can call either of them $e^*$. This proves one implication.

Now, suppose there are tensor units $1_x$ on $x$ and $1_y$ on $y$, a 1-cell $e^*: y \to x$, and 2-cells $h: (e,e^*) \to (1_x)$ and $h': (e^*, e) \to (1_y)$, both universal at $\bord{1}{+}$. We will show that $e$ is tensor left universal. 

Let $a: y \to z$ be an arbitrary 1-cell; by tensor 1-representability, there is a tensor $t: (e,a) \to (e\otimes a)$. We use it to construct other tensors of the same type, by the following sequence of factorisations (as usual, we omit labels of 0-cells):
\begin{equation} \label{eq:isomorphism_proof1}
\input{img/s3_isomorphism_proof1.tex}
\end{equation} 
for a unique 2-cell $s$, universal at $\bord{1}{+}$; then,
\begin{equation} \label{eq:isomorphism_proof2}
\input{img/s3_isomorphism_proof2.tex}
\end{equation} 
for a unique 2-cell $t'$, universal at $\bord{1}{+}$; finally,
\begin{equation} \label{eq:isomorphism_proof3}
\input{img/s3_isomorphism_proof3.tex}
\end{equation} 
for a unique $\tilde{t}$, universal at $\bord{1}{+}$. All these universal properties follow from Lemma \ref{lem:comp_divis_closure}. We will show that $\tilde{t}$ is also universal at $\bord{2}{-}$ (which by Lemma \ref{lem:twouniversal} implies that $t$ and $t'$ are, too).

Let $p: (e, \Gamma) \to (e \otimes a, \Delta)$ be a 2-cell, and precompose it with $l_e: (1_x, e) \to (e)$ and then $h: (e, e^*) \to (1_x)$. We have the following sequence of factorisations:
\begin{equation*}
\input{img/s3_isomorphism_proof4.tex}
\end{equation*} 
\begin{equation*}
\input{img/s3_isomorphism_proof5.tex}
\end{equation*} 
\begin{equation*}
\input{img/s3_isomorphism_proof6.tex}
\end{equation*} 
for some unique $p'$, $p''$, and $\tilde{p}$. Comparing the first and the last diagram, and cancelling $l_e$ and $h$, we obtain a factorisation of $p$ through $\tilde{t}$, which is easily determined to be unique.

Next, let $b: x \to z'$ be another 1-cell. By tensor 1-representability, there is a tensor $u: (e^*, b) \to (e^* \otimes b)$; then the factorisation
\begin{equation*}
\input{img/s3_isomorphism_proof7.tex}
\end{equation*} 
produces a 2-cell $v: (e, e^*\otimes b) \to (b)$ that is universal at $\bord{1}{+}$. Proceeding as in the first part of the proof, we can show that $v$ is also universal at $\bord{2}{-}$. This proves that $e$ is tensor left universal. 

Due to the symmetry between $e$ and $e^*$ in the hypothesis, we immediately obtain that $e^*$ is also tensor left universal; then, a dual argument in $\opp{X}$ shows that both $e$ and $e^*$ are also tensor right universal. This completes the proof.
\end{proof}

\begin{prop} \label{prop:isoclosed}
Let $X$ be a right and left 1-closed regular poly-bicategory. Then, for a 1-cell $e: x \to y$, the following are equivalent:
\begin{enumerate}
	\item $e$ is tensor universal;
	\item there exist tensor units $1_x$ on $x$ and $1_y$ on $y$, a 1-cell $e^*: y \to x$, and 2-cells exhibiting $e \otimes e^* \simeq 1_x$ and $e^* \otimes e \simeq 1_y$.
\end{enumerate}
\end{prop}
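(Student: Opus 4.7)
For (1) $\Rightarrow$ (2), the argument is essentially that of Proposition \ref{prop:isomorphism}: tensor divisibility of $e$ supplies, via Lemma \ref{lem:1sideunit}, the units $1_x$ and $1_y$; dividing $1_x$ by $e$ and $1_y$ by $e$ produces a common $e^* := \rimp{e}{1_x} \simeq \limp{e}{1_y}$, whose witnessing hom cells are also divisible at $\bord{1}{+}$ by Lemma \ref{lem:twodivisible}, exhibiting $e \otimes e^* \simeq 1_x$ and $e^* \otimes e \simeq 1_y$. This direction does not even use the full closedness of $X$, only the homs supplied by tensor divisibility of $e$ itself.

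For (2) $\Rightarrow$ (1), the obstacle is the absence of tensor 1-representability: the tensor witnesses required by the definition of tensor divisibility have to be manufactured from the closed structure. I propose $e \otimes a' := \rimp{e^*}{a'}$ for $a' : y \to z$, with witness $t_{e, a'} : (e, a') \to (\rimp{e^*}{a'})$ defined as the unique 2-cell satisfying
\[
\cutt{1, 2}(t_{e, a'}, e^R_{e^*, a'}) = \cutt{1, 1}(h', l_{a'}),
\]
whose existence and uniqueness are guaranteed by the $\bord{2}{-}$-divisibility of $e^R_{e^*, a'}$. Symmetrically, I set $b \otimes e := \limp{e^*}{b}$ with $t_{b, e}$ defined by factoring $\cutt{1, 2}(h, r_b)$ through $e^L_{e^*, b}$ using its $\bord{1}{-}$-divisibility.

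The crux of the proof is then to verify that $t_{e, a'}$ is divisible at both $\bord{1}{+}$ and $\bord{2}{-}$; granted this, Lemma \ref{lem:twodivisible} propagates the double divisibility to the hom cells $e^R_{e, a}$ and $e^L_{e, b'}$ (each of which comes with one divisibility by hypothesis), thereby establishing tensor left divisibility of $e$; tensor right divisibility follows by dualising the whole argument to $\opp{X}$. For the $\bord{2}{-}$-divisibility of $t_{e, a'}$, I would take a test cell $s : (e, \Gamma) \to (\Lambda_1, \rimp{e^*}{a'}, \Lambda_2)$, compose it with $e^R_{e^*, a'}$ along the $\rimp{e^*}{a'}$ boundary, and rewrite via associativity and the defining equation to reduce the problem to one solvable by the $\bord{1}{+}$-divisibility of $h'$ followed by the $\bord{2}{-}$-divisibility of $l_{a'}$; an analogous rewriting, using the $\bord{1}{+}$-divisibility of $l_{a'}$ instead, yields the $\bord{1}{+}$-divisibility of $t_{e, a'}$. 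This bookkeeping is the main technical obstacle: the defining equation of $t_{e, a'}$ only directly encodes information on its output side, so translating the disparate divisibilities of $h'$, $l_{a'}$, and $e^R_{e^*, a'}$ into a single doubly divisible tensor cell requires a careful diagram chase reminiscent of (\ref{eq:isomorphism_proof1})--(\ref{eq:isomorphism_proof3}) from Proposition \ref{prop:isomorphism}, but now organised around the closed structure rather than around tensor 1-representability.
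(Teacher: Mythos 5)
Your direction (1)$\Rightarrow$(2) and your construction of the tensor witnesses are exactly what the paper does: the cell $t_{e,a'}$ you define by factorising $\cutt{1,1}(h',l_{a'})$ through $e^R_{e^*,a'}$ is the paper's $v$, its divisibility at $\bord{2}{-}$ does reduce, as you say, to dividing by $h'$ at $\bord{1}{+}$ and then by $l_{a'}$ at $\bord{2}{-}$, and its divisibility at $\bord{1}{+}$ is obtained by a chase of the kind you anticipate (the paper's version precomposes the test cell with $r_e$ and $h'$ and uses the auxiliary factorisations (\ref{eq:isoclosed2})--(\ref{eq:isoclosed3})).

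The gap is in the sentence ``Lemma \ref{lem:twodivisible} propagates the double divisibility to the hom cells $e^R_{e,a}$''. That lemma only transfers divisibility between two cells that agree on \emph{every} boundary except the one at which the doubly divisible donor is divisible: from $t:(a,b)\to(c)$ divisible at $\bord{1}{+}$ and $\bord{2}{-}$ you may pass either to a cell with the same inputs that is already divisible at $\bord{1}{+}$, or to a cell with the same first input and the same output that is already divisible at $\bord{2}{-}$. Your donor is $t_{e,a'}:(e,a')\to(\rimp{e^*}{a'})$ and your target $e^R_{e,a}:(e,\rimp{e}{a})\to(a)$ comes with $\bord{2}{-}$-divisibility, so you need the second clause, and hence you need the outputs to coincide: taking $a'=\rimp{e}{a}$, you must first produce an isomorphism $\rimp{e^*}{(\rimp{e}{a})}\simeq a$. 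That isomorphism is not free --- it is essentially the associativity $e\otimes(e^*\otimes a)\simeq(e\otimes e^*)\otimes a\simeq 1_x\otimes a\simeq a$, and establishing it requires the symmetric tensor cells $t_{e^*,c}$ (the same construction with $e$ and $e^*$ interchanged), closure of $\bord{1}{+}$-divisibility under composition, and a comparison of $\cutt{}(t_{e^*,a},t_{e,\rimp{e}{a}})$ with $\cutt{}(h,l_a)$ as two $\bord{1}{+}$-divisible cells with inputs $(e,e^*,a)$. With that in hand your route closes, but the extra step is comparable in length to what the paper does instead: it takes the hom cell supplied by right closedness and proves its $\bord{1}{+}$-divisibility head-on via the auxiliary cells $u$, $t'$, $\tilde t$, never invoking Lemma \ref{lem:twodivisible} for this purpose. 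As written, the lemma does not apply to your configuration.
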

\begin{proof}
The implication from $(a)$ to $(b)$ is proved in the same way as in Proposition \ref{prop:isomorphism}. For the other implication, suppose there are tensor units $1_x$ on $x$ and $1_y$ on $y$, a 1-cell $e^*: y \to x$, and 2-cells $h: (e,e^*) \to (1_x)$ and $h': (e^*, e) \to (1_y)$, both universal at $\bord{1}{+}$. We will show that $e$ is tensor left universal.

Let $a: x \to z$ be a 1-cell; since $X$ is right closed, it has a right hom $t: (e, \rimp{e}{a}) \to (a)$. We use it to construct other right homs of the same type, as follows. Factorise
\begin{equation}
\input{img/s3_isoclosed_proof1.tex}
\end{equation} 
for a unique 2-cell $u$, universal at $\bord{2}{-}$; then,
\begin{equation} \label{eq:isoclosed2}
\input{img/s3_isoclosed_proof2.tex}
\end{equation} 
for a unique 2-cell $t'$, universal at $\bord{2}{-}$; finally,
\begin{equation} \label{eq:isoclosed3}
\input{img/s3_isoclosed_proof3.tex}
\end{equation}
for a unique 2-cell $\tilde{t}$, also universal at $\bord{2}{-}$. All these universal properties follow from Lemma \ref{lem:comp_divis_closure}. We will show that $\tilde{t}$ is also universal at $\bord{1}{+}$, implying that $t$ and $t'$ are, too.

Let $p: (\Gamma_1,e,\rimp{e}{a},\Gamma_2) \to (\Delta)$ be a 2-cell, and precompose it with $r_e: (e,1_y) \to (e)$ and with $h': (e^*,e) \to (1_y)$. We have the following sequence of factorisations:
\begin{equation*}
\input{img/s3_isoclosed_proof4.tex}
\end{equation*}
\begin{equation*}
\input{img/s3_isoclosed_proof5.tex}
\end{equation*}
\begin{equation*}
\input{img/s3_isoclosed_proof6.tex}
\end{equation*}
for some unique $p'$, $p''$, and $\tilde{p}$. Comparing the first and the last diagram, and cancelling $r_e$ and $h'$, we obtain a unique factorisation of $p$ through $\tilde{t}$.

Next, let $b: y \to z'$ be another 1-cell, and take a right hom $u: (e^*,\rimp{e^*}{b}) \to (b)$. The factorisation
\begin{equation*}
\input{img/s3_isoclosed_proof7.tex}
\end{equation*}
produces a 2-cell $v$, universal at $\bord{2}{-}$. Proceeding as in the first part of the proof, we can show that $v$ is also universal at $\bord{1}{+}$. This proves that $e$ is tensor left universal. 

Due to the symmetry between $e$ and $e^*$ in the hypothesis, we immediately obtain that $e^*$ is also tensor left universal; then, a dual argument in $\opp{X}$, which is right closed because $X$ is also left closed, shows that both $e$ and $e^*$ are also tensor right universal. This completes the proof.
\end{proof}

\begin{remark}
The proofs show that the implication from $(a)$ to $(b)$ holds in any unital regular poly-bicategory: that is, a tensor universal 1-cell is always a ``tensor equivalence'', in the sense of having an ``inverse up to isomorphism'' with respect to a tensor unit. The converse, however, only holds in a tensor 1-representable, or a right and left 1-closed regular poly-bicategory.
\end{remark}

\begin{cor} \label{cor:isomorphism_cor}
Let $X, Y$ be unital regular poly-bicategories, and let $f: X \to Y$ be a morphism that is unital, preserves tensor units, and preserves tensors of 1-cells. Suppose that $Y$ is tensor 1-representable, or that $Y$ is right and left 1-closed. Then $f$ is tensor strong.
\end{cor}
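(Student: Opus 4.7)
The plan is to reduce preservation of tensor divisibility to the ``algebraic'' characterisation of Propositions \ref{prop:isomorphism} and \ref{prop:isoclosed}. Both of these characterise tensor divisibility of a 1-cell $e \colon x \to y$ as the existence of tensor units $1_x$, $1_y$ and a 1-cell $e^* \colon y \to x$ with tensor isomorphisms $e \otimes e^* \simeq 1_x$ and $e^* \otimes e \simeq 1_y$. As observed in the remark after Proposition \ref{prop:isoclosed}, the implication $(a) \Rightarrow (b)$ holds in every unital regular poly-bicategory, whereas the converse uses exactly one of the hypotheses placed on $Y$ in our statement. The idea is therefore to extract such inverse data for a tensor divisible $e$ in $X$, transport it along $f$, and recover tensor divisibility of $f(e)$ from the converse implication in $Y$.

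First, I would let $e \colon x \to y$ be an arbitrary tensor divisible 1-cell in $X$ and apply Lemma \ref{lem:1sideunit} to obtain tensor units $1_x$ on $x$ and $1_y$ on $y$, and then Proposition \ref{prop:isomorphism} (direction $(a) \Rightarrow (b)$, which is unconditional) to produce a 1-cell $e^* \colon y \to x$ together with 2-cells $h \colon (e, e^*) \to (1_x)$ and $h' \colon (e^*, e) \to (1_y)$ exhibiting $e \otimes e^* \simeq 1_x$ and $e^* \otimes e \simeq 1_y$. Concretely, each of $h, h'$ is a composite of a tensor divisible at $\bord{1}{+}$ with a 1-to-1 isomorphism.

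Next, I would transport this data along $f$. By hypothesis, $f$ is unital, so it preserves divisible 2-cells with a single input and single output; by Proposition \ref{cor:onedivisible} it therefore preserves isomorphisms. Because $f$ preserves tensors of 1-cells, the tensor 2-cells for $e \otimes e^*$ and $e^* \otimes e$ are sent to tensors in $Y$, so $f(e \otimes e^*)$ and $f(e^* \otimes e)$ are tensors of $f(e)$ with $f(e^*)$, in the two orders. Because $f$ preserves tensor units, $f(1_x)$ and $f(1_y)$ are tensor units on $f(x)$ and $f(y)$. Assembling these, the images $f(h)$ and $f(h')$ (composed with the structural isomorphisms coming from preservation of tensors) are 2-cells divisible at $\bord{1}{+}$ exhibiting $f(e) \otimes f(e^*) \simeq 1_{f(x)}$ and $f(e^*) \otimes f(e) \simeq 1_{f(y)}$ in $Y$.

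Finally, I would invoke the converse direction $(b) \Rightarrow (a)$ of Proposition \ref{prop:isomorphism} (if $Y$ is tensor 1-representable) or of Proposition \ref{prop:isoclosed} (if $Y$ is right and left 1-closed) with respect to the pair $(f(e), f(e^*))$ to conclude that $f(e)$ is tensor divisible. Combined with the assumed unitality and preservation of tensors of 1-cells, this is exactly what is needed for $f$ to be tensor strong. The main delicate point is the bookkeeping in the middle step: one must verify that ``preserves tensors of 1-cells'' is strong enough to carry the specific witnesses $h, h'$ (and not merely an abstract isomorphism of objects) through $f$ as witnesses of the same shape in $Y$, but this is immediate from the fact that those witnesses are themselves constructed from tensor 2-cells and 1-to-1 isomorphisms, both of which $f$ preserves.
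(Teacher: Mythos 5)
Your proof is correct and follows essentially the same route as the paper's: both extract the inverse datum $e^*$ with $e\otimes e^* \simeq 1_x$ and $e^*\otimes e\simeq 1_y$ via Lemma \ref{lem:1sideunit} and the unconditional direction of Proposition \ref{prop:isomorphism}, transport it along $f$ using preservation of tensors and tensor units, and conclude by the converse direction of Proposition \ref{prop:isomorphism} or \ref{prop:isoclosed}. Your extra bookkeeping about carrying the explicit witnesses $h, h'$ through $f$ is a harmless elaboration of what the paper states more tersely.
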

\begin{proof}
Let $e: x \to y$ be a tensor universal 1-cell in $X$; by Lemma \ref{lem:1sideunit}, we know that there are tensor units $1_x$ on $x$ and $1_y$ on $y$. Dividing $1_x$ and $1_y$ by $e$, as in the proof of Proposition \ref{prop:isomorphism}, we obtain a 1-cell $e^*: y \to x$ and 2-cells exhibiting $e \otimes e^* \simeq 1_x$, and $e^* \otimes e \simeq 1_y$. 

Because $f$ preserves tensors, there are 2-cells of $Y$ that exhibit $f(e) \otimes f(e^*) \simeq f(1_x)$ and $f(e^*) \otimes f(e) \simeq f(1_y)$, and because $f$ preserves tensor units, $f(1_x)$ and $f(1_y)$ are tensor units in $Y$. Proposition \ref{prop:isomorphism} or \ref{prop:isoclosed} allows us to conclude that $f(e)$ is tensor universal.
\end{proof}

In particular, when restricting to tensor 1-representable, or to left and right closed regular poly-bicategories, we can replace the condition of ``preserving tensor universality'' with the condition of ``preserving tensor units'' in the definition of a tensor strong morphism. 

However, for morphisms that preserve homs, but not tensors, preservation of tensor units is strictly weaker than preservation of tensor universal 1-cells, as shown by the following example.
\begin{exm}
Let $X$ be the regular multicategory with two 1-cells, $0$ and $1$, and a unique 2-cell $(b_1,\ldots,b_n) \to (b)$ whenever $\sum_{i=1}^n b_i \equiv b\,\mathrm{mod}\,2$, for all $b_i, b \in \{0,1\}$. Then $X$ has $0$ as a tensor unit, it is tensor representable, and it is closed on both sides, with
\begin{equation*}
	b \otimes b' = \limp{b}{b'} = \rimp{b}{b'} := \begin{cases}	0 & \text{if } b = b', \\
	1 & \text{if } b \neq b', \end{cases}
\end{equation*}
for all $b, b' \in \{0,1\}$.

Let $Y$ be the regular multicategory with $\mathbb{N}$ as the set of 1-cells, and a unique 2-cell $(k_1, \ldots, k_n) \to (k)$ whenever $\sum_{i=1}^n k_i = k + 2m$ for some $m \geq 0$, for all $k_i, k \in \mathbb{N}$. Then $Y$ has $0$ as a tensor unit, it is tensor representable, and it is closed on both sides, with
\begin{align*}
	k \otimes j & \; := k+j, \\
	\limp{k}{j} = \rimp{k}{j} & \; := \begin{cases} j-k & \text{if } k < j, \\
	(k-j)\,\mathrm{mod}\,2 & \text{if } k \geq j, \end{cases}
\end{align*}
for all $k, j \in \mathbb{N}$.

Consider the obvious inclusion of multicategories $\imath: X \to Y$. This preserves the tensor unit and all homs in $X$, but does not preserve tensors, as $\imath(1 \otimes 1) = 0$ is not isomorphic to $\imath(1) \otimes \imath(1) = 2$. Moreover, $1$ is tensor universal in $X$, but it is not tensor universal in $Y$, as there are no 2-cells $(0) \to (1 \otimes n)$ for any $n$ in $Y$.
\end{exm}

Thus, there is no converse to Proposition \ref{prop:unitpreserve2}, and it makes sense to distinguish between closed morphisms that preserve units, and others that preserve universal 1-cells. We will only name the first, as we do not know if the latter have any significance.
\begin{dfn}
A morphism $f: X \to Y$ of unital regular poly-bicategories is \emph{right closed} if it is unital, preserves tensor units, and preserves right homs. It is \emph{left closed} if $\opp{f}$ is right closed, and \emph{right coclosed (left coclosed)} if $\coo{f}$ is right closed (left closed).
\end{dfn}

We now define some categories on which we will focus in the next section. 
\begin{dfn}
We write $\mbcat_\otimes$ for the subcategory of tensor representable regular multi-bicategories and tensor strong morphisms in $\mbcat$, and $\mcat_\otimes$ for its full subcategory on regular multicategories. 

We write $\pbcat_\otimes^\parr$ for the subcategory of representable regular poly-bicategories and strong morphisms in $\pbcat$, and $\pcat_\otimes^\parr$ for its full subcategory on regular polycategories. 

We write $\pcat_*$ for the subcategory of $*$-autonomous regular polycategories and strong morphisms in $\pcat$.
\end{dfn} 

\begin{remark}
While we prefer to keep foundational aspects at a minimum, it is due remarking that the ``coherence via universality'' theorems concern the production of a uniformly defined coherent structure from a representability property, given as a ``for all... there exists'' statement; some form of choice is therefore required, which may or may not be problematic depending on one's preferred foundations.
\end{remark}

\section{Coherence for units via universality} \label{sec:coherence}

The definition of tensor 0-representability asks that ``locally'' for each 0-cell $x$, some 1-cell $1_x: x \to x$ exists, and for each 1-cell $a: x \to y$ and $b: z \to x$, some 2-cells $l_a$, $r_b$ exist and are appropriately universal; no compatibility between them is required. The following result shows that any uniform choice of units and witnesses of unitality can be made to satisfy poly-bicategorical versions of ``naturality'' and of ``triangle'' equations.
\begin{thm} \label{thm:polycoherent}
Let $X$ be a tensor 0-representable regular poly-bicategory, and suppose tensor units $\{1_x: x \to x\}$ have been chosen for all $x \in X_0$. Then there exist witnesses $\{\tilde{l}_a: (1_x,a) \to (a), \tilde{r}_a: (a,1_y) \to (a)\}$ of unitality, indexed by 1-cells $a: x \to y$ of $X$, such that for all 2-cells $p, p', q$ as pictured, the following equations hold:
\begin{equation} \label{eq:natural}
\input{img/s4_natural.tex}
\end{equation}
\begin{equation} \label{eq:natural2}
\input{img/s4_natural2.tex}
\end{equation}
\begin{equation} \label{eq:trianglepoly}
\input{img/s4_trianglepoly.tex}
\end{equation}
\end{thm}
\begin{proof}
Pick arbitrary witnesses of unitality $\{l_a, r_a\}$ for each 1-cell of $X$; by Theorem \ref{thm:2outof3}, for all units $1_x$, we can assume that $l_{1_x} = r_{1_x}: (1_x,1_x) \to (1_x)$, since they are both everywhere universal. Then, we define a new family $\{\tilde{l}_a, \tilde{r}_a\}$ as follows: let $\tilde{l}_a := \cutt{1,1}(l_a,e_a)$, where $e_a: (a) \to (a)$ is the unique automorphism obtained by the factorisation
\begin{equation*} 
\input{img/s4_natural_factor.tex}
\end{equation*}
and $\tilde{r}_a := \cutt{1,1}(r_a,e'_a)$, where $e'_a: (a) \to (a)$ is obtained by the factorisation
\begin{equation*} 
\input{img/s4_natural_factor2.tex}
\end{equation*}
To prove equation (\ref{eq:natural}), precompose the left-hand side with $l_{1_x}$. We have two possible factorisations: 
\begin{equation*} 
\input{img/s4_natural_proof1.tex}
\end{equation*}
by definition of $\tilde{l}_b$; and, for a unique $\tilde{p}$ given by universality of $l_a$ at $\bord{1}{+}$,
\begin{equation*} 
\input{img/s4_natural_proof2.tex}
\end{equation*}
\begin{equation*} 
\input{img/s4_natural_proof3.tex}
\end{equation*}
Comparing the two, and using the cancellability of $\tilde{l}_b = \cutt{1,1}(l_b,e_b)$ at $\bord{2}{-}$, we obtain equation (\ref{eq:natural}). Equation (\ref{eq:natural2}) is proved similarly.

For equation (\ref{eq:trianglepoly}), we adapt the argument of \cite[Proposition 2.6]{kock2008elementary}. Consider the following two factorisations: first,
\begin{equation*} 
\input{img/s4_triangle_proof1.tex}
\end{equation*}
\begin{equation*} 
\input{img/s4_triangle_proof2.tex}
\end{equation*}
for a unique $\tilde{q}$ given by universality of $\tilde{l}_d$ at $\bord{1}{+}$, where the second equation uses the definition of $\tilde{l}_d$; symmetrically,
\begin{equation*} 
\input{img/s4_triangle_proof3.tex}
\end{equation*}
\begin{equation*} 
\input{img/s4_triangle_proof4.tex}
\end{equation*}
for some unique $\overline{q}$ given by universality of $\tilde{r}_a$ at $\bord{1}{+}$. Comparing the two, and using the cancellability of $l_{1_y}$, we obtain equation (\ref{eq:trianglepoly}).
\end{proof}

In order to fix the notation, we recall the definition of bicategory \cite{benabou1967introduction}.
\begin{dfn}
A \emph{bicategory} $B$ is a 2-graph
\begin{equation*}
\begin{tikzpicture}
	\node[scale=1.25] (0) at (0,0) {$B_0$};
	\node[scale=1.25] (1) at (2.5,0) {$B_1$};
	\node[scale=1.25] (2) at (5,0) {$B_2,$};
	\draw[1c] (1.west |- 0,.15) to node[auto,swap] {$\bord{}{+}$} (0.east |- 0,.15);
	\draw[1c] (1.west |- 0,-.15) to node[auto] {$\bord{}{-}$} (0.east |- 0,-.15);
	\draw[1c] (2.west |- 0,.15) to node[auto,swap] {$\bord{}{+}$} (1.east |- 0,.15);
	\draw[1c] (2.west |- 0,-.15) to node[auto] {$\bord{}{-}$} (1.east |- 0,-.15);
\end{tikzpicture}
\end{equation*}
together with the following data, where we write $p: a \to b$ for a 1-cell or 2-cell with $\bord{}{-}p = a$ and $\bord{}{+}p = b$:
\begin{enumerate}
	\item a family of 2-cells, the \emph{vertical composites} $\{p;q: a \to c\}$, indexed by composable pairs of 2-cells $p: a \to b$ and $q: b \to c$;
	\item a family of 2-cells, the \emph{vertical units} $\{\idd{a}: a \to a\}$, indexed by the 1-cells of $B$;
	\item a family of 1-cells $\{a \otimes b: x \to z\}$, indexed by composable pairs of 1-cells $a: x \to y$, $b: y \to z$, and a family of 2-cells $\{p \otimes q: a \otimes c \to b \otimes d\}$, indexed by pairs of 2-cells $p: a \to b$, $q: c \to d$ such that $\bord{}{+}\bord{}{+}p = \bord{}{-}\bord{}{-}q$, both called \emph{horizontal composites};
	\item a family of 1-cells, the \emph{horizontal units} $\{1_x: x \to x\}$, indexed by the 0-cells of $B$;
	\item a family of 2-cells, the \emph{associators} $\{\alpha_{a,b,c}: (a \otimes b) \otimes c \to a \otimes (b \otimes c)\}$, indexed by composable triples of 1-cells $a: x \to y$, $b: y \to z$, $c: z \to w$;
	\item two families of 2-cells, the \emph{left unitors} $\{\lambda_a: 1_x \otimes a \to a\}$ and the \emph{right unitors} $\{\rho_a: a \otimes 1_y \to a\}$, indexed by 1-cells $a: x \to y$.
\end{enumerate}
These are subject to the following conditions:
\begin{enumerate}
	\item vertical composition is associative, and unital with the vertical units, that is, $(p;q);r = p;(q;r)$, and $p;\idd{b} = p = \idd{a};q$, whenever both sides make sense;
	\item horizontal composition is natural with respect to vertical composition and units, that is, $(p_1;p_2) \otimes (q_1;q_2) = (p_1 \otimes q_1);(p_2 \otimes q_2)$ and $\idd{a} \otimes \idd{b} = \idd{a \otimes b}$, whenever the left-hand side makes sense;
	\item the associators and the unitors are natural in their parameters, that is, for all $p: a \to a'$, $q: b \to b'$, and $r: c \to c'$, the following diagrams commute:
	\begin{equation*}
\begin{tikzpicture}[baseline={([yshift=-.5ex]current bounding box.center)}]
\begin{scope}
	\node[scale=1.25] (0) at (-1.5,.75) {$(a \otimes b) \otimes c$};
	\node[scale=1.25] (1) at (2,.75) {$a \otimes (b \otimes c)$};
	\node[scale=1.25] (2) at (-1.5,-.75) {$(a' \otimes b') \otimes c'$};
	\node[scale=1.25] (3) at (2,-.75) {$a' \otimes (b' \otimes c')$};
	\draw[1c] (0.east) to node[auto] {$\alpha_{a,b,c}$} (1.west);
	\draw[1c] (2.east) to node[auto,swap] {$\alpha_{a',b',c'}$} (3.west);
	\draw[1c] (0.south) to node[auto,swap] {$(p \otimes q) \otimes r$} (2.north);
	\draw[1c] (1.south) to node[auto] {$p \otimes (q \otimes r)$} (3.north);
	\node[scale=1.25] at (3.5,-.9) {,};
\end{scope}
\begin{scope}[shift={(6,0)}]
	\node[scale=1.25] (0) at (-1,.75) {$1_x \otimes a$};
	\node[scale=1.25] (1) at (1,.75) {$a$};
	\node[scale=1.25] (0') at (3,.75) {$a \otimes 1_y$};
	\node[scale=1.25] (2) at (-1,-.75) {$1_x \otimes a'$};
	\node[scale=1.25] (3) at (1,-.75) {$a'$};
	\node[scale=1.25] (2') at (3,-.75) {$a' \otimes 1_y$};
	\draw[1c] (0.east) to node[auto] {$\lambda_a$} (1.west);
	\draw[1c] (2.east) to node[auto,swap] {$\lambda_{a'}$} (3.west);
	\draw[1c] (0'.west) to node[auto, swap] {$\rho_a$} (1.east);
	\draw[1c] (2'.west) to node[auto] {$\rho_{a'}$} (3.east);
	\draw[1c] (0.south) to node[auto,swap] {$1_x \otimes p$} (2.north);
	\draw[1c] (0'.south) to node[auto] {$p \otimes 1_y$} (2'.north);
	\draw[1c] (1.south) to node[auto] {$p$} (3.north);
\end{scope}
\end{tikzpicture}
\end{equation*}
	commute;
	\item the associators and unitors satisfy the pentagon and triangle equations, that is, for all $a: x \to y$, $b: y \to z$, $c: z \to w$, $d: w \to v$, the following diagrams commute:
	\begin{equation*}
\begin{tikzpicture}[baseline={([yshift=-.5ex]current bounding box.center)}]
	\node[scale=1.25] (0) at (-5,.75) {$((a \otimes b) \otimes c) \otimes d$};
	\node[scale=1.25] (1) at (5,.75) {$(a \otimes b) \otimes (c \otimes d)$};
	\node[scale=1.25] (2) at (-5,-.75) {$(a \otimes (b \otimes c)) \otimes d$};
	\node[scale=1.25] (2b) at (0,-.75) {$a \otimes ((b \otimes c) \otimes d)$};
	\node[scale=1.25] (3) at (5,-.75) {$a \otimes (b \otimes (c \otimes d))$};
	\draw[1c] (0.east) to node[auto] {$\alpha_{a\otimes b,c,d}$} (1.west);
	\draw[1c] (2.east) to node[auto,swap] {$\alpha_{a,b\otimes c, d}$} (2b.west);
	\draw[1c] (2b.east) to node[auto,swap] {$\idd{a} \otimes \alpha_{b,c,d}$} (3.west);
	\draw[1c] (0.south) to node[auto,swap] {$\alpha_{a,b,c} \otimes \idd{d}$} (2.north);
	\draw[1c] (1.south) to node[auto] {$\alpha_{a,b,c \otimes d}$} (3.north);
	\node[scale=1.25] at (6.5,-.9) {,};
\end{tikzpicture}
\end{equation*}
	\begin{equation*}
\begin{tikzpicture}[baseline={([yshift=-.5ex]current bounding box.center)}]
	\node[scale=1.25] (0) at (-2.5,.75) {$(a \otimes 1_y) \otimes b$};
	\node[scale=1.25] (1) at (2.5,.75) {$a \otimes (1_y \otimes b)$};
	\node[scale=1.25] (3) at (2.5,-.75) {$a \otimes b$};
	\draw[1c] (0.east) to node[auto] {$\alpha_{a,1_y,b}$} (1.west);
	\draw[1c] (0) to node[auto,swap] {$\rho_a \otimes \idd{b}$} (3);
	\draw[1c] (1.south) to node[auto] {$\idd{a} \otimes \lambda_b$} (3.north);
	\node[scale=1.25] at (3.5,-.8) {;};
\end{tikzpicture}
\end{equation*}
	\item the associators and unitors are isomorphisms, that is, they have inverses with respect to vertical composition.
\end{enumerate}
We say that $B$ is \emph{strict} if the associators and unitors are vertical units. We say that $B$ is \emph{strictly associative} if the associators are vertical units. A bicategory with a single 0-cell is called a \emph{monoidal category}.

Given two bicategories $B$, $C$, a \emph{functor} $f: B \to C$ is a morphism of the underlying 2-graphs that commutes with vertical composition and units, together with
\begin{enumerate}
	\item a family of isomorphisms in $C$ $\{f_{a,b}: f(a) \otimes f(b) \to f(a\otimes b)\}$, indexed by composable pairs of 1-cells $a: x \to y$, $b: y \to z$ of $B$, and
	\item a family of isomorphisms in $C$ $\{f_x: 1_{f(x)} \to f(1_x)\}$, indexed by 0-cells $x$ of $B$,
\end{enumerate}
where the first one is natural in its parameters, that is, for all $p: a \to a'$, $q: b \to b'$, the diagram
\begin{equation*}
\begin{tikzpicture}[baseline={([yshift=-.5ex]current bounding box.center)}]
	\node[scale=1.25] (0) at (-1.5,.75) {$f(a) \otimes f(b)$};
	\node[scale=1.25] (1) at (2,.75) {$f(a \otimes b)$};
	\node[scale=1.25] (2) at (-1.5,-.75) {$f(a') \otimes f(b')$};
	\node[scale=1.25] (3) at (2,-.75) {$f(a' \otimes b')$};
	\draw[1c] (0.east) to node[auto] {$f_{a,b}$} (1.west);
	\draw[1c] (2.east) to node[auto,swap] {$f_{a',b'}$} (3.west);
	\draw[1c] (0.south) to node[auto,swap] {$f(p) \otimes f(q)$} (2.north);
	\draw[1c] (1.south) to node[auto] {$f(p \otimes q)$} (3.north);
\end{tikzpicture}
\end{equation*}
commutes, and both families are compatible with the associators and unitors, in the sense that, for all $a: x \to y$, $b: y \to z$, $c: z \to w$, the following diagrams commute:
		\begin{equation*}
\begin{tikzpicture}[baseline={([yshift=-.5ex]current bounding box.center)}]
	\node[scale=1.25] (0) at (-5,.75) {$(f(a) \otimes f(b)) \otimes f(c)$};
	\node[scale=1.25] (0b) at (0,.75) {$f(a) \otimes (f(b) \otimes f(c))$};
	\node[scale=1.25] (1) at (5,.75) {$f(a) \otimes f(b \otimes c)$};
	\node[scale=1.25] (2) at (-5,-.75) {$f(a \otimes b) \otimes f(c)$};
	\node[scale=1.25] (2b) at (0,-.75) {$f((a \otimes b) \otimes c)$};
	\node[scale=1.25] (3) at (5,-.75) {$f(a \otimes (b \otimes c))$};
	\draw[1c] (0.east) to node[auto] {$\alpha_{f(a),f(b),f(c)}$} (0b.west);
	\draw[1c] (0b.east) to node[auto] {$\idd{f(a)} \otimes f_{b,c}$} (1.west);
	\draw[1c] (2.east) to node[auto,swap] {$f_{a \otimes b,c}$} (2b.west);
	\draw[1c] (2b.east) to node[auto,swap] {$f(\alpha_{a,b,c})$} (3.west);
	\draw[1c] (0.south) to node[auto,swap] {$f_{a,b} \otimes \idd{f(c)}$} (2.north);
	\draw[1c] (1.south) to node[auto] {$f_{a,b\otimes c}$} (3.north);
	\node[scale=1.25] at (6.5,-.9) {,};
\end{tikzpicture}
\end{equation*}
\begin{equation} \label{eq:functor_unitor}
\begin{tikzpicture}[baseline={([yshift=-.5ex]current bounding box.center)}]
\begin{scope}
	\node[scale=1.25] (0) at (-4,.75) {$1_{f(x)} \otimes f(a)$};
	\node[scale=1.25] (1b) at (0,.75) {$f(1_x) \otimes f(a)$};
	\node[scale=1.25] (1) at (4,.75) {$f(1_x \otimes a)$};
	\node[scale=1.25] (3) at (4,-.75) {$f(a)$};
	\draw[1c] (0.east) to node[auto] {$f_x \otimes \idd{f(a)}$} (1b.west);
	\draw[1c] (1b.east) to node[auto] {$f_{1_x,a}$} (1.west);
	\draw[1c] (0) to node[auto,swap] {$\lambda_{f(a)}$} (3);
	\draw[1c] (1.south) to node[auto] {$f(\lambda_a)$} (3.north);
	\node[scale=1.25] at (5,-.9) {,};
\end{scope}
\end{tikzpicture}
\end{equation}
\begin{equation} \label{eq:functor_unitor2}
\begin{tikzpicture}[baseline={([yshift=-.5ex]current bounding box.center)}]
\begin{scope}
	\node[scale=1.25] (0) at (-4,.75) {$f(a) \otimes 1_{f(y)}$};
	\node[scale=1.25] (1b) at (0,.75) {$f(a) \otimes f(1_y)$};
	\node[scale=1.25] (1) at (4,.75) {$f(a \otimes 1_y)$};
	\node[scale=1.25] (3) at (4,-.75) {$f(a)$};
	\draw[1c] (0.east) to node[auto] {$\idd{f(a)} \otimes f_y$} (1b.west);
	\draw[1c] (1b.east) to node[auto] {$f_{a,1_y}$} (1.west);
	\draw[1c] (0) to node[auto,swap] {$\rho_{f(a)}$} (3);
	\draw[1c] (1.south) to node[auto] {$f(\rho_a)$} (3.north);
	\node[scale=1.25] at (5,-.9) {.};
\end{scope}
\end{tikzpicture}
\end{equation}
Bicategories and functors form a large category $\bicat$. We write $\moncat$ for its full subcategory on monoidal categories.
\end{dfn}

\begin{cons} \label{cons:cvu-bicat}
Let $X$ be a tensor representable regular poly-bicategory. We endow the induced 2-graph $GX$ with families of cells as required by the definition of a bicategory:
\begin{enumerate}
	\item for each composable pair of 2-cells $p: (a) \to (b)$ and $q: (b) \to (c)$, let $p;q$ be $\cutt{1,1}(p,q)$;
	\item for each 1-cell $a$, let $\idd{a}: a \to a$ be the unique unit on $a$, whose existence is guaranteed by $X$ being unital;
	\item for each composable pair of 1-cells $a: x \to y$, $b: y \to z$, choose a tensor $t_{a,b}: (a, b) \to (a \otimes b)$, and let $a \otimes b$ be their horizontal composite; for each pair of 2-cells $p: (a) \to (c)$, $q: (b) \to (d)$, such that $\bord{}{+}\bord{}{+}p = \bord{}{-}\bord{}{-}q$, let $p \otimes q$ be the unique 2-cell obtained by the factorisation 
\begin{equation*} 
\input{img/s4_horizontal_tensor.tex}
\end{equation*}
	\item for each 0-cell $x$, choose a tensor unit $1_x: x \to x$ as a horizontal unit;
	\item for each composable triple of 1-cells $a, b, c$, let $\alpha_{a,b,c}$ be the unique isomorphism obtained by the factorisation
\begin{equation*} 
\input{img/s4_associator.tex}
\end{equation*}
	\item choose a coherent family of witnesses of unitality $\{l_a, r_a\}$ as in Theorem \ref{thm:polycoherent}, and let $\lambda_a$ and $\rho_a$ be the unique isomorphisms obtained by the factorisations
\begin{equation*} 
\input{img/s4_unitor.tex}
\end{equation*}
\begin{equation*} 
\input{img/s4_unitor2.tex}
\end{equation*}
\end{enumerate}
Now, let $Y$ be another tensor representable regular poly-bicategory, and $f: X \to Y$ a tensor strong morphism. Having picked families of cells for $GY$ as done for $GX$, we endow $Gf: GX \to GY$ with the structure required by the definition of a functor, as follows:
\begin{enumerate}
	\item for each composable pair of 1-cells $a,b$ in $X$, let $f_{a,b}$ be the unique 2-cell of $Y$ obtained by the factorisation
\begin{equation*} 
\input{img/s4_functor_compositor.tex}
\end{equation*}
	which is an isomorphism because $f(t_{a,b})$ is also universal at $\bord{1}{+}$;
	\item for each 0-cell $x$ of $X$, let $f_x$ be the inverse of the 2-cell $\bar{f}_x$ obtained by the factorisation
	\begin{equation} \label{eq:functor_unitor_def}
\input{img/s4_functor_unitor.tex}
	\end{equation}
	Since $f$ is tensor strong, $f(1_x)$ is tensor universal, and because $f(l_{1_x})$ is universal at $\bord{1}{+}$, it is in fact everywhere universal; this implies that $\bar{f}_x$ is indeed an isomorphism.
\end{enumerate}
\end{cons}

\begin{prop} \label{prop:cvu-bicat}
Let $X$ be a tensor representable regular poly-bicategory. Then $GX$ with the structure defined in Construction \ref{cons:cvu-bicat} is a bicategory.

If $f: X \to Y$ is a tensor strong morphism of tensor representable regular poly-bicategories, $Gf: GX \to GY$ with the structure defined in Construction \ref{cons:cvu-bicat} is a functor of bicategories. 
\end{prop}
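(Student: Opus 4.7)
The plan is to verify every bicategory and functor axiom by a single uniform strategy: for each equation in $GX$ or $GY$, I precompose (or postcompose) both sides with suitable tensor 2-cells $t_{-,-}$ and, where appropriate, unit witnesses $l_-, r_-$, so that each compound 2-cell ($\alpha$, $\lambda$, $\rho$, $p \otimes q$, $f_{a,b}$, $f_x$) gets replaced by its defining cut factorisation. Both sides then become composites of ``basic'' 2-cells in the poly-bicategory and agree by associativity and interchange of $\cutt{i,j}$; uniqueness of factorisation through divisible 2-cells cancels the universal 2-cells to deliver the original equation.

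Applying this strategy, the vertical associativity and unit laws in $GX$ are immediate from the corresponding properties of $\cutt{1,1}$ and $\idd{a}$. Functoriality of horizontal composition with respect to vertical composition and units, and naturality of $\alpha$ in its three arguments, follow from interchange equations for $\cutt{i,j}$ after precomposition with the relevant $t_{-,-}$. The pentagon follows by precomposing with the cascade of tensors that assembles $(a,b,c,d)$ into $((a \otimes b) \otimes c) \otimes d$: both sides then unfold into compositions that reassemble $(a,b,c,d)$ into $a \otimes (b \otimes (c \otimes d))$, and these coincide by associativity of $\cutt{i,j}$. The associators and unitors are isomorphisms because each is the unique factorisation of an everywhere divisible 2-cell---obtained from the divisible tensors, or from the unit witnesses $l_a, r_a$ upgraded to everywhere divisibility via Lemma \ref{lem:twodivisible}---through another such, so dividing in the reverse direction supplies a two-sided inverse.

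The unit-sensitive axioms (naturality of $\lambda, \rho$ and the triangle equation) cannot be handled by the bare strategy, since the pointwise-chosen $l_a, r_a$ need not be mutually compatible. Here I invoke Theorem \ref{thm:polycoherent} applied to $X$: after precomposition with the appropriate tensors, the cut-level equations (\ref{eq:natural}), (\ref{eq:natural2}), and (\ref{eq:trianglepoly}) descend to exactly the required equations for $\lambda$, $\rho$, and the triangle in $GX$.

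For $Gf$ to be a functor, preservation of vertical composition and units is direct from $f$ commuting with cuts and, being tensor strong, with units by Proposition \ref{prop:unitmorph}. Naturality of $f_{a,b}$ is by uniqueness through $t_{f(a'), f(b')}$, and the compatibility of $f_{-,-}$ with $\alpha$ unfolds by the general strategy. The principal obstacle is the unitor compatibility, equations (\ref{eq:functor_unitor}) and (\ref{eq:functor_unitor2}), because $f_x$ is defined only indirectly in (\ref{eq:functor_unitor_def}) via $\bar f_x$. For (\ref{eq:functor_unitor}), after precomposing with $t_{1_{f(x)}, f(a)}$ and expanding $\lambda_{f(a)}$, $f_{1_x, a}$, $f_x \otimes \idd{f(a)}$, and $f(\lambda_a)$ via their defining factorisations (and using $f$ commuting with cuts), the equation reduces to $\cutt{1,1}(\bar f_x, l_{f(a)}) = f(l_a)$---an identity of two witnesses of the left unitality of the tensor unit $f(1_x)$ (a unit by Proposition \ref{prop:unitpreserve1}) acting on $f(a)$. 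I would derive this by applying Theorem \ref{thm:polycoherent} in $Y$: the naturality equation (\ref{eq:natural}) applied to $\bar f_x$, together with the defining factorisation of $\bar f_x$ and the triangle equation (\ref{eq:trianglepoly}) applied at the composite $f(1_x), f(1_x)$, identifies the two sides. Equation (\ref{eq:functor_unitor2}) is proved symmetrically using (\ref{eq:natural2}).
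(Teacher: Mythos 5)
Your proposal is correct in substance, and for the bicategory axioms themselves it coincides with the paper's route: defer vertical composition and coherent associativity to Hermida's argument, and obtain naturality of the unitors and the triangle directly from Theorem \ref{thm:polycoherent}. Where you genuinely diverge is the unitor compatibility of $Gf$. The paper verifies only a \emph{single} instance --- the compatibility square at the unit object, relating $\lambda_{1_{f(x)}}$, $f_x \otimes f_x$, $f_{1_x,1_x}$ and $f(\lambda_{1_x})$, which falls out of equation (\ref{eq:natural2}) applied to $\bar{f}_x$ together with $l_{1_y}=r_{1_y}$ --- and then invokes Kock's Proposition 3.5 as a black box: for Saavedra units, compatibility at the idempotent unit implies the full diagrams (\ref{eq:functor_unitor}) and (\ref{eq:functor_unitor2}). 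You instead reduce (\ref{eq:functor_unitor}) to the identity $\cutt{1,1}(\bar{f}_x, l_{f(a)}) = f(l_a)$ for \emph{every} $a$ and prove it inside the poly-bicategory; this amounts to inlining the content of Kock's lemma. Your reduction is correct and reversible (since $f_x$ is invertible), and the approach buys self-containedness at the cost of redoing a known argument; the paper's appeal to Kock is shorter but imports an external result about monoidal categories into the poly-bicategorical setting without comment.

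One caveat on your final step: the ingredients you list (naturality applied to $\bar{f}_x$, the defining factorisation (\ref{eq:functor_unitor_def}), the triangle at $f(1_x),f(1_x)$) are not quite enough as stated. To close the argument one precomposes both candidate witnesses with $f(l_{1_x})$ in the unit input; the defining equation of $\bar{f}_x$ together with (\ref{eq:natural2}) applied to $\bar{f}_x$ and $l_{1_{f(x)}}=r_{1_{f(x)}}$ turns $\cutt{1,1}(\bar{f}_x,l_{f(a)})$ composed with $f(l_{1_x})$ into its own ``square'' via the triangle for the ambient family $\{l_c,r_c\}$ in $Y$; one then also needs the triangle for the \emph{image} family $\{f(l_a),f(r_a)\}$ (i.e.\ $f$ applied to equation (\ref{eq:trianglepoly}) in $X$) to do the same for $f(l_a)$, and finally the divisibility of $f(l_a)$ at $\bord{2}{-}$ and $\bord{1}{+}$ to cancel and conclude that the connecting automorphism of $f(a)$ is the identity. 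This is a completable sketch rather than a gap, but the appeal to the coherence of \emph{both} families, and the final cancellation, should be made explicit.
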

\begin{proof}
The conditions relative to vertical composition and coherent associativity of the horizontal composition are handled exactly as in \cite[Definition 9.6]{hermida2000representable}, so we only need to show that the unitors are natural in their parameters, and that they satisfy the triangle equations. By our definition of the unitors, these are all simple consequences of Theorem \ref{thm:polycoherent}.

Now, if $f: X \to Y$ is a tensor strong morphism, the fact that $Gf: GX \to GY$ is compatible with the associators is shown as in \cite[Proposition 9.7]{hermida2000representable}. For compatibility with the unitors, observe that, by our definition of $f_x$ (omitting labels of 1-cells),
\begin{equation*} 
\input{img/s4_functor_unitor2.tex}
\end{equation*}
where the first equation is an instance of (\ref{eq:natural2}), as $l_{1_y} = r_{1_y}$ for all $y$. This implies that, in $GY$, the diagram
\begin{equation*}
\begin{tikzpicture}[baseline={([yshift=-.5ex]current bounding box.center)}]
	\node[scale=1.25] (0) at (-3.5,.75) {$1_{f(x)} \otimes 1_{f(x)}$};
	\node[scale=1.25] (1) at (3.5,.75) {$1_{f(x)}$};
	\node[scale=1.25] (2) at (-3.5,-.75) {$f(1_x) \otimes f(1_x)$};
	\node[scale=1.25] (2b) at (0,-.75) {$f(1_x \otimes 1_x)$};
	\node[scale=1.25] (3) at (3.5,-.75) {$f(1_x)$};
	\draw[1c] (0.east) to node[auto] {$\lambda_{1_{f(x)}}$} (1.west);
	\draw[1c] (2.east) to node[auto,swap] {$f_{1_x,1_x}$} (2b.west);
	\draw[1c] (2b.east) to node[auto,swap] {$f(\lambda_{1_x})$} (3.west);
	\draw[1c] (0.south) to node[auto,swap] {$f_x \otimes f_x$} (2.north);
	\draw[1c] (1.south) to node[auto] {$f_x$} (3.north);
\end{tikzpicture}
\end{equation*}
commutes; by \cite[Proposition 3.5]{kock2008elementary}, this suffices to prove that all the diagrams (\ref{eq:functor_unitor}) and (\ref{eq:functor_unitor2}) commute.
\end{proof}
\begin{cor}
A choice of structure as in Construction \ref{cons:cvu-bicat} for each tensor representable regular multi-bicategory $X$ and tensor strong morphism $f: X \to Y$ determines a functor $G: \mbcat_\otimes \to \bicat$, restricting to $G: \mcat_\otimes \to \moncat$.
\end{cor}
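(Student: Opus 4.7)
The plan is to verify that $G$ preserves identities and composition, since Proposition \ref{prop:cvu-bicat} already ensures $GX$ is a bicategory and $Gf$ is a functor of bicategories. Both verifications reduce to uniqueness of factorisations through divisible 2-cells---specifically, through the chosen tensors $t_{a,b}$ and through the coherent witnesses of unitality $l_a, r_a$ produced by Theorem \ref{thm:polycoherent}.

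First I would handle identities. For $\idd{X}: X \to X$, the structure isomorphism $(\idd{X})_{a,b}$ is determined by factoring $t_{a,b}$ through itself, and $\bar{(\idd{X})}_x$ by factoring $l_{1_x}$ through $r_{1_x}$. Because the coherent choice imposed in the proof of Theorem \ref{thm:polycoherent} forces $l_{1_x} = r_{1_x}$, the unit $\idd{1_x}$ already satisfies both factorisation equations, so uniqueness yields $(\idd{X})_{a,b} = \idd{a \otimes b}$ and $(\idd{X})_x = \idd{1_x}$. Hence $G(\idd{X}) = \idd{GX}$.

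Next I would handle composition. Given tensor strong morphisms $f: X \to Y$ and $g: Y \to Z$, the defining factorisations for $g \circ f$ read
\begin{equation*}
  gf(t_{a,b}) = t_{gf(a),gf(b)} ; (g \circ f)_{a,b}, \qquad gf(l_{1_x}) = r_{gf(1_x)} ; \bar{(g \circ f)}_x.
\end{equation*}
By applying $g$ to the defining equations for $f_{a,b}$ and $\bar{f}_x$, and substituting $g$'s own defining equations, one shows that $g_{f(a),f(b)} ; g(f_{a,b})$ and $g(\bar{f}_x) ; \bar{g}_{f(x)}$ satisfy these same factorisation equations. Uniqueness of factorisation identifies them with $(g \circ f)_{a,b}$ and $\bar{(g \circ f)}_x$, and inverting yields exactly the formula by which structure isomorphisms of functors of bicategories compose. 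Hence $G(g \circ f) = Gg \circ Gf$.

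The main technical point to track carefully is the direction of isomorphisms in the unit case: $f_x$ is defined as the inverse of $\bar{f}_x$, and the factorisation of $\bar{f}_x$ passes through $r_{f(1_x)}$ rather than $l_{f(1_x)}$, so the coherence $l_{1_y} = r_{1_y}$ from Theorem \ref{thm:polycoherent} is what makes the two sides of the composite factorisation align. The restriction $G: \mcat_\otimes \to \moncat$ is then immediate, since regular multicategories have a single 0-cell, so $GX$ is a monoidal category, and any morphism vacuously preserves this single-0-cell condition.
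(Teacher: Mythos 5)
Your overall strategy---reduce functoriality of $G$ to uniqueness of factorisations through the chosen divisible 2-cells---is the right one, and is in fact all the paper offers: the corollary is stated without proof, as an immediate consequence of Proposition \ref{prop:cvu-bicat}. The identity case and the tensor part of the composition case are correct as you describe them: applying $g$ to the defining equation of $f_{a,b}$ and then substituting the defining equation of $g_{f(a),f(b)}$, which concerns exactly the cell $g(t_{f(a),f(b)})$ that appears, identifies $(g\circ f)_{a,b}$ with $g_{f(a),f(b)}$ followed by $g(f_{a,b})$, and the restriction to $\mcat_\otimes \to \moncat$ is indeed trivial.

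The unit case, however, does not close the way you state it. Applying $g$ to the defining equation (\ref{eq:functor_unitor_def}) for $\bar{f}_x$ produces the 2-cell $g(r_{f(1_x)}): (gf(1_x), g(1_{f(x)})) \to (gf(1_x))$, whereas ``$g$'s own defining equation'' concerns $g(l_{1_{f(x)}}): (g(1_{f(x)}), g(1_{f(x)})) \to (g(1_{f(x)}))$ --- a different cell, with a different first input --- so there is nothing to substitute. What you actually need is that $g(r_{f(1_x)})$ equals $r_{gf(1_x)}$ precomposed with $\bar{g}_{f(x)}$ at its second input; this is the poly-bicategorical form of diagram (\ref{eq:functor_unitor2}) for the functor $Gg$ at the 1-cell $f(1_x)$ of $Y$. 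It is not an instance of the coincidence $l_{1_y} = r_{1_y}$ that you flag as the key technical point: that coincidence only bears on the case where the first input is itself the chosen unit, and $f(1_x)$ is merely isomorphic to $1_{f(x)}$, not equal to it. The identity you need is nonetheless available, because Proposition \ref{prop:cvu-bicat} establishes (\ref{eq:functor_unitor}) and (\ref{eq:functor_unitor2}) for $Gg$ in full, via equation (\ref{eq:natural2}) of Theorem \ref{thm:polycoherent} together with the cited Proposition 3.5 of Kock; so the fix is to invoke that compatibility explicitly in place of the defining factorisation of $\bar{g}_{f(x)}$. With that substitution your argument goes through.
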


The inverse construction, from bicategories to tensor representable regular multi-bicategories, does not present any significant difference compared to Hermida's \cite[Definition 9.2]{hermida2000representable}, so we will treat it only briefly, skipping the details. In what follows, we only need a partial version of the coherence theorem for bicategories and functors. We write $[a_1 \otimes \ldots \otimes a_n]_\beta$ for a \emph{bracketing} $\beta$ of $a_1 \otimes \ldots \otimes a_n$; for example, we can have $[a \otimes b \otimes c]_\beta = (a\otimes b) \otimes c$ or $a \otimes (b \otimes c)$.

\begin{thm}\emph{\cite[Corollary 1.8]{joyal1993braided}} \label{thm:functocoherence}
Let $f: B \to C$ be a functor of bicategories. Then, for any sequentially composable sequence $a_1, \ldots, a_n$ of 1-cells of $B$, and any pair of bracketings $\beta, \gamma$ of $n$ elements, there is a unique 2-cell built from
\begin{enumerate}
	\item vertical units, associators and their inverses in $C$,
	\item the images through $f$ of the associators and their inverses in $B$, and
	\item the structural 2-cells $f_{a,b}$ of $f$
\end{enumerate}
between $f([a_1 \otimes \ldots \otimes a_n]_\beta)$ and $[f(a_1) \otimes \ldots \otimes f(a_n)]_\gamma$.
\end{thm}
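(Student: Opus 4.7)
The plan is to establish existence and uniqueness separately, reducing to Mac Lane's coherence theorem for bicategories.

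\textbf{Existence.} I would proceed by induction on $n$. The case $n = 1$ is trivial. For $n \geq 2$, fix a canonical target bracketing $\gamma_0$ (say, the fully left-associated one) and construct a 2-cell $\phi_\beta : f([a_1 \otimes \ldots \otimes a_n]_\beta) \to [f(a_1) \otimes \ldots \otimes f(a_n)]_{\gamma_0}$ as follows: split $\beta$ at the top level as $\beta_1 \otimes \beta_2$ over $\{a_1,\ldots,a_k\}$ and $\{a_{k+1},\ldots,a_n\}$; apply $f_{-,-}^{-1}$ to turn $f([\cdots]_{\beta_1} \otimes [\cdots]_{\beta_2})$ into $f([\cdots]_{\beta_1}) \otimes f([\cdots]_{\beta_2})$; recurse on each factor; and finish by applying the Mac Lane coherence isomorphism in $C$ to reach $\gamma_0$. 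Composing with the Mac Lane isomorphism from $\gamma_0$ to $\gamma$ yields the desired 2-cell.

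\textbf{Uniqueness.} My plan would be to strictify. By the strictification theorem for bicategories, there are biequivalences $B \simeq B^{\text{st}}$ and $C \simeq C^{\text{st}}$ with strict 2-categories, and $f$ lifts to a pseudofunctor $f^{\text{st}} : B^{\text{st}} \to C^{\text{st}}$ whose only non-trivial structure is the family $f^{\text{st}}_{a,b}$. Modulo this transport, every 2-cell built from the generators listed in the statement becomes a composite of $f^{\text{st}}_{-,-}$ and its inverses connecting two fixed bracketings (the associators now being identities). Uniqueness of such a composite follows by a confluence argument governed by the hexagonal coherence axiom of a pseudofunctor, which reconciles the two ways of distributing $f^{\text{st}}_{-,-}$ across a triple tensor. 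Transporting back through the biequivalences gives uniqueness for the original $f$.

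The main obstacle is uniqueness. Existence is a direct induction, but uniqueness requires either a rewriting analysis of a system generated by the hexagon (and the unit triangles), or an invocation of the coherence of a free pseudofunctor on a suitable generating datum. One must verify that strictification transports enough structure to preserve and reflect equality of composite 2-cells, and, on the rewriting side, that the hexagon axiom for $f$ is genuinely sufficient to reconcile all possible ways of interleaving applications of $f_{-,-}$ and its inverse across a tensor tree with $n$ leaves. An alternative route, which sidesteps strictification, would build the free bicategory on $n$ sequentially composable 1-cells together with the free pseudofunctor out of it, and read off the result from the explicit presentation; but the combinatorics is of comparable difficulty.
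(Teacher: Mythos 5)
The paper does not prove this statement: it is imported verbatim as \cite[Corollary 1.8]{joyal1993braided}, so there is no internal proof to compare against, and your sketch has to be judged on its own. The existence half is fine and is the standard induction. The problem is that essentially all of the mathematical content of the theorem lives in the uniqueness half, and there your proposal names two strategies without carrying either out. The ``confluence argument governed by the hexagonal coherence axiom'' \emph{is} the theorem: showing that the single associativity axiom for $f$ (together with the pentagon in $C$ and in $B$) suffices to reconcile every interleaving of associators and instances of $f_{-,-}$ across an $n$-leaf tree is exactly the Mac Lane-style induction one has to perform, and asserting confluence is not a substitute for exhibiting a terminating, locally confluent rewriting system (or a normal-form argument). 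As written, the key step is a restatement of the goal.

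There is also a concrete circularity hazard in the strictification route. To transport $f$ to a (pseudo)functor $f^{\mathrm{st}}:B^{\mathrm{st}}\to C^{\mathrm{st}}$ acting on formal strings, you must define $f^{\mathrm{st}}$ on 2-cells by conjugating with comparison cells $f([a_1\otimes\cdots\otimes a_n]_\beta)\to[f(a_1)\otimes\cdots\otimes f(a_n)]_\gamma$ --- precisely the cells whose uniqueness is at stake --- and then verify that $f^{\mathrm{st}}$ respects horizontal composition, which amounts to the commutativity of diagrams of exactly the shape you are trying to prove commute. This can be repaired by fixing canonical comparison cells inductively (e.g.\ always merging on the left) so that compatibility with concatenation holds by construction, but that repair is itself the bulk of the work and must be made explicit. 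Also, the unit triangles you mention play no role here: the statement involves no unitors, only associators and the $f_{a,b}$. Joyal and Street's own argument is your ``alternative route'': they construct the free monoidal functor on a generating datum and show directly that the relevant hom-sets are singletons; if you want a self-contained proof, that is the cleaner path, but the combinatorial analysis still has to be done rather than deferred.
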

\begin{cor}\emph{\cite[Theorem 3.1]{maclane1963natural}} \label{cor:assocoherence}
Let $B$ be a bicategory. Then, for any sequentially composable sequence $a_1, \ldots, a_n$ of 1-cells of $B$, there is a unique 2-cell built from vertical units, associators and their inverses between any two bracketings of $a_1 \otimes \ldots \otimes a_n$.
\end{cor}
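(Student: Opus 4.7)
The plan is to derive Corollary \ref{cor:assocoherence} directly from Theorem \ref{thm:functocoherence} by specialising to the identity functor. Given a bicategory $B$, consider $\idd{B}: B \to B$ as an endofunctor. I need to check that $\idd{B}$ admits the structure of a functor of bicategories: for each composable pair of 1-cells $a: x \to y$, $b: y \to z$, set $(\idd{B})_{a,b} := \idd{a \otimes b}$, and for each 0-cell $x$, set $(\idd{B})_x := \idd{1_x}$. Naturality of these families with respect to 2-cell composition is then the statement that $\idd{a \otimes b}$ is a vertical unit on $a \otimes b$, and compatibility with associators and unitors reduces to the tautologies $\alpha_{a,b,c} = \idd{} ;\alpha_{a,b,c};\idd{}$, $\lambda_a = (\idd{1_x} \otimes \idd{a});\idd{1_x \otimes a};\lambda_a$, etc., all of which follow from unitality of vertical composition and of the horizontal composition on identity 2-cells.

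Having established that $\idd{B}$ is a functor with these choices, apply Theorem \ref{thm:functocoherence}. The 2-cells in classes (1), (2), and (3) of that theorem then reduce to: (1) vertical units, associators, and their inverses in $B$; (2) again the associators and their inverses in $B$, since $\idd{B}$ acts as the identity; and (3) vertical units, since each structural 2-cell $(\idd{B})_{a,b}$ was chosen to be $\idd{a \otimes b}$. Moreover, since $\idd{B}([a_1 \otimes \ldots \otimes a_n]_\beta) = [a_1 \otimes \ldots \otimes a_n]_\beta$, the two endpoints of the 2-cell in Theorem \ref{thm:functocoherence} become exactly the two bracketings appearing in the corollary. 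The existence and uniqueness of the desired 2-cell follow immediately.

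There is no genuine obstacle here: the entire content of the corollary is packaged inside Theorem \ref{thm:functocoherence}, and the only minor care required is to verify that the identity bicategory endofunctor can be presented with strict structural 2-cells, so that the list of generators in the theorem collapses to the list in the corollary. Once this is observed, the specialisation is immediate, and no further combinatorial argument is required.
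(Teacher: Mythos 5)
Your derivation is correct and is exactly the route the paper intends: the statement is presented as a corollary of Theorem \ref{thm:functocoherence}, obtained by specialising to the identity functor with strict structural 2-cells $(\idd{B})_{a,b} = \idd{a\otimes b}$, $(\idd{B})_x = \idd{1_x}$, under which the three classes of generating 2-cells collapse to vertical units, associators, and their inverses. Your check that these choices do make $\idd{B}$ a functor (using $\idd{a}\otimes\idd{b}=\idd{a\otimes b}$ and unitality of vertical composition) is the only point needing verification, and you have handled it.
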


\begin{cons} \label{cons:groth_monoidal}
Let $B$ be a bicategory; we define a regular multi-bicategory $\int\! B$ as follows. The 0-cells and 1-cells of $\int\! B$ are the same as the 0-cells and 1-cells of $B$. For any compatible choices of 1-cells, the 2-cells $(a_1,\ldots, a_n) \to (b)$ in $\int\! B$ correspond to 2-cells $p: (\ldots(a_1\otimes a_2) \ldots \otimes a_{n-1}) \otimes a_n \to b$ in $B$. Composition is induced by the composition in $B$, using the unique coherence 2-cells as in Corollary \ref{cor:assocoherence} to re-bracket as needed.

Given a functor $f: B \to C$ of bicategories, we define a morphism $\int\! f: \int\! B \to \int\! C$ of regular multi-bicategories, which is the same as $f$ on 0-cells and 1-cells, and maps a 2-cell $(a_1, \ldots, a_n) \to (a)$, corresponding to $p: (\ldots(a_1\otimes a_2) \ldots )\otimes a_n \to b$, to the 2-cell obtained by precomposing $f(p)$ with the unique coherence 2-cell 
\begin{equation*}
	f((\ldots(a_1\otimes a_2) \ldots )\otimes a_n) \to (\ldots(f(a_1)\otimes f(a_2))\ldots )\otimes f(a_n)
\end{equation*}
given by Theorem \ref{thm:functocoherence}. The theorem also guarantees that $\int\! f$ commutes with compositions in $\int\! B$ and $\int\! C$.
\end{cons}

\begin{prop}
Let $B$ be a bicategory. Then $\int\! B$ is a tensor representable regular multi-bicategory. Moreover, if $f: B \to C$ is a functor, $\int\! f: \int\! B \to \int\! C$ is a tensor strong morphism.
\end{prop}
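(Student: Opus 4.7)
The plan is to verify the axioms in three stages: first that $\int\! B$ is a regular multi-bicategory, then that it carries the two representability properties, and finally that $\int\! f$ inherits the structural features needed to be tensor strong. The first stage is largely routine: associativity and interchange schemes for the cut operations translate, via the definition of composition in $\int\! B$, into equations in $B$ between pastings of horizontal and vertical composites together with the coherence 2-cells of Corollary \ref{cor:assocoherence}. The uniqueness clause in that corollary reduces each such scheme to an equation that is automatic from the bicategory axioms (vertical associativity/unitality, and functoriality of horizontal composition).

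For tensor 1-representability, given composable $a: x \to y$ and $b: y \to z$, I propose to take as $t_{a,b}: (a,b) \to (a \otimes b)$ the 2-cell of $\int\! B$ that corresponds to $\idd{a \otimes b}$ in $B$. A 2-cell $p: (\Gamma_1, a \otimes b, \Gamma_2) \to (c)$ of $\int\! B$ corresponds to a 2-cell in $B$ whose source is a particular bracketing of $\Gamma_1 \otimes (a \otimes b) \otimes \Gamma_2$; by Corollary \ref{cor:assocoherence}, precomposing with the unique associativity isomorphism to any bracketing of $\Gamma_1 \otimes a \otimes b \otimes \Gamma_2$ yields a 2-cell of $\int\! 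B$ with source $(\Gamma_1, a, b, \Gamma_2)$, and this is the unique factorisation through $t_{a,b}$. For tensor 0-representability, fix $x \in B_0$ and let $1_x$ also denote the corresponding 1-cell of $\int\! B$; take $l_a: (1_x, a) \to (a)$ and $r_b: (b, 1_x) \to (b)$ to be the cells corresponding to the unitors $\lambda_a$ and $\rho_b$ of $B$. Because the unitors are invertible, the required divisibility at both boundary positions reduces to composing with $\lambda_a^{-1}$ or $\rho_b^{-1}$ and then re-bracketing via Corollary \ref{cor:assocoherence}, which gives a unique factorisation just as in the tensor case. Thus $1_x$ is a tensor unit on $x$, and $\int\! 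B$ is tensor representable.

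For the second statement, $\int\! f$ is unital because $f$ preserves vertical units: the unique unit $\idd{a}$ on $a$ in $\int\! B$ corresponds to $\idd{a}$ in $B$ and is sent to $\idd{f(a)}$ in $C$, i.e.\ to the unit on $f(a)$ in $\int\! C$. It preserves tensors because $\int\! f(t_{a,b})$ corresponds to $\idd{f(a \otimes b)}$ together with the coherence 2-cell of Theorem \ref{thm:functocoherence}, and composing this with the isomorphism $f_{a,b}: f(a) \otimes f(b) \to f(a \otimes b)$ shows that $\int\! f(t_{a,b})$ is isomorphic to $t_{f(a),f(b)}$ in $\int\! C$, hence is also divisible at $\bord{1}{+}$. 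Similarly, the isomorphism $f_x: 1_{f(x)} \to f(1_x)$ witnesses that $\int\! f(1_x)$ is isomorphic to $1_{\int\! f(x)}$, which is a tensor unit, so $\int\! f$ preserves tensor units. Since $\int\! C$ is tensor 1-representable by the first part, Corollary \ref{cor:isomorphism_cor} applies and lets us conclude that $\int\! f$ is tensor strong without any separate verification that tensor divisibility of 1-cells is preserved. The only real obstacle I foresee is bookkeeping: one must be careful that the coherence 2-cells inserted when composing in $\int\! B$ and $\int\! C$, and the ones inserted in the definition of $\int\! f$ via Theorem \ref{thm:functocoherence}, combine to give a genuine morphism of regular multi-bicategories, but the uniqueness clauses in Theorem \ref{thm:functocoherence} and Corollary \ref{cor:assocoherence} reduce every such check to an identity.
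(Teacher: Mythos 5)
Your proof is correct and follows essentially the same route as the paper, which simply cites Hermida's Proposition 9.4 for the multi-bicategory axioms and tensor 1-representability, observes that the unitors of $B$ furnish (coherent) witnesses of unitality in $\int\! B$, and invokes Corollary \ref{cor:isomorphism_cor} to upgrade preservation of tensor units and tensors to tensor strongness of $\int\! f$. The only difference is that you spell out the details the paper delegates to Hermida.
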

\begin{proof}
This is \cite[Proposition 9.4]{hermida2000representable}, combined with the fact that the unitors of $B$ induce a coherent family of witnesses of unitality in $\int\! B$, and Corollary \ref{cor:isomorphism_cor} for the fact that preservation of tensor units implies preservation of tensor universal cells for $\int\! f$.
\end{proof}
We can then adapt \cite[Theorem 9.8]{hermida2000representable} to obtain the following.
\begin{cor} \label{cor:equiv_bicat_rep}
Construction \ref{cons:groth_monoidal} defines a functor $\int: \bicat \to \mbcat_\otimes$, restricting to a functor $\int: \moncat \to \mcat_\otimes$, inverse to $G$ up to natural isomorphism.
\end{cor}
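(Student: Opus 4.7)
The plan is to adapt Hermida's argument \cite[Theorem 9.8]{hermida2000representable} to our regular setting, using the coherence results already established. The proof naturally splits into three parts: functoriality of $\int$, naturality, and construction of the two natural isomorphisms.

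First, I would verify that $\int$ is genuinely a functor $\bicat \to \mbcat_\otimes$, not just a map on objects and morphisms. Given composable functors $f: B \to C$ and $g: C \to D$, both $\int(g \circ f)$ and $\int g \circ \int f$ send a 2-cell $p: (\ldots(a_1 \otimes a_2)\ldots) \otimes a_n \to b$ in $B$ to a 2-cell built by precomposing $g(f(p))$ with a 2-cell made from associators, unitors, and structure isomorphisms $f_{a,b}, g_{a,b}, f_x, g_x$ of the right type. Theorem \ref{thm:functocoherence} guarantees that any two such coherence 2-cells between the same source and target agree, forcing $\int(g \circ f) = \int g \circ \int f$; similarly $\int(\idd{B}) = \idd{\int B}$.

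Next, I would construct the natural isomorphism $\eta_B: B \xrightarrow{\sim} G\int B$. On the underlying 2-globular set this is the identity, since the 2-cells of $G\int B$ (i.e.\ 2-cells $(a) \to (b)$ in $\int B$) correspond by definition to 2-cells $a \to b$ in $B$. To make it a functor, the structural isomorphisms $(\eta_B)_{a,b}: a \otimes_B b \to a \otimes_{G\int B} b$ are the components of the chosen tensor $t_{a,b}$ in $\int B$ read as a 2-cell in $B$, and $(\eta_B)_x: 1^B_x \to 1^{G\int B}_x$ is the canonical isomorphism between the chosen tensor unit $1^{G\int B}_x$ and $1^B_x$ (any two tensor units on the same 0-cell are isomorphic via the uniqueness property of Theorem \ref{thm:0representable} and Lemma \ref{lem:twodivisible}). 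Compatibility of $\eta_B$ with associators and unitors is forced by how these are defined in Construction \ref{cons:cvu-bicat} (via factorisation through tensors and coherent witnesses of unitality); naturality of $\eta$ in $B$ reduces to the uniqueness of the coherence 2-cells produced by the two constructions.

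For the inverse direction, I would construct $\epsilon_X: X \xrightarrow{\sim} \int G X$ for each tensor representable regular multi-bicategory $X$. It is the identity on 0-cells and 1-cells; on 2-cells, it sends $p: (a_1, \ldots, a_n) \to (b)$ in $X$ to the unique 2-cell $\tilde{p}: (\ldots(a_1 \otimes a_2)\ldots) \otimes a_n \to b$ obtained by factoring $p$ through the iterated tensor $(a_1, \ldots, a_n) \to ((\ldots(a_1 \otimes a_2)\ldots) \otimes a_n)$ built from the chosen tensors $t_{a,b}$. Since each tensor 2-cell is everywhere divisible by Theorem \ref{thm:2outof3}, their iterated composite is also everywhere divisible, giving a bijection on 2-cells. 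The main verification is that this bijection commutes with the $\cutt{j,i}$ operations on both sides, which is straightforward but tedious — one rewrites each composition as a pasting of tensor 2-cells and the underlying 2-cells, using associativity and interchange in $X$, and uses uniqueness of factorisation to identify the result with the corresponding composition in $\int GX$.

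The main obstacle will be the bookkeeping in the last step: verifying that the correspondence $p \mapsto \tilde{p}$ intertwines all the $\cutt{j,i}$ operations, because one must carefully track how compositions of general polycategorical 2-cells translate, via divisibility, into compositions of the fully-bracketed 2-cells of $\int GX$, and appeal repeatedly to Theorem \ref{thm:polycoherent} to ensure the unitor-related coherence data match. Once this is done, naturality of $\epsilon$ with respect to tensor strong morphisms follows from the fact that such morphisms preserve divisible 2-cells, hence preserve the canonical factorisations defining $\epsilon$.
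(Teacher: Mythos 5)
Your proposal is correct and follows essentially the same route as the paper, which simply adapts \cite[Theorem 9.8]{hermida2000representable} using the already-established facts that $\int B$ is tensor representable, that $\int f$ is tensor strong, and that $G$ produces a bicategory via Construction \ref{cons:cvu-bicat} and Theorem \ref{thm:polycoherent}. One small correction: in the construction of $\epsilon_X$ you justify the bijection on 2-cells by saying each tensor is ``everywhere divisible by Theorem \ref{thm:2outof3}''--- that theorem concerns divisible \emph{1-cells}, and a tensor $t_{a,b}$ is only required to be divisible at $\bord{1}{+}$; what you actually need (and all you need) is that iterated composites of 2-cells divisible at $\bord{1}{+}$ remain divisible at $\bord{1}{+}$, the standard fact the paper imports from \cite{cockett2003morphisms}.
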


\begin{remark}
Combined with the bicategorical version of \cite[Theorem 9.8]{hermida2000representable}, this result implies that a unit constructed in a non-regular poly-bicategory from universal cells with degenerate boundaries is also a tensor unit in our sense, so the restriction of a non-regular poly-bicategory with units to its regular 2-cells is tensor 0-representable.
\end{remark}

\begin{remark} 
We do not wish at the moment to extend the equivalence to the level of natural transformations; this is because there does not seem to be a good theory of transformations between morphisms of poly-bicategories that are not equal on 0-cells (something that is not problematic when limited to multicategories and monoidal categories, as in Hermida's treatment). Instead, we will develop it for \emph{merge-bicategories} in the next section.
\end{remark}

\begin{remark}
The connection between tensor representable multi-bicategories and bicategories becomes weaker in the regular case when considering \emph{lax} functors instead of strong ones. In particular, while arbitrary morphisms of non-regular tensor representable multi-bicategories automatically induce lax functors, in the regular case units are not even preserved in the lax sense. It is plausible that the definitions of \cite[Section 6]{kock2008elementary} can be adapted to the poly-bicategorical context, but we will only treat proper functors here.
\end{remark}

An analogue of Hermida's theory for closed categories \cite{eilenberg1966closed} and closed (non-regular) multicategories was developed by Manzyuk in \cite{manzyuk2012closed}. It is straightforward to adapt our arguments relating to units to that setting, and prove a version of \cite[Proposition 4.3]{manzyuk2012closed} for regular multicategories; we leave it as an exercise to fill in the details.
\begin{prop}
There are equivalences between the following pairs of large categories:
\begin{enumerate}[label=(\alph*)]
	\item right closed (left closed) regular multicategories with right closed (left closed) morphisms, and right closed (left closed) categories with strong right closed (left closed) functors;
	\item tensor representable, right and left closed regular multicategories with tensor strong, right and left closed morphisms, and closed monoidal categories with strong closed monoidal functors.
\end{enumerate}
\end{prop}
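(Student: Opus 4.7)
The plan is to adapt Manzyuk's equivalence \cite{manzyuk2012closed} between closed (non-regular) multicategories and closed categories, replacing his unit 2-cells with degenerate boundaries by our Saavedra-style tensor units, and then combine with the constructions already established in this section. Since the two statements in the proposition are entirely parallel, I sketch part $(a)$ in detail and indicate what changes for $(b)$; I treat only the right closed case, the left closed one being symmetric.

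For one direction, starting from a right closed regular multicategory $X$, I would build the underlying closed category $C = CX$ as follows. The objects of $C$ are the 1-cells of $X$, and the morphisms $a \to b$ are the 2-cells $(a)\to(b)$, with composition given by $\cutt{1,1}$ and identities by the units provided by $0$-representability. The unit object is the chosen tensor unit $1$ on the unique 0-cell, and the internal hom functor is $[a,b] := \rimp{a}{b}$, with action on morphisms defined by factorising through the canonical cell $e^R_{a,b}$ using its divisibility at $\bord{2}{-}$. The structural natural transformations of a closed category---the canonical $I \to [a,a]$, the composition map $[b,c] \to [[a,b],[a,c]]$, the evaluation $i_a : [I,a] \to a$, and the Yoneda-type map $j_a : I \to [a,a]$---are each defined as the unique 2-cell produced from a well-formed equation involving $e^R_{-,-}$, $\tilde{l}_{-}$, $\tilde{r}_{-}$, and the units, exactly as in Construction \ref{cons:cvu-bicat}. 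On morphisms, a right closed morphism $f : X \to Y$ is unital and preserves tensor units and right homs, so it induces a functor on the underlying categories that commutes with $[-,-]$ and sends $I$ to $I$ up to the canonical isomorphism obtained from $\bar f_x$ of (\ref{eq:functor_unitor_def}).

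For the inverse direction, following the recipe of Construction \ref{cons:groth_monoidal}, I associate to a right closed category $C$ a regular multicategory $\int C$ with a single 0-cell, whose 1-cells are the objects of $C$, and whose 2-cells $(a_1,\ldots,a_n) \to (b)$ are morphisms $a_1 \to [a_2,[a_3,\ldots,[a_n,b]\ldots]]$ of $C$ (for $n \geq 1$, as required by regularity). Composition $\cutt{j,i}$ is defined by combining the closed-category structural maps, re-associating the iterated homs via unique coherence isomorphisms analogous to Corollary \ref{cor:assocoherence} but for closed categories. One verifies that $I$ is a tensor unit on the 0-cell, and that the identity element of $[a,b]$ exhibits $[a,b]$ as a right hom from $a$ to $b$. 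A strong right closed functor of closed categories lifts to a right closed morphism of regular multicategories, with tensor units preserved up to the chosen isomorphism $1 \to fI$.

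Then I would check that $C\int C \simeq C$ and $\int CX \simeq X$, both naturally; the first is routine, since the closed category structure on $C\int C$ is defined tautologically from the operations of $C$, while the second uses that in $X$ a 2-cell $(a_1,\ldots,a_n) \to (b)$ factors uniquely through iterated right homs, by repeated application of divisibility of $e^R_{-,-}$ at $\bord{2}{-}$. For part $(b)$, the same construction applies, with the additional requirement that $X$ be tensor representable; the equivalence of the tensor- and hom-parameterised descriptions of 2-cells in $X$ then yields the standard adjunction $a \otimes - \dashv [a,-]$, so that $CX$ becomes closed monoidal, and tensor strength of $f$ supplies the monoidal strength of the induced functor by Corollary \ref{cor:equiv_bicat_rep}. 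The main obstacle, and the one requiring genuine new work beyond transcribing Manzyuk, will be verifying the Eilenberg--Kelly axioms relating the unit to the internal hom---specifically the axioms involving $j$ and $i$---since in the non-regular setting these follow from a 2-cell with empty input boundary, while in our regular setting they must be derived from the coherent witnesses of unitality supplied by Theorem \ref{thm:polycoherent} together with the two-out-of-three divisibility of Theorem \ref{thm:2outof3}.
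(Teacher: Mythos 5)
Your proposal is correct and is exactly the route the paper intends: the paper gives no proof here, explicitly leaving the statement as an exercise in adapting Manzyuk's argument by replacing the nullary unit cells with Saavedra-style tensor units and the coherent witnesses of Theorem \ref{thm:polycoherent}, which is precisely what you do, and you correctly isolate the Eilenberg--Kelly unit axioms as the only point requiring genuinely new work. One convention slip to fix: with right homs as defined in the paper, $e^R_{a,b}: (a,\rimp{a}{b}) \to (b)$ is divisible at $\bord{2}{-}$, so currying strips the \emph{first} input, and a 2-cell $(a_1,\ldots,a_n) \to (b)$ corresponds to a morphism $a_n \to [a_{n-1},[\ldots,[a_1,b]\ldots]]$ rather than $a_1 \to [a_2,[\ldots,[a_n,b]\ldots]]$ (your formula is the left-hom form).
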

Although it is easy to define, we do not know of any applications of the variant of closed categories with ``many 0-cells'', corresponding to closed multi-bicategories.

Poly-bicategories were defined by Cockett, Koslowski, and Seely in order to have a coherence-via-universality approach to \emph{linear bicategories} \cite{cockett2000introduction}, which in turn they had defined as the variant with many 0-cells of linearly distributive categories \cite{cockett1997weakly}. While they developed a sophisticated theory of linear functors \cite{cockett1999linearly}, that preserve linear adjoints while being as lax as possible, we will only consider their ``strongest possible'' (but not strict in any sense) variant, where many structural cells and coherence equations are identified. This is because we are interested in linear bicategories only as a stepping stone towards an alternative approach to bicategories and functors.
\begin{dfn}
A \emph{linear bicategory} $B$ is a 2-graph together with two structures of bicategory, sharing the same vertical composites and units, but with two different families of horizontal composites and structural 2-cells:
\begin{enumerate}
	\item the \emph{tensors} $\{a \otimes b: x \to z\}$ of 1-cells and $\{p \otimes q: a \otimes c \to b \otimes d\}$ of 2-cells, with their \emph{tensor units} $\{1_x: x \to x\}$, associators $\{\alpha^\otimes_{a,b,c}: (a \otimes b) \otimes c \to a \otimes (b \otimes c)\}$, and unitors $\{\lambda^\otimes_a: 1_x \otimes a \to a\}$, $\{\rho^\otimes_a: a \otimes 1_y \to a\}$, and
	\item the \emph{pars} $\{a \parr b: x \to z\}$ of 1-cells and $\{p \parr q: a \parr c \to b \parr d\}$ of 2-cells, with their \emph{par units} $\{\bot_x: x \to x\}$, associators $\{\alpha^\parr_{a,b,c}: (a \parr b) \parr c \to a \parr (b \parr c)\}$, and unitors $\{\lambda^\parr_a: 1_x \parr a \to a\}$, $\{\rho^\parr_a: a \parr 1_y \to a\}$,
\end{enumerate}
together with two additional families of 2-cells, 
\begin{enumerate}[resume]
	\item the \emph{left distributors} $\{\delta^L_{a,b,c}: a \otimes (b \parr c) \to (a \otimes b) \parr c\}$, and the \emph{right distributors} $\{\delta^R_{a,b,c}: (a \parr b) \otimes c \to a \parr (b \otimes c)\}$, both indexed by composable triples of 1-cells $a: x \to y$, $b: y \to z$, and $c: z \to w$.
\end{enumerate}

In addition to the conditions that make $B$ a bicategory in two separate ways, there are many conditions relating the distributors to the rest of the structure, that we omit here; see \cite[Section 2.1]{cockett1997weakly} for all the details. A linear bicategory with a single 0-cell is called a (non-symmetric) \emph{linearly distributive category}.

Given two linear bicategories $B, C$, a \emph{strong linear functor} $f: B \to C$ is a morphism of the underlying 2-graphs that is a functor for both bicategory structures separately, with separate families of structural isomorphisms $\{f^\otimes_{a,b}: f(a) \otimes f(b) \to f(a \otimes b)\}$, $\{f^\otimes_x: 1_{f(x)} \to f(1_x)\}$ and $\{f^\parr_{a,b}: f(a) \parr f(b) \to f(a \parr b)\}$, $\{f^\parr_x: \bot_{f(x)} \to f(\bot_x)\}$, which furthermore make the diagrams
\begin{equation} \label{eq:distr_commute}
\begin{tikzpicture}[baseline={([yshift=-.5ex]current bounding box.center)}]
	\node[scale=1.25] (0) at (-5,.75) {$f(a) \otimes f(b \parr c)$};
	\node[scale=1.25] (0b) at (0,.75) {$f(a) \otimes (f(b) \parr f(c))$};
	\node[scale=1.25] (1) at (5,.75) {$(f(a) \otimes f(b)) \parr f(c)$};
	\node[scale=1.25] (2) at (-5,-.75) {$f(a \otimes (b\parr c))$};
	\node[scale=1.25] (2b) at (0,-.75) {$f((a \otimes b) \parr c)$};
	\node[scale=1.25] (3) at (5,-.75) {$f(a \otimes b) \parr f(c)$};
	\draw[1c] (0.east) to node[auto] {$\idd{f(a)} \otimes \invrs{(f^\parr_{b,c})}$} (0b.west);
	\draw[1c] (0b.east) to node[auto] {$\delta^L_{f(a),f(b),f(c)}$} (1.west);
	\draw[1c] (2.east) to node[auto,swap] {$f(\delta^L_{a,b,c})$} (2b.west);
	\draw[1c] (2b.east) to node[auto,swap] {$\invrs{(f^\parr_{a\otimes b, c})}$} (3.west);
	\draw[1c] (0.south) to node[auto,swap] {$f^\otimes_{a,b\parr c}$} (2.north);
	\draw[1c] (1.south) to node[auto] {$f^\otimes_{a,b} \parr \idd{f(c)}$} (3.north);
	\node[scale=1.25] at (6.5,-.9) {,};
\end{tikzpicture}
\end{equation}
\begin{equation} \label{eq:distr_commute2}
\begin{tikzpicture}[baseline={([yshift=-.5ex]current bounding box.center)}]
	\node[scale=1.25] (0) at (-5,.75) {$f(a \parr b) \otimes f(c)$};
	\node[scale=1.25] (0b) at (0,.75) {$(f(a) \parr f(b)) \otimes f(c)$};
	\node[scale=1.25] (1) at (5,.75) {$f(a) \parr (f(b) \otimes f(c))$};
	\node[scale=1.25] (2) at (-5,-.75) {$f((a \parr b) \otimes c)$};
	\node[scale=1.25] (2b) at (0,-.75) {$f(a \parr (b \otimes c))$};
	\node[scale=1.25] (3) at (5,-.75) {$f(a) \parr f(b\otimes c)$};
	\draw[1c] (0.east) to node[auto] {$\invrs{(f^\parr_{a,b})} \otimes \idd{f(c)}$} (0b.west);
	\draw[1c] (0b.east) to node[auto] {$\delta^R_{f(a),f(b),f(c)}$} (1.west);
	\draw[1c] (2.east) to node[auto,swap] {$f(\delta^R_{a,b,c})$} (2b.west);
	\draw[1c] (2b.east) to node[auto,swap] {$\invrs{(f^\parr_{a, b \otimes c})}$} (3.west);
	\draw[1c] (0.south) to node[auto,swap] {$f^\otimes_{a \parr b, c}$} (2.north);
	\draw[1c] (1.south) to node[auto] {$\idd{f(a)} \parr f^\otimes_{b,c}$} (3.north);
	\node[scale=1.25] at (6.5,-.9) {,};
\end{tikzpicture}
\end{equation}
commute.

Linear bicategories and strong linear functors form a large category $\linbicat$. We write $\lindist$ for its full subcategory on linearly distributive categories.
\end{dfn}

\begin{cons} \label{cons:cvu-linbicat}
Let $X$ be a representable regular poly-bicategory; we endow $GX$ with families of cells as required by the definition of a linear bicategory, as follows. For the two bicategory structures, first use tensor representability, as in Construction \ref{cons:cvu-bicat}, then use par representability, applying the same construction to $\coo{X}$. Finally, for each composable triple of 1-cells $a, b, c$, let $\delta^L_{a,b,c}$ be the unique 2-cell obtained by the factorisation 
\begin{equation*} 
\input{img/s4_distributor_left.tex}
\end{equation*}
and $\delta^R_{a,b,c}$ the unique 2-cell obtained by the factorisation
\begin{equation*} 
\input{img/s4_distributor_right.tex}
\end{equation*}
If $Y$ is another representable regular poly-bicategory, and $f: X \to Y$ a strong morphism, we give $Gf$ the structure required by the definition of a strong linear functor, using the fact that $f$ is tensor strong as in Construction \ref{cons:cvu-bicat}, then the fact that $f$ is par strong, applying the same construction to $\coo{f}$.
\end{cons}

Once the general structure of these coherence-via-universality proofs has been understood, it should be a straightforward exercise to prove the following variation on Proposition \ref{prop:cvu-bicat}, which, for what concerns associators and distributors, overlaps with \cite[Theorem 2.1]{cockett1997weakly}.
\begin{prop} 
Let $X$ be a representable regular poly-bicategory. Then $GX$ with the structure defined in Construction \ref{cons:cvu-linbicat} is a linear bicategory. If $f: X \to Y$ is a strong morphism of representable regular poly-bicategories, $Gf: GX \to GY$ with the structure defined in Construction \ref{cons:cvu-linbicat} is a strong linear functor of linear bicategories.
\end{prop}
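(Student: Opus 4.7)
The plan is to construct the proof as two applications of the machinery already developed, one for tensors and one for pars, together with a universal-factorisation verification of the distributor axioms. First, I would apply Proposition \ref{prop:cvu-bicat} to $X$ to endow $GX$ with its tensor-bicategory structure $(\otimes, 1_x, \alpha^\otimes, \lambda^\otimes, \rho^\otimes)$. To obtain the par structure, I would apply the same proposition to $\coo{X}$, which is tensor representable precisely because $X$ is par representable, and then transport the resulting bicategory structure back along the identification between the underlying 2-globular set of $G\coo{X}$ and that of $GX$ with 2-cells reversed; this yields $(\parr, \bot_x, \alpha^\parr, \lambda^\parr, \rho^\parr)$. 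The two structures manifestly share the same vertical composition $\cutt{1,1}$ and the same vertical units (the unique units of $\pcat$-cells on each 1-cell, by Proposition \ref{prop:2units}), since these are intrinsic to $GX$ and are preserved by the $(-)^{\mathrm{co}}$ functor.

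Second, I would show that the distributors $\delta^L$ and $\delta^R$ defined in Construction \ref{cons:cvu-linbicat} are indeed 2-cells in $GX_2^{(1,1)}$ and satisfy the linear-bicategory axioms. They are well-defined because the factorisations take place through 2-cells $t^\parr_{a\otimes b,c}$ and $t^\parr_{a,b\otimes c}$ which are divisible at $\bord{1}{-}$ and through $t^\otimes_{a,b\parr c}$, $t^\otimes_{a \parr b,c}$ at $\bord{1}{+}$; note that the configurations in question have the form $(b)$ of the composition diagrams in (\ref{eq:composable}) and so are admissible even under the regularity constraint. Each coherence axiom of a linear bicategory (naturality in all parameters, the pentagon-like equations relating $\delta^{L/R}$ with $\alpha^\otimes$ and $\alpha^\parr$, and the mixed triangles with the two unitors) is then verified by exhibiting both sides as the unique solution to a well-formed equation involving a tensor/par divisible 2-cell; the underlying pasting-diagram equalities in $X$ follow from associativity and interchange of $\cutt{}{}$, exactly along the lines of \cite[Theorem 2.1]{cockett1997weakly}. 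The regularity constraint never interferes, because distributors only involve 2-cells whose boundaries have the schemes $(b)$ and $(d)$ of (\ref{eq:composable}) and all intermediate factorisations land in configurations with at least one input and one output.

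Third, for the strong linear functor part, I would obtain the structural isomorphisms $\{f^\otimes_{a,b}\}$, $\{f^\otimes_x\}$ by applying Construction \ref{cons:cvu-bicat} to $f$, noting that $f$ is tensor strong by hypothesis so Proposition \ref{prop:cvu-bicat} yields a functor between the tensor bicategory structures; symmetrically, $\{f^\parr_{a,b}\}$ and $\{f^\parr_x\}$ come from applying the same construction to $\coo{f}: \coo{X} \to \coo{Y}$, which is tensor strong because $f$ is par strong. It remains to verify the two distributor coherence diagrams (\ref{eq:distr_commute}) and (\ref{eq:distr_commute2}). I would do this by precomposing both sides of each diagram with the appropriate $t^\otimes_{f(a), f(b\parr c)}$, respectively $t^\parr_{f(a)\otimes f(b),f(c)}$, and reducing to an equality of pasting diagrams in $Y$; after unfolding the definitions of $f^\otimes$, $f^\parr$, $\delta^L$, $\delta^R$ in both $X$ and $Y$, both sides trace back to the image under $f$ of a single factorisation through $t^\otimes_{a,b\parr c}$ and $t^\parr_{b,c}$ in $X$, and the required equality then follows from $f$ preserving cuts.

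I expect the main obstacle to be the sheer bookkeeping in the distributor step: there are many axioms listed in \cite{cockett1997weakly}, and although each individually reduces to uniqueness of a universal factorisation, organising them uniformly requires care. The key technical point, which keeps the argument under control, is that every axiom can be phrased as ``two canonical factorisations through the same divisible 2-cell must coincide,'' so no new combinatorial ingredient beyond Theorem \ref{thm:polycoherent} and the cut-equations of $X$ is needed.
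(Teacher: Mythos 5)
Your proposal is correct and follows essentially the same route as the paper, which states this proposition without a detailed proof precisely because it is the ``straightforward variation'' you describe: two applications of Proposition \ref{prop:cvu-bicat} (to $X$ and to $\coo{X}$), distributors and their coherence equations obtained as unique factorisations through divisible 2-cells along the lines of \cite[Theorem 2.1]{cockett1997weakly}, and the unit clauses handled by Theorem \ref{thm:polycoherent}. The only slip is cosmetic: the cut of $t^\parr_{b,c}$ against $t^\otimes_{a,b}$ defining $\delta^L_{a,b,c}$ is in configuration $(d)$ of (\ref{eq:composable}) rather than $(b)$, which your later remark covers in any case.
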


The following construction is also described in the proof of \cite[Theorem 2.1]{cockett1997weakly}.
\begin{cons} \label{cons:groth_linear}
Let $B$ be a linear bicategory; we define a regular poly-bicategory $\int\! B$ as follows. The 0-cells and 1-cells of $\int\! B$ are the same as the 0-cells and 1-cells of $B$. For any compatible choices of 1-cells, the 2-cells $p: (a_1,\ldots,a_n) \to (b_1,\ldots,b_m)$ in $\int\! B$ correspond to 2-cells 
\begin{equation*}
	p: (\ldots(a_1\otimes a_2) \ldots \otimes a_{n-1}) \otimes a_n \to (\ldots(b_1\parr b_2) \ldots \parr b_{m-1}) \parr b_m
\end{equation*}
in $B$. Composition is induced by the composition in $B$, using associators and distributors to re-bracket as needed; for example, given 2-cells $p: (a) \to (b,c)$ and $q: (c,d) \to (e)$, corresponding to 2-cells $p: a \to b \parr c$ and $q: c \otimes d \to e$ of $B$, the composite $\cutt{2,1}(p,q): (a,d) \to (b,e)$ in $\int\! B$ corresponds to the 2-cell
\begin{equation*}
\begin{tikzpicture}[baseline={([yshift=-.5ex]current bounding box.center)}]
	\node[scale=1.25] (0) at (-4.5,0) {$a \otimes d$};
	\node[scale=1.25] (1) at (-1.5,0) {$(b \parr c) \otimes d$};
	\node[scale=1.25] (2) at (1.5,0) {$b \parr (c \otimes d)$};
	\node[scale=1.25] (3) at (4.5,0) {$b \parr e$};
	\draw[1c] (0.east) to node[auto] {$p \otimes \idd{d}$} (1.west);
	\draw[1c] (1.east) to node[auto] {$\delta^R_{b,c,d}$} (2.west);
	\draw[1c] (2.east) to node[auto] {$\idd{b} \parr q$} (3.west);
\end{tikzpicture}
\end{equation*}
of $B$. 

Given a strong linear functor $f: B \to C$ of linear bicategories, we define a morphism $\int\! f: \int\! B \to \int\! C$, which maps a 2-cell $(a_1,\ldots,a_n) \to (b_1,\ldots, b_m)$, corresponding to a 2-cell $p$ of $B$, to the 2-cell $(f(a_1),\ldots,f(a_n)) \to (f(b_1),\ldots,f(b_m))$, obtained by precomposing $f(p)$ with the appropriate unique coherence cell for tensors, and postcomposing it with the appropriate unique coherence cell for pars. The coherence theorem for bicategories, applied to the two structures separately, together with the commutativity of diagrams (\ref{eq:distr_commute}) and (\ref{eq:distr_commute2}) ensure that composition is preserved.
\end{cons}

The following is also an easy consequence of \cite[Theorem 2.1]{cockett1997weakly} together with our treatment of units in the case of bicategories, now to be applied twice, for tensor and par units.
\begin{prop}
Let $B$ be a linear bicategory. Then $\int\! B$ is a representable regular poly-bicategory. Moreover, if $f: B \to C$ is a strong linear functor, $\int\! f: \int\! B \to \int\! C$ is a strong morphism.
\end{prop}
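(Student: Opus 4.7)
The plan is to verify each of the four representability conditions for $\int\!B$ by constructing the required divisible 2-cells directly out of identities, unitors, associators and distributors in $B$, and to derive strongness of $\int\!f$ from the fact that the structural isomorphisms of $f$ translate to isomorphisms between tensors, pars, and units in $\int\!B$.

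For tensor 1-representability, given composable $a, b$ in $\int\!B$, I would declare $a \otimes b$ to be the tensor in $B$, and take $t_{a,b} : (a,b) \to (a \otimes b)$ to be the 2-cell of $\int\!B$ corresponding to $\idd{a \otimes b}$ in $B$. To check divisibility at $\bord{1}{+}$, I would note that any well-formed equation $\cutt{1,1}(t_{a,b}, x) = s$ translates, via Construction \ref{cons:groth_linear}, into an equation in $B$ where the left-hand side is $x$ precomposed with some coherence isomorphism built from associators and distributors; invertibility of these isomorphisms in $B$ then yields existence and uniqueness of the factor. Par 1-representability is proved dually, using the corresponding identity 2-cells in $B$ as witnesses $c_{a,b}$.

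For tensor 0-representability, I would take $1_x$ in $\int\!B$ to be the tensor unit $1_x$ of $B$, and use the unitors to define $l_a: (1_x, a) \to (a)$ and $r_b: (b, 1_x) \to (b)$. The point is that $\lambda_a: 1_x \otimes a \to a$ is invertible in $B$, and after unwinding the cut rule in $\int\!B$, the required factorisation equations at $\bord{1}{+}$ and at $\bord{2}{-}$ both reduce to equations of the form ``precompose or postcompose with an isomorphism built out of $\lambda_a$, associators, and distributors''. These admit unique solutions. Par 0-representability follows from the same argument applied to $\coo{(\int\!B)}$, which is $\int\!(\coo{B})$ by inspection, together with the par-unit structure $\bot_x$ of $B$. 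Putting the four cases together yields representability of $\int\!B$.

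For the functor statement, given a strong linear functor $f: B \to C$, I would first check that $\int\!f$ is unital: the image $\int\!f(\idd{a})$ corresponds to $f(\idd{a}) = \idd{f(a)}$, which is a unit in $\int\!C$. To see that $\int\!f$ is tensor strong, I would note that, by construction, a tensor $t_{a,b}$ in $\int\!B$ corresponds to $\idd{a \otimes b}$ in $B$, so its image corresponds to $\idd{f(a \otimes b)}$, and composing with the isomorphism $f^\otimes_{a,b}$ exhibits $\int\!f(t_{a,b})$ as the composite of the canonical tensor $t_{f(a),f(b)}$ in $\int\!C$ with an isomorphism; hence $\int\!f$ preserves tensors of 1-cells. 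Similarly, $\int\!f$ preserves tensor units up to isomorphism via $f^\otimes_x: 1_{f(x)} \to f(1_x)$; Corollary \ref{cor:isomorphism_cor}, applied to $\int\!C$ which is tensor 1-representable, then upgrades this to preservation of tensor divisible 1-cells. The par-strong direction is symmetric.

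The main obstacle I expect is bookkeeping: matching the cut-rule composition in $\int\!B$, which re-bracketings and inserts distributors case-by-case depending on the positions of the cut, with the combinatorics of divisibility conditions. Once divisibility of the ``identity-like'' witnesses $t_{a,b}$ and $c_{a,b}$ is established, all further divisibility properties (including those of $l_a$, $r_a$, and their par duals) reduce to invertibility of the structural 2-cells of $B$ via the two-out-of-three principle of Theorem \ref{thm:2outof3} and Lemma \ref{lem:twodivisible}, so no further coherence arguments inside $B$ should be needed beyond those already packaged into the definition of a linear bicategory and strong linear functor.
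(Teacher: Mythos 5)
Your proposal is correct and follows essentially the same route as the paper, which simply defers the 1-representability part to Cockett and Seely's Theorem 2.1 (identity 2-cells on $a\otimes b$ and $a\parr b$ as tensors and pars, divisibility from invertibility of coherence cells) and handles units exactly as in the bicategory case, using the unitors as coherent witnesses and Corollary \ref{cor:isomorphism_cor} to upgrade preservation of tensor and par units by $\int\!f$ to preservation of divisible 1-cells. The only point worth making explicit is that the distributors appearing in your factorisation equations are not individually invertible in a general linear bicategory; the relevant composites such as $(\lambda^\otimes_b \parr \idd{c})\circ\delta^L_{1_x,b,c}$ are isomorphisms only by virtue of the unit coherence axioms, which your closing remark correctly subsumes under the definition of a linear bicategory.
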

\begin{cor} \label{cor:equiv_linbicat_rep}
A choice of structure as in Construction \ref{cons:cvu-linbicat} for each representable regular poly-bicategory $X$ and strong morphism $f: X \to Y$ determines a functor $G: \pbcat_\otimes^\parr \to \linbicat$, restricting to a functor $G: \pcat_\otimes^\parr \to \lindist$.

Construction \ref{cons:groth_linear} defines a functor $\int: \linbicat \to \pbcat_\otimes^\parr$, restricting to a functor $\int: \lindist \to \pcat_\otimes^\parr$, inverse to $G$ up to natural isomorphism.
\end{cor}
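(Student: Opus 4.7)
The plan is to mirror the strategy used for Corollary \ref{cor:equiv_bicat_rep}, but applying the representability machinery twice: once to the tensor structure (as in Construction \ref{cons:cvu-bicat} applied to $X$) and once to the par structure (as in Construction \ref{cons:cvu-bicat} applied to $\coo{X}$), and then verifying that the distributors introduced in Construction \ref{cons:cvu-linbicat} glue the two structures into a linear bicategory in a functorial way. Functoriality of $G$ reduces to checking that a strong morphism $f: X \to Y$, being both tensor strong and par strong, induces families $f^\otimes_{a,b}, f^\otimes_x$ and $f^\parr_{a,b}, f^\parr_x$ that make $Gf$ into a functor for each bicategory structure separately (by Proposition \ref{prop:cvu-bicat} and its dual); the commutativity of diagrams (\ref{eq:distr_commute}) and (\ref{eq:distr_commute2}) then follows by factorising both sides through the relevant tensor and par 2-cells and invoking uniqueness, since both composites in $Y$ arise from the same universal property applied to the images of $t^\otimes_{a,b}$ and $t^\parr_{b,c}$ under $f$.

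For the second half, I would first check that $\int\! B$ satisfies the axioms of a regular poly-bicategory: the associativity and interchange equations for the $\cutt{j,i}$ operations defined in Construction \ref{cons:groth_linear} follow from the coherence theorems for each of the two bicategory structures of $B$ (Corollary \ref{cor:assocoherence}), together with the linear distributivity axioms that govern how distributors commute past associators and unitors. Representability of $\int\! B$ is then witnessed directly by identity 2-cells: $\idd{a \otimes b}$ viewed as a 2-cell $(a,b) \to (a \otimes b)$ is everywhere divisible, the horizontal units of $B$ supply tensor units in $\int\! B$, and dually for pars. The fact that $\int\! f$ is a strong morphism reduces to showing that it preserves tensors (immediate from the structural isomorphisms $f^\otimes_{a,b}$) and tensor divisibility of 1-cells; by Corollary \ref{cor:isomorphism_cor}, preservation of tensor units plus preservation of tensors suffices, and the dual statement handles pars.

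The equivalence $G \circ \int \cong \idd{\linbicat}$ and $\int \circ G \cong \idd{\pbcat_\otimes^\parr}$ up to natural isomorphism then follows componentwise from the analogous result for bicategories (Corollary \ref{cor:equiv_bicat_rep}), applied separately to the tensor and par components. On 0-cells and 1-cells both round-trips are the identity; on 2-cells, the unique coherence 2-cells of Theorem \ref{thm:functocoherence} provide the natural isomorphism, and the distributors are recovered from the universal property that defined them.

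The main obstacle will be bookkeeping rather than substance: one must check that the many coherence equations of a linear bicategory (beyond those of two independent bicategories) correspond exactly to the compatibilities between tensor and par divisible 2-cells that hold automatically in any regular poly-bicategory, via repeated factorisation-and-uniqueness arguments. Fortunately, for the associator-and-distributor part of this bookkeeping, the non-regular analogue is already carried out in \cite[Theorem 2.1]{cockett1997weakly}; regularity does not affect those arguments, since they never introduce 2-cells with degenerate boundaries. What is genuinely new is the treatment of units, which we have already done in the course of Theorem \ref{thm:polycoherent}, Construction \ref{cons:cvu-bicat}, and Proposition \ref{prop:cvu-bicat}, now to be invoked simultaneously for both tensor and par units.
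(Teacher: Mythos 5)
Your proposal follows essentially the same route as the paper, which likewise defers the associator-and-distributor bookkeeping to \cite[Theorem 2.1]{cockett1997weakly} and handles the genuinely new part --- the units --- by invoking Theorem \ref{thm:polycoherent} and the machinery of Construction \ref{cons:cvu-bicat} and Proposition \ref{prop:cvu-bicat} twice, once for tensors and once for pars, before adapting the argument of Corollary \ref{cor:equiv_bicat_rep} for the natural isomorphisms. The only quibble is that $\idd{a\otimes b}$, viewed as a 2-cell $(a,b) \to (a\otimes b)$ of $\int\! B$, is in general only divisible at $\bord{1}{+}$ rather than everywhere divisible (the latter would require $B$ to be closed), but divisibility at $\bord{1}{+}$ is all that tensor representability demands, so the argument is unaffected.
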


Before moving on, we briefly treat the theory of linear adjunctions, as defined in \cite[Definition 3.1]{cockett2000introduction}.
\begin{dfn}
Let $B$ be a linear bicategory, and $a: x \to y$, $b: y \to x$ two 1-cells in $B$. A \emph{linear adjunction} $(\varepsilon, \eta): a \dashv b$ is a pair of 2-cells $\varepsilon: b \otimes a \to 1_y$ (the \emph{counit}) and $\eta: 1_x \to a \parr b$ (the \emph{unit}), such that the diagrams
\begin{equation*}
\begin{tikzpicture}[baseline={([yshift=-.5ex]current bounding box.center)}]
	\node[scale=1.25] (0) at (-3.5,.75) {$1_x \otimes a$};
	\node[scale=1.25] (1) at (3.5,.75) {$a$};
	\node[scale=1.25] (2) at (-3.5,-.75) {$(a \parr b) \otimes a$};
	\node[scale=1.25] (2b) at (0,-.75) {$a \parr (b \otimes a)$};
	\node[scale=1.25] (3) at (3.5,-.75) {$a \parr \bot_y$};
	\draw[1c] (0.east) to node[auto] {$\lambda^\otimes_a$} (1.west);
	\draw[1c] (2.east) to node[auto,swap] {$\delta^R_{a,b,a}$} (2b.west);
	\draw[1c] (2b.east) to node[auto,swap] {$a \parr \varepsilon$} (3.west);
	\draw[1c] (0.south) to node[auto,swap] {$\eta \otimes a$} (2.north);
	\draw[1c] (1.south) to node[auto] {$\invrs{(\rho^\parr_a)}$} (3.north);
	\node[scale=1.25] at (4.5,-.9) {,};
\end{tikzpicture}
\end{equation*}
\begin{equation*}
\begin{tikzpicture}[baseline={([yshift=-.5ex]current bounding box.center)}]
	\node[scale=1.25] (0) at (-3.5,.75) {$b \otimes 1_x$};
	\node[scale=1.25] (1) at (3.5,.75) {$b$};
	\node[scale=1.25] (2) at (-3.5,-.75) {$b \otimes (a \parr b)$};
	\node[scale=1.25] (2b) at (0,-.75) {$(b \otimes a) \parr b$};
	\node[scale=1.25] (3) at (3.5,-.75) {$\bot_y \parr b$};
	\draw[1c] (0.east) to node[auto] {$\rho^\otimes_a$} (1.west);
	\draw[1c] (2.east) to node[auto,swap] {$\delta^L_{b,a,b}$} (2b.west);
	\draw[1c] (2b.east) to node[auto,swap] {$\varepsilon \parr a$} (3.west);
	\draw[1c] (0.south) to node[auto,swap] {$b \otimes \eta$} (2.north);
	\draw[1c] (1.south) to node[auto] {$\invrs{(\lambda^\parr_b)}$} (3.north);
\end{tikzpicture}
\end{equation*}
commute in $B$. In a linear adjunction, $a$ is called the left linear adjoint of $b$, and $b$ the right linear adjoint of $a$.
\end{dfn}
The equational characterisation of linear adjunctions in a poly-bicategory of \cite[Equation 4]{cockett2003morphisms} relies on degenerate boundaries, so it cannot be used as is. Nevertheless, an equivalent characterisation based on homs and cohoms still works, relativised to a choice of weak units.
\begin{prop} \label{prop:adjoints}
Let $X$ be a unital regular poly-bicategory, $a: x \to y$, $b: y \to x$ be 1-cells, and $1_x: x \to x$, $\bot_y: y \to y$ a tensor and a par unit in $X$, respectively. The following are all equivalent:
\begin{enumerate}
	\item there exists a 2-cell $\varepsilon: (b, a) \to (\bot_y)$ universal at $\bord{1}{-}$;
	\item there exists a 2-cell $\varepsilon: (b, a) \to (\bot_y)$ universal at $\bord{2}{-}$;
	\item there exists a 2-cell $\eta: (1_x) \to (a,b)$ universal at $\bord{1}{+}$;
	\item there exists a 2-cell $\eta: (1_x) \to (a,b)$ universal at $\bord{2}{+}$.
\end{enumerate}
Moreover, if $X$ is representable, each condition is equivalent to the existence of a linear adjunction $a \dashv b$ in $GX$.
\end{prop}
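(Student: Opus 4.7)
The plan is to establish the equivalence of the four divisibility conditions by constructing, from each given cell, the remaining ones via a ``triangle-like'' factorisation through the unit witnesses $l_a, r_b, l^\bot_b, r^\bot_a$ from Section \ref{sec:weakunits}, and then verifying the claimed divisibility. The equivalence with the existence of a linear adjunction $a \dashv b$ in $GX$ under representability will follow from the equivalence $X \simeq \int GX$ of Corollary \ref{cor:equiv_linbicat_rep}.

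First I would show $(1) \Rightarrow (4)$. Given $\varepsilon: (b, a) \to (\bot_y)$ divisible at $\bord{1}{-}$, the 2-cell $\cutt{1,1}(l_a, r^\bot_a): (1_x, a) \to (a, \bot_y)$ has boundaries compatible with a factorisation through $\varepsilon$ at its $b$-input, so there exists a unique $\eta: (1_x) \to (a, b)$ satisfying the \emph{triangle identity}
\begin{equation*}
    \cutt{2,1}(\eta, \varepsilon) = \cutt{1,1}(l_a, r^\bot_a).
\end{equation*}
I claim this $\eta$ is divisible at $\bord{2}{+}$: given any well-formed equation $\cutt{2,i}(\eta, x) = s$, one cuts $s$ against $\varepsilon$ (gluing $s$'s $a$-output with $\varepsilon$'s $a$-input), rewrites using the triangle identity to produce a factorisation of the result through $l_a$ and $r^\bot_a$, and then recovers $x$ uniquely by applying in succession the divisibility of $\varepsilon$ at $\bord{1}{-}$ and the divisibility of $l_a$, $r^\bot_a$ at their respective unit positions.

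The remaining implications in the cycle are obtained symmetrically. For $(4) \Rightarrow (2)$: from $\eta$ divisible at $\bord{2}{+}$, construct $\varepsilon': (b,a) \to (\bot_y)$ as the unique 2-cell satisfying the \emph{dual triangle identity} $\cutt{1,2}(\eta, \varepsilon') = \cutt{1,1}(r_b, l^\bot_b)$, and verify its divisibility at $\bord{2}{-}$ by an analogous chain of factorisations. Applying the entire argument in $\opp{X}$ (which swaps $\bord{1}{-}$ with $\bord{2}{-}$ and reverses the roles of $a$ and $b$) then yields $(2) \Rightarrow (3)$ and $(3) \Rightarrow (1)$, closing the cycle. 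The main technical obstacle throughout is the divisibility verification, which requires a careful diagram chase reminiscent of those in the proof of Theorem \ref{thm:2outof3}, using the universal properties of multiple unit witnesses at once.

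For the equivalence with a linear adjunction in $GX$ when $X$ is representable: by Corollary \ref{cor:equiv_linbicat_rep} applied to $X \simeq \int GX$, a 2-cell $\varepsilon: (b,a) \to (\bot_y)$ in $X$ corresponds to a 2-cell $b \otimes a \to \bot_y$ in $GX$, and likewise $\eta: (1_x) \to (a,b)$ corresponds to a 2-cell $1_x \to a \parr b$. The triangle identity used in $(1) \Rightarrow (4)$ translates, up to composition with the unitors $\lambda^\otimes_a$, $\rho^\parr_a$ and the distributor $\delta^R_{a,b,a}$, to the first Cockett-Koslowski-Seely triangle identity for a linear adjunction; the dual triangle identity used in $(4) \Rightarrow (2)$ translates to the second. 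Hence a linear adjunction in $GX$ supplies both $\eta$ and $\varepsilon$ with the required divisibility properties, and conversely any of (1)--(4) yields, through the cycle, a pair satisfying both triangle identities, thereby exhibiting a linear adjunction $a \dashv b$ in $GX$.
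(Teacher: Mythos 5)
Your overall strategy---construct each cell from the others by factorising unit witnesses through the given divisible cell, verify the new divisibility by a chain of factorisations, close the cycle by duality in $\opp{X}$, and translate to linear adjunctions in $GX$ via Corollary \ref{cor:equiv_linbicat_rep}---is the route the paper intends: its own proof just fixes coherent witnesses as in Theorem \ref{thm:polycoherent} and defers to the Street--Walters-style argument of \cite[Proposition 1.7]{cockett2003morphisms}. But two of your steps do not go through as written. In $(4)\Rightarrow(2)$ you define $\varepsilon'$ by solving $\cutt{1,2}(\eta,\varepsilon') = \cutt{1,1}(r_b, l^\bot_b)$ for $\varepsilon'$. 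That cut glues the \emph{first} output of $\eta$ (the $a$) to $\varepsilon'$, so solving for the second argument requires $\eta$ to be divisible at $\bord{1}{+}$---which is condition (3), not your hypothesis (4) (divisibility at $\bord{2}{+}$). From (4) the factorisation you are entitled to is through the other triangle identity, $\cutt{2,1}(\eta,\varepsilon) = \cutt{1,1}(l_a, r^\bot_a)$, solved for $\varepsilon$; your ``dual triangle identity'' is the right tool for a different implication in the cycle.

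The divisibility verification in $(1)\Rightarrow(4)$ also does not close. After gluing $s = \cutt{2,1}(\eta,x)$ to $\varepsilon$ along the $a$'s, the sub-composite of $\eta$ and $\varepsilon$ inside the result is $\cutt{1,2}(\eta,\varepsilon)$ (glued along $a$), not the $\cutt{2,1}(\eta,\varepsilon)$ that your constructed triangle identity controls, so the announced rewriting is not available; and the final cancellation of $\varepsilon$ would have to take place at its $a$-input, i.e.\ at $\bord{2}{-}$, where no divisibility is hypothesised---only $\bord{1}{-}$ is. To repair this one must either first derive the second triangle identity from the first by a separate uniqueness argument, or run the verification in the Street--Walters form, comparing candidate factorisations only after an operation that is injective by the divisibility of $\varepsilon$ at $\bord{1}{-}$, namely cutting against $\varepsilon$ along a $b$ sitting in the \emph{output} boundary. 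That bookkeeping is exactly what the cited proof supplies, and it is where the substance of the proposition lies; as it stands your sketch asserts the conclusion of that computation rather than performing it.
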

\begin{proof}
Fix coherent witnesses of tensor and par unitality as in Theorem \ref{thm:polycoherent} and its dual. Then the proof of \cite[Proposition 1.7]{cockett2003morphisms}, based on a characterisation of ordinary adjunctions by Street and Walters \cite[Proposition 2]{street1978yoneda}, goes through, relative to this choice.
\end{proof}

If any of the conditions of Proposition \ref{prop:adjoints} holds, we call $a$ a left linear adjoint of $b$ and $b$ a right linear adjoint of $a$ in $X$.

If $X$ is left and right closed (respectively, coclosed), and has both tensor and par units, then every 1-cell has both a left and a right linear adjoint. A choice of adjoints induces an equivalence between $X$ and $\coo{(\opp{X})}$; in particular, $X$ is automatically left and right coclosed (respectively, closed), and it is tensor representable if and only if it is par representable. Conversely, a representable regular poly-bicategory where every 1-cell has both a left and a right linear adjoint is closed and coclosed on both sides: this follows immediately from the analogous statement for linear bicategories, pulled through the equivalence.

As discussed in \cite[Section 3]{cockett2000introduction}, a linearly distributive category where every 1-cell has a left and a right linear adjoint is the same as a nonsymmetric $*$-autonomous category, in the sense of \cite{barr1995nonsymmetric}. Therefore, we immediately obtain the following. Let $\staraut$ be the full subcategory of $\lindist$ on nonsymmetric $*$-autonomous categories.
\begin{prop}
The equivalence between $\pcat_\otimes^\parr$ and $\lindist$ restricts to an equivalence between $\pcat_*$ and $\staraut$.
\end{prop}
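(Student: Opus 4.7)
The plan is to show that the equivalence $G \dashv \int$ from Corollary \ref{cor:equiv_linbicat_rep} sends objects of $\pcat_*$ to objects of $\staraut$ and vice versa; since both $\pcat_*$ and $\staraut$ are full subcategories of $\pcat_\otimes^\parr$ and $\lindist$ respectively, the restricted functors will automatically inherit the natural isomorphisms witnessing the equivalence.

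First, I would fold the two remarks stated just above the proposition into a clean characterisation: a representable regular polycategory $X$ is $*$-autonomous if and only if every 1-cell of $X$ admits both a left and a right linear adjoint (in the sense following Proposition \ref{prop:adjoints}). One direction is precisely the first remark, which uses the chosen tensor and par units to produce the required hom/cohom cells witnessing adjointness via Proposition \ref{prop:adjoints}. The converse is the second remark, which starts from adjoints and builds homs and cohoms. Dually, by the cited result of Cockett and Seely, a linearly distributive category lies in $\staraut$ exactly when every 1-cell has left and right linear adjoints.

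Next, I need to check that $G$ and $\int$ preserve linear adjoints. This is immediate from Proposition \ref{prop:adjoints}, since the existence of a 2-cell $\varepsilon : (b,a)\to(\bot_y)$ divisible at $\bord{1}{-}$ and a 2-cell $\eta : (1_x)\to (a,b)$ divisible at $\bord{1}{+}$ in the polycategorical setting corresponds exactly (once coherent witnesses of unitality are fixed, as in Theorem \ref{thm:polycoherent}) to the pair of triangle identities making $a \dashv b$ a linear adjunction in the associated linear bicategory. Hence for $X\in\pcat_*$, the representable $X$ has all linear adjoints, so $GX$ has all linear adjoints, so $GX\in\staraut$; and for $B\in\staraut$, $\int\! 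B$ inherits all linear adjoints from $B$ under the same correspondence, hence $\int\! B\in\pcat_*$ by the characterisation above.

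Finally, I would observe that since $\pcat_*\hookrightarrow\pcat_\otimes^\parr$ and $\staraut\hookrightarrow\lindist$ are full subcategory inclusions, no extra conditions on morphisms need be checked: strong morphisms between $*$-autonomous objects are just strong morphisms, and strong linear functors between nonsymmetric $*$-autonomous categories are just strong linear functors (they automatically preserve the linear adjoints up to isomorphism, since adjoints are unique up to canonical isomorphism). The natural isomorphisms $X\simeq \int\! GX$ and $B\simeq G\!\int\! B$ from Corollary \ref{cor:equiv_linbicat_rep} therefore restrict to the required subcategories, giving the equivalence. The only step requiring care is the bookkeeping in the first paragraph above, translating between the four divisibility conditions of Proposition \ref{prop:adjoints} and the existence of homs/cohoms on both sides; but this is routine, since one-sided divisibility of a 2-cell whose output (resp.\ input) is a par (resp.\ tensor) unit exhibits the other input (resp.\ output) as a cohom (resp.\ hom) into (resp.\ out of) that unit.
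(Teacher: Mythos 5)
Your proposal is correct and follows essentially the same route as the paper, which derives this proposition immediately from the two preceding observations: the characterisation of $*$-autonomous regular polycategories (among representable ones) as those in which every 1-cell has both linear adjoints, the Cockett--Seely identification of nonsymmetric $*$-autonomous categories with linearly distributive categories having all linear adjoints, and the transfer of linear adjunctions across the equivalence via Proposition \ref{prop:adjoints}. The observation about fullness of the subcategory inclusions is likewise exactly what the paper relies on implicitly.
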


We conclude this section by re-evaluating one of the main examples of \cite{cockett2003morphisms}.

\begin{exm} \label{exm:chu}
In \cite[Example 1.8(2)]{cockett2003morphisms}, the following generalisation of the Chu construction \cite[Appendix]{barr1979autonomous} was outlined.

Let $M$ be a (regular or non-regular) multi-bicategory; we construct a poly-bicategory $\chu{M}$ as follows. The 0-cells of $\chu{M}$ are endo-1-cells $a: x \to x$, and a 1-cell $A: (a: x\to x) \to (b: y\to y)$ of $\chu{M}$ is given by a pair of 1-cells $A: x \to y$, $A^\bot: y \to x$, and a pair of 2-cells $e_A: (A,A^\bot) \to (a)$, $e_{A^\bot}: (A^\bot,A) \to (b)$ of $M$. There is an evident involution $A \mapsto A^\bot$ on the 1-cells of $\chu{M}$, reversing the roles of the 1-cells and 2-cells of $M$.

For $n > 1$, given a sequence $(A_1, \ldots, A_n)$ of 1-cells with $\bord{}{+}A_i = \bord{}{-}A_{i+1}$, for $i = 1,\ldots, n-1$, and $\bord{}{+}A_n = \bord{}{-}A_1$, a \emph{Chu band} $p$ of type $(A_1,\ldots,A_n)$ is defined as an $n$-tuple of 2-cells
\begin{align*}
	p_1: & \; (A_2, \ldots, A_n) \to (A^\bot_1), \\
	p_2: & \; (A_3, \ldots, A_n, A_1) \to (A^\bot_2), \\
	& \quad \quad \quad \quad \vdots \\
	p_n: & \; (A_1, \ldots, A_{n-1}) \to (A^\bot_n),
\end{align*} 
in $M$, satisfying 
\begin{align*} 
	\cutt{1,1}(p_1,e_{A^\bot_1}) & \;=\; \cutt{1,2}(p_2,e_{A_2}), \\
	\cutt{1,1}(p_2,e_{A^\bot_2}) & \;=\; \cutt{1,2}(p_3,e_{A_3}), \\
	& \;\;\vdots \\
	\cutt{1,1}(p_n,e_{A^\bot_n}) & \;=\; \cutt{1,2}(p_1,e_{A_1}).
\end{align*}
A 2-cell $(A_1,\ldots,A_n) \to (B_1, \ldots, B_m)$, with $n + m > 1$, is defined to be a Chu band of type $(A_1,\ldots,$ $A_n,B^\bot_m,\ldots,B^\bot_1)$. Notice that this only defines 2-cells with nullary input $() \to (B_1,\ldots,B_m)$ or 2-cells with nullary output $(A_1,\ldots,A_n) \to ()$ when $n > 1$ and $m > 1$.

Given two composable 2-cells, there is only one way of putting their components together that is consistent with the typing; for example, given 2-cells $p: (A) \to (B,C)$ and $q: (C,D) \to (E)$, corresponding to Chu bands $p$ of type $(A, C^\bot, B^\bot)$ and $q$ of type $(C, D, E^\bot)$, their composite $\cutt{2,1}(p,q)$ has to be a Chu band $r$ of type $(A,D,E^\bot, B^\bot)$; the typing forces us to define
\begin{align*}
	r_1 & \; := \; \cutt{1,1}(q_1,p_1): (D,E^\bot,B^\bot) \to (A^\bot), \\
	r_2 & \; := \; \cutt{1,2}(p_2,q_2): (E^\bot, B^\bot, A) \to (D^\bot), \\
	r_3 & \; := \; \cutt{1,1}(p_2,q_3): (B^\bot, A, D) \to (E), \\
	r_4 & \; := \; \cutt{1,2}(q_1,p_3): (A, D, E^\bot) \to (B).
\end{align*}
It is an exercise to check that this is, indeed, a Chu band.

If $M$ is non-regular, and $A$ is an endo-1-cell $A: (a: x\to x) \to (a: x\to x)$, it is possible to define a unary Chu band of type $(A)$ as a single 2-cell $p: () \to (A)$ of $M$, satisfying $\cutt{1,1}(p, e_{A^\bot}) = \cutt{1,2}(p, e_{A})$. On the other hand, there is no sensible notion of nullary Chu band, nor a way of composing a Chu band of type $(A)$ and one of type $(A^\bot)$, which would be necessary in order to compose a 2-cell $() \to (A)$ and a 2-cell $(A) \to ()$.

Thus, contradicting \cite[Example 2.5(4)]{cockett2003morphisms}, we claim that $\chu{M}$ should be defined as a regular poly-bicategory or, at least, a poly-bicategory with no 2-cells with zero inputs and one output, nor 2-cells with one input and zero outputs. 

In particular, the construction of units from universal 2-cells with nullary input or output does not apply, as it requires 2-cells $() \to (1_x)$ and $(\bot_x) \to ()$, unless done in an entirely circular way (\emph{defining} the ``missing'' 2-cells to be those of the linear bicategory one wants to induce, strictified in order to recover an associative composition). That is, $\chu{M}$ is never ``representable for tensor units'' in the sense of \cite[Section 2]{cockett2003morphisms}.

Nevertheless, it is true that, if $M$ is tensor 0-representable, $\chu{M}$ is both tensor and par 0-representable. Fix a tensor unit $1_x: x \to x$ on each 0-cell $x$ of $M$, and a coherent family of witnesses of unitality $\{l_a, r_a\}$ as in Theorem \ref{thm:polycoherent}. For each 0-cell $a: x \to x$ of $\chu{M}$, we claim that the 1-cell $1_a$ of $\chu{M}$ given by the pair of 1-cells $1_x: x \to x$, $a: x \to x$, together with the pair of 2-cells $l_a: (1_x,a) \to (a)$ and $r_a: (a,1_x) \to (a)$, is a tensor unit on $a$ in $\chu{M}$. 

To prove this, let $A: (a: x \to x) \to (b: y \to y)$ be a 1-cell in $\chu{M}$. The triple of 2-cells of $M$
\begin{align*}
	l_{A,1} & \; := e_A : (A, A^\bot) \to (a), \\
	l_{A,2} & \; := r_{A^\bot} : (A^\bot, 1_x) \to (A^\bot), \\
	l_{A,3} & \; := l_A : (1_x, A) \to (A),
\end{align*} 
defines a Chu band of type $(1_a, A, A^\bot)$, that is a 2-cell $l_A: (1_a, A) \to (A)$ of $\chu{M}$: the three Chu band conditions are instances of equations (\ref{eq:natural2}), (\ref{eq:trianglepoly}), and (\ref{eq:natural}), respectively.

It is then a straightforward exercise to show that this 2-cell is universal at $\bord{1}{+}$ and at $\bord{2}{-}$. For example, given a 2-cell $p: (1_a, \Gamma) \to (A, \Delta)$ in $\chu{M}$, we can obtain a unique factorisation through a 2-cell $\tilde{p}: (\Gamma) \to (A,\Delta)$, whose components of the form $(\Gamma_1,\Gamma_2) \to (B)$ are the unique factorisations through a witness of unitality in $M$ of the components $(\Gamma_1,1_x,\Gamma_2) \to (B)$ of $p$.

Similarly, we define a 2-cell $r_A: (A, 1_b) \to (A)$ of $\chu{M}$, with components
\begin{align*}
	r_{A,1} & \; := l_{A^\bot} : (1_y, A^\bot) \to (A^\bot), \\
	r_{A,2} & \; := e_{A^\bot} : (A^\bot, A) \to (b), \\
	r_{A,3} & \; := r_{A} : (A, 1_y) \to (A),
\end{align*} 
and prove that it is universal at $\bord{1}{+}$ and at $\bord{1}{-}$. This proves that the $1_a$ are tensor units in $\chu{M}$. 

The fact that $1^\bot_a$ is a par unit in $\chu{M}$ can either be checked directly, or derived, by duality, from the fact that the morphism $(-)^\bot: \chu{M} \to \coo{\opp{\chu{M}}}$, defined as the identity on 0-cells, as $A \mapsto A^\bot$ on 1-cells, and mapping a Chu band $b$, seen as a 2-cell $(A_1,\ldots,A_n) \to (B_1,\ldots, B_m)$, to the same Chu band seen as a 2-cell $(B^\bot_m,\ldots,B^\bot_1) \to (A^\bot_n, \ldots, A^\bot_1)$, is an involutive isomorphism of regular poly-bicategories. 

It is equally straightforward to check that the 2-cell $\varepsilon_A: (A, A^\bot) \to (1_a^\bot)$ with components $\varepsilon_{A,1} := r_{A^\bot}$, $\varepsilon_{A,2} := l_A$, and $\varepsilon_{A,3} := e_A$ is universal at $\bord{1}{-}$ and $\bord{2}{-}$, which makes it a witness of $A^\bot$ as a left linear adjoint of $A$; the 2-cell $\varepsilon_{A^\bot}: (A^\bot, A) \to (1_b^\bot)$ defined similarly shows that it is also a right linear adjoint.

The conditions for $\chu{M}$ to be tensor and par 1-representable do not differ from those stated in \cite[Example 2.5(4)]{cockett2003morphisms}.
\end{exm}

\begin{remark}
A higher-dimensional version of the Chu construction was proposed by Shulman in \cite{shulman2017mvar}, based on the example of a ``2-multicategory'' of multivariable adjunctions in \cite{cheng2014cyclic}. The same remarks about units should apply, relative to a suitable notion of universality up to isomorphism.
\end{remark}

\section{Merge-bicategories and higher morphisms} \label{sec:probicat}

Let $B$ be a bicategory. Then $B$ can also be seen as a degenerate linear bicategory, whose two bicategory structures coincide, and whose right and left distributors are associators and inverses of associators, respectively. Clearly, a functor of bicategories becomes a strong linear functor of the corresponding linear bicategories, so this determines an inclusion $\imath: \bicat \to \linbicat$ exhibiting $\bicat$ as a full subcategory of $\linbicat$.

Composing this with $\int: \linbicat \to \pbcat_\otimes^\parr$, we can realise every bicategory as a representable regular poly-bicategory. Indeed, there is a certain asymmetry in Construction \ref{cons:groth_monoidal}, where we choose to realise a bicategory as a tensor representable multi-bicategory, rather than its par representable dual: this makes sense at the level of morphisms, if we are interested in lax rather than colax functors; not so much when limiting ourselves to proper functors.

On the other hand, the characterisation of bicategories as degenerate objects of $\pbcat_\otimes^\parr$ is quite unsatisfactory, as it amounts to requiring that for \emph{some} choice of tensors, pars, and their units, the representing 1-cells for tensors and pars coincide, and the tensor and par units coincide. This kind of post-selection, requiring certain equalities or isomorphisms between specific cells, goes against the spirit of coherence-via-universality, where we only care about the properties of cells, and not their names.

Rather, we would like to make the choice of pars and par units a consequence of the choice of tensors and tensor units, the way it is (trivially) in the passage from bicategories to degenerate linear bicategories. This would be possible if a tensor $t: (a, b) \to (a \otimes b)$ had a unique ``inverse'' $\invrs{t}: (a \otimes b) \to (a,b)$, but there is no way to make sense of this in a poly-bicategory, for lack of compositions along multiple 1-cells.

On the other hand, it \emph{is} possible to compose 2-cells in a bicategory ``along composites of multiple 1-cells''. This is implicit in the string-diagrammatic calculus for bicategories \cite{joyal1991geometry}, when string diagrams are seen as Poincar\'e duals of pasting diagrams, since string diagrams can usually be composed along multiple edges.

In this section, we make this a structural possibility, in a variant of poly-bicategories that we call \emph{merge-bicategories}: while ``cut'' is composition along a single edge, and ``mix'' is parallel (or nullary) composition \cite{cockett1997proof}, we speak of ``merge'' for composition along one or more edges, or, in terms of pasting diagrams, the most general composition that preserves up to homeomorphism the topology of diagrams as combinatory polygons. 

This will allow us to recover bicategories, instead of linear bicategories, as the representable objects, and also to develop a satisfactory theory of natural transformations and modifications.

\begin{dfn}
Let $X$ be a regular 2-polygraph. We write $X_1^+$ for the set of composable non-empty sequences of 1-cells of $X$; that is, the elements of $X_1^+$ are sequences $(a_1,\ldots,a_n)$ of 1-cells of $X$, such that $\bord{}{+}a_i = \bord{}{-}a_{i+1}$, for $i=1,\ldots,n-1$. 

For all $n,m > 0$, and $1 \leq i_1 \leq i_2 \leq n$, $1 \leq j_1 \leq j_2 \leq m$, we also define functions
\begin{align*}
	\bord{[i_1,i_2]}{-}: X_2^{(n,m)} \to X_1^+, \quad \quad & p \mapsto (\bord{i_1}{-}p,\ldots,\bord{i_2}{-}p), \\
	\bord{[j_1,j_2]}{+}: X_2^{(n,m)} \to X_1^+, \quad \quad & p \mapsto (\bord{j_1}{+}p,\ldots,\bord{j_2}{+}p). 
\end{align*}
We write simply $\bord{}{-}$ for $\bord{[1,n]}{-}$, and $\bord{}{+}$ for $\bord{[1,m]}{+}$.
\end{dfn}

\begin{dfn}
A \emph{merge-bicategory} is a regular 2-polygraph $X$ together with ``merge'' functions
\begin{equation*}
\begin{tikzpicture}
	\node[scale=1.25] (0) at (0,0) {$X_2^{(n,m)} \pback{\bord{[j_1,j_2]}{+}}{\bord{[i_1,i_2]}{-}} X_2^{(p,q)}$};
	\node[scale=1.25] (1) at (6,0) {$X_2^{(n+p-\ell,m+q-\ell)},$};
	\draw[1c] (0) to node[auto] {$\mrg{[j_1,j_2]}{[i_1,i_2]}$} (1);
\end{tikzpicture}
\end{equation*}
whenever $1 \leq j_1 \leq j_2 \leq m$ and $1 \leq i_1 \leq i_2 \leq p$, such that $\ell := j_2 - (j_1-1) = i_2 - (i_1 -1)$, satisfy the two conditions on any side of the following square:
\begin{equation*}
\begin{tikzpicture}[baseline={([yshift=-.5ex]current bounding box.center)}]
	\node[scale=1.25] (0) at (-1.5,.75) {$i_1=1$};
	\node[scale=1.25] (1) at (1.5,.75) {$j_2=m$};
	\node[scale=1.25] (2) at (-1.5,-.75) {$i_2=p$};
	\node[scale=1.25] (3) at (1.5,-.75) {$j_1=1.$};
	\draw[edge] (0.east) to node[auto] {$(b)$} (1.west);
	\draw[edge] (2.east) to node[auto,swap] {$(d)$} (3.west);
	\draw[edge] (0.south) to node[auto,swap] {$(a)$} (2.north);
	\draw[edge] (1.south) to node[auto] {$(c)$} (3.north);
\end{tikzpicture}
\end{equation*}
Each pair of conditions corresponds to a diagram in (\ref{eq:composable}), where the shared boundary can now comprise multiple 1-cells. The interaction of merge with the boundaries is also evident from the diagrams: explicitly,
\begin{enumerate}[label=($\alph*$)]
	\item $\bord{k}{-}\mrg{[j_1,j_2]}{[1,p]}(t,s) = \bord{k}{-}t, \\
	\bord{k}{+}\mrg{[j_1,j_2]}{[1,p]}(t,s) = \begin{cases}
		\bord{k}{+}t, & 1 \leq k \leq j_1-1, \\
		\bord{k-j_1+1}{+}s, & j_1 \leq k \leq j_1+q-1, \\
		\bord{k-q+p}{+}t, & j_1+q \leq k \leq m+q-p;
	\end{cases}$
		
	\item $\bord{k}{-}\mrg{[j_1,m]}{[1,i_2]}(t,s) = \begin{cases}
		\bord{k}{-}t, & 1 \leq k \leq n, \\
		\bord{k-n+\ell}{-}s, & n+1 \leq k \leq n+p-\ell,
	\end{cases} \\
	\bord{k}{+}\mrg{[j_1,m]}{[1,i_2]}(t,s) = \begin{cases}
		\bord{k}{+}t, & 1 \leq k \leq m-\ell, \\
		\bord{k-m+\ell}{+}s, & m-\ell+1 \leq k \leq m+q-\ell;
	\end{cases}$
	
	\item $\bord{k}{-}\mrg{[1,m]}{[i_1,i_2]}(t,s) = \begin{cases}
		\bord{k}{-}s, & 1 \leq k \leq i_1-1, \\
		\bord{k-i_1+1}{-}t, & i_1 \leq k \leq i_1+n-1, \\
		\bord{k-n+m}{-}s, & i_1+n \leq k \leq n+p-m,
	\end{cases} \\ \bord{k}{+}\mrg{[1,m]}{[i_1,i_2]} = \bord{k}{+}s;$
	
	\item $\bord{k}{-}\mrg{[1,j_2]}{[i_1,p]}(t,s) = \begin{cases}
		\bord{k}{-}s, & 1 \leq k \leq p-\ell, \\
		\bord{k-p+\ell}{-}t, & p-\ell+1 \leq k \leq n+p-\ell;
	\end{cases}\\ 
	\bord{k}{+}\mrg{[1,j_2]}{[i_1,p]}(t,s) = \begin{cases}
		\bord{k}{+}s, & 1 \leq k \leq q, \\
		\bord{k-q+\ell}{+}t, & q+1 \leq k \leq m+q-\ell.
	\end{cases}$
\end{enumerate}

Moreover, the $\mrg{[j_1,j_2]}{[i_1,i_2]}$ satisfy associativity and interchange equations that guarantee the uniqueness of the merger of three or more 2-cells, whenever they can be merged in different orders. In addition to the 9 associativity schemes of diagram (\ref{asso_scheme}) and the 8 interchange schemes of diagram (\ref{inter_scheme}), which now allow the shared boundaries to comprise more than one 1-cell, there are 16 new associativity schemes:
\begin{equation} \label{asso_scheme2}
\input{img/s5_associativity-pro.tex}
\end{equation}

Given two merge-bicategories $X, Y$, a \emph{morphism} $f: X \to Y$ is a morphism of the underlying regular 2-polygraphs that commutes with the $\mrg{[j_1,j_2]}{[i_1,i_2]}$ functions. Merge-bicategories and their morphisms form a large category $\mrgpol$.
\end{dfn}

The merge-composable pasting diagrams of 2-cells in a merge-bicategory are precisely those whose shape is constructible 2-molecule, as defined in \cite{hadzihasanovic2018combinatorial}. The obvious forgetful functor $U: \mrgpol \to \rtwopol$ is monadic: its left adjoint freely adds all the merge-composable pasting diagrams of 2-cells to a regular 2-polygraph. 

The forgetful functor factors through a functor $U_1: \mrgpol \to \pbcat$, which endows the underlying regular 2-polygraph with the operations 
\begin{equation*}
	\cutt{j,i} := \mrg{[j,j]}{[i,i]}.
\end{equation*}
So a merge-bicategory can be seen as a regular poly-bicategory with additional structure.

\begin{remark}
The functor $U_1$ is faithful, but it is neither full nor essentially surjective: not every regular poly-bicategory admits a structure of merge-bicategory, and if it does admit one, it may not be unique (that is, being a merge-bicategory is not a \emph{property} of a regular poly-bicategory).

For example, let $B$ be the regular poly-bicategory corresponding to a Boolean algebra, as in Remark \ref{rmk:booleanalgebra}. Then $B$ has a 2-cell $s: (\top) \to (\top,\bot)$, since $\top = \top \lor \bot$, and a 2-cell $t: (\top,\bot) \to (\bot)$, since $\top \land \bot = \bot$. However, unless $\top = \bot$, there is no 2-cell $(\top) \to (\bot)$ in $B$, hence no way to define $\mrg{[1,2]}{[1,2]}(s,t)$. Therefore $B$ is not $U_1 \tilde{B}$ for any merge-bicategory $\tilde{B}$. This proves that $U_1$ is not essentially surjective.

Next, let $X$ be a regular polycategory with four 1-cells $a, b, c, d$, and four 2-cells $s: (c) \to (a,b)$, $t: (a,b) \to (d)$, $r_1: (c) \to (d)$, and $r_2: (c) \to (d)$; notice that there are no cut-composable pairs of 2-cells. However, there are two possible structures of merge-bicategory on the underlying regular 2-polygraph, $\tilde{X}_1$ with $\mrg{[1,2]}{[1,2]}(s,t) := r_1$ and $\tilde{X}_2$ with $\mrg{[1,2]}{[1,2]}(s,t) := r_2$, such that $U_1\tilde{X}_1 = U_1\tilde{X}_2 = X$. 

Moreover, $X$ has an involutive automorphism which exchanges $r_1$ and $r_2$, but it does not lift to an automorphism of either $\tilde{X}_1$ or $\tilde{X}_2$. This proves that $U_1$ is not full.
\end{remark}

The expanded possibilities for composition open up possibilities for division. In the following, we assume that it is clear from context what a well-formed equation is.
\begin{dfn}
Let $t \in X_2^{(n,m)}$ be a 2-cell in a merge-bicategory $X$. We say that $t$ is \emph{universal at $\bord{[j_1,j_2]}{+}$} if, for all 2-cells $s$ and well-formed equations $\mrg{[j_1,j_2]}{[i_1,i_2]}(t,x) = s$, there exists a unique 2-cell $r$ such that $\mrg{[j_1,j_2]}{[i_1,i_2]}(t,r) = s$.

We say that $t$ is \emph{universal at $\bord{[i_1,i_2]}{-}$} if, for all 2-cells $s$ and well-formed equations $\mrg{[j_1,j_2]}{[i_1,i_2]}(x,t) = s$, there exists a unique 2-cell $r$ such that $\mrg{[j_1,j_2]}{[i_1,i_2]}(r,t) = s$.

We say that $t$ is \emph{left universal} if it is universal at $\bord{}{+}$, and \emph{right universal} if it is universal at $\bord{}{-}$. We say that $t$ is \emph{universal} if it is both left and right universal. 
\end{dfn}

It is clear that any cell of a merge-bicategory $X$ that is universal at $\bord{j}{+}$ or $\bord{i}{-}$ in $U_1X$ is universal at $\bord{[j,j]}{+}$ or $\bord{[i,i]}{-}$ in $X$. Therefore, any notion of universality that we have defined in the context of regular poly-bicategories still makes sense for merge-bicategories, and we will casually employ the same terminology for special universal cells, speaking of tensors, pars, tensor and par units. We will also use ``universal at $\bord{i}{-}$'' (or $\bord{j}{+}$) as an abbreviation of ``universal at $\bord{[i,i]}{-}$'' (or $\bord{[j,j]}{+}$).

As we have universal 2-cells with multiple inputs and outputs, so we have unit 2-cells with multiple inputs and outputs.
\begin{dfn}
Let $\Gamma = a_1,\ldots,a_n$ be a composable sequence of 1-cells in a merge-bicategory. A 2-cell $\idd{\Gamma}: (\Gamma) \to (\Gamma)$ is a \emph{unit} on $\Gamma$ if, for all $t \in X_2^{(n,m)}$, if $\bord{[j_1,j_2]}{+}t = (\Gamma)$ then $\mrg{[j_1,j_2]}{[1,n]}(t,\idd{\Gamma}) = t$, and if $\bord{[i_1,i_2]}{-}t = (\Gamma)$ then $\mrg{[1,n]}{[i_1,i_2]}(\idd{\Gamma},t) = t$.
\end{dfn}

It is easy to adapt the proofs of Lemma \ref{lem:2divunit}, Proposition \ref{prop:2units}, and Proposition \ref{cor:oneuniversal} to obtain the following.
\begin{lem}
Let $p: (a_1,\ldots,a_n) \to (b_1,\ldots,b_m)$ be a universal 2-cell in a merge-bicategory $X$, and let $\idd{}: (a_1,\ldots,a_n) \to (a_1,\ldots,a_n)$, $\idd{}': (b_1,\ldots,b_m) \to (b_1,\ldots,b_m)$ be the unique 2-cells such that 
\begin{equation*}
	\mrg{[1,n]}{[1,n]}(\idd{},p) = p, \quad\quad \mrg{[1,m]}{[1,m]}(p,\idd{}') = p.
\end{equation*}
Then $\idd{}$ is a unit on $(a_1,\ldots,a_n)$, and $\idd{}'$ is a unit on $(b_1,\ldots,b_m)$.
\end{lem}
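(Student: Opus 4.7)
The plan is to mimic the proof of Lemma \ref{lem:2divunit}, promoting each single-edge cut to a multi-edge merge and each three-cell chain associativity for poly-bicategories to its merge-bicategorical counterpart. Two cases need to be checked for $\idd{}$: that $\mrg{[1,n]}{[i_1,i_2]}(\idd{}, t) = t$ whenever $t$ has $(a_1,\ldots,a_n)$ appearing in its input boundary at positions $[i_1,i_2]$, and that $\mrg{[j_1,j_2]}{[1,n]}(t, \idd{}) = t$ whenever $t$ has $(a_1,\ldots,a_n)$ appearing in its output boundary at positions $[j_1,j_2]$. The corresponding statement for $\idd{}'$ then follows by applying the same argument in $\coo{X}$, where $\coo{p}$ remains divisible and the defining identity $\mrg{[1,m]}{[1,m]}(p, \idd{}') = p$ becomes the $\coo{X}$-analogue of the defining identity of $\idd{}$.

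For the first case, I would use left divisibility of $p$, that is, divisibility at $\bord{[1,m]}{+}$, to factor $t$ uniquely as $t = \mrg{[1,m]}{[i_1,i_1+m-1]}(p, t')$, where $t'$ has $(b_1,\ldots,b_m)$ in place of $(a_1,\ldots,a_n)$. The chain-associativity equation
\begin{equation*}
\mrg{[1,n]}{[i_1,i_2]}(\idd{},\, \mrg{[1,m]}{[i_1,i_1+m-1]}(p, t')) = \mrg{[1,m]}{[i_1,i_1+m-1]}(\mrg{[1,n]}{[1,n]}(\idd{}, p),\, t'),
\end{equation*}
together with the hypothesis $\mrg{[1,n]}{[1,n]}(\idd{}, p) = p$, then collapses the left-hand side to $t$. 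For the second case, I would form $\mrg{[j_1,j_2]}{[1,n]}(t, p)$, rewrite the second argument using $p = \mrg{[1,n]}{[1,n]}(\idd{}, p)$, and re-associate by the symmetric three-cell chain equation to obtain
\begin{equation*}
\mrg{[j_1,j_2]}{[1,n]}(t, p) = \mrg{[j_1,j_2]}{[1,n]}(\mrg{[j_1,j_2]}{[1,n]}(t, \idd{}),\, p),
\end{equation*}
at which point right divisibility of $p$, that is, divisibility at $\bord{[1,n]}{-}$, cancels $p$ on the right and yields $t = \mrg{[j_1,j_2]}{[1,n]}(t, \idd{})$.

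No real obstacle is expected: the two divisibility conditions packaged into the hypothesis \emph{divisible} provide exactly the existence and uniqueness needed; the defining identities $\mrg{[1,n]}{[1,n]}(\idd{}, p) = p$ and $\mrg{[1,m]}{[1,m]}(p, \idd{}') = p$ slot cleanly into the associativity rewrites; and the chain-associativity equations required are direct multi-edge generalisations of the single scheme used in the proof of Lemma \ref{lem:2divunit}, which are among the associativity axioms of a merge-bicategory. The dualisation to $\coo{X}$ for $\idd{}'$ is routine, since $\coo{(-)}$ exchanges the roles of $\bord{}{-}$ and $\bord{}{+}$ and of the two defining identities, so the same argument applies verbatim.
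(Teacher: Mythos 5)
Your proof is correct and is precisely the adaptation the paper has in mind: the paper gives no explicit argument for this lemma, stating only that "it is easy to adapt the proofs of Lemma \ref{lem:2divunit} \dots", and your two cases (factor through $p$ using divisibility at $\bord{}{+}$ for absorption on the input side, post-compose with $p$ and cancel using divisibility at $\bord{}{-}$ for absorption on the output side, then dualise to $\coo{X}$ for $\idd{}'$) reproduce that proof verbatim with cuts promoted to merges. The associativity schemes you invoke are indeed among the merge-bicategory axioms, and the compatibility conditions on $\mrg{[j_1,j_2]}{[i_1,i_2]}$ are satisfied in both cases since $\idd{}$ is glued along its entire output (resp.\ input) boundary.
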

\begin{prop} \label{prop:2unitsmerge}
Let $X$ be a merge-bicategory. The following are equivalent:
\begin{enumerate}
	\item for all composable sequences of 1-cells $(\Gamma)$ in $X$, there exist a composable sequence $(\overline{\Gamma})$ and a universal 2-cell $p: (\Gamma) \to (\overline{\Gamma})$;
	\item for all composable sequences of 1-cells $(\Gamma)$ in $X$, there exist a composable sequence $\overline{\Gamma}$ and a universal 2-cell $p': (\overline{\Gamma}) \to (\Gamma)$;
	\item for all composable sequences of 1-cells $(\Gamma)$ in $X$, there exists a (necessarily unique) unit $\idd{\Gamma}$ on $\Gamma$.
\end{enumerate}
\end{prop}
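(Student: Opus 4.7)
The plan is to mimic the proof strategy of Proposition \ref{prop:2units} almost verbatim, with the preceding lemma providing the essential technical step. The implications $(3) \Rightarrow (1)$ and $(3) \Rightarrow (2)$ are the easy directions: given a unit $\idd{\Gamma}$ on $\Gamma$, I would verify directly from the unit axiom that $\idd{\Gamma}$ is itself divisible --- if $\mrg{[1,n]}{[i_1,i_2]}(\idd{\Gamma},x) = s$ is any well-formed equation, then by the unit property the left-hand side equals $x$, so $x = s$ is the unique solution, and dually for right divisibility. Taking $\overline{\Gamma} := \Gamma$ then exhibits the units as witnesses for conditions $(1)$ and $(2)$ simultaneously.

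For the converse implications $(1) \Rightarrow (3)$ and $(2) \Rightarrow (3)$, I would invoke the preceding lemma directly. Starting from a divisible 2-cell $p \colon (\Gamma) \to (\overline{\Gamma})$, I would use its left divisibility at $\bord{}{+}$ (applied to the well-formed equation $\mrg{[1,n]}{[1,n]}(x,p) = p$, which is well-formed because the full input boundary of $p$ is $\Gamma$) to extract the unique $\idd{\Gamma}$ with $\mrg{[1,n]}{[1,n]}(\idd{\Gamma},p) = p$; the lemma then asserts that this $\idd{\Gamma}$ is in fact a unit on $\Gamma$. The case of a divisible $p' \colon (\overline{\Gamma}) \to (\Gamma)$ is entirely symmetric, using the right-divisibility half of the same lemma to obtain a unit on $\Gamma$ from the factorisation $\mrg{[1,m]}{[1,m]}(p',\idd{\Gamma}') = p'$. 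Uniqueness of the unit follows from the general observation that any two units on $\Gamma$ must agree: if $\idd{\Gamma}$ and $\idd{\Gamma}'$ are both units, then $\idd{\Gamma} = \mrg{[1,n]}{[1,n]}(\idd{\Gamma},\idd{\Gamma}') = \idd{\Gamma}'$.

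I do not expect any real obstacle here, because all the substantive work has been pushed into the preceding lemma, which is itself a straightforward adaptation of Lemma \ref{lem:2divunit} to the merge setting. The only point requiring minor care is bookkeeping around the indices $[j_1,j_2]$ and $[i_1,i_2]$ when checking the unit axioms: one must confirm that each well-formed merge involving $\idd{\Gamma}$ in the ``full boundary'' position gives back the other cell, which is essentially built into the definition of a unit on $\Gamma$. The proof will therefore be very short, referring back to the lemma rather than reproducing its diagram chase.
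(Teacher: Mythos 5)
Your proposal is correct and follows exactly the route the paper intends: the paper gives no separate proof of this proposition, stating only that it is obtained by adapting Lemma \ref{lem:2divunit} and Proposition \ref{prop:2units}, which is precisely what you do, with the preceding lemma carrying the substantive content. The only nit is a label swap: solving $\mrg{[1,n]}{[1,n]}(x,p)=p$ for $x$ uses divisibility of $p$ at $\bord{}{-}$ (what the paper calls right divisibility), not at $\bord{}{+}$ --- harmless here since $p$ is assumed divisible on both sides.
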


\begin{dfn} \label{dfn:mergeunital}
A merge-bicategory $X$ is \emph{unital} if it satisfies any of the equivalent conditions of Proposition \ref{prop:2unitsmerge}.
\end{dfn}

This stronger notion of unitality is, in fact, all that we need to make the two bicategory structures induced by a representable regular poly-bicategory collapse.

\begin{prop} \label{prop:mergeunital}
Let $X$ be a unital merge-bicategory, and $p: (\Gamma) \to (\Delta)$ a 2-cell of $X$. The following are equivalent:
\begin{enumerate}
	\item $p$ is universal at $\bord{}{+}$;
	\item $p$ is universal at $\bord{}{-}$;
	\item $p$ is an isomorphism, that is, it has a unique inverse $\invrs{p}: (\Delta) \to (\Gamma)$ such that $\mrg{[1,m]}{[1,m]}(p,\invrs{p})= \idd{\Gamma}$, and $\mrg{[1,n]}{[1,n]}(\invrs{p},p) = \idd{\Delta}$.
\end{enumerate}
\end{prop}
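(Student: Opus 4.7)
The plan is to mirror the proof of Proposition \ref{cor:onedivisible} almost verbatim, replacing the single-cell units $\idd{a}$, $\idd{a'}$ by the sequence units $\idd{\Gamma}$, $\idd{\Delta}$ provided by Proposition \ref{prop:2unitsmerge}, and replacing the cuts $\cutt{1,1}$ by full-boundary merges $\mrg{[1,n]}{[1,n]}$ and $\mrg{[1,m]}{[1,m]}$. The equivalence $(3) \Leftrightarrow (1)$ and $(3) \Leftrightarrow (2)$ in the ``easy'' direction is immediate: given an inverse $\invrs{p}$, any well-formed division equation through $p$ on either side can be solved by merging with $\invrs{p}$, and uniqueness follows from the unit equations for $\idd{\Gamma}$ and $\idd{\Delta}$. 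So the content is in $(1) \Rightarrow (3)$, and the case $(2) \Rightarrow (3)$ is then obtained by applying the result to $\coo{X}$.

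For $(1) \Rightarrow (3)$, assume $p$ is divisible at $\bord{}{+}$, with $\Gamma$ of length $n$ and $\Delta$ of length $m$. First I would observe that the equation $\mrg{[1,m]}{[1,m]}(p, y) = \idd{\Gamma}$ is well-formed in the indeterminate $y$ of type $(\Delta) \to (\Gamma)$: matching boundaries on the indeterminate side is exactly the statement that $\idd{\Gamma}$ has input $\Gamma$ and output $\Gamma$, which agrees with the output of the merger regardless of $y$'s outputs so long as they equal $\Gamma$. By divisibility at $\bord{}{+}$, this produces a unique $\invrs{p}: (\Delta) \to (\Gamma)$ with $\mrg{[1,m]}{[1,m]}(p, \invrs{p}) = \idd{\Gamma}$.

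The key computation is then
\begin{equation*}
\mrg{[1,m]}{[1,m]}\!\bigl(p, \mrg{[1,n]}{[1,n]}(\invrs{p}, p)\bigr)
= \mrg{[1,n]}{[1,n]}\!\bigl(\mrg{[1,m]}{[1,m]}(p, \invrs{p}), p\bigr)
= \mrg{[1,n]}{[1,n]}(\idd{\Gamma}, p)
= p,
\end{equation*}
using the unit property of $\idd{\Gamma}$ at the end, together with $\mrg{[1,m]}{[1,m]}(p, \idd{\Delta}) = p$; by divisibility of $p$ at $\bord{}{+}$, we conclude $\mrg{[1,n]}{[1,n]}(\invrs{p}, p) = \idd{\Delta}$. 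Uniqueness of $\invrs{p}$ with both properties is automatic once we have either equation.

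The only substantive point to verify --- and this is what I expect to be the main (though minor) obstacle --- is that the first equality above is indeed one of the associativity equations postulated for merge-bicategories. It is a ``sandwich'' composition of $p$, $\invrs{p}$, $p$ that shares the full output boundary of the outer $p$ with the full input boundary of $\invrs{p}$, and the full output boundary of $\invrs{p}$ with the full input boundary of the inner $p$. This is exactly the last pattern in diagram (\ref{asso_scheme2}) (with the middle pair of arcs collapsed to the full shared boundaries), so the equation holds by definition. Everything else reduces to checking well-formedness of the intermediate equations, which is a routine bookkeeping exercise on the boundary data of $p$ and $\invrs{p}$.
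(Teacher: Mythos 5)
Your proof is correct and is essentially the paper's own argument: the paper explicitly obtains this proposition by adapting the proof of Proposition \ref{cor:onedivisible}, dividing the unit $\idd{\Gamma}$ by $p$ and then using the three-cell associativity of full-boundary merges exactly as you do, with the case of divisibility at $\bord{}{-}$ handled dually. (The only quibble is your citation of the associativity scheme: the vertical three-cell stack is the generalisation of one of the original nine schemes in (\ref{asso_scheme}) with the shared boundaries now allowed to be full, rather than one of the sixteen new schemes of (\ref{asso_scheme2}), but this does not affect the argument.)
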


\begin{cor} \label{cor:tensorparcollapse}
Let $X$ be a unital merge-bicategory, and let $a,b$ be a composable pair of 1-cells. Then each tensor $(a,b) \to (a\otimes b)$ determines a par $(a\otimes b) \to (a,b)$, and vice versa. 

In particular, $U_1 X$ is tensor 1-representable if and only if it is par 1-representable.
\end{cor}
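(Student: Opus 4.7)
The plan is to exhibit a bijective correspondence, up to inverse, between tensors $t\colon (a,b) \to (a\otimes b)$ and pars $c\colon (a\otimes b) \to (a,b)$ in a unital merge-bicategory $X$, and then to derive the representability equivalence from it. The key observation, which makes the corollary almost immediate, is that divisibility in the merge sense at $\bord{[1,1]}{\pm}$ coincides with divisibility in the poly-bicategorical sense at $\bord{1}{\pm}$, since the merge operation $\mrg{[1,1]}{[i,i]}$ is by definition the cut $\cutt{1,i}$. Consequently, for a 2-cell with a single output, ``divisible at $\bord{}{+}$'' as a merge-cell means exactly ``divisible at $\bord{1}{+}$'' as a poly-cell, and similarly for single-input cells on the $\bord{}{-}$ side.

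Given this, I would first take a tensor $t\colon (a,b) \to (a\otimes b)$ in $U_1 X$. By the observation, $t$ is left divisible in the merge-bicategory $X$. Since $X$ is unital, Proposition \ref{prop:mergeunital} applies and produces a unique inverse $\invrs{t}\colon (a\otimes b) \to (a,b)$, which is in particular right divisible at $\bord{[1,1]}{-}$; by the observation again, this means $\invrs{t}$ is divisible at $\bord{1}{-}$ in $U_1 X$, so it witnesses $a\otimes b$ as a par of $a$ and $b$. The converse direction is entirely symmetric: a par $c\colon (a\parr b) \to (a,b)$ is right divisible in $X$, hence an isomorphism by Proposition \ref{prop:mergeunital}, whose inverse $\invrs{c}$ is divisible at $\bord{1}{+}$ and so a tensor of $a$ and $b$. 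The two assignments $t \mapsto \invrs{t}$ and $c \mapsto \invrs{c}$ are inverse to each other by uniqueness of inverses.

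For the ``in particular'' clause, observe that Proposition \ref{prop:2unitsmerge} applied to single 1-cells (seen as length-1 composable sequences) immediately implies that $U_1 X$ is unital in the sense of Proposition \ref{prop:2units}. Thus tensor 1-representability and par 1-representability of $U_1 X$ each amount to unitality together with the existence of a tensor (respectively, par) for every composable pair of 1-cells, and the bijection established above transfers existence from one side to the other.

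The ``hard part'' is really only bookkeeping: making sure the indexing conventions on $\mrg{[j_1,j_2]}{[i_1,i_2]}$ versus $\cutt{j,i}$ agree when the boundary in question has length one, and verifying that the global divisibility at $\bord{}{\pm}$ invoked in Proposition \ref{prop:mergeunital} degenerates to the poly-bicategorical divisibility in precisely the single-output/input cases at hand. Once these degeneracies are spelled out, the corollary reduces to a direct application of Proposition \ref{prop:mergeunital}.
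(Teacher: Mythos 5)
Your proof is correct and follows essentially the same route as the paper: identify that for a single-output (resp.\ single-input) 2-cell, divisibility at $\bord{1}{+}$ (resp.\ $\bord{1}{-}$) coincides with left (resp.\ right) divisibility in the merge sense, then apply Proposition \ref{prop:mergeunital} to pass to the inverse, which is automatically divisible on the other side. The only difference is that you spell out the bookkeeping about $\mrg{[1,1]}{[i,i]} = \cutt{1,i}$ and the ``in particular'' clause more explicitly than the paper does, which is harmless.
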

\begin{proof}
A tensor $t: (a,b) \to (a\otimes b)$ is universal at $\bord{}{+}$; by Proposition \ref{prop:mergeunital}, it is an isomorphism, with an inverse $\invrs{t}: (a\otimes b) \to (a,b)$ which is also an isomorphism. In particular, $\invrs{t}$ is universal at $\bord{}{-}$, that is, it is a par.
\end{proof}

We do not quite have an equivalence between tensor 0-representability and par 0-representability, because the inverse of a universal 2-cell exhibiting a right hom may not exhibit a right cohom. On the other hand, when $U_1 X$ is both tensor and par 0-representable, one structure determines the other.

\begin{dfn}
A unital merge-bicategory $X$ is \emph{0-representable} (\emph{1-representable}, \emph{representable}) if the regular poly-bicategory $U_1 X$ is 0-representable (1-representable, representable).
\end{dfn}

\begin{prop}  \label{cor:tensorparunitcollapse}
Let $X$ be a 0-representable merge-bicategory. Then:
\begin{enumerate}[label=($\alph*$)]
	\item a 1-cell $1_x: x \to x$ is a tensor unit if and only if it is a par unit;
	\item a 1-cell $e: x \to y$ is tensor universal if and only if it is par universal.
\end{enumerate}
Moreover, the 2-cells that exhibit $1_x$ as a par unit, or $e$ as par universal, can be chosen as inverses of the 2-cells that exhibit $1_x$ as a tensor unit, or $e$ as tensor universal.
\end{prop}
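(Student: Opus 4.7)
The proof plan hinges on a single extension of Proposition~\ref{prop:mergeunital}: in a unital merge-bicategory, any 2-cell $p:(\Gamma)\to(\Delta)$ that is an isomorphism (equivalently, divisible at $\bord{}{+}$ or at $\bord{}{-}$) is in fact divisible at every sub-boundary $\bord{[j_1,j_2]}{+}$ and $\bord{[i_1,i_2]}{-}$, not merely the full ones. Given a well-formed equation $\mrg{[j_1,j_2]}{[i_1,i_2]}(p,x)=s$, one constructs the unique solution by merging $s$ with $\invrs{p}$ along the shared portion of $\Delta$ (possibly together with units $\idd{\Delta_{[1,j_1-1]}}$, $\idd{\Delta_{[j_2+1,m]}}$ on the unaffected part of $\Delta$); existence and uniqueness then reduce to the cancellation identities $\mrg{[1,m]}{[1,m]}(p,\invrs{p})=\idd{\Gamma}$ and $\mrg{[1,n]}{[1,n]}(\invrs{p},p)=\idd{\Delta}$ combined with the associativity and interchange schemes. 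This is the main technical step: conceptually it is a direct consequence of the merge and unit axioms, but the exact indexing is delicate, and it is where all the weight of the argument sits.

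Granted this extension, part (a) is immediate. If $1_x$ is a tensor unit, the witnesses $l_a:(1_x,a)\to(a)$ and $r_b:(b,1_x)\to(b)$ each have a single output, so their defining divisibility at $\bord{1}{+}$ coincides with divisibility at the full output boundary $\bord{}{+}$. Proposition~\ref{prop:mergeunital} then makes both of them isomorphisms, with inverses $\invrs{l_a}:(a)\to(1_x,a)$ and $\invrs{r_b}:(b)\to(b,1_x)$ which are themselves isomorphisms. By the extension, $\invrs{l_a}$ is divisible both at $\bord{1}{-}$ and at $\bord{2}{+}$, and $\invrs{r_b}$ both at $\bord{1}{-}$ and at $\bord{1}{+}$---the precise conditions exhibiting $1_x$ as a par unit. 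The converse follows by applying the same argument in $\coo{X}$, where ``tensor unit'' and ``par unit'' swap roles.

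Part (b) is proved in the same way. The witnesses of tensor divisibility of $e$---the right hom $e^R_{e,a}$, the tensor $t_{e,a'}$, the left hom $e^L_{e,b'}$, and the tensor $t_{b,e}$---are all 2-cells with two inputs and one output, so the tensor-divisibility condition of being divisible at $\bord{1}{+}$ is divisibility at the full output boundary. Proposition~\ref{prop:mergeunital} again makes all of them isomorphisms, and the extension lemma shows that their inverses are divisible at all the requisite partial sub-boundaries. Unpacking, these inverses are 2-cells of the form $(a)\to(e,\rcimp{e}{a})$, $(e\parr a')\to(e,a')$, and so on, divisible at $\bord{1}{-}$ and at $\bord{2}{+}$ (or $\bord{1}{+}$, as appropriate), which are exactly the witnesses required for $e$ to be par divisible. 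The converse is again dual via $\coo{X}$, and the final ``moreover'' clause is immediate from the construction, since the par witnesses were produced as the inverses of the tensor witnesses.
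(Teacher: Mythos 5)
Your central lemma is false, and the whole argument rests on it. Being an isomorphism in a unital merge-bicategory (equivalently, being divisible at the \emph{full} boundary $\bord{}{+}$ or $\bord{}{-}$, as in Proposition \ref{prop:mergeunital}) does \emph{not} imply divisibility at proper sub-boundaries. Divisibility of $t: (a,b) \to (c)$ at $\bord{[2,2]}{-}$ is the right-hom property (it exhibits $b$ as $\rimp{a}{c}$), which is genuinely stronger than invertibility: in the merge-bicategory associated to $(\mathbf{Set}, \times)$, the tensor cell $t: (A,B) \to (A \times B)$ corresponding to the identity is an isomorphism, but divisibility at $\bord{2}{-}$ would force every function $A \times C \to A \times B$ to be of the form $\idd{A} \times r$, which fails. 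Your proposed construction of the quotient --- merging $s$ with $\invrs{p}$ along only part of $\Delta$, padding with units elsewhere --- does not typecheck: the planarity constraints on the merge operations are exactly what prevents gluing a proper sub-segment of $\invrs{p}$'s input boundary while leaving the rest free, and if it worked every monoidal category would be closed. The paper itself flags the precise failure in the paragraph before the Proposition: ``the inverse of a divisible 2-cell exhibiting a right hom may not exhibit a right cohom.'' A symptom of the problem is that your proof never uses the hypothesis of 0-representability (only unitality), so it would prove that every tensor unit in a unital merge-bicategory is a par unit, which is not what the Proposition claims and is not expected to hold.

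The step you do need is supplied differently in the paper. For (a), par 0-representability gives you a par unit $\bot_x$ with witnesses $(a) \to (\bot_x, a)$ divisible at $\bord{1}{-}$ \emph{and} $\bord{2}{+}$; factoring $\invrs{(r^\parr_{1_x})}$ through $l^\otimes_{\bot_x}$ yields an isomorphism $\bot_x \simeq 1_x$, along which those witnesses transport to cells $(a) \to (1_x,a)$ with both divisibility properties. Only then does the dual of Lemma \ref{lem:twodivisible} upgrade your $\invrs{(l^\otimes_a)}$ --- which a priori is only divisible at $\bord{1}{-}$ --- to be divisible at $\bord{2}{+}$ as well. For (b), the route is through Corollary \ref{cor:tensorparcollapse} (tensors and pars coincide) and part (a) (tensor and par units coincide), giving ``par invertibility'' of $e$, and then the dual of Proposition \ref{prop:isomorphism} to convert invertibility into par divisibility; the ``moreover'' clause is again Lemma \ref{lem:twodivisible}, not a formal consequence of taking inverses.
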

\begin{proof}
Suppose $1_x: x \to x$ is a tensor unit, and fix witnesses of its tensor unitality $\{l^\otimes_a, r^\otimes_a\}$. Let $\bot_x$ be a par unit on $x$, which exists by par 0-representability of $X$, and also fix witnesses $\{l^\parr_a, r^\parr_a\}$. Then $l^\otimes_{\bot_x}: (1_x,\bot_x) \to (\bot_x)$ and $\invrs{(r^\parr_{1_x})}: (1_x, \bot_x) \to (1_x)$ are both universal at $\bord{1}{+}$, so by factoring the second through the first we obtain an isomorphism $p: \bot_x \to 1_x$. We can postcompose all the $l^\parr_a$ and the $r^\parr_a$ with $p$, to obtain witnesses of the par unitality of $1_x$. But the $\{\invrs{(l^\otimes_a)}, \invrs{(r^\otimes_a)}\}$ are also universal at $\bord{1}{-}$, so we can apply Lemma \ref{lem:twouniversal} to conclude that they are universal at $\bord{2}{+}$ or $\bord{1}{+}$, as needed. The converse implication follows immediately by duality.

Now, let $e: x \to y$ be tensor universal. Since tensors and pars coincide in $X$, and, by the first part, so do tensor and par units, we know that $e$ is ``par invertible'', that is, there exist a tensor universal 1-cell $e^*: y \to x$ and 2-cells exhibiting $e \parr e^* \simeq \bot_x$ and $e^* \parr e \simeq \bot_{y}$. Moreover, because $e$, $e^*$ have tensors with arbitrary 1-cells $a: y \to z$ and $b: x \to z'$, they also have pars with arbitrary 1-cells, so the derivation of par universality from par invertibility as in Proposition \ref{prop:isomorphism} (dualised to $\coo{X}$) goes through even if $X$ is not 1-representable. The fact that the 2-cells exhibiting par universality can be picked as the inverses of the 2-cells exhibiting tensor universality is another immediate consequence of Lemma \ref{lem:twouniversal}.
\end{proof}

\begin{remark} \label{remark:merge-coherent}
In fact, we can pick \emph{coherent} witnesses of tensor and par unitality which are mutual inverses: starting from arbitrary families that are mutual inverses, the construction in the proof of Theorem \ref{thm:polycoherent} and its dual produce coherent families that are still mutual inverses.

Furthermore, even when $X$ is not 0-representable, if $1_x$ is both a tensor unit and a par unit, or if $e$ is both tensor and par universal, witnesses of par unitality or universality can be chosen as inverses of witnesses of tensor unitality or universality.
\end{remark}

\begin{dfn}
Let $X$ be a merge-bicategory. A 1-cell $1_x: x \to x$ in $X$ is a \emph{1-unit} if it is both a tensor unit and a par unit. A 1-cell $e: x \to y$ is \emph{universal} if it is both tensor and par universal.
\end{dfn}

Corollary \ref{cor:tensorparcollapse} allows us to simplify the definition of 1-representability for merge-bicategories, by putting unitality and 1-representability on the same footing; then, Proposition \ref{cor:tensorparunitcollapse} allows us to further simplify the definition of representability, emphasising its symmetry.

\begin{prop} \label{def:representable}
A merge-bicategory $X$ is representable if and only if
\begin{enumerate}
	\item for all 0-cells $x$ in $X$, there exist a 0-cell $\overline{x}$ and a universal 1-cell $e: x \to \overline{x}$, or a universal 1-cell $e': \overline{x} \to x$;
	\item for all 1-cells $a$ in $X$, there exist a 1-cell $\overline{a}$ and a universal 2-cell $p: (a) \to (\overline{a})$, or a universal 2-cell $p': (\overline{a}) \to (a)$;
	\item for all composable pairs $a,b$ in $X$, there exist a 1-cell $a \otimes b$ and a universal 2-cell $t: (a,b) \to (a \otimes b)$, or a universal 2-cell $t': (a\otimes b) \to (a,b)$.
\end{enumerate}
\end{prop}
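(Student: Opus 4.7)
The forward direction is immediate: if $X$ is representable, each tensor witnessing 1-representability of $U_1 X$ is an isomorphism by Proposition \ref{prop:mergeunital}, hence merge-divisible, giving (3); the tensor-divisible 1-cells witnessing tensor 0-representability are also par-divisible by Proposition \ref{cor:tensorparunitcollapse}, giving (1); and each unit $\idd{a}$ is trivially a merge-divisible 2-cell $(a) \to (a)$, giving (2). The substantive content is the backward direction.

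My plan for the backward direction is to proceed in three stages: first establish that $X$ is merge-unital, then derive 1-representability of $U_1 X$, and finally 0-representability. The workhorse is an auxiliary lemma to the effect that merging preserves merge-divisibility: if $t$ and $u$ are merge-divisible 2-cells and $v := \mrg{[j_1,j_2]}{[i_1,i_2]}(t, u)$ is defined, then $v$ is merge-divisible. To verify divisibility of $v$ at $\bord{}{+}$, I would start from a well-formed equation $\mrg{}{}(v, x) = s$, unfold $v$ as a merge of $t$ and $u$, and use the associativity equations of schemes (\ref{asso_scheme}) and (\ref{asso_scheme2}) to rewrite the composite as a nested merge involving $t$ and $u$ separately, whose unique solvability would then follow from divisibility of $t$ and $u$ applied in sequence; divisibility of $v$ at $\bord{}{-}$ is dual. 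The main technical obstacle is precisely this case analysis, which must split according to how the inputs of $x$ are distributed between the outputs of $t$ and those of $u$ within $v$, especially when $x$ straddles the shared boundary.

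Granted this lemma, I would prove unitality of $X$ by strong induction on the length $n$ of a composable sequence $(a_1, \ldots, a_n)$. The cases $n = 1$ and $n = 2$ are settled directly by conditions (2) and (3), respectively, through Proposition \ref{prop:2unitsmerge}. For $n \geq 3$, condition (3) applied to the pair $(a_{n-1}, a_n)$ produces either a merge-divisible $t: (a_{n-1}, a_n) \to (d)$ or a merge-divisible $t': (d) \to (a_{n-1}, a_n)$ for some 1-cell $d$, and by inductive hypothesis the unit $e := \idd{(a_1, \ldots, a_{n-2}, d)}$ exists and is merge-divisible. Merging $t$ into $e$ along $d$, or $e$ into $t'$ along $d$, the auxiliary lemma yields a merge-divisible 2-cell having $(a_1, \ldots, a_n)$ as one boundary; a final application of Proposition \ref{prop:2unitsmerge} produces the required unit on $(a_1, \ldots, a_n)$.

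Once $X$ is merge-unital, Corollary \ref{cor:tensorparcollapse} upgrades either direction in condition (3) to both a tensor and a par --- as Proposition \ref{prop:mergeunital} identifies merge-divisible cells with isomorphisms --- yielding tensor and par 1-representability of $U_1 X$. Since condition (1) explicitly supplies, for each 0-cell, a 1-cell that is simultaneously tensor and par divisible, Theorem \ref{thm:0representable} applied to $U_1 X$ and its dual in $\coo{X}$ delivers tensor and par 0-representability. Combined with merge-unitality, this is precisely representability of $X$.
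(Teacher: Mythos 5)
Your proof is correct and follows essentially the same route as the paper, which leaves the argument implicit apart from the remark that divisible 2-cells on arbitrarily long sequences, as required by unitality, are obtained by composing the binary tensors or pars — precisely the content of your auxiliary lemma and induction. Your reduction of the remaining conditions to Corollary \ref{cor:tensorparcollapse}, Proposition \ref{cor:tensorparunitcollapse}, and Theorem \ref{thm:0representable} matches the paper's intended reading.
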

Universal 2-cells with arbitrarily long sequences as inputs or outputs, as required by Definition \ref{dfn:mergeunital}, are obtained by composing the binary ``tensors'' or ``pars''.

Similarly, we can simplify the definition of a strong morphism.
\begin{prop} \label{def:representablestrong}
A morphism $f: X \to Y$ of representable merge-bicategories is strong if and only if it preserves universal 1-cells and 2-cells.
\end{prop}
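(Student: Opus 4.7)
The plan is to reduce both implications to results already proved in Sections \ref{sec:weakunits} and \ref{sec:coherence}, via the collapses supplied by Propositions \ref{prop:mergeunital} and \ref{cor:tensorparunitcollapse}: in a representable merge-bicategory, divisibility of a 1-cell is at once tensor and par divisibility, and merge-divisibility of a 2-cell coincides with being an isomorphism.

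For the forward direction, I will assume $f$ is strong. Tensor strongness and par strongness together give preservation of tensor and par divisibility of 1-cells, which by Proposition \ref{cor:tensorparunitcollapse} are both the same as divisibility, so $f$ preserves divisible 1-cells. For 2-cells, I will use that tensor strongness implies $f$ is unital, and that a merge-divisible 2-cell $t \colon (\Gamma) \to (\Delta)$ is, by Proposition \ref{prop:mergeunital}, an isomorphism with an inverse $\invrs{t}$ characterised by merge equations involving units. Since $f$ preserves both merge operations and units, those equations lift verbatim to $Y$, making $f(t)$ an isomorphism in $Y$ and hence, again by Proposition \ref{prop:mergeunital}, merge-divisible.

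For the backward direction, I will assume $f$ preserves divisible 1-cells and 2-cells. A unit 2-cell $\idd{a}$ is unary in both input and output, so merge-divisibility for it coincides with divisibility at $\bord{1}{+}$ and $\bord{1}{-}$ in the underlying regular poly-bicategory; thus the hypothesis on 2-cells preserves these units, and the argument of Proposition \ref{prop:unitmorph} carries over to show $f$ is unital. Preservation of tensor divisibility of 1-cells follows immediately from preservation of divisibility via Proposition \ref{cor:tensorparunitcollapse}. To see that $f$ preserves tensors, note that a tensor $t \colon (a,b) \to (a \otimes b)$ has a single output, so divisibility at $\bord{1}{+}$ coincides with divisibility at $\bord{}{+}$; furthermore, in a unital merge-bicategory $t$ is an isomorphism by Corollary \ref{cor:tensorparcollapse}, hence merge-divisible. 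Thus $f(t)$ is merge-divisible in $Y$ and in particular divisible at $\bord{1}{+}$, witnessing $f(a \otimes b)$ as a tensor of $f(a)$ and $f(b)$. This makes $f$ tensor strong, and par strongness follows by dualising to $\coo{X}$ and $\coo{Y}$.

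The one subtle point to keep straight is the distinction between divisibility at single boundary positions (the poly-bicategorical notion, used in the definitions of tensors, pars, units, and tensor-strong morphisms) and divisibility at multi-cell boundaries (the merge-bicategorical notion appearing in the statement). Propositions \ref{prop:mergeunital} and \ref{cor:tensorparunitcollapse} reconcile these in the representable setting, and the cells that matter for tensor/par strongness are either unary or binary-to-unary, for which the two notions coincide on the relevant boundary; so the proof is essentially bookkeeping, and no new constructions are required.
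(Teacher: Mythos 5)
Your proposal is correct and follows essentially the route the paper intends: the paper gives no explicit proof of this proposition, treating it as a direct consequence of Propositions \ref{prop:mergeunital} and \ref{cor:tensorparunitcollapse} together with the remark that divisible 2-cells with longer boundaries are obtained by composing binary tensors and pars, and your reduction is exactly that.

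One step is under-justified in the forward direction: when you say the isomorphism equations for a divisible $t\colon(\Gamma)\to(\Delta)$ ``lift verbatim'' because $f$ preserves merges and units, you are implicitly using $f(\idd{\Gamma})=\idd{f(\Gamma)}$ for sequences $\Gamma$ of length $n>1$. Unitality of $f$ (via Proposition \ref{prop:unitmorph}) only gives preservation of the unary units $\idd{a}$, and $\idd{\Gamma}$ is \emph{not} a merge-composite of the $\idd{a_i}$, since the merge operations require a nonempty shared boundary and so do not allow placing units side by side. The fix is the one the paper's preceding remark points to: write $\idd{\Gamma}=\mrg{[1,1]}{[1,1]}(t_\Gamma,\invrs{t_\Gamma})$ for a composite of binary tensors $t_\Gamma\colon(\Gamma)\to(c_\Gamma)$, note that $f(t_\Gamma)$ is divisible (hence invertible, by Proposition \ref{prop:mergeunital}) because $f$ preserves tensors, and that $f(\invrs{t_\Gamma})$ is forced to be its inverse since $f$ preserves the unary unit $\idd{c_\Gamma}$; equivalently, conjugate an arbitrary divisible $(\Gamma)\to(\Delta)$ by $t_\Gamma$ and $t_\Delta$ to reduce to the unary case. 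With that detail supplied, both directions go through as you describe.
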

\begin{dfn}
We write $\mrgpol_\otimes$ for the large category of representable merge-bicategories and strong morphisms.
\end{dfn}

Now, we have several ways of giving $GX$ a bicategory structure, starting from a representable merge-bicategory $X$, all of them leading to the same result (simply unravel the definitions):
\begin{enumerate}
	\item use the tensor representability of $U_1 X$, applying Construction \ref{cons:cvu-bicat};
	\item use the par representability of $U_1 X$, applying the dual of Construction \ref{cons:cvu-bicat};
	\item apply Construction \ref{cons:cvu-linbicat} to $U_1 X$, choosing tensors and pars, and witnesses of tensor and par unitality, that are mutual inverses, as granted by Corollary \ref{cor:tensorparcollapse} and Proposition \ref{cor:tensorparunitcollapse}: the result is a degenerate linear bicategory, which can be pulled back through the inclusion $\imath: \bicat \to \linbicat$.
\end{enumerate}
In fact, in constructing the bicategory directly from a merge-bicategory, some of the definitions can be simplified, using the algebraic characterisation of universal 2-cells (invertibility), instead of their universal property. For example, given a pair of 2-cells $p: (a) \to (c)$, $q: (b) \to (d)$, such that $\bord{}{+}\bord{}{+}p = \bord{}{-}\bord{}{-}q$, we can define $p \otimes q$ as the composite
\begin{equation*}
\input{img/s5_horizontal_algebraic.tex}
\end{equation*} 

Conversely, we can lift $\int \imath: \bicat \to \linbicat \to \pbcat_\otimes^\parr$ to $\mrgpol_\otimes$, using Theorem \ref{thm:functocoherence}, which is applicable to degenerate linear bicategories whose distributors are associators, to define the composition of 2-cells along multiple 1-cells. 

We keep the notation $G: \mrgpol_\otimes \to \bicat$ and $\int: \bicat \to \mrgpol_\otimes$ for the functors that correspond to the two constructions. We have all that is needed to state the following.
\begin{thm} \label{mrgpol_equivalence}
$G: \mrgpol_\otimes \to \bicat$ and $\int: \bicat \to \mrgpol_\otimes$ are two sides of an equivalence of large categories.
\end{thm}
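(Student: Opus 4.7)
\medskip

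\noindent\textbf{Proof proposal.} The plan is to exhibit natural isomorphisms $G \circ \int \cong \idd{\bicat}$ and $\int \circ G \cong \idd{\mrgpol_\otimes}$, relying on the characterisation of divisible cells in unital merge-bicategories from Proposition~\ref{prop:mergeunital} and Corollaries~\ref{cor:tensorparcollapse},~\ref{cor:tensorparunitcollapse}, together with the coherence theorem~\ref{thm:functocoherence}. The direction $G \circ \int \cong \idd{\bicat}$ is almost definitional: given a bicategory $B$, the $(1,1)$-cells of $\int\! B$ are literally the 2-cells of $B$, and by Construction~\ref{cons:groth_monoidal} the chosen tensors $(a,b) \to (a\otimes b)$ in $\int\! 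B$ correspond to vertical units $\idd{a \otimes b}$ in $B$, so the horizontal composition of $G \int\! B$ recovers that of $B$; naturality in functors follows since the structural 2-cells $f_{a,b}$ of a functor $f: B \to C$ are transported through $\int$ unchanged.

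For the harder direction, given a representable merge-bicategory $X$, I would construct a comparison morphism $\phi_X: X \to \int G X$ acting as the identity on 0-cells and 1-cells. Having fixed in $G X$ a coherent choice of tensors $t_{a,b}$ (divisible at $\bord{1}{+}$) and, via Corollary~\ref{cor:tensorparcollapse} and Remark~\ref{remark:merge-coherent}, their inverse pars $\invrs{t_{a,b}}$, I would send a 2-cell $p: (a_1, \ldots, a_n) \to (b_1, \ldots, b_m)$ of $X$ to the $(1,1)$-cell $\phi_X(p): (a_1 \otimes \ldots \otimes a_n) \to (b_1 \otimes \ldots \otimes b_m)$ of $X$ obtained by iteratively pre-merging $p$ with pars on the input sequence and post-merging with tensors on the output sequence (under a fixed left-bracketing). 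The interchange equations (\ref{inter_scheme}) together with the extra associativity schemes (\ref{asso_scheme2}) available in a merge-bicategory ensure that $\phi_X(p)$ is independent of the order of mergings, and the $(1,1)$-cell $\phi_X(p)$ is exactly the datum of an $(n,m)$-cell of $\int G X$ by Construction~\ref{cons:groth_monoidal}.

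To see that $\phi_X$ is an isomorphism in $\mrgpol$, I would write down the inverse $\psi_X$, which sends a $(1,1)$-cell $q: (a_1\otimes \ldots \otimes a_n) \to (b_1\otimes \ldots \otimes b_m)$ of $X$ (that is, an $(n,m)$-cell of $\int G X$) to the 2-cell of $X$ obtained by pre-merging with tensors and post-merging with pars; that $\phi_X \circ \psi_X$ and $\psi_X \circ \phi_X$ are identities follows from Proposition~\ref{prop:mergeunital}, since a tensor merged with its par produces a unit 2-cell, and any unit can be cancelled. Preservation of merge composition reduces, after untangling the whiskering built into the composition of $\int GX$ (cf.\ Construction~\ref{cons:groth_linear}), to a diagrammatic identity in $X$ that holds by the associativity and interchange equations. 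Preservation and reflection of divisible cells uses Proposition~\ref{def:representablestrong} and the closure of divisibility under composition with isomorphisms.

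Naturality in strong morphisms $f: X \to Y$ amounts to the equation $\phi_Y \circ f = (\int G f) \circ \phi_X$: both sides agree on 0-cells and 1-cells, and on a 2-cell $p$ the equation reduces to the compatibility of $f$ with the chosen tensors, which is exactly the structural isomorphism $f_{a,b}$ recorded in Construction~\ref{cons:cvu-bicat} for $G f$. I expect the main obstacle to lie in bookkeeping: explicitly verifying that $\phi_X$ commutes with the full range of merge operations $\mrg{[j_1,j_2]}{[i_1,i_2]}$ requires checking that a merge in $X$ along a sequence of 1-cells corresponds, via iterated whiskering with tensors and pars, to the definition of merge in $\int G X$ given by a single $(1,1)$-cell composition in $G X$. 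This is essentially a coherence argument, tamed by Theorem~\ref{thm:functocoherence} and the invertibility of tensors established in Corollary~\ref{cor:tensorparcollapse}, but deserves a careful case analysis by the shape of the shared boundary.
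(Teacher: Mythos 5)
Your proposal is correct and follows essentially the route the paper intends: the theorem is stated without a proof body, deferring to Corollary \ref{cor:equiv_bicat_rep} and to the preceding observation that tensors in a unital merge-bicategory are invertible (Proposition \ref{prop:mergeunital}, Corollary \ref{cor:tensorparcollapse}), which is exactly what makes your comparison morphisms $\phi_X$ and $\psi_X$ --- whiskering with pars on inputs and tensors on outputs, and conversely --- mutually inverse. The bookkeeping you flag at the end (compatibility of $\phi_X$ with the full range of merge operations, tamed by Theorem \ref{thm:functocoherence}) is precisely the verification the paper outsources to the earlier equivalences built on Hermida's and Cockett--Seely's results, so your write-up supplies the details the paper leaves implicit rather than taking a different path.
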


What advantage does this have over the equivalence with $\mbcat_\otimes$? First of all, unlike (regular) poly-bicategories, merge-bicategories have a natural monoidal closed structure, giving access to higher morphisms. However, we will not try to extend the equivalence of Proposition \ref{mrgpol_equivalence} to some bi-equivalence of bicategories, or tri-equivalence of tricategories: instead, working with closed structures allows us to consider lax transformations that do not fit properly into a higher-categorical structure on the same footing as the pseudo-natural transformations that do. 

\begin{cons} \label{cons:laxgray}
Let $X$, $Y$ be merge-bicategories; we define a new merge-bicategory $X \tensor Y$ as follows.
\begin{itemize}
	\item The 0-cells of $X \tensor Y$ are of the form $x \tensor y$, for $x$ in $X_0$ and $y$ in $Y_0$.
	\item The 1-cells of $X \tensor Y$ are either of the form $x \tensor b: x \tensor y \to x \tensor y'$, for $x$ in $X_0$ and $b: y \to y'$ in $Y_1$, or of the form $a \tensor y: x \tensor y \to x' \tensor y$, for $a: x \to x'$ in $X_1$ and $y$ in $Y_0$.
	\item The 2-cells of $X \tensor Y$ are generated under the merge operations by 
\begin{equation*}
\input{img/s5_grayproduct1.tex}
\end{equation*}
	for all 0-cells $x, x'$ in $X$ and $y, y'$ in $Y$, all 1-cells $a: x \to x'$ in $X$ and $b: y \to y'$ in $Y$, and all 2-cells $p: (a_1,\ldots,a_n) \to (b_1,\ldots,b_m)$ in $X$, $p': (a'_1,\ldots,a'_n) \to (b'_1,\ldots,b'_m)$ in $Y$, subject to the equations 
	\begin{equation*}
		x \tensor \mrg{[j_1,j_2]}{[i_1,i_2]}(p',q') = \mrg{[j_1,j_2]}{[i_1,i_2]}(x\tensor p', x\tensor q'), 
	\end{equation*}
	\begin{equation*}
	 \mrg{[j_1,j_2]}{[i_1,i_2]}(p,q) \tensor y = \mrg{[j_1,j_2]}{[i_1,i_2]}(p\tensor y, q \tensor y),
	 \end{equation*}
	 whenever the left-hand side is defined for 0-cells $x$ in $X$ and $y$ in $Y$, and 2-cells $p, q$ in $X$ and $p',q'$ in $Y$, and
	 \begin{equation} \label{grayproduct2}
\input{img/s5_grayproduct2.tex}
\end{equation} 
	 \begin{equation} \label{grayproduct3}
\input{img/s5_grayproduct3.tex}
\end{equation}

whenever the two sides are well-defined. In the diagrams (\ref{grayproduct2}), (\ref{grayproduct3}), a number of ``squares'' $c\tensor b'_j$, $c \tensor a'_i$, or $a_i \tensor c'$, $b_j \tensor c'$ is implied.
\end{itemize}
We extend this construction to pairs of morphisms $f: X \to X'$, $g: Y \to Y'$, by defining $(f\tensor g)(x \tensor y) := f(x)\tensor g(y)$ on generators, and extending freely to generic cells.
\end{cons}
\begin{dfn}
We call the merge-bicategory $X \tensor Y$, as defined in Construction \ref{cons:laxgray}, the \emph{lax Gray product} of $X$ and $Y$.
\end{dfn}
\begin{prop} \label{prop:graymonoidal}
The lax Gray product defines a monoidal structure on $\mrgpol$, whose unit is the merge-bicategory $1$ with a single 0-cell and no 1-cells or 2-cells.
\end{prop}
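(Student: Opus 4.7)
The plan is to verify the three ingredients of a monoidal structure in order: bifunctoriality of $\boxtimes$, construction of the associator and unitors as natural isomorphisms, and verification of the pentagon and triangle coherence conditions. Throughout, I will exploit the fact that $X \boxtimes Y$ is presented by generators and relations, so any morphism out of it is determined by its action on the three families of 1-cell generators ($x \boxtimes b$ and $a \boxtimes y$) and the three families of 2-cell generators ($x \boxtimes p'$, $p \boxtimes y$, and the square generators $a \boxtimes b$), subject only to compatibility with the defining equations.

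For bifunctoriality, given morphisms $f: X \to X'$ and $g: Y \to Y'$, the map $(f \boxtimes g)$ is defined on generators by $x \boxtimes b \mapsto f(x) \boxtimes g(b)$, $a \boxtimes y \mapsto f(a) \boxtimes g(y)$, etc., and extends to merges by freeness. Compatibility with the defining equations is immediate since $f$ and $g$ are themselves morphisms of merge-bicategories; functoriality in each variable separately, as well as preservation of identities, is then automatic. The unitors are essentially trivial: since $1$ has a unique 0-cell $\ast$ and no higher cells, the only non-trivial generators of $1 \boxtimes X$ are of the form $\ast \boxtimes y$, $\ast \boxtimes b$, $\ast \boxtimes p'$ (no generators of the form $a \boxtimes y$ or $a \boxtimes b$ can appear, and the relations (\ref{grayproduct2}) and (\ref{grayproduct3}) become vacuous). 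The obvious forgetful map $\lambda_X: 1 \boxtimes X \to X$, $\ast \boxtimes (-) \mapsto (-)$, is then an isomorphism, with inverse sending each cell $c$ to $\ast \boxtimes c$. Naturality in $X$, and the symmetric treatment of the right unitor $\rho_X: X \boxtimes 1 \to X$, are routine.

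For the associator, I define $\alpha_{X,Y,Z}: (X \boxtimes Y) \boxtimes Z \to X \boxtimes (Y \boxtimes Z)$ on generators by the evident rebracketing. On 0-cells, $(x \boxtimes y) \boxtimes z \mapsto x \boxtimes (y \boxtimes z)$. On 1-cell generators, the three types $(a \boxtimes y) \boxtimes z$, $(x \boxtimes b) \boxtimes z$, and $(x \boxtimes y) \boxtimes c$ (the latter coming from a generator of the outer product) map to $a \boxtimes (y \boxtimes z)$, $x \boxtimes (b \boxtimes z)$, and $x \boxtimes (y \boxtimes c)$ respectively. The 2-cell generators are rebracketed analogously, including the square generators: for example, $(a \boxtimes b) \boxtimes z$ must be sent to the evident 2-cell in $X \boxtimes (Y \boxtimes Z)$ whose boundaries match. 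The inverse $\alpha^{-1}_{X,Y,Z}$ is defined symmetrically, and the two are mutually inverse on generators, hence on all cells. Naturality in $X, Y, Z$ follows because all morphisms are determined by their action on generators, on which the naturality squares commute by construction.

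The main obstacle is showing that $\alpha_{X,Y,Z}$ is well-defined, i.e.\ that it respects the defining relations of $(X \boxtimes Y) \boxtimes Z$. This amounts to a systematic case analysis: the substitution equations of the form $u \boxtimes \mathrm{mrg}(p,q) = \mathrm{mrg}(u \boxtimes p, u \boxtimes q)$ for each of the three nested positions, together with the two families of square-interchange equations (\ref{grayproduct2}) and (\ref{grayproduct3}) applied once for the outer product and once for the inner, must each be shown to map to a valid equation on the rebracketed side. Each such check is conceptually routine, but the bookkeeping is significant because every generator admits three possible bracketings of its ``labels'', and the square interchanges become ``cubical'' interchanges in the triple product. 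Finally, the pentagon commutes because $\alpha$ acts on generators by pure rebracketing, so both composites in the pentagon send a generator of $((W \boxtimes X) \boxtimes Y) \boxtimes Z$ to the corresponding fully right-associated generator of $W \boxtimes (X \boxtimes (Y \boxtimes Z))$; likewise the triangle commutes strictly, since $\lambda$, $\rho$, and $\alpha$ all act as pure relabelings that are compatible with the trivial unit $1$.
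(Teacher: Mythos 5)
Your proposal is a legitimate proof strategy, but it is genuinely different from the paper's. You verify the monoidal axioms directly on the generators-and-relations presentation of $X \boxtimes Y$: bifunctoriality, unitors via the collapse of $1 \boxtimes X$ (your observation that no generators $a \boxtimes y$, $p \boxtimes y$, or $a \boxtimes b$ survive when the left factor is $1$ is correct), and an associator defined by rebracketing generators, with strict pentagon and triangle. The paper instead never touches the presentation in its proof: it exhibits $\mrgpol$ as a reflective subcategory of the category $\rpol$ of regular polygraphs via an adjunction $\imath \dashv \tau$ with $\imath$ full and faithful, sets $X \boxtimes Y := \tau(\imath X \boxtimes \imath Y)$, and inherits the entire monoidal structure from the one already established on $\rpol$ in \cite[Definition 3.33]{hadzihasanovic2017algebra}; Construction \ref{cons:laxgray} is then only an explicit description of the result. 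What your route buys is self-containedness and an explicit associator; what it costs is exactly the step you defer as ``conceptually routine'': checking that the rebracketing map respects all the defining relations of $(X \boxtimes Y) \boxtimes Z$ --- the three nested families of substitution equations and the interchange equations (\ref{grayproduct2})--(\ref{grayproduct3}) applied at both levels, where, as you note, square generators of the outer product such as $(x \boxtimes b) \boxtimes c$ land on non-square generators $x \boxtimes (b \boxtimes c)$ of the other bracketing. This verification is the entire content of the proposition on your route, and it is precisely what the reflective-subcategory argument is engineered to avoid; it also explains why the paper prefers that argument, since it identifies $\boxtimes$ as the truncation of a tensor product that exists in all dimensions, which is what the paper's higher-dimensional programme requires. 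So: correct in outline, different in method, but incomplete as written unless you actually carry out the case analysis you describe.
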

\begin{proof}
(Sketch.) The diagrams composable with the merge operations are classified by the constructible 2-molecules of \cite{hadzihasanovic2018combinatorial}; this allows us to realise $\mrgpol$ as a reflective subcategory of the category $\rpol$ of constructible polygraphs, as defined there. 

Briefly, given a merge-bicategory $X$, we obtain a constructible polygraph $\imath X$ whose generators of dimension $n = 0, 1, 2$ are the same as the $n$-cells of $X$, there is a unique 3-dimensional generator $e_{p,q}: p \to q$ between any two diagrams whose composites in $X$ are equal, and a unique $n$-dimensional generator, for $n > 3$, between any two $(n-1)$-dimensional constructible diagrams with compatible boundaries. Conversely, given a regular polygraph $P$, we obtain a merge-bicategory $\tau P$ whose 0-cells and 1-cells are the same as the 0 and 1-dimensional generators of $P$, and whose 2-cells are all 2-dimensional constructible diagrams $p$ in $P$, quotiented by the equation $p = q$ if there exists a 3-dimensional generator $p \to q$ or $q \to p$ in $P$; the merge operations are induced by free composition of the generators.

These define adjoint functors $\imath: \mrgpol \to \rpol$ and $\tau: \rpol \to \mrgpol$, where $\imath$ is full and faithful. The unit $\eta_P: P \to \imath\tau P$ is the identity on generators of dimension $n < 3$; given a 3-dimensional generator $f: p \to q$, since $p = q$ as 2-cells of $\tau P$, it follows that $p$ and $q$, as composable diagrams of generators in $P$, both merge to the same 2-cell in $\tau P$. By construction, there is a unique generator $e_{p,q}: p \to q$ in $\imath \tau P$, and we set $\eta_P(f) := e_{p,q}$. On $n$-cells with $n > 3$, $\eta_P$ is just the quotient of all generating cells whose boundaries are equal in $\imath \tau P$. The counit $\varepsilon_P: \tau\imath X \to X$ is the identity on 0-cells and 1-cells, and maps any 2-cell of $\tau\imath X$, which corresponds to a composable diagram in $X$, to its unique composite in $X$.

The monoidal structure on $\rpol$ then induces a monoidal structure on $\mrgpol$ with $X \tensor Y := \tau(\imath X \tensor \imath Y)$, of which Construction \ref{cons:laxgray} is an explicit description. 
\end{proof}

\begin{remark}
The proof sketch of Proposition \ref{prop:graymonoidal} mirrors the construction of the lax Gray product of strict 2-categories, as induced by the lax Gray product of strict $\omega$-categories \cite{crans1995pasting,steiner2004omega}. One could also try to use this directly, realising $\mrgpol$ as a (non-full) subcategory of the category of strict $\omega$-categories, whose objects satisfy certain properties. However, it seems more complicated to check that the monoidal structure on strict $\omega$-categories induces the one on merge-bicategories.
\end{remark}

The category of merge-bicategories with the lax Gray product is closed on both sides; right homs and left homs can be calculated using the fact that cells of a merge-bicategory are in bijective correspondence with morphisms from certain representing objects in $\mrgpol$. 

\begin{cons}
Given merge-bicategories $X$ and $Y$, we describe explicitly the left hom $[X,Y]$ from $X$ to $Y$.

\begin{itemize}
	\item The 0-cells of $[X,Y]$ are morphisms $X \to Y$.
	\item The 1-cells $\sigma: f \to g$ of $[X,Y]$ are \emph{oplax transformations}, assigning to each 0-cell of $X$ a 1-cell $\sigma_x: f(x) \to g(x)$ of $Y$, and to each 1-cell $a: x \to y$ of $X$ a 2-cell $\sigma_a: (f(a),\sigma_y) \to (\sigma_x, g(a))$ of $Y$, such that, for all $p: (a_1,\ldots,a_n) \to (b_1,\ldots,b_m)$ in $X$, the equation
\begin{equation} \label{eq:oplaxnatural}
\input{img/s5_laxnatural.tex}
\end{equation}
holds in $Y$.
	\item The 2-cells $\mu: (\sigma^1,\ldots,\sigma^n) \to (\tau^1,\ldots,\tau^m)$ are \emph{modifications}, assigning to each 0-cell $x$ of $X$ a 2-cell $\mu_x: (\sigma^1_x,\ldots,\sigma^n_x) \to (\tau^1_x,\ldots,\tau^m_x)$ of $Y$, such that, for all 1-cells $a: x \to y$ of $X$, the equation
\begin{equation} \label{eq:modification}
\input{img/s5_modification.tex}
\end{equation}
	holds in $Y$. Mergers of 2-cells are calculated pointwise in $Y$, that is, $\mrg{[j_1,j_2]}{[i_1,i_2]}(\mu,\nu)$ assigns to the 0-cell $x$ the 2-cell $\mrg{[j_1,j_2]}{[i_1,i_2]}(\mu_x,\nu_x)$.
\end{itemize}
\end{cons}

We follow the convention of \cite[Section 7.5]{borceux1994handbook} regarding lax and oplax transformations; the right hom from $X$ to $Y$ in $\mrgpol$ has the analogues of lax transformations as 1-cells. 

\begin{dfn} A 1-cell $\sigma$ of $[X,Y]$ is a \emph{pseudo-natural transformation} if its 2-cell components are all universal. We say that $\sigma$ is a \emph{pseudo-natural equivalence} if its 1-cell and 2-cell components are all universal.
\end{dfn}

We prove some simple results relating properties of $Y$ to properties of $[X,Y]$.
\begin{prop} \label{prop:hominherit}
Let $X, Y$ be merge-bicategories. Then:
\begin{enumerate}[label=($\alph*$)]
	\item if $Y$ is unital, then so is $[X,Y]$;
	\item if $Y$ is also 1-representable, then so is $[X,Y]$;
	\item if $Y$ is representable, then so is $[X,Y]$.
\end{enumerate}
\end{prop}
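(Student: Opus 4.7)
The key structural observation is that every piece of data and every axiom in $[X, Y]$ is given pointwise in $Y$: the components of a modification at distinct 0-cells of $X$ are independent cells of $Y$, and merge operations in $[X,Y]$ are computed componentwise. Consequently, any universal property in $Y$ that survives pointwise assembly will transfer to $[X,Y]$, provided the assembled cells satisfy the modification (resp.\ oplax transformation) axioms. Each of the three claims will be established by (i) defining a candidate cell pointwise, using the corresponding property in $Y$; (ii) verifying the relevant axiom by pasting manipulations; (iii) deducing divisibility in $[X,Y]$ from pointwise divisibility in $Y$.

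For (a), given a composable sequence $\Gamma = (\sigma^1, \ldots, \sigma^n)$ of oplax transformations, I define $\idd{\Gamma}$ pointwise by $(\idd{\Gamma})_x := \idd{\sigma^1_x, \ldots, \sigma^n_x}$, using unitality of $Y$. The modification axiom (\ref{eq:modification}) at $a: x \to y$ reduces to the equation obtained by absorbing $(\idd{\Gamma})_x$ into the pasted $\sigma^i_a$ cells on one side, and $(\idd{\Gamma})_y$ on the other; both collapse to the same diagram by the defining absorption property of units. Unitality of $\idd{\Gamma}$ under merging in $[X,Y]$ is then immediate, since every merger is computed componentwise and absorbs the pointwise units.

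For (b), given composable 1-cells $\sigma: f \to g$ and $\tau: g \to h$, I set $(\sigma \otimes \tau)_x := \sigma_x \otimes \tau_x$ using tensors in $Y$, and define $(\sigma \otimes \tau)_a$ by pasting $\sigma_a$ and $\tau_a$ between the par $\invrs{t_{\sigma_x, \tau_x}}: (\sigma_x \otimes \tau_x) \to (\sigma_x, \tau_x)$ and the tensor $t_{\sigma_y, \tau_y}: (\sigma_y, \tau_y) \to (\sigma_y \otimes \tau_y)$ (both available by Corollary \ref{cor:tensorparcollapse}). The oplax axiom for $\sigma \otimes \tau$ is verified by stacking the oplax axioms for $\sigma$ and $\tau$ and using interchange. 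The candidate tensor modification $t_{\sigma, \tau}$ is then defined pointwise by $(t_{\sigma, \tau})_x := t_{\sigma_x, \tau_x}$, and its modification axiom reduces at each $a$ to the defining equation of $(\sigma \otimes \tau)_a$. Divisibility of $t_{\sigma, \tau}$ at $\bord{}{+}$ is obtained as follows: given a modification $\mu$ with compatible boundary, factor each component $\mu_x$ through $t_{\sigma_x, \tau_x}$ in $Y$ to get pointwise witnesses $\tilde{\mu}_x$; the modification axiom for $\tilde{\mu}$ is derived from the modification axiom for $\mu$ by substituting the factorisations and cancelling $t_{\sigma_x, \tau_x}$ using its divisibility in $Y$.

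For (c), by (a) and (b) it remains to establish 0-representability of $[X,Y]$. By Proposition \ref{def:representable}, it suffices to exhibit, for each morphism $f: X \to Y$, a divisible 1-cell with domain $f$. I take the identity pseudo-natural transformation $1_f: f \to f$ with $(1_f)_x := 1_{f(x)}$, using the chosen 1-units of $Y$ (Proposition \ref{cor:tensorparunitcollapse}), and with 2-cell components $(1_f)_a: (f(a), 1_{f(y)}) \to (1_{f(x)}, f(a))$ given by the composite of the coherent witnesses $\tilde{r}_{f(a)}$ and $\invrs{(\tilde{l}_{f(a)})}$ from Theorem \ref{thm:polycoherent} (available in a merge-bicategory because every unitor is divisible, hence invertible). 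The oplax axiom for $1_f$ is then verified using the naturality equations (\ref{eq:natural}), (\ref{eq:natural2}) and the triangle equation (\ref{eq:trianglepoly}). Divisibility of $1_f$ in $[X,Y]$ reduces to the pointwise divisibility of the 1-units $1_{f(x)}$ in $Y$: given any pseudo-natural transformation $\sigma: f \to g$, the tensors, homs, and unit-action 2-cells for $\sigma_x$ with $1_{f(x)}$ in $Y$ assemble, by the same pointwise-plus-axiom-check pattern as in (b), into corresponding cells in $[X,Y]$.

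The main obstacle throughout is the verification of the oplax transformation axiom for assembled 1-cells (such as $\sigma \otimes \tau$ and $1_f$) and the modification axiom for factored 2-cells such as $\tilde{\mu}$: these require careful pasting manipulations in $Y$ that crossbreed the naturality axioms of the input transformations with interchange and the coherent unitor equations. Once these pasting computations go through, divisibility in $[X,Y]$ descends from componentwise divisibility in $Y$ essentially for free, because merging is pointwise.
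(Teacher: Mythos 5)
Your proof is correct and follows essentially the same route as the paper's: units, tensors, and the unit transformation $1_f$ are all built pointwise in $Y$ (the paper uses exactly your composite $r_{f(a)}$ followed by $\invrs{(l_{f(a)})}$ for $(1_f)_a$, and conjugates $\sigma_a$, $\tau_a$ by chosen tensors to define $(\sigma\otimes\tau)_a$), with divisibility in $[X,Y]$ descending from componentwise divisibility in $Y$. Two small remarks: in (b) your indices are swapped --- the par $\invrs{(t_{\sigma_y,\tau_y})}$ must be merged onto the \emph{input} occurrence of $\sigma_y\otimes\tau_y$ and the tensor $t_{\sigma_x,\tau_x}$ onto the output pair $(\sigma_x,\tau_x)$, not the other way around --- and in (c) the paper packages the same data slightly differently, exhibiting $1_f$ directly as a 1-unit via the pointwise witnesses $l_{\sigma_x}$ and $r_{\tau_x}$ rather than as a divisible 1-cell fed into Proposition \ref{def:representable}, which is an equivalent endpoint.
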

\begin{proof}
All the implications are proved by defining units and universal cells in $[X,Y]$ to be units and universal cells pointwise in $Y$. For example, suppose $Y$ is unital. Then, given any sequence of natural transformations $(\sigma^1,\ldots,\sigma^n)$, we define a modification $\idd{(\sigma^1,\ldots,\sigma^n)}: (\sigma^1,\ldots,\sigma^n) \to (\sigma^1,\ldots,\sigma^n)$ whose component at the 0-cell $x$ of $X$ is $\idd{(\sigma^1_x,\ldots,\sigma^n_x)}$. It is straightforward to prove that this is a unit in $[X,Y]$.

Suppose $Y$ is 1-representable. Then, if $\sigma: f \to g$ and $\tau: g \to h$ are oplax transformations, pick universal 2-cells $\mu_x: (\sigma_x, \tau_x) \to (\sigma_x \otimes \tau_x)$ in $Y$ for each 0-cell $x$ of $X$. Then, define an oplax transformation $\sigma \otimes \tau: f \to h$ assigning to each 0-cell $x$ the 1-cell $\sigma_x \otimes \tau_x$, and to each 1-cell $a: x \to y$ the 2-cell
\begin{equation*}
\input{img/s5_tensor_laxnat.tex}
\end{equation*}
of $Y$. It can be checked that $\mu: (\sigma, \tau) \to (\sigma \otimes \tau)$ with components $\mu_x$ is a universal 2-cell in $[X,Y]$, which makes $[X,Y]$ 1-representable.

Finally, let $Y$ be representable, and pick 1-units $1_y$ on each 0-cell $y$ of $Y$, together with a coherent family of witnesses of unitality $\{l_a, r_a\}$. For each 0-cell $f$ of $[X,Y]$, let $1_f: f \to f$ be the oplax transformation assigning to each 0-cell $x$ of $X$ the 1-unit $1_{f(x)}: f(x) \to f(x)$, and to each 1-cell $a: x \to y$ of $X$ the 2-cell
\begin{equation*}
\input{img/s5_unit_laxnat.tex}
\end{equation*}
of $Y$; Theorem \ref{thm:polycoherent} and its dual ensure that this is well-defined. Then $1_f$ is a 1-unit in $[X,Y]$, as witnessed, for any oplax transformations $\sigma: f \to g$ and $\tau: h \to f$, by the modifications $l_\sigma: (1_f, \sigma) \to (\sigma)$ with components $l_{\sigma_x}$, and $r_\tau: (\tau, 1_f) \to (\tau)$ with components $r_{\tau_x}$, for each 0-cell $x$.
\end{proof}

In a bicategory, seen as a degenerate linear bicategory, the notion of linear adjunction collapses to an ordinary adjunction. Through the equivalence between $\bicat$ and $\mrgpol_\otimes$, we can import the well-known result that any equivalence in a bicategory can be improved to an adjoint equivalence; see for example \cite[Proposition 1.5.7]{leinster2004higher}. The following could be stated under less restrictive assumptions, but we will not need the extra generality.
\begin{lem} \label{lem:adjointequivalence}
Let $e: x \to y$ be a universal 1-cell in a representable merge-bicategory. Then there exist a universal 1-cell $e^*: y \to x$ and an adjunction $(\varepsilon, \eta): e^* \dashv e$ whose counit $\varepsilon$ and unit $\eta$ are universal 2-cells.
\end{lem}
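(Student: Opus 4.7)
The plan is to reduce the statement, via the equivalence of Theorem \ref{mrgpol_equivalence}, to the classical fact that every equivalence in a bicategory refines to an adjoint equivalence. By Proposition \ref{prop:mergeunital}, divisibility in a unital merge-bicategory coincides with invertibility, so the image of $e$ under $G$ is an equivalence in $GX$. More concretely, the merge-bicategorical analogue of Proposition \ref{prop:isomorphism} is available: Corollary \ref{cor:tensorparcollapse} and Proposition \ref{cor:tensorparunitcollapse} make the tensor and par sides of representability collapse to a single notion, so the divisibility of $e$ produces a 1-cell $e^*: y \to x$ together with divisible 2-cells $h: (e, e^*) \to (1_x)$ and $h': (e^*, e) \to (1_y)$ with respect to chosen 1-units $1_x$, $1_y$. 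By Theorem \ref{thm:2outof3}, $e^*$ is itself divisible, and the inverses of $h$ and $h'$ translate under $G$ to the data of an equivalence $e \simeq e^*$ in the bicategory $GX$.

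Next I would invoke the classical refinement argument (for instance \cite[Proposition 1.5.7]{leinster2004higher}) that an equivalence in a bicategory always gives rise to an adjoint equivalence: one keeps the unit fixed and replaces the counit by its composite with the coherent isomorphism measuring the failure of one of the triangle identities, so that this identity holds strictly; the second triangle identity then follows automatically from the first, together with invertibility. The outcome is an adjunction $e^* \dashv e$ in $GX$ whose unit and counit are invertible 2-cells.

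Finally, I would transport this adjunction back along $\int: \bicat \to \mrgpol_\otimes$. The invertible unit and counit in $GX$ correspond, via Proposition \ref{prop:mergeunital}, to divisible 2-cells $\eta: (1_x) \to (e, e^*)$ and $\varepsilon: (e^*, e) \to (1_y)$ in $X$, and the triangle identities of the adjunction translate directly to the merge-bicategorical triangle identities involving $\eta$, $\varepsilon$, and a coherent family of witnesses of unitality (as in Theorem \ref{thm:polycoherent} and Remark \ref{remark:merge-coherent}).

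The main obstacle is the modification step in the classical refinement, which is where one actually proves that an equivalence can be upgraded; once that is granted in $GX$, the translation back to $X$ is essentially formal, because every cell involved in the construction is assembled from divisible 2-cells, each of which lifts uniquely to a divisible cell of the merge-bicategory.
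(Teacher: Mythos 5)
Your proposal matches the paper's approach exactly: the paper gives no separate proof of this lemma, but introduces it by saying that the well-known refinement of equivalences to adjoint equivalences (citing Leinster, Proposition 1.5.7) is imported through the equivalence between $\bicat$ and $\mrgpol_\otimes$, which is precisely your strategy. Your additional detail on extracting the equivalence data from divisibility (via Proposition \ref{prop:mergeunital} and the collapse results) and on transporting the adjunction back is a sound elaboration of what the paper leaves implicit.
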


The second statement of the following result is also well-known for bicategories and oplax transformations of functors (see the definition below), but the proof goes through in a less restrictive context.
\begin{prop}
Let $X, Y$ be merge-bicategories, and suppose $Y$ is unital. Then:
\begin{enumerate}[label=($\alph*$)]
	\item a modification $\mu$ is a universal 2-cell in $[X,Y]$ if and only if all its components $\mu_x$ are universal 2-cells in $Y$;
	\item if $Y$ is representable, an oplax transformation $\sigma$ is a universal 1-cell in $[X,Y]$ if and only if all its 1-cell components $\sigma_x$ and its 2-cell components $\sigma_a$ are universal in $Y$.
\end{enumerate}
\end{prop}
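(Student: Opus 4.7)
For part (a), my strategy is to reduce divisibility to invertibility. Since $Y$ is unital, Proposition \ref{prop:hominherit}(a) ensures $[X,Y]$ is also unital, so Proposition \ref{prop:mergeunital} applies and a modification is divisible precisely when it admits a two-sided merge-inverse. Because mergers of modifications are computed pointwise, any inverse $\nu$ of $\mu$ in $[X,Y]$ must satisfy $\nu_x = \mu_x^{-1}$ at each 0-cell of $X$; invoking Proposition \ref{prop:mergeunital} pointwise in $Y$ yields the forward direction. For the converse, I define $\nu$ with components $\nu_x := \mu_x^{-1}$ and verify the modification equation (\ref{eq:modification}) for $\nu$ by pre- and post-merging the equation for $\mu$ with the local inverses $\mu_x^{-1}, \mu_y^{-1}$ and cancelling $\mu_x, \mu_y$; pointwise computation of merge then shows $\nu$ is indeed a merge-inverse of $\mu$ in $[X,Y]$.

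For the forward direction of (b), Proposition \ref{prop:hominherit}(c) gives that $[X,Y]$ is representable, so Lemma \ref{lem:adjointequivalence} promotes $\sigma$ to an adjoint equivalence $(\varepsilon,\eta) \colon \sigma^* \dashv \sigma$ in $[X,Y]$ whose unit and counit are divisible 2-cells. Evaluating at each 0-cell $x$ of $X$, and using that tensors and units in $[X,Y]$ are computed componentwise (as in the proof of Proposition \ref{prop:hominherit}), produces divisible 2-cells $\varepsilon_x$ and $\eta_x$ exhibiting $\sigma_x \otimes \sigma_x^* \simeq 1_{f(x)}$ and $\sigma_x^* \otimes \sigma_x \simeq 1_{g(x)}$ in $Y$; Proposition \ref{prop:isomorphism} then gives tensor divisibility of $\sigma_x$, which Proposition \ref{cor:tensorparunitcollapse} upgrades to divisibility. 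For the 2-cell component $\sigma_a$ at $a \colon x \to y$, I will construct an explicit inverse as the \emph{mate} of the 2-cell component $\sigma^*_a$ under the local adjunctions $\sigma^*_x \dashv \sigma_x$ and $\sigma^*_y \dashv \sigma_y$ supplied by the componentwise values of $\varepsilon$ and $\eta$; the triangle identities holding pointwise imply that this mate is a two-sided inverse to $\sigma_a$, so $\sigma_a$ is divisible by Proposition \ref{prop:mergeunital}.

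The converse of (b) is the delicate step. Given divisibility of every $\sigma_x$ and every $\sigma_a$, I apply Lemma \ref{lem:adjointequivalence} in $Y$ to choose, for each $x$, an adjoint equivalence $\sigma_x^* \dashv \sigma_x$ with divisible counit $\overline{\varepsilon}_x$ and unit $\overline{\eta}_x$, and I define an oplax transformation $\sigma^* \colon g \to f$ with $(\sigma^*)_x := \sigma_x^*$ and $(\sigma^*)_a :=$ the mate of $\sigma_a^{-1}$ under these local adjunctions. The oplax naturality equation (\ref{eq:oplaxnatural}) for $\sigma^*$ is to be derived from (\ref{eq:oplaxnatural}) for $\sigma$ by a mate calculation exploiting the triangle identities and Theorem \ref{thm:polycoherent} (together with Remark \ref{remark:merge-coherent}) to keep the witnesses of unitality coherent; the families $(\overline{\varepsilon}_x)$ and $(\overline{\eta}_x)$ will then be shown to assemble into modifications $\varepsilon \colon \sigma \otimes \sigma^* \to 1_f$ and $\eta \colon 1_g \to \sigma^* \otimes \sigma$ by an analogous check of the modification equation (\ref{eq:modification}) at each $a \colon x \to y$. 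Once this adjoint equivalence in $[X,Y]$ is in place, its unit and counit are divisible componentwise, hence divisible in $[X,Y]$ by part (a), so Proposition \ref{prop:isomorphism} applied inside the representable $[X,Y]$ yields tensor divisibility of $\sigma$, and Proposition \ref{cor:tensorparunitcollapse} finishes the argument. The main obstacle will be precisely this mate construction: verifying oplax naturality of $\sigma^*$ and the modification equations for $\varepsilon, \eta$ is a standard bicategorical pasting-diagram manipulation, but in merge-bicategorical syntax it requires carefully interleaving the triangle identities of the local adjunctions with the naturality squares of $\sigma$, and it is here that coherence of the chosen witnesses of unitality is indispensable.
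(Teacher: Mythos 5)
Your proposal is correct and follows essentially the same route as the paper: part (a) by reducing divisibility to pointwise invertibility via Proposition \ref{prop:mergeunital} and the pointwise description of units and mergers in $[X,Y]$, and part (b) by promoting $\sigma$ to an adjoint equivalence (Lemma \ref{lem:adjointequivalence}), evaluating componentwise, and obtaining the inverse of $\sigma_a$ as the mate of $\sigma^*_a$ under the local adjunctions --- which is exactly the paper's factorisation (\ref{eq:pointwise}), read backwards in the converse direction to define $\sigma^*_a$ from $\invrs{\sigma_a}$ before concluding with Proposition \ref{prop:isomorphism} in the representable $[X,Y]$. The only difference is that you spell out the verification steps the paper declares ``straightforward to check''.
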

\begin{proof}
By Proposition \ref{prop:hominherit}, $[X,Y]$ inherits from $Y$ the property of being unital or representable. For the first statement, we can use the algebraic characterisation of universal 2-cells of Proposition \ref{prop:mergeunital}, combined with the description of units in $[X,Y]$ as pointwise units in $Y$.

Let $Y$ be representable, and $\sigma: f \to g$ be universal in $[X,Y]$. Because $[X,Y]$ is representable, we can apply Lemma \ref{lem:adjointequivalence}, and obtain another oplax transformation $\sigma^*: g \to f$, together with universal modifications $\varepsilon: (\sigma, \sigma^*) \to (1_f)$ and $\eta: (1_g) \to (\sigma^*, \sigma)$, where units are chosen as in the proof of Proposition \ref{prop:hominherit}. Then, for all 0-cells $x$ of $X$, the $\varepsilon_x$ and $\eta_x$ are universal 2-cells, that also witness an adjunction between $e^*_x$ and $e_x$. This proves that the $\sigma_x$ are universal 1-cells.

Let $a: x \to y$ be a 1-cell of $X$. Using the fact that $\sigma^*$ is an oplax transformation, and $\varepsilon_x$ and $\eta_x$ are the counit and unit of an adjunction, it is straightforward to check that the 2-cells $\invrs{\sigma_a}: (\sigma_x,g(a)) \to (f(a),\sigma_y)$ obtained by the unique factorisation
\begin{equation} \label{eq:pointwise}
\input{img/s5_pointwise1.tex}
\end{equation}
are inverses of the $\sigma_a: (f(a),\sigma_y) \to (\sigma_x,g(a))$.

Conversely, if all components of $\sigma: f \to g$ are universal, for each 0-cell $x$ in $X$ we can fix an adjunction $(\varepsilon_x, \eta_x): \sigma^*_x \dashv \sigma_x$, and for each 1-cell $a: x \to y$ we can read equation $(\ref{eq:pointwise})$ backwards as a definition of $\sigma^*_a$, starting from the inverse of $\sigma_a$. The $\sigma^*_x$ and $\sigma^*_a$ assemble into an oplax transformation $\sigma^*: g \to f$, and the $\varepsilon_x$ and $\eta_x$ into universal modifications $\varepsilon: (\sigma, \sigma^*) \to (1_f)$ and $\eta: (1_g) \to (\sigma^*, \sigma)$. Proposition \ref{prop:isomorphism} and the representability of $[X,Y]$ allow us to conclude.
\end{proof}

We recall the definition of oplax transformations and modifications in the context of bicategories.
\begin{dfn}
Let $f, g: B \to C$ be functors of bicategories. An \emph{oplax transformation} $\sigma: f \to g$ is the data of
\begin{enumerate}
	\item a family of 1-cells $\{\sigma_x: f(x) \to g(x)\}$ in $C$, indexed by 0-cells $x$ of $B$, and
	\item a family of 2-cells $\{\sigma_a: f(a)\otimes \sigma_y \to \sigma_x \otimes g(a)\}$ in $C$, indexed by 1-cells $a: x \to y$ of $B$,
\end{enumerate}
such that, for all 1-cells $a: x \to y$, $b: y \to z$ in $B$, the following diagrams commute in $C$:
\begin{equation} \label{eq:oplax_coherence}
\begin{tikzpicture}[baseline={([yshift=-.5ex]current bounding box.center)}]
	\node[scale=1.25] (0) at (-5.9,.75) {$f(a) \!\otimes\! (f(b) \!\otimes\! \sigma_z)$};
	\node[scale=1.25] (a2) at (0,.75) {$(f(a) \!\otimes\! f(b)) \!\otimes\! \sigma_z$};
	\node[scale=1.25] (a3) at (5.9,.75) {$f(a \!\otimes\! b) \!\otimes\! \sigma_z$};
	\node[scale=1.25] (b1) at (-5.9,-.75) {$f(a) \!\otimes\! (\sigma_y \!\otimes\! g(b))$};
	\node[scale=1.25] (b2) at (-5.9,-2.25) {$(f(a) \!\otimes\! \sigma_y) \!\otimes\! g(b)$};
	\node[scale=1.25] (b3) at (-2,-2.25) {$(\sigma_x \!\otimes\! g(a)) \!\otimes\! g(b)$};
	\node[scale=1.25] (b4) at (2,-2.25) {$\sigma_x \!\otimes\! (g(a) \!\otimes\! g(b))$};
	\node[scale=1.25] (1) at (5.9,-2.25) {$\sigma_x \!\otimes\! g(a\!\otimes\! b),$};
	\draw[1c] (0.east) to node[auto] {$\invrs{\alpha_{f(a),f(b),\sigma_z}}$} (a2.west);
	\draw[1c] (a2.east) to node[auto] {$f_{a,b} \!\otimes\! \idd{\sigma_z}$} (a3.west);
	\draw[1c] (a3.south) to node[auto] {$\sigma_{a \otimes b}$} (1.north);
	\draw[1c] (0.south) to node[auto,swap] {$\idd{f(a)} \!\otimes\! \sigma_b$} (b1.north);
	\draw[1c] (b1.south) to node[auto,swap] {$\invrs{\alpha_{f(a),\sigma_y,g(b)}}$} (b2.north);
	\draw[1c] (b2.east) to node[auto,swap, below=4pt] {$\sigma_a \!\otimes\! \idd{g(b)}$} (b3.west);
	\draw[1c] (b3.east) to node[auto,swap, below=4pt] {$\alpha_{\sigma_x,g(a),g(b)}$} (b4.west);
	\draw[1c] (b4.east) to node[auto,swap, below=4pt] {$\idd{\sigma_x} \!\otimes\! g_{a,b}$} (1.west);
\end{tikzpicture}
\end{equation}
\begin{equation} \label{eq:oplax_unit_coherence}
\begin{tikzpicture}[baseline={([yshift=-.5ex]current bounding box.center)}]
	\node[scale=1.25] (0) at (-4,.75) {$1_{f(x)} \otimes \sigma_x$};
	\node[scale=1.25] (a1) at (4,.75) {$f(1_x) \otimes \sigma_x$};
	\node[scale=1.25] (b1) at (-4,-.75) {$\sigma_x$};
	\node[scale=1.25] (b2) at (0,-.75) {$\sigma_x \otimes 1_{g(x)}$};
	\node[scale=1.25] (1) at (4,-.75) {$\sigma_x \otimes g(1_x)$};
	\draw[1c] (0.east) to node[auto] {$f_x \otimes \idd{\sigma_x}$} (a1.west);
	\draw[1c] (a1.south) to node[auto] {$\sigma_{1_x}$} (1.north);
	\draw[1c] (0.south) to node[auto,swap] {$l_{\sigma_x}$} (b1.north);
	\draw[1c] (b1.east) to node[auto,swap] {$\invrs{r_{\sigma_x}}$} (b2.west);
	\draw[1c] (b2.east) to node[auto,swap] {$\idd{\sigma_x} \otimes g_x$} (1.west);
	\node[scale=1.25] at (5,-.9) {.};
\end{tikzpicture}
\end{equation}
An oplax transformation $\sigma: f \to g$ is a \emph{pseudo-natural transformation} if the 2-cells $\sigma_a$ are all isomorphisms, and a \emph{pseudo-natural equivalence} if, in addition, the 1-cells $\sigma_x$ are all equivalences.

Given oplax transformations $\sigma, \tau: f \to g$, a \emph{modification} $\mu: \sigma \to \tau$ is a family of 2-cells $\{\mu_x: \sigma_x \to \tau_x\}$ of $C$, indexed by 0-cells $x$ of $B$, such that for all 1-cells $a: x \to y$ in $B$ the diagram
\begin{equation*}
\begin{tikzpicture}[baseline={([yshift=-.5ex]current bounding box.center)}]
	\node[scale=1.25] (0) at (-1.5,.75) {$f(a) \otimes \sigma_y$};
	\node[scale=1.25] (a1) at (2,.75) {$\sigma_x \otimes g(a)$};
	\node[scale=1.25] (b1) at (-1.5,-.75) {$f(a) \otimes \tau_y$};
	\node[scale=1.25] (1) at (2,-.75) {$\tau_x \otimes g(a)$};
	\draw[1c] (0.east) to node[auto] {$\sigma_a$} (a1.west);
	\draw[1c] (a1.south) to node[auto] {$\mu_x \otimes \idd{g(a)}$} (1.north);
	\draw[1c] (0.south) to node[auto,swap] {$\idd{f(a)} \otimes \mu_y$} (b1.north);
	\draw[1c] (b1.east) to node[auto,swap] {$\tau_a$} (1.west);
\end{tikzpicture}
\end{equation*}
commutes in $C$.
\end{dfn}

The correspondence between oplax transformations of functors of bicategories and strong morphisms of representable merge-bicategories is not as simple as one would hope. The commutativity of diagram (\ref{eq:oplax_unit_coherence}) implies that the components $\sigma_{1_x}$ at units are isomorphisms for all oplax transformations $\sigma$ of functors; but that does not seem to be automatic for oplax transformations of strong morphisms. Instead, we need a slight specialisation.
\begin{dfn}
Let $\sigma: f \to g$ be an oplax transformation between morphisms $f,g: X \to Y$ of merge-bicategories. We say that $\sigma$ is \emph{fair} if, for all universal 1-cells $e: x \to y$ in $X$, the component $\sigma_e: (f(e),\sigma_y) \to (\sigma_x, g(e))$ is a universal 2-cell in $Y$.
\end{dfn}

\begin{remark} \label{remark:fairness}
If we see (lax, oplax) transformations as the next step after morphisms in a ladder of ``transfors'' \cite{crans2003localizations}, then fair transformations are a natural next step after strong morphisms: if the latter are characterised by their assigning universal $n$-cells to universal $n$-cells, the former are characterised by their assigning universal $(n+1)$-cells to universal $n$-cells, whenever it makes sense. However, we avoid calling them ``strong'' because this term is generally used in opposition to ``lax'', and it would be confusing to speak of ``strong oplax transformations''.

Restricted to transformations of strong morphisms, we can also see fairness as a requirement for ``bi-morphisms'' $Z \tensor X \to Y$ to be strong parametrically in each variable: an oplax transformation of morphisms $X \to Y$ is the same as a morphism $\sigma: \vec{I} \tensor X \to Y$, where $\vec{I}$ is the merge-bicategory with two 0-cells and a single 1-cell between them, and it is a fair oplax transformation of strong morphisms precisely when $\sigma(x \tensor -)$ takes universal cells of $X$ to universal cells of $Y$, for all cells $x$ of $\vec{I}$.

Note that all pseudo-natural transformations are trivially fair.
\end{remark}

\begin{lem} \label{lem:fair}
Let $X, Y$ be representable merge-bicategories, $f, g: X \to Y$ strong morphisms, and $\sigma: f \to g$ an oplax transformation. The following are equivalent:
\begin{enumerate}
	\item $\sigma$ is fair;
	\item for all 1-units $1_x: x \to x$ in $X$, the 2-cell $\sigma_{1_x}: (f(1_x),\sigma_x) \to (\sigma_x,f(1_x))$ is universal in $Y$.
\end{enumerate}
\end{lem}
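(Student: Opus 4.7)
The implication $(1) \Rightarrow (2)$ is immediate: every 1-unit is a divisible 1-cell, so fairness of $\sigma$ specialises to divisibility of each $\sigma_{1_x}$.

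For $(2) \Rightarrow (1)$, let $e: x \to y$ be a divisible 1-cell in $X$. Since $U_1 X$ is tensor 1-representable, Proposition \ref{prop:isomorphism} provides a divisible $e^*: y \to x$ together with 2-cells $h: (e, e^*) \to (1_x)$ and $h': (e^*, e) \to (1_y)$ that are divisible at $\bord{1}{+}$; by Theorem \ref{thm:2outof3} they are everywhere divisible, hence by Proposition \ref{prop:mergeunital} they are isomorphisms in the merge-bicategory $X$. Because $f$ and $g$ are strong, their images $f(h), g(h), f(h'), g(h')$ are isomorphisms in $Y$, and $f(1_x), g(1_x), f(1_y), g(1_y)$ are 1-units in $Y$ (by Proposition \ref{prop:unitpreserve1} and its par-dual, both applicable because $f, g$ are strong). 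Consequently, hypothesis (2) makes $\sigma_{1_x}$ and $\sigma_{1_y}$ divisible, hence isomorphisms in $Y$.

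I then plan to write out the oplax naturality equation (\ref{eq:oplaxnatural}) for $p = h: (e, e^*) \to (1_x)$. Both sides are a common $(3,2)$-cell $(f(e), f(e^*), \sigma_x) \to (\sigma_x, g(1_x))$ of $Y$. The LHS is a merger of the isomorphisms $f(h)$ and $\sigma_{1_x}$; since mergers of isomorphisms are isomorphisms (the inverse being the merger of the component inverses in the transposed position, which one checks via the associativity and interchange equations), the LHS, and therefore the RHS, is an isomorphism. The RHS is the merger of $\sigma_e$ and $\sigma_{e^*}$ along $\sigma_y$, post-composed with $g(h)$ along $(g(e), g(e^*))$; cancelling the isomorphism $g(h)$ on the output side forces the merger $M_1$ of $\sigma_e$ and $\sigma_{e^*}$ along $\sigma_y$ to be itself an isomorphism. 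The symmetric argument applied to $h'$ yields an isomorphism $M_2$ obtained as the merger of $\sigma_{e^*}$ and $\sigma_e$ along $\sigma_x$.

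From $M_1$, $M_2$, and the isomorphisms $f(h), g(h), f(h'), g(h')$ I will construct an explicit two-sided inverse of $\sigma_e$, or equivalently verify its divisibility at each boundary position by factoring through $M_1$ and $M_2$. Via Theorem \ref{mrgpol_equivalence} one can also transport the problem to the bicategory $GY$, where $M_1$ and $M_2$ become the counit and unit of an adjunction exhibiting $\sigma_{e^*}$ as an adjoint inverse of $\sigma_e$ (after absorbing the coherence isos induced by $f(h), g(h), f(h'), g(h')$), forcing $\sigma_e$ to be invertible. The main obstacle is the bookkeeping of the two mergers, which live along different 1-cells ($\sigma_y$ and $\sigma_x$): showing that together they pin down $\sigma_e$ demands a careful pasting argument that simultaneously uses the adjoint-equivalence data $(h, h')$ in both $f$- and $g$-images.
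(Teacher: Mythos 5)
Your direction $(1)\Rightarrow(2)$ is fine, and the first half of $(2)\Rightarrow(1)$ is a legitimate variant of the paper's argument: the paper also reduces to a quasi-inverse $e^*$ of $e$ and exploits the oplax naturality equation, but it takes the quasi-inverse in the form of an \emph{adjoint} equivalence $(\varepsilon,\eta)\colon e^*\dashv e$ via Lemma \ref{lem:adjointequivalence}, whereas you take a bare quasi-inverse from Proposition \ref{prop:isomorphism}. (Minor point: your appeal to Theorem \ref{thm:2outof3} is both unnecessary and not literally licensed --- its hypothesis asks for two divisibility properties of $h$ --- but since $h$ has a single output, divisibility at $\bord{1}{+}$ already makes it an isomorphism by Proposition \ref{prop:mergeunital}, so your conclusion stands.) Your derivation that $M_1$ and $M_2$ are isomorphisms is correct.

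The genuine gap is exactly the step you flag as ``bookkeeping'': passing from ``$M_1$ and $M_2$ are isomorphisms'' to ``$\sigma_e$ is an isomorphism''. Unwinding $M_1^{-1}$ gives a cell $P$ with $\mathrm{mrg}(P,\sigma_e)=\mathrm{id}$, and unwinding $M_2^{-1}$ gives a cell $Q$ with $\mathrm{mrg}(\sigma_e,Q)=\mathrm{id}$, but these two identities live at \emph{different whiskerings}: the first exhibits $\sigma_e$ as split epi only after its output boundary has been extended by $g(e^*)$, the second as split mono only after its input boundary has been extended by $f(e^*)$. Since $P$ and $Q$ do not even have the same type, the standard ``left inverse equals right inverse'' argument does not apply, and your proposed shortcut --- reading $M_1,M_2$ as the counit and unit of an adjunction with $\sigma_{e^*}$ ``adjoint inverse'' to $\sigma_e$ --- is a type error: $\sigma_e$ is a 2-cell, so it has no adjoints in $GY$. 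To close the gap you must either (i) cancel the whiskerings, using that $f(e^*)$ and $g(e^*)$ are divisible 1-cells so that whiskering by them is an equivalence of hom-categories reflecting split epis and split monos --- an argument you have not supplied --- or (ii) do what the paper does: upgrade $(h,h')$ to the counit and unit of an adjunction $e^*\dashv e$, so that the triangle identities make the spurious $f(e^*)$ and $g(e^*)$ factors cancel \emph{inside} the pasting, yielding one-sided inverses of $\sigma_e$ at its own boundary $(f(e),\sigma_y)\to(\sigma_x,g(e))$. Option (ii) is a one-line repair of your setup (replace Proposition \ref{prop:isomorphism} by Lemma \ref{lem:adjointequivalence}), after which your computation with the naturality equations for $\varepsilon$ and $\eta$ essentially becomes the paper's proof.
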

\begin{proof}
The implication from $(a)$ to $(b)$ is obvious. Conversely, let $e: x \to y$ be a universal 1-cell in $X$, and take a 1-cell $e^*: y \to x$ and an adjunction $(\varepsilon, \eta): e^* \dashv e$ in $X$ as by Lemma \ref{lem:adjointequivalence}. Then, postcompose $\sigma_e: (f(e),\sigma_y) \to (\sigma_x,g(e))$ with
\begin{equation*}
\input{img/s5_fair2.tex}
\end{equation*}
where we omitted the labels of the inner 1-cells to avoid clutter. Using
\begin{equation*}
\input{img/s5_fair1.tex}
\end{equation*}
together with the definition of $f_x$, and the fact that $\varepsilon$ and $\eta$ are the counit and unit of an adjunction, we obtain that the result is equal to
\begin{equation*}
\input{img/s5_fair3.tex}
\end{equation*}
which is a composite of universal 2-cells if $\sigma_{1_y}$ is universal (in fact, it will turn out to be a unit).  From this, we deduce that $\sigma_e$ has a left inverse, and we can show similarly that it has a right inverse, which proves the claim.
\end{proof}

\begin{dfn}
Let $X, Y$ be representable merge-bicategories. We write $[X,Y]_\text{s}$ for the restriction of the left hom $[X,Y]$ that has strong morphisms as 0-cells, fair oplax transformations as 1-cells, and all their modifications as 2-cells. We write $[X,Y]_\text{ps}$ for the restriction of $[X,Y]_\text{s}$ that has only pseudo-natural transformations as 1-cells.
\end{dfn}
Fair oplax transformations and pseudo-natural transformations are closed under tensors, and 1-units in $[X,Y]$ are pseudo-natural by construction, so $[X,Y]_\text{s}$ and $[X,Y]_\text{ps}$ are still representable when $[X,Y]$ is.

\begin{thm} \label{thm:fair_coherence}
The following correspond through the equivalence between $\bicat$ and $\mrgpol_\otimes$:
\begin{enumerate}
	\item oplax transformations and modifications in $\bicat$, and fair oplax transformations and modifications in $\mrgpol_\otimes$;
	\item pseudo-natural transformations in $\bicat$ and in $\mrgpol_\otimes$;
	\item pseudo-natural equivalences in $\bicat$ and in $\mrgpol_\otimes$.
\end{enumerate}
\end{thm}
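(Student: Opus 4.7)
The plan is to construct explicit bijections between the 0-cells, 1-cells, and 2-cells of $[X,Y]_\text{s}$ (respectively, $[X,Y]_\text{ps}$) and those of the analogous hom in $\bicat$ with objects functors, 1-cells oplax (respectively, pseudo-natural) transformations, and 2-cells modifications, for all representable merge-bicategories $X, Y$, then check that these are compatible with the equivalence of Theorem \ref{mrgpol_equivalence}. Fix a choice of tensors $t_{a,b}$ with inverses $\invrs{t_{a,b}}$ (Corollary \ref{cor:tensorparcollapse}), as required by Construction \ref{cons:cvu-bicat}.

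From merge to bicategorical: given a fair oplax transformation $\sigma: f \to g$, I would define $G\sigma: Gf \to Gg$ with 1-cell components $(G\sigma)_x := \sigma_x$ and 2-cell components $(G\sigma)_a: f(a) \otimes \sigma_y \to \sigma_x \otimes g(a)$ obtained by precomposing $\sigma_a$ with $\invrs{t_{f(a),\sigma_y}}$ and postcomposing with $t_{\sigma_x,g(a)}$. To verify the hexagon (\ref{eq:oplax_coherence}), I would apply naturality (\ref{eq:oplaxnatural}) to the tensor $t_{a,b}: (a,b) \to (a\otimes b)$ of $X$, and use the definitions of $f_{a,b}$, $g_{a,b}$ in Construction \ref{cons:cvu-bicat} to re-express the resulting equation in $Y$. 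For the unit coherence (\ref{eq:oplax_unit_coherence}), I would apply naturality to a coherent witness $l_a$ of unitality, use the definition of $f_x$ from diagram (\ref{eq:functor_unitor_def}), and rely on Lemma \ref{lem:fair}: fairness of $\sigma$ is equivalent to $\sigma_{1_x}$ being divisible, which makes $(G\sigma)_{1_x}$ an isomorphism, as required.

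Conversely, given an oplax transformation $\tilde\sigma: Gf \to Gg$ in $\bicat$, define $\int\!\tilde\sigma$ with the same 1-cell components and 2-cells $(\int\!\tilde\sigma)_a := \cutt{1,2}(t_{f(a),\sigma_y}, \cutt{1,1}(\tilde\sigma_a, \invrs{t_{\sigma_x,g(a)}}))$; naturality (\ref{eq:oplaxnatural}) at an arbitrary 2-cell $p: (a_1,\ldots,a_n) \to (b_1,\ldots,b_m)$ will reduce, via the iterated hexagon together with Theorem \ref{thm:functocoherence} and Corollary \ref{cor:assocoherence}, to the bicategorical naturality applied to the rebracketed version of $p$, and fairness follows from (\ref{eq:oplax_unit_coherence}) via Lemma \ref{lem:fair}. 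The correspondence on modifications is direct: equation (\ref{eq:modification}) for $\mu: \sigma \to \tau$ with all components $\mu_x: (\sigma_x) \to (\tau_x)$ single-input-single-output, composed with tensors, is exactly the bicategorical modification equation. Pseudo-natural transformations correspond via Proposition \ref{prop:mergeunital}, since divisibility of 2-cells collapses to invertibility; pseudo-natural equivalences add the condition on 1-cell components, handled by Proposition \ref{prop:isomorphism}.

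The main obstacle is the verification of (\ref{eq:oplax_coherence}) and especially (\ref{eq:oplax_unit_coherence}): the former requires careful bookkeeping of the associators introduced by re-bracketing, while the latter relies essentially on the fairness hypothesis, in the same manner as the proof of Lemma \ref{lem:fair}. A secondary subtlety is checking that the two constructions are mutually inverse up to the natural isomorphisms induced by the choice of tensors and that the resulting bijections are functorial with respect to merging (respectively, vertical composition) of modifications; this is a routine consequence of the way the $t_{a,b}$ and the coherence 2-cells interact with the merge operations.
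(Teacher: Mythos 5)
Your proposal is correct and follows essentially the same route as the paper: both directions are handled by transporting components through the chosen tensors, the hexagon \eqref{eq:oplax_coherence} is derived from the naturality equation \eqref{eq:oplaxnatural} instantiated at $t_{a,b}$, the unit coherence \eqref{eq:oplax_unit_coherence} is matched to fairness via Lemma \ref{lem:fair} and the divisibility of $\sigma_{1_x}$, and modifications and the pseudo-natural cases are treated as direct specialisations. The only cosmetic difference is that the paper instantiates naturality at $l_{1_x}$ rather than at a general $l_a$ for the unit-coherence step, but this is the same computation.
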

\begin{proof}
Let $f, g: B \to C$ be functors of bicategories, and $\sigma: f \to g$ an oplax transformation. By construction, there are 1-cells $\sigma_x: (\int\! f)(x) \to (\int\! g)(x)$ of $\int\! C$, and 2-cells $\sigma_a: ((\int\! f)(a),\sigma_y) \to (\sigma_x, (\int\! g)(a))$ of $\int\! C$ corresponding to the components of $\sigma$, as required by the definition of an oplax transformation between the strong morphisms $\int\! f$ and $\int\! g$. 

Unravelling the interpretation of the equations (\ref{eq:oplaxnatural}) from $\int\! C$ into $C$, and using the commutativity of the diagrams (\ref{eq:oplax_coherence}) in $C$, we can check that this is indeed an oplax transformation, so it suffices to prove that it is fair. This follows from Lemma \ref{lem:fair}, since the commutativity of the diagrams (\ref{eq:oplax_unit_coherence}) in $C$ implies that $\sigma_{1_x}$ is a universal 2-cell for each unit $1_x$.

Conversely, if $f, g: X \to Y$ are strong morphisms of representable merge-bicategories, and $\sigma: f \to g$ is a fair oplax transformation, the components $\sigma_x: f(x) \to g(x)$ are automatically 1-cells $Gf(x) \to Gg(x)$ in $GY$, and the components $\sigma_a$ induce 2-cells $Gf(a) \otimes \sigma_y \to \sigma_x \otimes Gg(a)$ in $GY$ by representability, so it suffices to prove that they define an oplax transformation between the functors $Gf$ and $Gg$.

Given 1-cells $a: x \to y$, $b: y \to z$, with a chosen tensor $t_{a,b}$ in $X$, the commutativity of diagram (\ref{eq:oplax_coherence}) in $GY$ follows straightforwardly from 
\begin{equation*}
\input{img/s5_fair_coherence1.tex}
\end{equation*}
factorising the two sides through the same universal 2-cells in $Y$. Moreover, from
\begin{equation*}
\input{img/s5_fair_coherence2.tex}
\end{equation*}
by the defining equations (\ref{eq:functor_unitor_def}) of $f_x$ and $g_x$, and coherence of witnesses of unitality in $Y$, it follows that
\begin{equation*}
\input{img/s5_fair_coherence3.tex}
\end{equation*}
Using the universality of $f(1_x)$, and that of $\sigma_{1_x}$, as granted by the fairness of $\sigma$, we can cancel the leftmost 2-cell on both sides. The remaining equation implies the commutativity of (\ref{eq:oplax_unit_coherence}) in $GY$.

The correspondence between modifications is immediate from the definitions, and the correspondence for pseudo-natural transformations and equivalences is a simple specialisation of the first part, using the fact that pseudo-natural transformations are automatically fair.
\end{proof}

It follows from Theorem \ref{thm:fair_coherence} that, for all bicategories $B$ and $C$, we can define the bicategory of functors, oplax transformations, and modifications between $B$ and $C$ as $G[\int\! B,\int\! C]_\text{s}$, and the bicategory of functors, pseudo-natural transformations, and modifications as $G[\int\! B,\int\! C]_\text{ps}$. The latter is part of a monoidal closed structure on $\bicat$ with the better-known ``pseudo'' version of the Gray product, see for example \cite{bourke2016gray}, but we cannot see any added insight from the perspective of merge-bicategories.

As usual, we obtain a dual correspondence between lax transformations of functors and fair lax transformations of strong morphisms. Theorem \ref{thm:fair_coherence} also allows us to transport the notion of equivalence from bicategories to representable merge-bicategories.
\begin{dfn}
A strong morphism $f: X \to Y$ of representable merge-bicategories is an \emph{equivalence} if there exist a strong morphism $g: Y \to X$ and pseudo-natural equivalences $\eta: \idd{X} \to gf$ and $\varepsilon: \idd{Y} \to fg$.

A functor $f: B \to C$ of bicategories is an \emph{equivalence} if there exist a functor $g: C \to B$ and pseudo-natural equivalences $\eta: \idd{B} \to gf$ and $\varepsilon: \idd{C} \to fg$.
\end{dfn}

\begin{cor} \label{cor:equivalences}
Let $f: X \to Y$ be an equivalence of representable merge-bicategories. Then $Gf: GX \to GY$ is an equivalence of bicategories. Conversely, if $f: B \to C$ is an equivalence of bicategories, $\int\! f: \int\! B \to \int\! C$ is an equivalence of representable merge-bicategories.
\end{cor}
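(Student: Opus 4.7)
The plan is to deduce Corollary \ref{cor:equivalences} as a direct consequence of Theorem \ref{mrgpol_equivalence} (the equivalence of $\mrgpol_\otimes$ and $\bicat$) together with Theorem \ref{thm:fair_coherence}(3) (the correspondence of pseudo-natural equivalences on both sides). Since $G$ and $\int$ are genuine functors of large categories, they preserve composition and identities of 1-cells on the nose; in particular $G(gf) = (Gg)(Gf)$ and $G(\idd{X}) = \idd{GX}$, and dually for $\int$. This observation is what allows pseudo-natural equivalences to be transported as witnesses of an equivalence.

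For the forward direction, suppose $f: X \to Y$ is an equivalence in $\mrgpol_\otimes$, with quasi-inverse $g: Y \to X$ and pseudo-natural equivalences $\eta: \idd{X} \to gf$ and $\varepsilon: \idd{Y} \to fg$. First I would form $Gf$ and $Gg$, observing that they compose correctly with identities. Then Theorem \ref{thm:fair_coherence}(3) applied to $\eta$ and $\varepsilon$ produces pseudo-natural equivalences in $\bicat$ between $\idd{GX}$ and $(Gg)(Gf)$, and between $\idd{GY}$ and $(Gf)(Gg)$; these exhibit $Gf$ as an equivalence of bicategories.

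The converse is entirely symmetric: given $f: B \to C$ an equivalence of bicategories with quasi-inverse $g$ and pseudo-natural equivalences $\eta, \varepsilon$, I would form $\int\! f$ and $\int\! g$, again using functoriality of $\int$ to identify $\int(gf)$ with $(\int\! g)(\int\! f)$ and $\int \idd{B}$ with $\idd{\int\! B}$, and then invoke Theorem \ref{thm:fair_coherence}(3) in the other direction to obtain the required pseudo-natural equivalences in $\mrgpol_\otimes$.

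There is no serious obstacle, since all of the technical work has been done in the preceding sections. The only point that needs care is checking that the equality of underlying data on 1-cells is compatible with the witnesses of the equivalence of categories from Theorem \ref{mrgpol_equivalence}; one could in fact phrase the argument by first replacing $Gf$ with the isomorphic $\int^{-1}$ of $f$ under the natural isomorphism of Theorem \ref{mrgpol_equivalence}, but the functoriality of $G$ and $\int$ makes this unnecessary. The statement is essentially a corollary of the fact that \emph{any} equivalence of categories lifts the notion of equivalence defined by pseudo-natural equivalences in one ambient structure to the corresponding notion in the other, once those notions have been shown to match.
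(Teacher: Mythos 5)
Your proposal is correct and matches the paper's (implicit) argument: the corollary is stated without proof precisely because it is meant to follow immediately from Theorem \ref{thm:fair_coherence}(3) applied to the witnessing pseudo-natural equivalences, together with the strict functoriality of $G$ and $\int$ on composites and identities. Nothing further is needed.
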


\section{Semi-strictification} \label{sec:strictify}

The definitions and constructions of Section \ref{sec:probicat} were all given with an eye towards higher dimensions. In particular, Proposition \ref{def:representable} and Proposition \ref{def:representablestrong} generalise to the definition of \emph{representable constructible polygraphs} and strong morphisms, stated in terms of the existence and preservation of universal cells \cite[Appendix B]{hadzihasanovic2018combinatorial}, of which representable merge-bicategories are a truncated version. 

We essentially defined the lax Gray product of merge-bicategories as the truncation of the lax Gray product of constructible polygraphs, and our characterisation of ``fairness'' of transformations, as explained in Remark \ref{remark:fairness}, can also scale up to higher morphisms.

Our aim, in this section, is to develop a strictification strategy, based on this theory, that also has the potential to scale up. This means that, at the 2-dimensional level, we need to do a little worse than the strongest strictification result for bicategories, which says that they are equivalent to strict ones, as this is known to be false for tricategories. 

We warn that a general theory in higher dimensions has not yet been fully developed, and that, in any case, it remains to be shown that representable constructible polygraphs will capture the desired properties of weak higher categories (for example, that the groupoidal ones satisfy a form of the homotopy hypothesis). 

In the context of merge-bicategories, strictness is the property of a choice of representing cells, and related witnesses: it does not make sense to speak of ``strict representable merge-bicategories'' \emph{tout court}. In \cite{hermida2000representable}, Hermida encoded such a choice for a representable multicategory into the structure of a pseudoalgebra for a 2-monad; following a general scheme for coherence theorems \cite{power1989general, lack2002codescent}, he then formulated strictification as an equivalence between pseudoalgebras and strict algebras.

Our approach, employing an ``ordinary'' monad $\tmonad{}$, will differ in two ways:
\begin{enumerate}
	\item in order to streamline the discussion, and focus on local combinatorics rather than a global theorem on a category of pseudoalgebras, we show directly that a representable merge-bicategory is equivalent to one that supports the structure of a strict $\tmonad{}$-algebra;
	
	\item because we are interested in semi-strictification, we separate $\tmonad{}$ into the composite of two monads $\imonad{}$ (for \emph{inflate}) and $\mmonad{}$ (for \emph{merge}), related by a distributive law: roughly, the first encodes the structure relative to units, and the second the structure relative to composition in the ``merger'' sense. Supporting the structure of a strict $\imonad{}$-algebra is a weak enough condition that any representable merge-bicategory satisfies it, whereas strictification is needed to support an $\mmonad{}$-algebra structure. 
\end{enumerate}
The main point is that the construction of tensors, that is, witnesses of composition can be separated in two steps:
\begin{enumerate}
	\item first we ``only add units'', which suffices to create unit 2-cells $\idd{\Gamma}: (\Gamma) \to (\Gamma)$ with multiple inputs and outputs;
	\item then we ``only add composites'', whereby a composable sequence $\Gamma$ produces a single 1-cell $\sequ{\Gamma}$, and the formal composition of the output boundary of $\idd{\Gamma}$ produces a universal 2-cell $t_{\Gamma}: (\Gamma) \to (\sequ{\Gamma})$, that is, a tensor.
\end{enumerate}
Semi-strictification corresponds to the second step only.

\begin{cons} \label{cons:inflate}
Let $X$ be a merge-bicategory. The merge-bicategory $\imonad{X}$ is generated by the cells of $X$, together with
\begin{itemize}
	\item for each 0-cell $x$ in $X$, a 1-cell $\thin{x}: x \to x$ and a triple of 2-cells $\thin{\thin{x}}: (\thin{x}) \to (\thin{x})$, $r_{\thin{x}} \equiv l_{\thin{x}}: (\thin{x},\thin{x}) \to (\thin{x})$ and $r^*_{\thin{x}} \equiv l^*_{\thin{x}}: (\thin{x}) \to (\thin{x},\thin{x})$;
	\item for each 1-cell $a: x \to y$ in $X$, 2-cells $\thin{a}: (a) \to (a)$, $l_a: (\thin{x},a) \to (a)$, $l^*_a: (a) \to (\thin{x},a)$, $r_a: (a,\thin{y}) \to (a)$, and $r^*_a: (a) \to (a,\thin{y})$,
\end{itemize}
subject to the following equations:
\begin{itemize}
	\item for all 2-cells $p, q$ in $X$, if $\mrg{[j_1,j_2]}{[i_1,i_2]}(p,q)$ is defined, it is equal in $\imonad{X}$ to its value in $X$;
	\item for all 2-cells $p$ in $\imonad{X}$, if $\bord{i}{+}p = a$ and $\bord{j}{-}p = b$, then $\mrg{[i,i]}{[1,1]}(p, \thin{a}) = p$ and $\mrg{[1,1]}{[j,j]}(\thin{b}, p) = p$;
	\item for all 1-cells $a$ in $\imonad{X}$, $\mrg{[1,2]}{[1,2]}(l_a^*,l_a) = \thin{a}$ and $\mrg{[1,2]}{[1,2]}(r_a^*,r_a) = \thin{a}$;
	\item for all 2-cells $p: (a,\Gamma) \to (b,\Delta)$,
	\begin{equation} \label{eq:naturleft_i}
		\mrg{[1,1]}{[2,2]}(p,l_b) = \mrg{[1,1]}{[1,1]}(l_a,p), \quad \quad \mrg{[2,2]}{[1,1]}(l^*_a,p) = \mrg{[1,1]}{[1,1]}(p,l^*_b)
	\end{equation} 
	and dually for all 2-cells $p': (\Gamma', a) \to (\Delta',b)$ in $X_2^{(n,m)}$,
	\begin{equation} \label{eq:naturright_i}
		\mrg{[m,m]}{[1,1]}(p',r_b) = \mrg{[1,1]}{[n,n]}(r_a,p'), \quad \quad \mrg{[1,1]}{[n,n]}(r^*_a,p') = \mrg{[m,m]}{[1,1]}(p',r^*_b);
	\end{equation}
	\item for all 2-cells $p: (\Gamma) \to (\Delta)$, if $\bord{j}{-}p = a, \bord{j+1}{-}p = b, \bord{i}{+}p = a', \bord{i+1}{+}p = b'$
	\begin{equation} \label{eq:triangle_i}
		\mrg{[1,1]}{[j,j]}(r_a,p) = \mrg{[1,1]}{[j+1,j+1]}(l_b,p), \quad \mrg{[i,i]}{[1,1]}(p,r^*_{a'}) = \mrg{[i+1,i+1]}{[1,1]}(p,l^*_{b'}).
	\end{equation}
\end{itemize}
There is an obvious inclusion morphism $\eta_X: X \to \imonad{X}$. 

Given a morphism $f: X \to Y$, we define a morphism $\imonad{f}: \imonad{X} \to \imonad{Y}$, which is equal to $\eta_Y f$ on $\eta_X$, maps the 1-cells $\thin{x}$ to $f(\thin{x}) := \thin{f(x)}$, and the 2-cells $\thin{a}, l_a, l^*_a, r_a, r_a^*$ to $\thin{f(a)}, l_{f(a)}, l^*_{f(a)}, r_{f(a)}$, $r^*_{f(a)}$, respectively. This assignment makes $\imonad{}$ an endofunctor on $\mrgpol$.

Moreover, consider the iterated construction $\imonad{\imonad{X}}$, and denote the cells added at the second iteration with a tilde. There is a morphism $\mu_X: \imonad{\imonad{X}} \to \imonad{X}$, which is the identity on $\eta_{\imonad{X}}$, maps $\tilde{\thin{x}}$ to $\thin{x}$ (which completely specifies $\mu_X$ on 1-cells), and $\tilde{\thin{a}}, \tilde{l}_a, \tilde{l}^*_a, \tilde{r}_a, \tilde{r}_a^*$ to $\thin{\mu_X a}, l_{\mu_X a}, l^*_{\mu_X a}, r_{\mu_X a}, r_{\mu_X a}^*$, respectively.

The $\eta_X$ and $\mu_X$ are components of natural transformations $\text{Id} \to \imonad{}$ and $\imonad{\imonad{}} \to \imonad{}$, such that $(\imonad{}, \mu, \eta)$ determines a monad on $\mrgpol$.
\end{cons}

\begin{dfn}
We call $(\imonad{}, \mu, \eta)$, as defined in Construction \ref{cons:inflate}, the \emph{inflate} monad on $\mrgpol$.
\end{dfn}

\begin{lem} \label{lem:imonad_rep}
Let $X$ be a merge-bicategory. Then $\imonad{X}$ is 0-representable, the $\{\thin{x}: x \to x\}$ are 1-units, and the $\{l_a, r_a\}$ are coherent witnesses of unitality in the sense of Theorem \ref{thm:polycoherent}.
\end{lem}
\begin{proof}
First, let us show that $\imonad{X}$ is unital. By construction, $\thin{a}: (a) \to (a)$ is a unit on $a$. It suffices to show that a unit exists on any composable pair $(a,b)$ of 1-cells: units on longer sequences can be composed together from these. We define
\begin{equation*}
	\thin{(a,b)} := \mrg{[2,2]}{[1,1]}(r_a^*, l_b).
\end{equation*}
Then, for all 2-cells $p$ with $\bord{j}{-}p = a$ and $\bord{j+1}{-}p = b$, we have
\begin{align*}
	\mrg{[1,2]}{[j,j+1]}(\thin{(a,b)},p) & = \mrg{[1,2]}{[j,j+1]}(r_a^*, \mrg{[1,1]}{[j+1,j+1]}(l_b,p)) = \mrg{[1,2]}{[j,j+1]}(r_a^*, \mrg{[1,1]}{[j,j]}(r_a,p)) = \\
		& = \mrg{[1,1]}{[j,j]}(\mrg{[1,2]}{[1,2]}(r_a^*,r_a),p) = \mrg{[1,1]}{[j,j]}(\thin{a},p) = p,
\end{align*}
and dually on the other side. 

Now, let $x$ be a 0-cell of $\imonad{X}$; we want to show that $\thin{x}: x \to x$ is a 1-unit on $x$, and that the $l_a$ and $r_a$ are witnesses of unitality. First of all, $l^*_a$ is the inverse of $l_a$: $\mrg{[1,2]}{[1,2]}(l_a^*,l_a) = \thin{a}$ is true by definition, and
\begin{equation*}
	\mrg{[1,1]}{[1,1]}(l_a,l^*_a) = \mrg{[1,1]}{[2,2]}(l^*_{\thin{x}},l_a) = \mrg{[1,1]}{[2,2]}(r^*_{\thin{x}},l_a) = \thin{(\thin{x},a)},
\end{equation*}
where we used the rightmost equation in (\ref{eq:naturleft_i}) and the fact that $r^*_{\thin{x}} = l^*_{\thin{x}}$. Dually, we find that $r^*_a$ is the inverse of $r_a$. 

It only remains to show that $l_a$ is universal at $\bord{2}{-}$ and $r_a$ at $\bord{1}{-}$. This follows from their invertibility together with the equations (\ref{eq:naturleft_i}) and (\ref{eq:naturright_i}): given $p: (\thin{x},c,\Gamma) \to (a,\Delta)$, there is a factorisation 
\begin{equation*}
	p = \mrg{[1,1]}{[1,1]}(l_c,p') = \mrg{[1,1]}{[2,2]}(p',l_a),
\end{equation*}
for a unique $p'$, and dually with $r_b$.
\end{proof}

\begin{remark}
The equations that we imposed on $\imonad{X}$ hold in all 0-representable merge-bicategories, when the $\thin{x}$ are 1-units, the $\thin{a}$ are unit 2-cells, and the $\{l_a, r_a\}$ are a coherent family of witnesses of unitality.

Thus we can see $\imonad{X}$ as a free (relatively to $X$) 0-representable merge-bicategory with chosen 1-units and coherent witnesses of unitality. More precisely, $\imonad{}$ arises from an adjunction between $\mrgpol$ and a category whose
\begin{itemize}
	\item objects are 0-representable merge-bicategories with a choice of 1-units and coherent witnesses of unitality, and
	\item morphisms strictly preserve both the 1-units and the witnesses of unitality.
\end{itemize}
\end{remark}

\begin{remark} \label{remark:imonad-no1rep}
Note that $\imonad{X}$ is never 1-representable, unless there are no composable pairs of 1-cells in $X$ at all (in which case, there are no composable pairs of 1-cells in $\imonad{X}$ unless one of them is a 1-unit, so 0-representability implies 1-representability). Otherwise, given a composable pair $a, b$ in $X$, there is no 2-cell $(a,b) \to (c)$ through which $\thin{(a,b)}$ may factor in $\imonad{X}$.

In general, no universal 2-cell or 1-cell of $X$ is universal in $\imonad{X}$, and the inclusion $\eta_X: X \to \imonad{X}$ is not a strong morphism.
\end{remark}

\begin{prop} \label{prop:ialg_0rep}
Let $X$ be a merge-bicategory. The following are equivalent:
\begin{enumerate}
	\item $X$ admits an $\imonad{}$-algebra structure $\alpha: \imonad{X} \to X$ where $\alpha$ is a strong morphism;
	\item $X$ is 0-representable.
\end{enumerate}
\end{prop}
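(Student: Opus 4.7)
The plan is to prove both directions by exploiting Lemma \ref{lem:imonad_rep}, which ensures that $\imonad{X}$ is always 0-representable, together with the coherence of unitality witnesses from Theorem \ref{thm:polycoherent}.

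For the implication $(a) \Rightarrow (b)$, suppose $\alpha : \imonad{X} \to X$ is a strong algebra structure. The algebra axiom $\alpha \circ \eta_X = \idd{X}$ forces $\alpha$ to act as the identity on cells in the image of $\eta_X$. By Lemma \ref{lem:imonad_rep}, each $\thin{x}$ is a 1-unit in $\imonad{X}$, and a 1-unit is automatically a divisible 1-cell (the 2-cells $l_a, r_a$ witnessing unitality exhibit the unit as serving both as its own tensor and as its own hom on the left, and dually on the right). Strongness of $\alpha$ then gives a divisible 1-cell $\alpha(\thin{x}) : x \to x$ in $X$ for every 0-cell $x$. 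Similarly, each $\thin{(\Gamma)}$ is a unit on $\Gamma$ in $\imonad{X}$; applying $\alpha$ to the defining merger equations of a unit and using that $\alpha$ restricts to the identity on $\eta_X(X)$, we obtain that $\alpha(\thin{(\Gamma)})$ is a unit on $\Gamma$ in $X$, so that $X$ is unital by Proposition \ref{prop:2unitsmerge}. With unitality in hand, Theorem \ref{thm:0representable} turns the existence of divisible 1-cells from each 0-cell into tensor 0-representability; the par statement follows dually, so $X$ is 0-representable.

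For $(b) \Rightarrow (a)$, choose 1-units $\{1_x\}$ on each 0-cell together with coherent witnesses of unitality $\{\tilde{l}_a, \tilde{r}_a\}$ as in Theorem \ref{thm:polycoherent}, with the tensor and par witnesses chosen as mutual inverses per Remark \ref{remark:merge-coherent}; via Proposition \ref{prop:2unitsmerge} these induce a unit $\idd{\Gamma}$ on every composable sequence. Define $\alpha : \imonad{X} \to X$ inductively following the stages of Construction \ref{cons:inflate}: let $\alpha$ be the identity on $\eta_X(X)$; set $\alpha(\thin{x}) := 1_x$ and $\alpha(\thin{(\Gamma)}) := \idd{\Gamma}$; and for a generator $p_{(\Gamma'),(\Delta')}$ with $p : (\Gamma) \to (\Delta)$ in $X$ and $\Gamma, \Delta$ collapses of $\Gamma', \Delta'$, define $\alpha(p_{(\Gamma'),(\Delta')})$ as the 2-cell of $X$ obtained by pre-composing $p$ with the canonical 2-cell $(\Gamma') \to (\Gamma)$ assembled from the $\tilde{l}$'s and $\tilde{r}$'s that ``removes'' the $1_x$'s inserted by the collapse, and post-composing with the symmetric canonical 2-cell $(\Delta) \to (\Delta')$.

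The main technical point, and the chief obstacle, is verifying that $\alpha$ respects the identifications of the third stage of Construction \ref{cons:inflate}: two composable diagrams with a common collapse must be sent to the same 2-cell of $X$. The identifications are generated by two elementary moves, namely removal of a $\thin{(\Gamma)}$ and removal of a $\thin{x}$ from a diagram; under $\alpha$, the first becomes a cancellation of $\idd{\Gamma}$ inside a merger, which holds by the definition of a unit, while the second becomes an instance of one of the naturality equations (\ref{eq:natural}), (\ref{eq:natural2}) or of the triangle equation (\ref{eq:trianglepoly}) for $\{\tilde{l}_a, \tilde{r}_a\}$, together with their par duals. Because these all hold simultaneously by Theorem \ref{thm:polycoherent}, any two chains of elementary collapses from $D_1$ or $D_2$ to a common collapse $D$ yield the same composite in $X$. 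Once well-definedness is secured, the algebra axioms follow immediately: $\alpha \circ \eta_X = \idd{X}$ by construction, and $\alpha \circ \mu_X = \alpha \circ \imonad{\alpha}$ because both sides collapse iterated freely-added units onto the chosen units of $X$. Finally, strongness of $\alpha$ follows from Remark \ref{remark:imonad-no1rep}: the divisible 1-cells and 2-cells of $\imonad{X}$ are (up to identifications) the freely-added units $\thin{x}$, $\thin{(\Gamma)}$ and their collapses, all sent by $\alpha$ to the 1-units $1_x$ and units $\idd{\Gamma}$ of $X$, which are divisible in the 0-representable $X$.
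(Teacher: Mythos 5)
Your proof is correct and follows essentially the same strategy as the paper's: the forward direction transports the freely-added units of $\imonad{X}$ (Lemma \ref{lem:imonad_rep}) back to $X$ along the strong algebra map, and the backward direction defines $\alpha$ by sending $\thin{x}$ to a chosen 1-unit, $\thin{(\Gamma)}$ to $\idd{\Gamma}$, and $p_{(\Gamma'),(\Delta')}$ to $p$ conjugated by coherence cells built from the witnesses of Theorem \ref{thm:polycoherent}, with Remark \ref{remark:imonad-no1rep} giving strongness. The only (harmless) deviation is that you route the forward direction through divisible 1-cells and Theorem \ref{thm:0representable} where the paper directly uses preservation of 1-units; your extra care about well-definedness under the collapse identifications is a welcome elaboration of what the paper leaves as "straightforward".
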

\begin{proof}
Suppose $\alpha: \imonad{X} \to X$ is an $\imonad{}$-algebra structure on $X$. By Lemma \ref{lem:imonad_rep}, there are units in $\imonad{X}$ on all composable sequences $\Gamma$ of 1-cells coming from $X$, and 1-units on all 0-cells $x$ coming from $X$. Since $\alpha\eta_X$ is the identity on $X$, $\alpha$ maps all cells originally in $X$ to themselves, so if $\alpha$ is strong, it maps a unit on $\Gamma$ in $\imonad{X}$ to one in $X$, and a 1-unit on $x$ in $\imonad{X}$ to one in $X$. This proves one implication.

Conversely, suppose $X$ is 0-representable, and fix 1-units $1_x: x \to x$ on each 0-cell of $X$, together with coherent witnesses of tensor and par unitality $\{\tilde{l}_a, \tilde{r}_a\}$ and $\{\invrs{\tilde{l}_a}, \invrs{\tilde{r}_a}\}$, inverse to each other (see Remark \ref{remark:merge-coherent}). Then, define a morphism $\alpha: \imonad{X} \to X$ which is the identity on $\eta_X$, maps $\thin{x}$ to $1_x$, and $\thin{a}, l_a, l^*_a, r_a, r^*_a$ to $\idd{a}, \tilde{l}_a, \invrs{\tilde{l}}_a, \tilde{r}_a, \invrs{\tilde{r}}_a$, respectively. 

By Remark \ref{remark:imonad-no1rep}, there are no universal cells in $\imonad{X}$ except the ones freely added, that is, the 1-cells $\thin{x}$, the 2-cells $\thin{a}, l_a, r_a$ and their inverses. These are all mapped to universal cells, so $\alpha$ is a strong morphism. It is straightforward to verify that it defines an $\imonad{}$-algebra structure. 
\end{proof}

We move on to the second component of $\tmonad{}$.

\begin{cons} \label{cons:mergemonad}
Let $X$ be a merge-bicategory. We define a new merge-bicategory $\mmonad{X}$ as follows. 
\begin{itemize}
	\item The 0-cells of $\mmonad{X}$ are the same as the 0-cells of $X$.
	\item For each composable sequence $\Gamma$ of 1-cells in $X$, starting at the 0-cell $x$ and ending at the 0-cell $y$, there is a 1-cell $\sequ{\Gamma}: x \to y$ in $\mmonad{X}$ (the ``merger'' of $\Gamma$).
	\item For each 2-cell $p: (\Gamma) \to (\Delta)$, and each pair of partitions $\Gamma = \Gamma_1,\ldots,\Gamma_n$ and $\Delta = \Delta_1,\ldots,\Delta_m$ of the inputs and outputs of $p$ into finitely many subsequences, there is a 2-cell $p_{\sequ{\Gamma_1}\ldots\sequ{\Gamma_n}}^{\sequ{\Delta_1}\ldots\sequ{\Delta_m}}: (\sequ{\Gamma_1},\ldots,\sequ{\Gamma_n}) \to (\sequ{\Delta_1},\ldots,\sequ{\Delta_m})$ in $\mmonad{X}$.
\end{itemize}
Composition of 2-cells in $\mmonad{X}$ is informally obtained by completely ``unmerging'' the boundaries of the components, composing in $X$, then merging appropriately the boundaries of the result, as in the following example:
\begin{equation*}
\input{img/s6_merge_multiply.tex}
\end{equation*}

Given a morphism $f: X \to Y$, we define a morphism $\mmonad{f}: \mmonad{X} \to \mmonad{Y}$, equal to $f$ on $0$-cells, mapping 1-cells $\sequ{\Gamma}$ to $\sequ{f(\Gamma)}$, and 2-cells $p_{\sequ{\Gamma_1}\ldots\sequ{\Gamma_n}}^{\sequ{\Delta_1}\ldots\sequ{\Delta_m}}$ to $f(p)_{\sequ{f(\Gamma_1)}\ldots\sequ{f(\Gamma_n})}^{\sequ{f(\Delta_1)}\ldots\sequ{f(\Delta_m)}}$. This assignment makes $\mmonad{}$ and endofunctor on $\mrgpol$.

For each merge-bicategory $X$, there is an inclusion $\zeta_X: X \to \mmonad{X}$, mapping 1-cells $a: x \to y$ to $\sequ{a}: x \to y$, and 2-cells $p: (a_1,\ldots,a_n) \to (b_1,\ldots,b_m)$ to $p_{\sequ{a_1}\ldots\sequ{a_n}}^{\sequ{b_1}\ldots\sequ{b_m}}: (\sequ{a_1},\ldots,\sequ{a_n}) \to (\sequ{b_1},\ldots,\sequ{b_m})$. 

Moreover, there is a morphism $\nu_X: \mmonad{\mmonad{X}} \to \mmonad{X}$, mapping 1-cells $\sequ{\sequ{\Gamma_1},\ldots,\sequ{\Gamma_n}}$ to $\sequ{\Gamma_1,\ldots,\Gamma_n}$, extending to 2-cells in the obvious way. These assemble into natural transformations $\zeta: \text{Id} \to \mmonad{}$ and $\nu: \mmonad{\mmonad{}} \to \mmonad{}$, such that $(\mmonad{}, \nu, \zeta)$ determines a monad on $\mrgpol$.
\end{cons}

\begin{dfn}
We call $(\mmonad{}, \nu, \zeta)$, as defined in Construction \ref{cons:mergemonad}, the \emph{merge} monad on $\mrgpol$.
\end{dfn}

When $X$ is unital, $\mmonad{X}$ can be seen as a free (relatively to $X$) 1-representable merge-bicategory with a chosen \emph{strictly associative} family of tensors, in the following sense.

\begin{dfn}
Let $X$ be a 1-representable merge-bicategory with a chosen family $\{t_{a,b}: (a,b) \to (a \otimes b)\}$ of tensors, indexed by 1-cells $a: x \to y$ and $b: y \to z$ of $X$. We say that $\{t_{a,b}\}$ is \emph{strictly associative} if, for all composable triples $a, b, c$ of 1-cells, $a \otimes (b \otimes c) = (a \otimes b) \otimes c =: a \otimes b \otimes c$ and
\begin{equation*}
	\input{img/s6_strict_associator.tex}
\end{equation*}
\end{dfn}

\begin{exm}
Not every 1-representable merge-bicategory admits a strictly associative family of tensors. 

For example, there is a merge-bicategory with a single 0-cell and a single 1-cell $\mathbb{N}$, whose 2-cells with $n$ inputs and $m$ outputs correspond to functions $\mathbb{N}^n \to \mathbb{N}^m$, and composition is as in a strict cartesian monoidal category of sets and functions. Any bijective function $\mathbb{N} \times \mathbb{N} \to \mathbb{N}$ (a \emph{pairing} function) defines a tensor $t_{\mathbb{N},\mathbb{N}}$, but a simple argument shows that no such pairing can be strictly associative. 
\end{exm}

Unlike the inclusion into $\imonad{X}$, the inclusion into $\mmonad{X}$ preserves the universal properties of cells of $X$: this will be crucial for the semi-strictification argument.
\begin{lem} \label{lem:merge_inclusion}
Let $X$ be a unital merge-bicategory. Then:
\begin{enumerate}[label=(\alph*)]
	\item $\mmonad{X}$ is 1-representable and admits a strictly associative family of tensors;
	\item the inclusion $\zeta_X: X \to \mmonad{X}$ is a strong morphism;
	\item if $X$ is representable, $\zeta_X$ is an equivalence.
\end{enumerate}
\end{lem}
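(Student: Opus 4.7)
The proof will rest on a concrete description of $\mmonad{X}$: a 2-cell $(\sequ{\Gamma_1}, \ldots, \sequ{\Gamma_n}) \to (\sequ{\Delta_1}, \ldots, \sequ{\Delta_m})$ in $\mmonad{X}$ is naturally identified with a 2-cell $(\Gamma_1, \ldots, \Gamma_n) \to (\Delta_1, \ldots, \Delta_m)$ in $X$ together with a partition of its input and output boundaries into the indicated contiguous subsequences; moreover, the merge operations in $\mmonad{X}$ correspond to composing the underlying cells in $X$ followed by re-partitioning the resulting boundary. I will call this correspondence $\Phi$, and use it systematically.

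For part (a), unitality of $\mmonad{X}$ is witnessed by units of $X$: for any sequence $(\sequ{\Gamma_1}, \ldots, \sequ{\Gamma_n})$ with concatenation $\Gamma$, the unit $\idd{\Gamma}$ (existing by unitality of $X$) equipped with the partition $\sequ{\Gamma_1}\ldots\sequ{\Gamma_n}$ on both sides is the required unit in $\mmonad{X}$, as one checks via $\Phi$. For 1-representability, I would define the tensor of $\sequ{\Gamma}$ and $\sequ{\Delta}$ to be $\sequ{\Gamma, \Delta}$, exhibited by the 2-cell obtained from $\idd{\Gamma, \Delta}$ with input partition $\sequ{\Gamma}\sequ{\Delta}$ and output partition $\sequ{\Gamma, \Delta}$; divisibility at $\bord{1}{+}$ translates under $\Phi$ to the obvious uniqueness of refining a partition, invoking the cancellability of the unit $\idd{\Gamma, \Delta}$ in $X$. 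To show $\zeta_X$ is strong, preservation of units and tensors follows directly from $\Phi$, while preservation of divisible 2-cells $p$ of $X$ follows because any factorization in $\mmonad{X}$ against $\zeta_X(p)$ (which carries trivial partitions) reduces under $\Phi$ to a factorization in $X$. For a divisible 1-cell $e$ of $X$, I would build the right hom $\rimp{\sequ{e}}{\sequ{a_1, \ldots, a_n}}$ as $\sequ{\rimp{e}{a_1}, a_2, \ldots, a_n}$, witnessed by the 2-cell in $\mmonad{X}$ coming from merging $e^R_{e, a_1}: (e, \rimp{e}{a_1}) \to (a_1)$ with the units $\idd{a_2}, \ldots, \idd{a_n}$ in $X$; a dual construction handles left homs and ultimately tensor (and par) divisibility of $\sequ{e}$.

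For part (b), part (a) together with strength of $\zeta_X$ implies that $\mmonad{X}$ inherits representability from $X$ (divisible 1-cells in $X$ give divisible 1-cells in $\mmonad{X}$, so $\mmonad{X}$ is 0-representable by Theorem \ref{thm:0representable}, and par representability is dual). By Corollary \ref{cor:equivalences}, it then suffices to show that $G\zeta_X: GX \to G\mmonad{X}$ is an equivalence of bicategories, which reduces to biessential surjectivity on 1-cells and local full faithfulness on Hom-categories. For essential surjectivity: given $\sequ{a_1, \ldots, a_n}$, representability of $X$ provides a composite $a_1 \otimes \ldots \otimes a_n$ with tensor 2-cell $t$, which by Proposition \ref{prop:mergeunital} is invertible in $X$; under $\Phi$, the inverse of $t$ yields an invertible 2-cell $\sequ{a_1 \otimes \ldots \otimes a_n} \to \sequ{a_1, \ldots, a_n}$ in $\mmonad{X}$, exhibiting $G\zeta_X(a_1 \otimes \ldots \otimes a_n)$ as isomorphic to $\sequ{a_1, \ldots, a_n}$. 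For local full faithfulness, the restriction of $\Phi$ to cells with single-element input and output partitions is a bijection between 2-cells $(a) \to (b)$ in $X$ and 2-cells $(\sequ{a}) \to (\sequ{b})$ in $\mmonad{X}$, which is precisely the content of fullness and faithfulness on each $\mathrm{Hom}(x, y)$ category.

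The main obstacle will be the combinatorial verification that $\zeta_X$ preserves divisibility of 1-cells: one must explicitly produce tensors, right homs, and left homs in $\mmonad{X}$ with arbitrary partitioned 1-cells starting from only the ``single-cell'' divisibility data in $X$, and then check that the constructed witnesses --- which are built by merging divisibility cells of $X$ with units --- enjoy the requisite double divisibility properties at both $\bord{1}{+}$ and at the input on which they act. Theorem \ref{thm:polycoherent} on the coherence of witnesses of unitality will be needed to ensure that these constructions compose compatibly and that the resulting 2-cells of $\mmonad{X}$ genuinely exhibit the claimed tensor and hom structure.
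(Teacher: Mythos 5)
Your description of $\mmonad{X}$, and your treatment of unitality, of 1-representability, and of the preservation of units and divisible 2-cells, all match the paper's proof: the units $\idd{\Gamma}$ of $X$, carried along with partitions of their boundaries, supply both the units of $\mmonad{X}$ and the mutually inverse tensor/par witnesses $(\idd{\Gamma})_{\sequ{\Gamma_1}\sequ{\Gamma_2}}^{\sequ{\Gamma}}$. Where you diverge is on the divisibility of 1-cells: the paper never constructs homs in $\mmonad{X}$. Instead it shows that $\zeta_X$ preserves tensor (dually, par) units, by exhibiting witnesses $l_{\sequ{\Gamma}} := (l_\Gamma)_{\sequ{1_x}\sequ{\Gamma}}^{\sequ{\Gamma}}$ of the unitality of $\sequ{1_x}$, and then invokes Corollary \ref{cor:isomorphism_cor}: a unital morphism that preserves tensor units and tensors, into a tensor 1-representable target, is automatically tensor strong. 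This sidesteps exactly the "main obstacle" you flag --- verifying that $\sequ{\rimp{e}{a_1},a_2,\ldots,a_n}$ with your merged witness is divisible both at $\bord{1}{+}$ and at $\bord{2}{-}$, and likewise for the tensors $(\sequ{e},\sequ{\Gamma'}) \to (\sequ{e,\Gamma'})$. Your direct construction should go through, but it amounts to re-proving a special case of Proposition \ref{prop:isomorphism}, which Corollary \ref{cor:isomorphism_cor} packages once and for all; using it is both shorter and closer to the paper's divisibility-first methodology.

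For (b) the routes genuinely differ. The paper stays at the merge-bicategory level: it chooses composites $c_\Gamma$ with divisible 2-cells $t_\Gamma: (\Gamma) \to (c_\Gamma)$, defines a strong morphism $f: \mmonad{X} \to X$ by $\sequ{\Gamma} \mapsto c_\Gamma$ and factorisation of 2-cells through the $t_\Gamma$, notes $f\zeta_X = \idd{X}$, and writes down the pseudo-natural equivalence $\varepsilon: \idd{\mmonad{X}} \to \zeta_X f$ with components $\varepsilon_x = \sequ{1_x}$ and $\varepsilon_{\sequ{\Gamma}}$ assembled from $r_{\sequ{\Gamma}}$, $(t_\Gamma)_{\sequ{\Gamma}}^{\sequ{c_\Gamma}}$ and $\invrs{l_{\sequ{c_\Gamma}}}$. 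Your reduction to $G\zeta_X$ has two soft spots. First, Corollary \ref{cor:equivalences} is stated only in the directions "$f$ an equivalence implies $Gf$ an equivalence" and "$f$ an equivalence implies $\int\! f$ an equivalence"; to deduce that $\zeta_X$ is an equivalence from $G\zeta_X$ being one, you must pass through $\int\! G\zeta_X$ and the natural isomorphism of Theorem \ref{mrgpol_equivalence}, which works but needs saying. Second, and more substantively, the step "locally an equivalence of hom-categories plus essential surjectivity implies an equivalence of bicategories" is an external theorem the paper neither proves nor cites, and its proof consists precisely of choosing quasi-inverse data --- i.e.\ the explicit construction of $f$ and $\varepsilon$ above. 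Since the paper's definition of equivalence demands an actual quasi-inverse strong morphism and pseudo-natural equivalences, your argument is correct in substance but delegates the constructive content of (b) to that black box; to meet the definition you would still have to produce $f$ and $\varepsilon$ somewhere.
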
 
\begin{proof}
It follows from the definition of composition in $\mmonad{X}$ that, for each sequence $\sequ{\Gamma_1},\ldots,\sequ{\Gamma_n}$ of 1-cells in $\mmonad{X}$, letting $\Gamma := \Gamma_1,\ldots,\Gamma_n$, the 2-cell $(\idd{\Gamma})_{\sequ{\Gamma_1},\ldots,\sequ{\Gamma_n}}^{\sequ{\Gamma_1},\ldots,\sequ{\Gamma_n}}$ is a unit. This implies both that $\mmonad{X}$ is unital, and that $\zeta_X$ preserves units and universal 2-cells.

Let $\sequ{\Gamma_1}, \sequ{\Gamma_2}$ be a composable pair of 1-cells in $\mmonad{X}$, and let $\Gamma := \Gamma_1, \Gamma_2$ be the corresponding composable sequence of 1-cells in $X$. Then 
\begin{align*}
	t_{\sequ{\Gamma_1},\sequ{\Gamma_2}} & := (\idd{\Gamma})_{\sequ{\Gamma_1}\sequ{\Gamma_2}}^{\sequ{\Gamma}}: (\sequ{\Gamma_1},\sequ{\Gamma_2}) \to (\sequ{\Gamma}), \\
	\invrs{t}_{\sequ{\Gamma_1},\sequ{\Gamma_2}} & := (\idd{\Gamma})_{\sequ{\Gamma}}^{\sequ{\Gamma_1}\sequ{\Gamma_2}}: (\sequ{\Gamma}) \to (\sequ{\Gamma_1},\sequ{\Gamma_2})
\end{align*} 
are mutually inverse 2-cells in $\mmonad{X}$, so they exhibit $\sequ{\Gamma}$ as a tensor (and par) of $\sequ{\Gamma_1}$ and $\sequ{\Gamma_2}$. This proves that $\mmonad{X}$ is 1-representable. Because units compose to units in $X$, the $\{t_{\sequ{\Gamma_1},\sequ{\Gamma_2}}\}$ form a strictly associative family of tensors in $\mmonad{X}$.

Suppose that $1_x: x \to x$ is a tensor unit in $X$, and fix witnesses of unitality $\{l_a, r_a\}$ for $1_x$. Let $\sequ{\Gamma}$ be a 1-cell in $\mmonad{X}$, corresponding to a composable sequence $\Gamma = a_1,\ldots,a_n$ of 1-cells of $X$ with $\bord{}{-}a_1 = x$, and define 
\begin{align*}
	l_\Gamma & := \mrg{[1,1]}{[1,1]}(l_{a_1},\idd{\Gamma}): (1_x,\Gamma) \to (\Gamma) \text{ in $X$}, \\
	l_{\sequ{\Gamma}} & := (l_\Gamma)_{\sequ{1_x}\sequ{\Gamma}}^{\sequ{\Gamma}}: (\sequ{1_x},\sequ{\Gamma}) \to (\sequ{\Gamma}) \text{ in $\mmonad{X}$.} 
\end{align*}
Then $l_{\sequ{\Gamma}}$ is universal at $\bord{1}{+}$ and at $\bord{2}{-}$, hence is a witness of left tensor unitality of $\sequ{1_x}$; one similarly obtains witnesses $r_{\sequ{\Gamma}}$ of right tensor unitality. Together with the dual proof for par units, since $\mmonad{X}$ is 1-representable, this suffices to prove that $\zeta_X$ is a strong morphism, by Corollary \ref{cor:isomorphism_cor}.

Next, suppose that $X$ is representable. For each 0-cell $x$ of $X$, fix a 1-unit $1_x: x \to x$, together with coherent witnesses of unitality $\{l_a, r_a\}$, $\{\invrs{l_a},\invrs{r_a}\}$, and for each composable sequence $\Gamma$ of 1-cells of $X$, fix a 1-cell $c_\Gamma$ and a universal 2-cell $t_\Gamma: (\Gamma) \to (c_\Gamma)$; if $\Gamma$ is a single 1-cell $a$, we assume that $c_a = a$ and $t_a = \idd{a}$. Then, we define a morphism $f: \mmonad{X} \to X$ which is the identity on 0-cells, maps a 1-cell $\sequ{\Gamma}$ to $c_\Gamma$, and maps a 2-cell $p_{\sequ{\Gamma_1}\ldots\sequ{\Gamma_n}}^{\sequ{\Delta_1}\ldots\sequ{\Delta_m}}$ to the unique 2-cell $(c_{\Gamma_1},\ldots,c_{\Gamma_n}) \to (c_{\Delta_1},\ldots,c_{\Delta_m})$ obtained by factorising $p: (\Gamma_1,\ldots,\Gamma_n) \to (\Delta_1,\ldots,\Delta_m)$ through the $t_{\Gamma_i}$ and the $\invrs{t_{\Delta_j}}$, for $i = 1,\ldots, n$ and $j = 1,\ldots,m$.

By construction, $f \zeta_X$ is the identity on $X$, so a 1-unit on $\idd{X}$ in $[X,X]$, which exists by Proposition \ref{prop:hominherit}, is a pseudo-natural equivalence $\idd{X} \to f \zeta_X$. Moreover, we can define families of cells of $\mmonad{X}$ as required by the definition of an oplax transformation $\varepsilon: \idd{\mmonad{X}} \to \zeta_X f$, as follows:
\begin{itemize}
	\item for each 0-cell $x$ of $\mmonad{X}$, let $\varepsilon_x := \sequ{1_x}$;
	\item for each 1-cell $\sequ{\Gamma}: x \to y$ of $\mmonad{X}$, let $\varepsilon_{\sequ{\Gamma}}: (\sequ{\Gamma}, \sequ{1_y}) \to (\sequ{1_x}, \sequ{c_\Gamma})$ be the composite $r_{\sequ{\Gamma}}$, followed by $(t_\Gamma)_{\sequ{\Gamma}}^{\sequ{c_\Gamma}}: (\sequ{\Gamma}) \to (\sequ{c_\Gamma})$, followed by $\invrs{l_{\sequ{c_\Gamma}}}$.
\end{itemize}
It is straightforward to check that this is, in fact, an oplax transformation, and because the $(t_\Gamma)_{\sequ{\Gamma}}^{\sequ{c_\Gamma}}$ are universal, all components of $\varepsilon$ are universal, that is, $\varepsilon$ is a pseudo-natural equivalence. Thus, $\zeta_X$ is an equivalence of representable merge-bicategories. 
\end{proof}

\begin{remark}
Given morphisms of merge-bicategories $f, g: X \to Y$ that agree on 0-cells, let an \emph{icon} $\sigma: f \to g$ be the assignment, to each 1-cell $a: x \to y$ of $X$, of a 2-cell $\sigma_a: (f(a)) \to (g(a))$ of $Y$, such that, for all 2-cells $p: (a_1,\ldots,a_n) \to (b_1,\ldots,b_m)$ of $X$, the equation
\begin{equation*}
\input{img/s6_icon.tex}
\end{equation*}
holds in $Y$. Through the equivalence between $\bicat$ and $\mrgpol_\otimes$, icons between strong morphisms of representable merge-bicategories correspond to icons between functors of bicategories \cite{lack2010icons}.

Since $\zeta_X$ and $f$ are the identity on 0-cells, we can say that $\zeta_X$ is an ``equivalence in the sense of icons'' even when $X$ is 1-representable but not 0-representable. More precisely, there are icons $\idd{X} \to f \zeta_X$ and $\idd{\mmonad{X}} \to \zeta_X f$ whose components are all invertible 2-cells: namely, the icon whose component at a 1-cell $a$ of $X$ is the unit on $a$, and the icon whose component at a 1-cell $\sequ{\Gamma}$ of $\mmonad{X}$ is $(t_\Gamma)_{\sequ{\Gamma}}^{\sequ{c_\Gamma}}: (\sequ{\Gamma}) \to (\sequ{c_\Gamma})$.
\end{remark}

\begin{prop} \label{prop:malg_1rep}
Let $X$ be a unital merge-bicategory. The following are equivalent:
\begin{enumerate}
	\item $X$ admits an $\mmonad{}$-algebra structure $\alpha: \mmonad{X} \to X$ where $\alpha$ is a strong morphism;
	\item $X$ is 1-representable and admits a strictly associative family of tensors.
\end{enumerate}
\end{prop}
\begin{proof}
By Lemma \ref{lem:merge_inclusion}, $\mmonad{X}$ is 1-representable and admits a strictly associative family of tensors $\{t_{\sequ{\Gamma_1},\sequ{\Gamma_2}}\}$. If $\alpha: \mmonad{X} \to X$ is an $\mmonad{}$-algebra structure on $X$, then $\alpha(\sequ{a}) = a$ for each 1-cell $a$ of $X$, and if $\alpha$ is strong, then $\{\alpha(t_{\sequ{a},\sequ{b}})\}$ is a strictly associative family of tensors in $X$, indexed by composable pairs of 1-cells $a, b$ in $X$.

Conversely, suppose $X$ is 1-representable and has a strictly associative family of tensors $\{t_{a,b}\}$. For each composable sequence $\Gamma$ of 1-cells in $X$, there is a unique 1-cell $\otimes \Gamma$ and a unique invertible 2-cell $t_\Gamma: (\Gamma) \to (\otimes\Gamma)$ obtained as a composite of the $t_{a,b}$. We define $\alpha: \mmonad{X} \to X$ to be the morphism that sends each 1-cell $\sequ{\Gamma}$ to $\otimes \Gamma$, and each 2-cell $p_{\sequ{\Gamma_1}\ldots\sequ{\Gamma_n}}^{\sequ{\Delta_1}\ldots\sequ{\Delta_m}}$ to the unique $p': (\otimes \Gamma_1, \ldots, \otimes \Gamma_n) \to (\otimes \Delta_1, \ldots, \otimes \Delta_m)$ such that $p$ factors as a composite of the $t_{\Gamma_i}$, followed by $p'$, followed by the $\invrs{t}_{\Delta_j}$, for $i = 1,\ldots,n$, $j = 1,\ldots,m$. This defines an $\mmonad{}$-algebra structure on $X$.

Clearly $\alpha$ preserves unit 2-cells, so to prove it is strong, by Corollary \ref{cor:isomorphism_cor}, it suffices to show that it preserves 1-units. If $\langle \Gamma \rangle$ is a 1-unit in $\mmonad{X}$, then $\langle \otimes \Gamma \rangle$ is also a 1-unit, so it suffices to look at 1-cells in the image of $\zeta_X$. But any such 1-cell comes, \emph{a fortiori}, from a 1-unit in $X$.
\end{proof}

\begin{remark} 
In fact, to obtain a necessary condition for representability, we could require that $\alpha$ be only compatible with the unit, but not the multiplication of $\mmonad{}$. 
\end{remark}

\begin{lem} \label{lem:mmonad_strong}
Let $f: X \to Y$ be a strong morphism of unital merge-bicategories, and suppose $X$ is 0-representable. Then $\mmonad{f}: \mmonad{X} \to \mmonad{Y}$ is a strong morphism.
\end{lem}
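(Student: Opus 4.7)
The plan is to show that $\mmonad{f}$ satisfies the hypotheses of Corollary \ref{cor:isomorphism_cor} for tensor strongness, and then to exploit the collapse of tensor and par structures in unital merge-bicategories to promote this to full strongness. Since $Y$ is unital, $\mmonad{Y}$ is 1-representable by Lemma \ref{lem:merge_inclusion}, so it will suffice to verify that $\mmonad{f}$ is unital, preserves tensors of 1-cells, and preserves tensor units on 0-cells.

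First I would observe that $\mmonad{X}$ is itself a unital merge-bicategory: given any composable sequence $(\sequ{\Gamma_1},\ldots,\sequ{\Gamma_n})$ of 1-cells of $\mmonad{X}$, the cell $(\idd{\Gamma_1,\ldots,\Gamma_n})_{\sequ{\Gamma_1}\ldots\sequ{\Gamma_n}}^{\sequ{\Gamma_1}\ldots\sequ{\Gamma_n}}$ is a unit, as can be checked directly from the ``unmerge--compose in $X$--merge'' description of composition in $\mmonad{X}$ together with the unit property of $\idd{\Gamma_1,\ldots,\Gamma_n}$ in $X$. Because $f$ is strong, $f$ is unital (Proposition \ref{prop:unitmorph}), so $f(\idd{\Gamma_1,\ldots,\Gamma_n}) = \idd{f(\Gamma_1,\ldots,\Gamma_n)}$, and by the definition of $\mmonad{f}$ on 2-cells, it carries this unit in $\mmonad{X}$ to the analogous unit on $(\sequ{f(\Gamma_1)},\ldots,\sequ{f(\Gamma_n)})$ in $\mmonad{Y}$. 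Thus $\mmonad{f}$ is unital.

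For tensors, the proof of Lemma \ref{lem:merge_inclusion} already identified the canonical tensor of $\sequ{\Gamma_1}$ and $\sequ{\Gamma_2}$ in $\mmonad{X}$ as $\sequ{\Gamma_1,\Gamma_2}$, witnessed by $(\idd{\Gamma_1,\Gamma_2})_{\sequ{\Gamma_1}\sequ{\Gamma_2}}^{\sequ{\Gamma_1,\Gamma_2}}$ and its inverse. Since $\mmonad{f}$ maps this cell to the cell of the same form in $\mmonad{Y}$, it preserves tensors of 1-cells. For tensor units, the hypothesis that $X$ is 0-representable yields tensor units $1_x$ on every 0-cell $x$ of $X$; Lemma \ref{lem:merge_inclusion} guarantees that $\zeta_X(1_x) = \sequ{1_x}$ is a tensor unit on $x$ in $\mmonad{X}$. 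Applying the strongness of $f$, $f(1_x)$ is a tensor unit on $f(x)$ in $Y$, and applying Lemma \ref{lem:merge_inclusion} again (to $Y$, which is unital), $\sequ{f(1_x)} = \mmonad{f}(\sequ{1_x})$ is a tensor unit on $f(x)$ in $\mmonad{Y}$. Corollary \ref{cor:isomorphism_cor} now gives that $\mmonad{f}$ is tensor strong.

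The last step is to upgrade this to strongness, which is where I would use Corollary \ref{cor:tensorparcollapse} and Proposition \ref{cor:tensorparunitcollapse}: in the unital merge-bicategory $\mmonad{X}$, every tensor 2-cell has an inverse that exhibits a par, and every tensor divisible 1-cell is par divisible with witnesses given by inverses of the tensor witnesses. Since preservation of units automatically entails preservation of inverses (by the algebraic characterisation of divisibility in Proposition \ref{prop:mergeunital}), the preservation of tensors and tensor divisible 1-cells by $\mmonad{f}$ is simultaneously preservation of pars and par divisible 1-cells. There is no genuine obstacle here: the proof is essentially an exercise in pushing definitions through the inductive construction of $\mmonad{}$, and the only point requiring a little care is the translation between ``strong'' for non-representable merge-bicategories (as tensor strong plus par strong on the underlying regular poly-bicategory) and the preservation-of-divisible-cells phrasing available once 1-representability of $\mmonad{X},\mmonad{Y}$ is in hand.
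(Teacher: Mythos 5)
Your proposal is correct and follows essentially the same route as the paper's proof: use Lemma \ref{lem:merge_inclusion} to get 1-representability of $\mmonad{X}$ and $\mmonad{Y}$, check directly that $\mmonad{f}$ preserves units and the canonical tensors, reduce everything else to preservation of tensor units on 0-cells via Corollary \ref{cor:isomorphism_cor}, and obtain those units from the 0-representability of $X$ through $\zeta_X$ (your chain $1_x \mapsto f(1_x) \mapsto \sequ{f(1_x)}$ is just the unpacking of the paper's observation that $\mmonad{f}\circ\zeta_X = \zeta_Y\circ f$ is strong). Your closing paragraph on upgrading tensor strong to strong via the tensor/par collapse makes explicit a step the paper leaves implicit, but the argument is the same.
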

\begin{proof}
By Lemma \ref{lem:merge_inclusion}, both $\mmonad{X}$ and $\mmonad{Y}$ are 1-representable, and it is immediate that $\mmonad{f}$ maps units in $\mmonad{X}$ to units in $\mmonad{Y}$. By Corollary \ref{cor:isomorphism_cor}, it then suffices that $\mmonad{f}$ map a 1-unit on each 0-cell $x$ of $\mmonad{X}$ to a 1-unit in $\mmonad{Y}$. Because $X$ is 0-representable, there is such a 1-unit coming from $X$ through $\zeta_X$, and $\mmonad{f}\circ \zeta_X = \zeta_Y \circ f$ is strong. This proves the claim.
\end{proof}

Let $\tmonad{} := \mmonad{\imonad{}}$. To show that $\tmonad{}$ admits the structure of a monad, we introduce a distributive law from $\imonad{}$ to $\mmonad{}$ \cite{beck1969distributive}. Intuitively, the distributive law encodes the fact that, whenever we merge some 1-cells in a sequence, and then ``inflate'' it to a unit, we can first inflate the original sequence, then merge on both sides of the unit's boundary, instead:
\begin{equation*}
\input{img/s6_distributive.tex}
\end{equation*}

\begin{cons}
Let $X$ be a merge-bicategory. We define $\sigma_X: \imonad{\mmonad{X}} \to \mmonad{\imonad{X}}$ as follows.
\begin{itemize}
	\item For each cell $p$ of $X$, $\eta_{\mmonad{X}}\zeta_X(p)$ is mapped to $\zeta_{\imonad{X}}\eta_X(p)$ (that is, $\sigma_X$ ``is the identity'' on the inclusion of $X$ into both sides).
	\item For each 0-cell $x$ of $\mmonad{X}$ (equivalently, of $X$), $\thin{x}$ is mapped to $\sequ{\thin{x}}$ and $\thin{\thin{x}}$ to ${\thin{\thin{x}}}_{\sequ{\thin{x}}}^{\sequ{\thin{x}}}$.
	\item For each 1-cell $\sequ{\Gamma}: x \to y$ of $\mmonad{X}$, corresponding to a sequence $\Gamma = a_1,\ldots,a_n$ of 1-cells of $X$, $\thin{\sequ{\Gamma}}$ is mapped to $(\thin{(\Gamma)})_{\sequ{\Gamma}}^{\sequ{\Gamma}}$, $l_{\sequ{\Gamma}}$ and $r_{\sequ{\Gamma}}$ to 
	\begin{align*} 
		& (l_\Gamma)_{\sequ{\thin{x}}\sequ{\Gamma}}^{\sequ{\Gamma}}, \text{ where } l_\Gamma := \mrg{[1,1]}{[1,1]}(l_{a_1},\thin{(\Gamma)}) \text{ in $\imonad{X}$}, \\
		& (r_\Gamma)_{\sequ{\Gamma}\sequ{\thin{y}}}^{\sequ{\Gamma}}, \text{ where } r_\Gamma := \mrg{[1,1]}{[n,n]}(r_{a_n},\thin{(\Gamma)}) \text{ in $\imonad{X}$},
	\end{align*}
	respectively, and $l^*_{\sequ{\Gamma}}$ and $r^*_{\sequ{\Gamma}}$ to their respective inverses.
\end{itemize}
The $\sigma_X$ assemble into a natural transformation $\sigma: \imonad{\mmonad{}} \to \mmonad{\imonad{}}$.
\end{cons}

Let $\tilde{\mu},\tilde{\eta}: \tmonad{\tmonad{}} \to \tmonad{}$ be the natural transformations with components 
\begin{equation*}
	\tilde{\mu}_X := \nu_{\imonad{X}}\circ\mmonad{\mmonad{\mu_X}}\circ\mmonad{\sigma_{\imonad{X}}}, \qquad \tilde{\eta}_X := \zeta_{\imonad{X}}\circ \eta_X,
\end{equation*}
for each merge-bicategory $X$.
\begin{prop}
The natural transformation $\sigma: \imonad{\mmonad{}} \to \mmonad{\imonad{}}$ is a distributive law from $(\imonad{},\mu,\eta)$ to $(\mmonad{},\nu,\zeta)$. Consequently, $(\tmonad{}, \tilde{\mu}, \tilde{\eta})$ determines a monad on $\mrgpol$.
\end{prop}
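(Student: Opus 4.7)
The plan is to verify the four Beck distributive law axioms
\begin{equation*}
\sigma \circ \imonad{\zeta} = \zeta\mmonad{}\eta^{-1}\text{-style: } \sigma_X\circ\imonad{\zeta_X} = \zeta_{\imonad{X}}, \quad \sigma_X\circ\eta_{\mmonad{X}} = \mmonad{\eta_X},
\end{equation*}
\begin{equation*}
\sigma_X\circ\imonad{\nu_X} = \nu_{\imonad{X}}\circ\mmonad{\sigma_X}\circ\sigma_{\mmonad{X}}, \quad \sigma_X\circ\mu_{\mmonad{X}} = \mmonad{\mu_X}\circ\sigma_{\imonad{X}}\circ\imonad{\sigma_X},
\end{equation*}
and then invoke Beck's theorem to obtain the monad structure on $\tmonad{} = \mmonad{\imonad{}}$ with the stated $\tilde{\mu}$ and $\tilde{\eta}$. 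Since all of $\imonad{}$, $\mmonad{}$, $\mu$, $\nu$, $\eta$, $\zeta$ and $\sigma$ act ``freely'' on generating cells and inclusions, each axiom will reduce to a check on a short list of generators: cells of the form $\eta_X(p)$, $\thin{x}$, $\thin{(\Gamma)}$, $p_{(\Gamma'),(\Delta')}$ for $\imonad{}$, and cells of the form $\zeta_X(p)$, $\sequ{\Gamma}$, $p^{\sequ{\Delta_1}\ldots}_{\sequ{\Gamma_1}\ldots}$ for $\mmonad{}$.

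First I would dispatch the two unit axioms. By construction $\sigma_X$ is the identity on the image of $X$ (i.e.\ on $\eta_{\mmonad{X}}\zeta_X$), which immediately yields $\sigma_X\circ\eta_{\mmonad{X}}(\sequ{\Gamma}) = \sequ{\Gamma} = \mmonad{\eta_X}(\sequ{\Gamma})$ on 1-cells, and analogously on 2-cells by the identification $p = p_{(\Gamma),(\Delta)}$; and $\sigma_X\circ\imonad{\zeta_X}(a) = \sigma_X(\sequ{a}) = \sequ{a} = \zeta_{\imonad{X}}(a)$, extending to inflated cells via the observation that $\imonad{\zeta_X}$ sends $\thin{x}$, $\thin{(\Gamma)}$, $p_{(\Gamma'),(\Delta')}$ to the corresponding cells over $\mmonad{X}$ whose shape sequences have all entries of the form $\sequ{a}$. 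Both axioms therefore reduce to reading off the definition of $\sigma_X$.

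For the two multiplication axioms, the strategy is to evaluate both sides on each class of generators and observe that they give the same cell of $\mmonad{\imonad{X}}$. The compatibility with $\nu$ amounts to: whether we first iterate merger and then transport the $\thin{}$ through, or first transport through one layer of merger and then flatten, we obtain the same partitioned 2-cell, because flattening a nested sequence $\sequ{\sequ{\Gamma_1},\ldots,\sequ{\Gamma_k}}$ concatenates its components in precisely the way that makes the ``$\thin{x^-},\ldots,\thin{x^+}$''-padded sequence $\Gamma'$ assigned to the outer 2-cell match the one assigned after flattening. The compatibility with $\mu$ is similar: iterated inflation produces doubly-inflated sequences, and both sides send a 2-cell $(({p_{(\Gamma),(\Delta)}})_{(\Gamma'),(\Delta')})$ to the same $\mmonad{}$-partitioned 2-cell over $\imonad{X}$, namely the one built from $p_{\mu_X(\Gamma'),\mu_X(\Delta')}$ with the outer partitioning obtained by collapsing the nested $\tilde{\epsilon}$-generators to $\thin{}$-generators. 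In both cases the main ingredient is that $\mu_X$ and $\nu_X$ commute with the formation of padded sequences $\Gamma'$ and their partitioning, which is a bookkeeping calculation on indices.

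The main obstacle will be the interchange in the last axiom (compatibility with $\mu$): one must make sure, while iterating the inflate construction, that the identification of composable diagrams with a common collapse in $\imonad{\imonad{X}}$ is respected after passing through $\sigma$, so that the 2-cell assignment is well-defined on equivalence classes. This is where the definition of $\mu_X$ via collapse--compatibility, together with the fact that $\sigma$ only reshuffles $\thin{}$-padded sequences without altering collapses, is essential; everything else is schematic. Once all four axioms are verified, Beck's theorem \cite{beck1969distributive} yields that $\tmonad{} = \mmonad{\imonad{}}$ carries a canonical monad structure with unit $\tilde{\eta}_X = \zeta_{\imonad{X}}\circ\eta_X$ and multiplication $\tilde{\mu}_X$ built from $\nu$, $\mmonad{\mu}$ and $\sigma$ exactly as stated, concluding the proposition.
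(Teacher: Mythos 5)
Your proposal is correct and matches the paper's approach: the paper's own proof is literally ``an exercise in unpacking definitions,'' and your plan --- stating the four Beck axioms for a distributive law from $(\imonad{},\mu,\eta)$ to $(\mmonad{},\nu,\zeta)$ and verifying each on the generating cells $\thin{x}$, $\thin{(\Gamma)}$, $p_{(\Gamma'),(\Delta')}$ and $p_{\sequ{\Gamma_1}\ldots}^{\sequ{\Delta_1}\ldots}$, then invoking Beck's theorem for the composite monad --- is exactly that exercise carried out. The only blemish is the garbled phrase ``$\zeta\mmonad{}\eta^{-1}$-style'' in your first display, which should simply be deleted; the two unit axioms you then write are the correct ones.
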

\begin{proof}
An exercise in unpacking definitions.
\end{proof}

\begin{lem} \label{lem:tmonad_rep}
Let $X$ be a merge-bicategory. Then $\tmonad{X}$ is representable and admits a strictly associative family of tensors.
\end{lem}
\begin{proof}
By Lemma \ref{lem:imonad_rep}, $\imonad{X}$ is 0-representable, and by Lemma \ref{lem:merge_inclusion} the inclusion of $\imonad{X}$ into $\tmonad{X}$ preserves 1-units; because the 0-cells of $\tmonad{X}$ are the same as the 0-cells of $\imonad{X}$, we conclude that $\tmonad{X}$ is also 0-representable. By Lemma \ref{lem:merge_inclusion}, $\tmonad{X}$ is also 1-representable and admits a strictly associative family of tensors, which completes the proof.
\end{proof}

\begin{prop} \label{prop:talg_rep}
Let $X$ be a merge-bicategory. The following are equivalent:
\begin{enumerate}
	\item $X$ admits a $\tmonad{}$-algebra structure $\alpha: \tmonad{X} \to X$ where $\alpha$ is a strong morphism;
	\item $X$ is representable and admits a strictly associative family of tensors.
\end{enumerate}
\end{prop}
\begin{proof}
By Lemma \ref{lem:merge_inclusion}, $\zeta_{\imonad{X}}: \imonad{X} \to \tmonad{X}$ is a strong morphism, so $\beta := \alpha\zeta_{\imonad{X}}: \imonad{X} \to X$ is both a strong morphism and an $\imonad{}$-algebra structure on $X$; by Proposition \ref{prop:ialg_0rep}, $X$ is 0-representable.

Moreover, by Lemma \ref{lem:tmonad_rep}, $\tmonad{X}$ is 1-representable and admits a strictly associative family of tensors $\{t_{\sequ{\Gamma_1},\sequ{\Gamma_2}}\}$; then $\alpha(\sequ{a}) = a$ for each 1-cell $a$ of $X$, and $\{\alpha(t_{\sequ{a},\sequ{b}})\}$ is a strictly associative family of tensors in $X$.

Conversely, by Proposition \ref{prop:ialg_0rep} and Proposition \ref{prop:malg_1rep}, $X$ admits an $\imonad{}$-algebra structure $\alpha: \imonad{X} \to X$ and an $\mmonad{}$-algebra structure $\beta: \mmonad{X} \to X$, such that both $\alpha$ and $\beta$ are strong morphisms. Then $\beta \circ \mmonad{\alpha}: \mmonad{\imonad{X}} \to X$ is a $\tmonad{}$-algebra structure, and by Lemma \ref{lem:mmonad_strong} $\beta \circ \mmonad{\alpha}$ is a composite of strong morphisms, therefore it is strong.
\end{proof}

The structure of $\tmonad{}$-algebra leads to a stricter form of bicategory, as follows.

\begin{cons} \label{cons:strictlyasso}
Let $\alpha: \tmonad{X} \to X$ be a $\tmonad{}$-algebra such that $\alpha$ is a strong morphism; then $GX$ admits ``canonically'' the structure of a bicategory. For vertical composition and units, there are already unique choices. As a horizontal composite of $a: x \to y$, $b: y \to z$, pick $\alpha(\sequ{a,b})$. The horizontal composites are witnessed by the strictly associative family of tensors $\{\alpha((\thin{(a,b)})_{\sequ{a}\sequ{b}}^{\sequ{a,b}})\}$. 

As a horizontal unit on the 0-cell $x$, pick $\alpha(\sequ{\thin{x}})$. The $\{\alpha\zeta_X(l_a), \alpha\zeta_X(r_b)\}$ are coherent witnesses of unitality of $\alpha(\sequ{\thin{x}})$. 

The rest of the bicategory structure on $GX$ is determined by these choices; the fact that the chosen family of tensors is strictly associative implies that the bicategory structure is strictly associative.
\end{cons}

\begin{lem} \label{lem:imonad_to_tmonad}
Let $\alpha: \imonad{X} \to X$ be an $\imonad{}$-algebra. Then $\mmonad{X}$ admits a canonical $\tmonad{}$-algebra structure $\beta: \tmonad{(\mmonad{X})} \to \mmonad{X}$, and if $\alpha$ is a strong morphism, then so is $\beta$.
\end{lem}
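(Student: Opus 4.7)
The plan is to invoke the classical theory of distributive laws. Since $\sigma : \imonad{\mmonad{}} \to \mmonad{\imonad{}}$ is a distributive law of $\imonad{}$ over $\mmonad{}$ (as just noted in the paper), the functor $T\!X := \mmonad{X}$ carries both a $\mmonad{}$-algebra structure, given by $\nu_X$, and an $\imonad{}$-algebra structure, given by $\mmonad{\alpha} \circ \sigma_X : \imonad{\mmonad{X}} \to \mmonad{X}$; and these are compatible in exactly the right way to combine into a $\tmonad{}$-algebra. Explicitly, I would define
\begin{equation*}
  \beta \;:=\; \nu_X \circ \mmonad{\mmonad{\alpha}} \circ \mmonad{\sigma_X} \;:\; \mmonad{\imonad{\mmonad{X}}} \to \mmonad{X}.
\end{equation*}

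First I would verify that $\beta$ is indeed a $\tmonad{}$-algebra structure. Compatibility with the unit $\tilde{\eta}_{\mmonad{X}} = \zeta_{\imonad{\mmonad{X}}} \circ \eta_{\mmonad{X}}$ reduces, after naturality, to the $\imonad{}$-algebra unit law $\alpha \circ \eta_X = \idd{X}$ composed with the $\mmonad{}$-monad unit law $\nu_X \circ \zeta_{\mmonad{X}} = \idd{\mmonad{X}}$. Compatibility with $\tilde{\mu}_{\mmonad{X}}$ is the standard Beck-style diagram chase: one expands $\tilde{\mu}_{\mmonad{X}} = \nu_{\imonad{X}} \circ \mmonad{\mmonad{\mu_X}} \circ \mmonad{\sigma_{\imonad{X}}}$, and then uses the four pentagonal/triangular axioms of the distributive law $\sigma$ (with respect to both monad structures), together with the $\imonad{}$-algebra multiplication law $\alpha \circ \mu_X = \alpha \circ \imonad{\alpha}$ and the $\mmonad{}$-multiplication axiom for $\nu$, to bring the two composites $\beta \circ \tmonad{\beta}$ and $\beta \circ \tilde{\mu}_{\mmonad{X}}$ into the same form. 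This part is routine.

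Next, assuming $\alpha$ is strong, I would show each of the three factors of $\beta$ is strong, and conclude by composition. The morphism $\nu_X : \mmonad{\mmonad{X}} \to \mmonad{X}$ is strong by direct inspection of Construction~\ref{cons:mergemonad}: the generating units, tensors, and pars of $\mmonad{\mmonad{X}}$ have the form $(\idd{\Gamma})_{\ldots}^{\ldots}$, and $\nu_X$ sends these to units, tensors, and pars of $\mmonad{X}$. For the middle factor, two applications of Lemma~\ref{lem:mmonad_strong} suffice: Lemma~\ref{lem:imonad_rep} gives that $\imonad{X}$ is $0$-representable, so $\mmonad{\alpha}$ is strong; then $\mmonad{\imonad{X}}$ is $0$-representable because the strong inclusion $\zeta_{\imonad{X}}$ carries the $1$-units of $\imonad{X}$ to $1$-units of $\mmonad{\imonad{X}}$, so Lemma~\ref{lem:mmonad_strong} applies again to give that $\mmonad{\mmonad{\alpha}}$ is strong. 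The remaining factor requires showing $\sigma_X$ is strong, after which a final invocation of Lemma~\ref{lem:mmonad_strong}, together with the $0$-representability of $\imonad{\mmonad{X}}$ from Lemma~\ref{lem:imonad_rep}, yields strongness of $\mmonad{\sigma_X}$.

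The main obstacle is checking that $\sigma_X : \imonad{\mmonad{X}} \to \mmonad{\imonad{X}}$ is itself strong, since $\imonad{\mmonad{X}}$ is in general not $1$-representable (Remark~\ref{remark:imonad-no1rep}), and the convenient criteria such as Corollary~\ref{cor:isomorphism_cor} do not apply. Instead one must identify by hand the divisible cells of $\imonad{\mmonad{X}}$. By construction these are: the $1$-units $\thin{x}$; the unit $2$-cells $\thin{(\Gamma)}$ on arbitrary composable sequences $\Gamma$; and the witnesses $l_a, r_a$ and their inverses exhibiting each $\thin{x}$ as a $1$-unit, as produced in the proof of Lemma~\ref{lem:imonad_rep}, together with composites thereof. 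Unpacking the definition of $\sigma_X$, each $\thin{x}$ is sent to $\sequ{\thin{x}} = \zeta_{\imonad{X}}(\thin{x})$, a $1$-unit in $\mmonad{\imonad{X}}$ since $\zeta_{\imonad{X}}$ is strong (Lemma~\ref{lem:merge_inclusion}); each $\thin{(\Gamma)}$ is sent to $(\thin{(\Gamma')})_{\sequ{\Gamma_1}\ldots\sequ{\Gamma_n}}^{\sequ{\Gamma_1}\ldots\sequ{\Gamma_n}}$, which is a unit in $\mmonad{\imonad{X}}$ directly from the composition rule of $\mmonad{}$; and the $l_a, r_a$ and their inverses map to the analogous witnesses in $\mmonad{\imonad{X}}$ for the $1$-unit $\sequ{\thin{x}}$. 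Once these case analyses are in place, the rest of the argument assembles straightforwardly.
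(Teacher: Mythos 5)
Your proof is correct and follows essentially the same route as the paper's: your $\beta = \nu_X \circ \mmonad{\mmonad{\alpha}} \circ \mmonad{\sigma_X}$ equals the paper's $\mmonad{\alpha} \circ \nu_{\imonad{X}} \circ \mmonad{\sigma_X}$ by naturality of $\nu$, and the strongness argument proceeds identically — the key step in both cases is the hand characterisation of the divisible cells of $\imonad{\mmonad{X}}$ via Remark \ref{remark:imonad-no1rep} to establish that $\sigma_X$ is strong, followed by applications of Lemma \ref{lem:mmonad_strong} and a direct check on the $\mmonad{}$-multiplication.
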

\begin{proof}
Let $\beta: \tmonad{(\mmonad{X})} \to \mmonad{X}$ be the composite
\begin{equation*}
	\mmonad{\alpha} \circ \nu_{\imonad{X}} \circ \mmonad{\sigma_X};
\end{equation*}
a straightforward calculation shows that it is compatible with the multiplication and unit of $\tmonad{}$, hence it defines a $\tmonad{}$-algebra structure on $\mmonad{X}$.

Suppose that $\alpha$ is a strong morphism; by Proposition \ref{prop:ialg_0rep}, we know that $X$ is 0-representable and, \emph{a fortiori}, unital, and by Lemma \ref{lem:imonad_rep} we know that $\imonad{X}$ and $\imonad{(\mmonad{X})}$ are both 0-representable. Moreover, by Remark \ref{remark:imonad-no1rep}, we can characterise the universal cells of $\imonad{\mmonad{X}}$, and the morphism $\sigma_X: \imonad{\mmonad{X}} \to \mmonad{\imonad{X}}$ maps them to universal cells of $\tmonad{X}$, as described in the proof of Lemma \ref{lem:tmonad_rep}. 

Thus, $\sigma_X$ is a strong morphism, and both $\sigma_X$ and $\alpha$ satisfy the conditions of Lemma \ref{lem:mmonad_strong}; it follows that $\mmonad{\sigma_X}$ and $\mmonad{\alpha}$ are strong morphisms. Since we know how to construct 1-units and units in $\tmonad{X}$ and $\mmonad{\tmonad{X}}$, which are both representable merge-bicategories, it is not hard to check that $\nu_{\imonad{X}}$ is also a strong morphism. We conclude that $\beta$ is a strong morphism.
\end{proof}

We have reached the conclusion of our argument.
\begin{thm} \label{thm:semistrict}
Let $X$ be a representable merge-bicategory. Then there exist an equivalence $f: X \to Y$ of representable merge-bicategories, and a $\tmonad{}$-algebra $\beta: \tmonad{Y} \to Y$ such that $\beta$ is a strong morphism.
\end{thm}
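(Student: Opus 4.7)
The plan is to take $Y := \mmonad{X}$ and $f := \zeta_X : X \to \mmonad{X}$, exploiting the decomposition $\tmonad{} = \mmonad{\imonad{}}$ together with the distributive law $\sigma$, so that a $\tmonad{}$-algebra on $\mmonad{X}$ can be built canonically from an $\imonad{}$-algebra on $X$. The argument is essentially a short assembly of the preceding lemmas.

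First I would invoke Proposition \ref{prop:ialg_0rep}: since $X$ is representable it is in particular 0-representable, so a choice of 1-units and of coherent witnesses of unitality in $X$ (granted by Theorem \ref{thm:polycoherent} and Remark \ref{remark:merge-coherent}) determines a strong morphism $\alpha : \imonad{X} \to X$ which makes $X$ into an $\imonad{}$-algebra. Next I would feed $\alpha$ into Lemma \ref{lem:imonad_to_tmonad} to produce the strong $\tmonad{}$-algebra
\[ \beta \;:=\; \mmonad{\alpha}\circ \nu_{\imonad{X}}\circ \mmonad{\sigma_X} : \tmonad{\mmonad{X}} \to \mmonad{X}. \]
This already settles the algebraic part of the statement.

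For the equivalence part, Lemma \ref{lem:merge_inclusion}(b) gives directly that $\zeta_X : X \to \mmonad{X}$ is an equivalence: the strong morphism $f : \mmonad{X} \to X$ constructed there using chosen tensor composites, together with the pointwise unitors, exhibits $\zeta_X$ as one side of an equivalence via pseudo-natural equivalences $\idd{X} \to f\zeta_X$ and $\idd{\mmonad{X}} \to \zeta_X f$. To close the argument, I still need to know that $\mmonad{X}$ is itself a representable merge-bicategory, so that $\zeta_X$ lives in $\mrgpol_\otimes$: this is immediate from Proposition \ref{prop:talg_rep} applied to the strong $\tmonad{}$-algebra $\beta$ obtained above (alternatively one could transport representability along the equivalence using Corollary \ref{cor:equivalences} and the equivalence $G \dashv \int$).

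I do not expect any genuine obstacle: all of the structural work has been done in Lemmas \ref{lem:imonad_rep}, \ref{lem:merge_inclusion}, \ref{lem:tmonad_rep}, \ref{lem:mmonad_strong}, \ref{lem:imonad_to_tmonad}, Propositions \ref{prop:ialg_0rep} and \ref{prop:talg_rep}, and the distributive law $\sigma$ that makes $\tmonad{}$ a monad. The only subtlety worth flagging is that the $\tmonad{}$-algebra structure lives on $\mmonad{X}$ rather than on $X$ itself — in general no strong $\mmonad{}$-algebra structure can exist on a non-strict $X$ — which is precisely why the theorem yields a semi-strictification instead of a strictification, in line with C.~Simpson's conjecture.
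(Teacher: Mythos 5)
Your proof is correct and follows essentially the same route as the paper: obtain a strong $\imonad{}$-algebra on $X$ via Proposition \ref{prop:ialg_0rep}, transfer it to a strong $\tmonad{}$-algebra on $\mmonad{X}$ via Lemma \ref{lem:imonad_to_tmonad}, and conclude with the equivalence $\zeta_X: X \to \mmonad{X}$ from Lemma \ref{lem:merge_inclusion}. Your extra check that $\mmonad{X}$ is representable (via Proposition \ref{prop:talg_rep}) is a reasonable way to make explicit something the paper leaves implicit in Lemma \ref{lem:merge_inclusion}.
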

\begin{proof}
Since $X$ is, \emph{a fortiori}, 0-representable, by Proposition \ref{prop:ialg_0rep} it admits an $\imonad{}$-algebra structure $\alpha: \imonad{X} \to X$ such that $\alpha$ is a strong morphism. It follows from Lemma \ref{lem:imonad_to_tmonad} that $\mmonad{X}$ admits a $\tmonad{}$-algebra structure $\beta: \tmonad{(\mmonad{X})} \to \mmonad{X}$ such that $\beta$ is a strong morphism. Finally, by Lemma \ref{lem:merge_inclusion}, $\zeta_X: X \to \mmonad{X}$ is an equivalence of representable merge-bicategories, so the statement follows with $Y := \mmonad{X}$.
\end{proof}

To claim that semi-strictification for bicategories follows from Theorem \ref{thm:semistrict} through the equivalence between $\mrgpol_\otimes$ and $\bicat$ would be at least partially circular, given that we relied on Mac Lane's coherence theorem to define one side of the equivalence. Nevertheless, if we read this equivalence as a definition of sorts --- that is, we define a bicategory as a representable merge-bicategory $X$ with the necessary structure on $GX$ --- it makes sense to state the following. In higher dimensions, where coherence results for weak $n$-categories are not at hand, the definitional reading may be the only one available.

\begin{cor}
Let $B$ be a bicategory. Then there exist a strictly associative bicategory $C$ and an equivalence $f: B \to C$.
\end{cor}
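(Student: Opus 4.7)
The plan is to transport Theorem \ref{thm:semistrict} across the equivalence between $\mrgpol_\otimes$ and $\bicat$, and then read off the resulting bicategory structure via Construction \ref{cons:strictlyasso}. Given a bicategory $B$, I would first form the representable merge-bicategory $\int\! B$ using Construction \ref{cons:groth_monoidal} (lifted to merge-bicategories as described before Theorem \ref{mrgpol_equivalence}). Applying Theorem \ref{thm:semistrict} to $\int\! B$ yields an equivalence $\varphi: \int\! B \to Y$ of representable merge-bicategories, where $Y$ carries a $\tmonad{}$-algebra structure $\beta: \tmonad{Y} \to Y$ whose underlying morphism is strong.

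The key step is to observe that, by Construction \ref{cons:strictlyasso}, the pair $(Y,\beta)$ induces a canonical bicategory structure on $GY$ whose horizontal composition is strictly associative: the composite of $a$ and $b$ is defined as $\beta(\sequ{a,b})$, and a direct calculation using $\tilde{\mu}_Y\sequ{a,\sequ{b,c}} = \sequ{a,b,c} = \tilde{\mu}_Y\sequ{\sequ{a,b},c}$ together with the $\tmonad{}$-algebra axiom $\beta\circ \tmonad{\beta} = \beta\circ\tilde{\mu}_Y$ gives $a\otimes(b\otimes c) = (a\otimes b)\otimes c$ on the nose, with associator equal to the image under $\beta$ of a unit cell in $\tmonad{Y}$, hence a vertical unit. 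Thus $C := GY$ is a strictly associative bicategory.

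Next, by Corollary \ref{cor:equivalences}, the equivalence $\varphi$ in $\mrgpol_\otimes$ produces an equivalence $G\varphi: G\!\int\! B \to GY = C$ in $\bicat$. Composing with the natural isomorphism $B \simeq G\!\int\! B$ coming from Corollary \ref{cor:equiv_bicat_rep} (and its extension to merge-bicategories in Theorem \ref{mrgpol_equivalence}) yields the desired equivalence $f: B \to C$.

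The main obstacle is verifying that the canonical bicategory structure on $GY$ described in Construction \ref{cons:strictlyasso} is genuinely a bicategory in the original sense (and not merely in the sense of some post-hoc definitional identification), so that the composition $B \simeq G\!\int\! B \to GY$ really lives in $\bicat$ as defined. The unitors and unit coherences come from the $\imonad{}$-component of $\beta$ via Proposition \ref{prop:ialg_0rep} and the coherent family of witnesses in Theorem \ref{thm:polycoherent}; the interchange law and pentagon equation collapse to equalities of units in $\tmonad{Y}$ because the relevant reassociations are all instances of $\tilde{\mu}_Y$ applied to sequences of the form $\sequ{\sequ{\ldots}}$. Once these verifications are in place, the corollary follows immediately.
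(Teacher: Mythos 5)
Your proposal is correct and follows essentially the same route as the paper: pass from $B$ to a representable merge-bicategory (the paper writes ``some $X$ with $B \simeq GX$'' where you take $X = \int\! B$ explicitly), apply Theorem \ref{thm:semistrict} to obtain an equivalence onto a $\tmonad{}$-algebra, read off the strictly associative structure on $GY$ via Construction \ref{cons:strictlyasso}, and transport the equivalence back with Corollary \ref{cor:equivalences}. Your closing caveat about whether the structure on $GY$ is a bicategory ``in the original sense'' is exactly the circularity the paper flags in the paragraph preceding the corollary, where it adopts the definitional reading of the equivalence.
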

\begin{proof}
We know that $B$ is equivalent to $GX$ for some representable merge-bicategory $X$. Let $f: X \to Y$ be an equivalence, and $\beta: \tmonad{Y} \to Y$ a $\tmonad{}$-algebra as in the statement of Theorem \ref{thm:semistrict}. By Construction \ref{cons:strictlyasso}, $C := GY$ has the structure of a strictly associative bicategory, and by Corollary \ref{cor:equivalences} $Gf: GX \to C$ is an equivalence of bicategories. Precomposing $Gf$ with an equivalence $B \to GX$ leads to the conclusion.
\end{proof}

\begin{remark}
Even in the higher-dimensional case, we do not think that this is the strictest structure for which one can aim; it seems reasonable, for instance, to ask for weak units to be strictly idempotent. However, this constraint requires for $\mmonad{}$ to ``recognise'' the units created by $\imonad{}$, so we would lose the decomposition of $\tmonad{}$ into a pair of monads.

Since the first version of this article, we have found that this particular scheme, with two monads related by a distributive law, does not generalise well to higher dimensions; what generalises, however, is the core idea, which is to construct witnesses of composition first by ``only adding units'', then by ``only composing cells'', with strictification corresponding to the second step. This will be discussed in a forthcoming article.
\end{remark}

\bibliographystyle{alpha}
\small \bibliography{main}

\end{document}